\numberwithin{equation}{section}
\def \Vol {\mathrm{Vol}}
\def \tr {\text{\rm Tr}}
\def \Ric {\text{\rm Ric}}
\def \Hess {\mathrm{Hess}}
\def \Len {\mathrm{Len}}
\def \diam {{\rm diam}}
\def \div {{ \rm div}}
\def \supp {{ \rm supp}\,}
\def \Tr {{ \rm Tr }}
\def \Graph {\mathrm{Graph}}
\def \Exp {\mathrm{Exp}}
\renewcommand{\Re}{\mathrm{Re}}
\def \cc {\mathrm c}
\def \CC {\mathrm C}
\def \qqquad {\qquad \qquad }
\def \RR {\mathbb R}
\def \HH {\mathbf H}
\def \bC {\mathbb C}
\def \rY {\mathrm Y}
\def \rS {\mathrm S}
\def \rT {\mathrm T}
\def \rR {\mathrm R}
\def \rD {\mathrm D}
\def \rJ {\mathrm J}
\def \rI {\mathrm I}
\def \eps {\varepsilon}
\def \vphi {\varphi}
\def \bM {\mathbf{M}}
\def \bz {\mathbf 0}
\def \bL {\mathbf{L}}
\def \cS {\mathcal S}
\def \rF {\mathrm F}
\def \cP {\mathcal P}
\def \cH {\mathcal H}
\def \cL {\mathcal L}
\def \sA {\mathscr A}
\def \sN {\mathscr N}
\def \cL {\mathcal L}
\def \cE {\mathcal E}
\def \cV {\mathcal V}
\def \sL {\mathscr L}
\def \sD {\mathscr D}
\def \CD {\mathrm{CD}}
\newtheorem{theorem}{Theorem}[section]
\newtheorem{Theorem}{Theorem}
\newtheorem*{theorem*}{theorem}
\newtheorem{definition}[theorem]{Definition}
\newtheorem{lemma}[theorem]{Lemma}
\newtheorem{proposition}[theorem]{Proposition}
\newtheorem*{proposition*}{Proposition}
\newtheorem{corollary}[theorem]{Corollary}
\newtheorem{Corollary}{Corollary}
\newtheorem{example}[theorem]{Example}
\theoremstyle{definition}
\newtheorem{remark}[theorem]{Remark}
\newtheorem*{remark*}{Remark}
\DeclareMathAlphabet{\mathcal}{U}{eus}{m}{n}
\def\qed{\hfill $\vcenter{\hrule height .3mm
		\hbox {\vrule width .3mm height 2.1mm \kern 2mm \vrule width .3mm
			height 2.1mm} \hrule height .3mm}$ \bigskip}
\title{Curvature-Dimension for Autonomous Lagrangians}
\author{Rotem Assouline}
\date{}
\begin{document}

\maketitle
 
\begin{abstract}
	We introduce a curvature-dimension condition for autonomous Lagrangians on weighted manifolds, which depends on the Euler-Lagrange dynamics on a single energy level. By generalizing Klartag's needle decomposition technique to the Lagrangian setting, we prove that this curvature-dimension condition is equivalent to displacement convexity of entropy along cost-minimizing interpolations in an $L^1$ sense, and that it implies various consequences of lower Ricci curvature bounds, as in the metric setting. As examples we consider classical and isotropic Lagrangians on Riemannian manifolds. In particular, we generalize the horocyclic Brunn-Minkowski inequality to complex hyperbolic space of arbitrary dimension, and present a new Brunn-Minkowski inequality for contact magnetic geodesics on odd-dimensional spheres.
\end{abstract}

\tableofcontents

\section{Introduction}

\subsection{Background}

Riemannian manifolds with Ricci curvature bounded from below possess a variety of geometric and analytic properties. Some of those properties make sense beyond the classical setting of Riemannian geometry, and can be used to define more general notions of lower Ricci curvature bounds. The $\CD(K,N)$ condition of Bakry and {\'E}mery \cite{BGL}, which for a Riemannian manifold $(M,g)$ is equivalent to the conditions $\Ric_g \ge Kg$ and $\dim M \le N$, may be formulated in terms of a Bochner-type inequality; since it involves only the Laplace operator and the measure, it is a property of abstract Markov semigroups. The synthetic characterization of curvature-dimension bounds via optimal transport, initiated by Sturm \cite{Sturm2} and Lott-Villani \cite{LV}, provides a way of defining curvature-dimension bounds on geodesic metric measure spaces with no differentiable structure.

\medskip
While optimal transport on manifolds with general Lagrangian costs is a well-studied subject \cite{BB,BB07,Fig,FF,Vil,HPR}, to the best of our knowledge, the use of optimal transport to harness curvature-dimension bounds has not been explored for Lagrangians other than Riemannian, sub-Riemannian \cite{BKS,BMR}, Finslerian \cite{Oh09,Oh18,OhBook} and Lorentzian \cite{Mc20,CM24,MS,Br} metrics, with the exception of the works by Lee \cite{Lee} and Ohta \cite{Oh14}. It was shown in \cite{Lee} that a lower bound on a certain Ricci curvature associated to a Hamiltonian system implies convexity of entropy along smooth displacement interpolations with respect to the corresponding Lagrangian cost. Here we take a complementary approach, namely that of $L^1$, rather than $L^2$, mass transport, and do not make any a-priori assumptions on the regularity of the displacement interpolation. 

\medskip
One of the main goals of this work is to demonstrate the equivalence between three notions of curvature-dimension bounds for a weighted manifold endowed with a Lagrangian: the first, in terms of a quantity on the tangent bundle generalizing the weighted Riemannian (and Finslerian) Ricci curvature; the second, in terms of a Bochner-type formula for solutions to the Hamilton-Jacobi equation; and the third, in terms of displacement convexity of entropy along cost-minimizing interpolations between probability measures. 

\medskip
It is a feature of our curvature-dimension condition that it only depends on the Euler-Lagrange dynamics on a single energy level. If the Lagrangian comes from a Finsler metric (that is, if it is two-homogeneous on each fiber of the tangent bundle), then this determines the dynamics on all energy levels; but if there is no homogeneity then this is no longer true. By adding a constant to the Lagrangian, one can pass to a different energy level - but the validity of the curvature-dimension condition depends on the chosen energy level. At this point it is worth mentioning that our results do not follow from analogous results on Finsler manifolds, since in general there is no Finsler metric whose geodesic flow coincides with the Euler-Lagrange flow on the given energy level (although there is a Finsler metric whose unit-speed geodesics are \emph{reparametrizations} of solutions to the Euler-Lagrange equation on the given energy level \cite{IS}). 

\medskip
For more information on the synthetic characterization of curvature-dimension bounds, see \cite{St23,Vil,OhBook}. Notions of curvature for Lagrangian and  Hamiltonian systems were studied in \cite{Ag07,Ag,AG,BM,Fou,Gr}; for the special case of (electro)magnetic Lagrangians, which serve as one of our main examples, see \cite{Asse,BA,Gouda,Gro,Woj}. Displacement convexity in optimal transport with Lagrangian cost was studied previously in Schachter \cite{SchThesis} and in Yang \cite{Yang}. 

\subsection{Main results}
Before presenting our results, let us briefly introduce the objects involved in their formulation; precise definitions  are given in Chapters \ref{presec}, \ref{ricsec} and \ref{transportsec}. A \emph{Lagrangian} on a smooth manifold $M$ is a function 
$$L:TM \to \RR,$$
where $\pi:TM \to M$ is the tangent bundle. We thus concern ourselves with autonomous, i.e. time-independent, Lagrangians. The rather standard assumptions we impose on our Lagrangian are given in Section \ref{assumpsec}. Associated to such a Lagrangian is a Hamiltonian $$H : T^*M \to \RR, \qquad H(p) : = \sup_{v \in T_xM}[p(v) - L(v)], \quad p \in T^*_xM, \, x \in M.$$ 

\medskip
The \emph{cost} is the function $\cc:M\times M \to \RR$ defined by
\begin{equation}\label{ccdef}\cc(x_0,x_1) : = \inf_\gamma\int L(\dot\gamma(t))dt,\end{equation}
where the infimum is taken over curves joining $x_0$ to $x_1$. A curve realizing this infimum will be called a \emph{minimizing extremal}; under our assumptions it will always exist and solve the \emph{Euler-Lagrange equation}. Solutions of the Euler-Lagrange equation are projections to $M$ of trajectories of a flow $\Phi^L$ on $TM$, the \emph{Euler-Lagrange flow}.

\medskip
The \emph{Legendre transform} $\cL : T^*M \to TM$ provides an identification between the tangent and cotangent bundles. The \emph{gradient} $\nabla u$ of a function $u$ is defined to be the image of its differential under the Legendre transform: $$\nabla u : = \cL du.$$ The \emph{energy} is the function
$$E = H\circ\cL^{-1}:TM \to \RR.$$

\medskip
Since the infimum in \eqref{ccdef} allows the domain of $\gamma$ to be any finite interval, minimizing extremals satisfy $E(\dot\gamma)\equiv 0$, see Section \ref{costsec}. We will thus work exclusively on the energy level 
$$SM : = E^{-1}(0).$$
In the Riemannian case we take the Lagrangian to be $L=(g+1)/2$, so that $SM$ is the unit tangent bundle. In any case, the energy level $SM$ will never intersect the zero section of $TM$. 

\medskip
In addition to the Lagrangian $L$, we fix a measure $\mu$ on $M$ with a smooth density, and define an operator $\bL$ by 
$$\bL  u : = \div_\mu({\nabla}u).$$ 

\medskip
In Section \ref{ricsec} we construct a quantity on $TM$, a \emph{weighted Ricci curvature} $\Ric_{\mu,N}$ depending on the Lagrangian $L$, the measure $\mu$ and a parameter $N$, which generalizes the weighted Ricci curvature of a Riemannian or Finslerian manifold (see e.g. \cite{BGL,Sturm2,Vil}). The construction is based on the formalism of semisprays and nonlinear connections \cite{Gr,FL,BM,KMS}; the resulting notion of weighted Ricci curvature coincides with the one in \cite{Oh14} except for some extra terms which account for the volume distortion of the Euler-Lagrange flow in the tangential direction, see the discussion following Theorem \ref{mainthm}.

\medskip
Let us also recall some concepts from optimal transport theory (some of our definitions are a bit different from the standard ones). For two probability measures $\mu_0,\mu_1$ on $M$, set
$$\CC(\mu_0,\mu_1) := \inf_\kappa\int_{M\times M}\cc(x_0,x_1) \, d\kappa(x_0,x_1),$$
where the infimum is over couplings $\kappa$ between the measures $\mu_0$ and $\mu_1$, i.e. measures on $M\times M$ whose marginals are $\mu_0$ and $\mu_1$. A coupling attaining this infimum is called an \emph{optimal coupling} between $\mu_0$ and $\mu_1$. Let 
$$SM\times_0[0,\infty) : = \faktor{SM\times[0,\infty)}{\sim}$$
where $\sim$ is the equivalence relation defined by:
$$(v,0)\sim(v',0) \qquad \text{ whenever} \qquad \pi(v) = \pi(v'),$$
i.e. when $v$ and $v'$ share the same basepoint. For $0 \le \lambda\le 1$, define a map
$$\Exp_\lambda : SM\times_0[0,\infty)\to M, \qquad \Exp_\lambda(v,\ell) = \pi(\Phi^L_{\lambda\ell}(v))$$
(recall that $\Phi^L$ is the Euler-Lagrange flow). Thus $\Exp_\lambda(v,\ell)=\gamma(\lambda\ell)$, where $\gamma:[0,\ell]\to M$ is a solution to the Euler-Lagrange equation satisfying $\dot\gamma(0) = v$.

\medskip
A \emph{transport plan} is a Borel probability measure on $SM\times_0[0,\infty)$. A family $\{\mu_\lambda\}_{0 \le \lambda \le 1}$ of Borel probability measures on $M$ will be called a \emph{displacement interpolation} between $\mu_0$ and $\mu_1$ if:
\begin{itemize}
    \item There exists a transport plan $\Pi$ such that
    $$\mu_\lambda = (\Exp_\lambda)_*\Pi, \qquad \lambda \in [0,1].$$
    Informally, this means that $\mu_\lambda$ is the law of $\gamma(\lambda\ell)$, where $\gamma:[0,\ell]\to M$ is a random extremal such that the pair $(\dot\gamma(0),\ell)$ is distributed according to $\Pi$.
    \item For every $0 \le \lambda \le \lambda' \le 1$, the measure 
    $$\kappa_{\lambda,\lambda'}: = (\Exp_\lambda\times\Exp_{\lambda'})_*\Pi$$
    on $M\times M$ is an optimal coupling between $\mu_\lambda$ and $\mu_{\lambda'}$. 
\end{itemize} 

\medskip
For an absolutely-continuous probability measure $\mu_0 = f_0\mu$ on $M$ and for $N \in(1,\infty)$ we set
$$\rS_N[\mu_0|\mu] : = -\int_Mf_0^{-1/N}\,d\mu_0 = -\int_Mf_0^{1-1/N}\,d\mu.
$$
We also set
$$\rS_\infty[\mu_0|\mu] :=  
%\lim_{N\to\infty}N(1+\rS_N[\mu_0|\mu]) = 
\int_M\log f_0\,d\mu_0 = \int_Mf_0\log f_0\,d\mu.
$$
We can now state our main result. Denote by $\cP_1(L)$ the collection of absolutely continuous probability measures on $M$ with finite first moment with respect to the cost $\cc$, see Section \ref{MKsec}.

\begin{Theorem}\label{mainthm}
    Let $M$ be a smooth manifold and let $L:TM \to \RR$ be a Lagrangian satisfying assumptions (I)-(III) from Section \ref{assumpsec} which is smooth away from the zero section. Let $\mu$ be a measure on $M$ with a smooth density, and let ${K} \in \RR$ and $N \in [n,\infty]$. Then the following conditions are equivalent:
    \begin{enumerate}[(i)]
        \item $\Ric_{\mu,N} \ge {K}$ on $SM$.
        \item Every local solution $u$ to the Hamilton-Jacobi equation $H(du) = 0$ satisfies
        \begin{equation}\label{mainBochnereq}(d\bL u)(\nabla u) + \frac{(\bL u)^2}{N-1} + {K} \le 0.\end{equation}
        \item For every $\mu_0 = f_0\mu,\mu_1 = f_1\mu \in \cP_1(L)$ there exists a displacement interpolation $\mu_\lambda = (\Exp_\lambda)_*\Pi$ between $\mu_0$ and $\mu_1$ such that for every $\lambda \in [0,1]$,
        $$\rS_{N}[\mu_\lambda|\mu] \le 
        \begin{cases}
            \int\left[\tau_{1-\lambda}^{{K},{N}}\cdot \left(-f_0^{-1/{N}}\circ\Exp_0\right) + \tau_{\lambda}^{{K},{N}}\cdot \left(-f_1^{-1/{N}}\circ\Exp_1\right)\right]d\Pi & {N} < \infty\\\noalign{\vskip9pt}
            (1-\lambda)\cdot\rS_\infty[\mu_0|\mu] + \lambda\cdot\rS_\infty[\mu_1|\mu] - \frac{K}{2}\cdot\lambda\cdot(1-\lambda)\cdot \int\ell^2 d\Pi & {N} =\infty,
        \end{cases}$$
        where the coefficients $\tau_t^{{K},N}:SM\times_0[0,\infty)\to[0,\infty]$ are defined in Section \ref{convsec}, and $\ell : SM\times_0[0,\infty)\to [0,\infty)$ denotes the second variable.
    \end{enumerate}
\end{Theorem}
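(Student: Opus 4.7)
The plan is to establish the three conditions are equivalent via a cycle, handling $(i)\Leftrightarrow(ii)$ as a pointwise computation on $TM$ and passing between $(ii)$ and $(iii)$ by a Lagrangian version of Klartag's needle decomposition. For $(i)\Leftrightarrow(ii)$, I would use the fact that a local solution $u$ of $H(du)=0$ has gradient $\nabla u=\cL du$ taking values in $SM$, and whose integral curves project to solutions of the Euler-Lagrange equation. Differentiating $\bL u=\div_\mu\nabla u$ along these curves produces, via the semispray/nonlinear-connection formalism developed in Section \ref{ricsec}, a Bochner-Lichnerowicz identity expressing $(d\bL u)(\nabla u)$ as the negative sum of an $N$-weighted Ricci term and the squared norm of a suitable Hessian of $u$. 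Applying Cauchy-Schwarz to isolate the transverse trace from the transverse trace-free part of this Hessian accounts for the factor $1/(N-1)$ and yields $(i)\Rightarrow(ii)$. Conversely, for any $v_0\in SM$ one can construct a local solution $u$ of $H(du)=0$ with $\nabla u(\pi(v_0))=v_0$ whose Hessian at $\pi(v_0)$ is (up to a controllable perturbation) a pure transverse multiple of the ``metric'' arising from $\partial^2_{vv}L$, so that plugging this test function into $(ii)$ forces the pointwise bound $\Ric_{\mu,N}(v_0)\ge K$.

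The crux of the theorem is the implication $(ii)\Rightarrow(iii)$, which I would carry out by a needle decomposition adapted to the Lagrangian cost. Given $\mu_0,\mu_1\in\cP_1(L)$, Kantorovich duality for the cost $\cc$ produces a $\cc$-concave potential $u$ that is a viscosity solution of $H(du)\le 0$, with $H(du)=0$ along the minimizing extremals carrying the optimal transport. These transport rays partition, up to a $\mu$-null set of branch points and endpoints, the region on which transport occurs, and disintegrating $\mu$ along this foliation yields a decomposition $\mu=\int\mu_I\, dm(I)$ into one-dimensional needle measures. On each needle $u$ is smooth, so the Bochner inequality \eqref{mainBochnereq} reduces to an ODE inequality for the logarithmic derivative of the needle density, which is precisely the one-dimensional $\CD(K,N)$ condition. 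Because the optimal plan decomposes fiber-wise along the needles (by cyclical monotonicity relative to $\cc$), the global displacement convexity in $(iii)$ is obtained by integrating the classical one-dimensional convexity of $\rS_N$ against $dm(I)$, with the coefficients $\tau^{K,N}_t$ arising from the explicit comparison ODEs for solutions of the corresponding Riccati inequality.

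For $(iii)\Rightarrow(i)$ I would localize at a chosen $v_0\in SM$ with basepoint $x_0$. Taking a small hypersurface $\Sigma$ through $x_0$ transverse to $v_0$ and flowing it by $\Phi^L$ up to short times of order $\pm\eps$, I would construct measures $\mu_0^\eps,\mu_1^\eps$ supported on thin tubes around the two extremal segments immediately before and after $x_0$ along the trajectory of $v_0$, so that the optimal transport between them is driven by $\Phi^L$ with typical needle length of order $\eps$. Taylor-expanding the inequality in $(iii)$ as $\eps\to 0$ and comparing the $\eps^2$ terms in $\tau^{K,N}_t$ -- whose expansion encodes the Jacobi equation along the extremal -- against the second-order variation of $\rS_N$ isolates the pointwise lower bound $\Ric_{\mu,N}(v_0)\ge K$.

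I expect the main obstacle to be the needle decomposition step. In the metric setting the partition into transport rays is controlled by Lipschitz regularity of Kantorovich potentials and by uniqueness of minimizing geodesics on the set where they exist, both of which can fail for a Lagrangian cost: $\cc$ is asymmetric, potentials need only be \emph{dominated} in the Mather/Fathi sense rather than Lipschitz with respect to any a priori metric, minimizing extremals can bifurcate, and the transport rays live in a foliation whose tangent structure is encoded in the Euler-Lagrange flow $\Phi^L$ on $SM$ rather than by unit-speed geodesics. Establishing Borel measurability of this foliation, proving that the endpoint and branch sets carry zero $\mu$-mass, and ensuring that the disintegrated needle densities are regular enough to support the one-dimensional Bochner inequality in a useful distributional form will all require a careful analysis of $\Phi^L$ on $SM$ adapted from the construction of $\Ric_{\mu,N}$ in Section \ref{ricsec}, together with a weak KAM style framework in the spirit of \cite{BB,FF,Vil}.
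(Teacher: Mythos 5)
Your three-part strategy mirrors the paper's: the Bochner--Weitzenb\"ock identity plus Cauchy--Schwarz on the transverse trace-free Hessian for $(i)\Leftrightarrow(ii)$ (Proposition~\ref{Weightedbochnerprop}, Lemma~\ref{equalitylemma}), a Lagrangian needle decomposition for $(ii)\Rightarrow(iii)$ (Theorem~\ref{needlethm2}, Proposition~\ref{interprop}, Theorem~\ref{DCthm}), and a localization argument for the remaining implication. You also correctly flag the needle decomposition as the technical heart (Sections~\ref{domsec}--\ref{transportsec}) and list the right difficulties: asymmetric cost, dominated (rather than Lipschitz) potentials, branching extremals, measurability of the ray foliation, nullity of endpoint sets, and regularity of needle densities.

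One point in your closing step deserves more care. Condition $(iii)$ asserts only that \emph{some} displacement interpolation obeys the entropy bound; to derive $(iii)\Rightarrow(i)$ you must show the bound fails for \emph{every} candidate $\Pi$, not merely for the one naturally transported by $\Phi^L$ along $v_0$. The paper sidesteps this by proving the contrapositive $\lnot(ii)\Rightarrow\lnot(iii)$: it extends a local HJ solution $u$ that violates \eqref{mainBochnereq} to a global dominated function, foliates a coordinate slab where $\nabla u=\partial_{x^n}$ by $u$'s calibrated lines, invokes the $1$-dimensional converse (Lemma~\ref{DCthm1D}(iii), using two disjoint intervals per line), and, crucially, observes that the resulting $\mu_0,\mu_1$ admit a \emph{unique} displacement interpolation because on each needle the two measures are supported on disjoint intervals and the monotone map is the only $\cc$-cyclically-monotone one. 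Your thin-tube construction around $v_0$ would need an analogous uniqueness argument before the $\eps^2$ Taylor expansion of $\tau^{K,N}_t$ can be pitted against the second variation of $\rS_N$; without it, the expansion constrains only one particular plan. That is the one gap; otherwise the proposal agrees with the paper's route.
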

\begin{Corollary}
    Let $N \in [n,\infty]$. Then the following conditions are equivalent:
    \begin{enumerate}[(i)]
        \item $\Ric_{\mu,N} \ge 0$ on $SM$.
        \item For every $\mu_0,\mu_1\in\cP_1(L)$ there exists a displacement interpolation $\{\mu_\lambda\}_{0 \le \lambda \le 1}$ between $\mu_0$ and $\mu_1$ such that the function $$\lambda\mapsto\rS_{N}[\mu_\lambda|\mu]$$ is convex.
    \end{enumerate}
\end{Corollary}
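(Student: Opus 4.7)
The plan is to simply specialize Theorem \ref{mainthm} to $K=0$ and unpack the distortion coefficients, then use the stability of displacement interpolations under restriction to sub-intervals to upgrade an endpoint inequality to genuine convexity.

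First, I would recall (or verify from the definition given in Section \ref{convsec}) the standard fact that $\tau_t^{0,N}(v,\ell)\equiv t$ for every $t\in[0,1]$, every $N\in[n,\infty)$ and every $(v,\ell)\in SM\times_0[0,\infty)$, since the $K=0$ distortion coefficients degenerate to the linear interpolation coefficients. Plugging this into condition (iii) of Theorem \ref{mainthm} with $K=0$, and using that $(\Exp_0)_*\Pi=\mu_0$ and $(\Exp_1)_*\Pi=\mu_1$, the right-hand side collapses to
$$(1-\lambda)\int(-f_0^{-1/N})\,d\mu_0+\lambda\int(-f_1^{-1/N})\,d\mu_1=(1-\lambda)\rS_N[\mu_0|\mu]+\lambda\rS_N[\mu_1|\mu].$$
For $N=\infty$ the $K=0$ inequality in Theorem \ref{mainthm}(iii) already has this form directly.

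Next I would upgrade the endpoint inequality to convexity on the whole interval. For $0\le\lambda_0<\lambda_1\le 1$, the family $\{\mu_{\lambda_0+s(\lambda_1-\lambda_0)}\}_{0\le s\le 1}$ is again a displacement interpolation between $\mu_{\lambda_0}$ and $\mu_{\lambda_1}$: one builds its transport plan as the pushforward of $\Pi$ under the map $(v,\ell)\mapsto(\Phi^L_{\lambda_0\ell}(v),(\lambda_1-\lambda_0)\ell)$, which lands in $SM\times_0[0,\infty)$ because $\Phi^L$ preserves $SM$, and one checks that this reproduces $\mu_{\lambda_0+s(\lambda_1-\lambda_0)}$ via $\Exp_s$ and that its two-point projections remain optimal couplings (inherited from $\Pi$). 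Applying the endpoint inequality just derived to this restricted interpolation yields
$$\rS_N[\mu_{\lambda_0+\lambda(\lambda_1-\lambda_0)}|\mu]\le(1-\lambda)\rS_N[\mu_{\lambda_0}|\mu]+\lambda\rS_N[\mu_{\lambda_1}|\mu],$$
which is exactly convexity of $\lambda\mapsto\rS_N[\mu_\lambda|\mu]$ on $[0,1]$.

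The converse direction is essentially immediate: if a displacement interpolation exists along which $\lambda\mapsto\rS_N[\mu_\lambda|\mu]$ is convex, then applying convexity with endpoints $0$ and $1$ reproduces precisely condition (iii) of Theorem \ref{mainthm} for $K=0$ (using $\tau_t^{0,N}=t$ again for the $N<\infty$ case, and directly for $N=\infty$), so by Theorem \ref{mainthm} we conclude $\Ric_{\mu,N}\ge 0$ on $SM$.

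The only substantive point requiring care is the restriction-of-displacement-interpolations step, in particular making sure the intermediate measures $\mu_{\lambda_0},\mu_{\lambda_1}$ lie in $\cP_1(L)$ so that Theorem \ref{mainthm} applies to them; this should follow from the absolute continuity and finite-cost properties of displacement interpolations already established in the preceding sections. Everything else is a direct specialization of the main theorem.
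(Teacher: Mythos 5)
Your specialization of Theorem \ref{mainthm} to $K=0$ is correct ($\tau_t^{0,N}\equiv t$, and the push-forward identities $(\Exp_0)_*\Pi=\mu_0$, $(\Exp_1)_*\Pi=\mu_1$ reduce the right-hand side of (iii) to the linear interpolation of the endpoint entropies), and your construction of the restricted transport plan via $(v,\ell)\mapsto(\Phi^L_{\lambda_0\ell}(v),(\lambda_1-\lambda_0)\ell)$, together with the verification that its two-point projections are optimal, is also fine. The converse direction ((ii) $\Rightarrow$ (i)) is likewise correct.

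However, there is a genuine gap in the step where you ``apply the endpoint inequality just derived to this restricted interpolation.'' Theorem \ref{mainthm}(iii) asserts only that \emph{there exists} a displacement interpolation between $\mu_{\lambda_0}$ and $\mu_{\lambda_1}$ satisfying the endpoint inequality; it does not say that \emph{every} displacement interpolation between them satisfies it. Displacement interpolations for the cost $\cc$ are not unique (this is the $L^1$-type phenomenon discussed at the start of Section \ref{interpsec}), so the restricted family $\{\mu_{\lambda_0+s(\lambda_1-\lambda_0)}\}_s$ is a priori just \emph{some} displacement interpolation between the intermediate measures, and you cannot simply invoke the theorem for it. An endpoint estimate $\phi(\lambda)\le(1-\lambda)\phi(0)+\lambda\phi(1)$ for all $\lambda$ does not by itself imply convexity, so this gap is substantive.

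To close it, you need to show that the displacement interpolation supplied by the proof of Theorem \ref{mainthm} (the distinguished interpolation of Proposition \ref{interprop}) is \emph{restriction-closed}: its restriction to $[\lambda_0,\lambda_1]$ is again the distinguished interpolation between $\mu_{\lambda_0}$ and $\mu_{\lambda_1}$. The key facts are (a) the same dominated function $u$ remains a Kantorovich potential for the intermediate pair, since by the calibration property of the needles and Theorem \ref{KDthm} the coupling $(\Exp_{\lambda_0}\times\Exp_{\lambda_1})_*\Pi$ is concentrated on the set where $u(x_1)-u(x_0)=\cc(x_0,x_1)$; and (b) on each needle the restriction of the monotone displacement interpolation is again the monotone displacement interpolation between the intermediate one-dimensional marginals (a short computation with the maps $\rT_\lambda$ of Lemma \ref{1Dintlemma}). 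Alternatively, and perhaps more directly, one can bypass the restriction of the global interpolation entirely: apply the 1D restriction property needle by needle to conclude that $\lambda\mapsto\rS_N[m_{\alpha,\lambda}|m_\alpha]$ is convex on each needle, and then use the disintegration identity \eqref{HNmulambdaeq}, $\rS_N[\mu_\lambda|\mu]=\int_{\sA}\rS_N[m_{\alpha,\lambda}|m_\alpha]\,d\nu(\alpha)$, together with the fact that convexity is preserved under mixtures. Either route supplies the missing justification; as written, your proof assumes it.
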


If any of the equivalent conditions in Theorem \ref{mainthm} is met then we say that the pair $(\mu,L)$ satisfies $\CD(K,N)$.

\begin{remark*}\normalfont
    The equivalence (ii)$\iff$(iii) in Theorem \ref{mainthm} requires only $C^2$ regularity of $L$ away from the zero section, see Theorem \ref{DCthm}. The equivalence (i)$\iff$(ii) holds for every $N \in (-\infty,\infty]\setminus[1,n)$, see Corollary \ref{equivalencecor}. The equivalence (ii)$\iff$(iii) can be extended to the range $N \in (-\infty,0]$ for an appropriate definition of $\rS_N$, see e.g. \cite[Theorem 18.6]{OhBook} and references therein; we do not include this regime in the present work.
\end{remark*}

\medskip
Theorem \ref{mainthm} extends to the Lagrangian setting the equivalence between lower bounds on weighted Ricci curvature and convexity of entropy along displacement interpolations in the space of probability measures \cite{Mc97,LV,Sturm2,Vil}. However, in contrast to the Lott-Sturm-Villani theory, here the characterization is in terms of $L^1$ optimal transport; see \cite{ACMCS,CGS,CMil} for analogous results in the metric-measure setting. A unique feature of the $L^1$ optimal transport problem is that its solution gives rise to a disintegration of the manifold into disjoint trajectories along which mass travels. In Section \ref{methodssec} we elaborate on how this fact can be put to use.

\medskip
A weaker form of the Bochner-type inequality \eqref{mainBochnereq}, with $N-1$ replaced by $N$, was proved in \cite{Lee}. In order to obtain the sharp inequality, it is necessary to separate the tangential and normal components of the Hessian of a solution to the Hamilton-Jacobi equation (this is the same as separating  the tangential and normal components of a Jacobi field; here `tangential' and `normal' is with respect to the gradient $\nabla u$). In Riemannian geometry, where the Hamilton-Jacobi equation reads $|du|_g = 1$, the Hessian of a solution has no tangential component (when $u$ is the distance function from a point, this is the content of Gauss' Lemma). In the Lagrangian setting this is no longer true, and it is therefore necessary to incorporate into the definition of the weighted Ricci curvature a term which measures the non-homogeneity of the Euler-Lagrange flow (called the \emph{deviation} in  \cite{Gr}), and which vanishes for Finsler Lagrangians, see Section \ref{spraysec}. 

\medskip
One significant consequence of the curvature-dimension condition for metric measure spaces is the Brunn-Minkowski inequality \cite{CMS,Sturm2,CM}, which gives a lower bound on the measure of a set formed by midpoints of geodesic segments joining a given pair of sets. We prove the following analogous result:

\begin{Theorem}[A Brunn-Minkowski inequality for $\CD(K,N)$ Lagrangians]\label{BMthm0}
    Suppose that the pair $(\mu,L)$ satisfies $\CD_{}({K},N)$ for some ${K} \in \RR$ and some $N \in [n,\infty]$. Then for every pair $A_0,A_1\subseteq M$ of Borel sets and every $0 < \lambda < 1$,
        $$\mu(A_\lambda) \ge \bM_{\frac{1}{N}}\left(\beta_{1-\lambda}^{{K},N}(A_0,A_1)\cdot\mu(A_0),\beta_{\lambda}^{{K},N}(A_0,A_1)\cdot\mu(A_1);\lambda\right),$$
        where
        $$A_\lambda : = \{\gamma(\lambda \ell) \, \mid \, \gamma:[0,\ell] \to M \, \, \text{ is a minimizing extremal, $\quad \gamma(0) \in A_0, \, \,\gamma(\ell) \in A_1$}\},$$
        and the generalized means $\bM$ and distortion coefficients $\beta_t^{{K},N}(A_0,A_1)$ are defined in Section \ref{BMsec}. In particular, if the pair $(\mu,L)$ satisfies $\CD(0,N)$ and if $\mu(A_0)\mu(A_1) > 0$ then
        \begin{equation}\label{BMsimpleeq}
            \mu(A_\lambda) \ge 
            \begin{cases}
                \left((1-\lambda)\cdot\mu(A_0)^{1/N} + \lambda \cdot \mu(A_1)^{1/N}\right)^N & N < \infty\\
                \mu(A_0)^{1-\lambda}\cdot\mu(A_1)^\lambda & N = \infty.
            \end{cases}
        \end{equation}
\end{Theorem}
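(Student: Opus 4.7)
The plan is to deduce the Brunn-Minkowski inequality from condition (iii) of Theorem \ref{mainthm} applied to the uniform probability measures supported on $A_0$ and $A_1$, following the standard pattern of \cite{Sturm2, CMS} adapted to the Lagrangian setting. I would first reduce, by inner regularity and a monotone exhaustion argument, to the case of bounded sets with $0 < \mu(A_0), \mu(A_1) < \infty$, and after a further truncation if necessary, to the case where
$$\mu_0 := \frac{\mathbf{1}_{A_0}}{\mu(A_0)}\,\mu, \qquad \mu_1 := \frac{\mathbf{1}_{A_1}}{\mu(A_1)}\,\mu$$
both lie in $\cP_1(L)$. Theorem \ref{mainthm}(iii) then produces a displacement interpolation $\mu_\lambda = (\Exp_\lambda)_*\Pi$ satisfying the stated entropy bound. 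Since $\Pi$ is concentrated on pairs $(v,\ell)$ corresponding to minimizing extremals $\gamma:[0,\ell]\to M$ with $\gamma(0) \in A_0$ and $\gamma(\ell)\in A_1$, the very definition of $A_\lambda$ gives $\supp \mu_\lambda \subseteq A_\lambda$.

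The core of the argument is a two-sided estimate of the entropy. Writing $\mu_\lambda = f_\lambda\mu$ with $f_\lambda$ supported in $A_\lambda$, Jensen's inequality applied to the concave function $x \mapsto x^{1-1/N}$ against the probability measure $\mu(A_\lambda)^{-1}\mathbf{1}_{A_\lambda}\mu$ yields
$$\rS_N[\mu_\lambda|\mu] \geq -\mu(A_\lambda)^{1/N} \qquad (N<\infty),$$
and analogously $\rS_\infty[\mu_\lambda|\mu] \geq -\log\mu(A_\lambda)$. Meanwhile, the upper bound from Theorem \ref{mainthm}(iii) simplifies since $f_0\circ\Exp_0 \equiv \mu(A_0)^{-1}$ and $f_1\circ\Exp_1 \equiv \mu(A_1)^{-1}$ throughout $\supp \Pi$. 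Combining these for $N < \infty$ gives
$$\mu(A_\lambda)^{1/N} \geq \int \left[\tau_{1-\lambda}^{K,N}(\ell)\,\mu(A_0)^{1/N} + \tau_\lambda^{K,N}(\ell)\,\mu(A_1)^{1/N}\right] d\Pi(v,\ell),$$
and for $N=\infty$,
$$\log\mu(A_\lambda) \geq (1-\lambda)\log\mu(A_0) + \lambda\log\mu(A_1) + \frac{K}{2}\lambda(1-\lambda)\int\ell^2\,d\Pi.$$

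The final step is to recognize these integrated inequalities as exactly the stated ones. Unpacking the definitions of the generalized mean $\bM_{1/N}$ and the distortion coefficient $\beta_t^{K,N}(A_0, A_1)$ from Section \ref{BMsec} --- the latter being essentially a $\Pi$-averaged (or essential-infimum) variant of the pointwise $\beta_t^{K,N}(\ell)$, related to $\tau_t^{K,N}$ in the standard way --- the claimed form follows. The simplified consequence \eqref{BMsimpleeq} is immediate, since $\CD(0,N)$ gives $\tau_t^{0,N}(\ell) = t$ and $\beta_t^{0,N} \equiv 1$, reducing the integral to the $\lambda$-weighted power (or log-geometric) mean of $\mu(A_0)$ and $\mu(A_1)$ using $\int d\Pi = 1$.

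The main obstacle is technical rather than conceptual: arranging the preparatory truncation so that $\mu_0, \mu_1 \in \cP_1(L)$ and then recovering the full claim by passage to the limit, together with precisely matching the integrated $\tau$-estimate against the formal definitions of $\bM$ and $\beta$ in Section \ref{BMsec}. No new geometric content beyond Theorem \ref{mainthm} is required.
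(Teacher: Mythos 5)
Your approach is correct in outline, and it is the one the paper itself flags as feasible (``Theorem \ref{BMthm} can be obtained from Theorem \ref{DCthm} by taking $\mu_i = \mu\vert_{A_i}$'') but does not actually carry out. The paper instead proves the stronger Borell-Brascamp-Lieb inequality (Theorem \ref{BBLthm}) directly from the needle decomposition (Theorem \ref{needlethm}) together with the one-dimensional directed BBL lemma from \cite{AK}, and then obtains Brunn-Minkowski as the special case $q=\infty$, $f_0 = \beta_{1-\lambda}^{K,N}(A_0,A_1)\chi_{A_0}$, $f_1 = \beta_\lambda^{K,N}(A_0,A_1)\chi_{A_1}$, $f_\lambda = \chi_{A_\lambda}$. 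That route is cleaner for the final deduction --- it avoids both the Jensen step and the inner-regularity truncation you need to ensure $\mu_i \in \cP_1(L)$ --- and yields the functional inequality as a bonus. Your route, via displacement convexity of entropy plus Jensen's inequality, is shorter if one already has Theorem \ref{mainthm}(iii) in hand, which is the classical Sturm/CMS pattern and is perfectly valid here.

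Two small points where your write-up should be tightened. First, $\beta_t^{K,N}(A_0,A_1)$ is an \emph{infimum} over admissible $\ell$, not a $\Pi$-average; this matters because the step that closes the argument is
$$\int \tau_t^{K,N}(\ell)\,d\Pi(v,\ell) = \int t\cdot\beta_t^{K,N}(\ell)^{1/N}\,d\Pi \;\geq\; t\cdot\beta_t^{K,N}(A_0,A_1)^{1/N},$$
using the identity $\tau_t^{K,N}(\ell) = t\cdot\beta_t^{K,N}(\ell)^{1/N}$ (which follows directly from Definition \ref{distcoeffdef}) and the fact that $\Pi$ is concentrated on trajectories joining $A_0$ to $A_1$, so $\beta_t^{K,N}(\ell)\geq\beta_t^{K,N}(A_0,A_1)$ on $\supp\Pi$. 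An average would not give this bound. Second, in the $N=\infty$ case one should note the elementary identity $(1-\lambda)\log\beta_{1-\lambda}^{K,\infty}(\ell)+\lambda\log\beta_\lambda^{K,\infty}(\ell)=\tfrac{K}{2}\lambda(1-\lambda)\ell^2$, after which the sign analysis on $K$ (infimum of $\beta$ achieved at $\ell_{\min}$ for $K>0$, at $\ell_{\max}$ for $K<0$) closes the argument. Finally, you should briefly justify that $\Pi$ is supported on initial data of \emph{minimizing} extremals joining $A_0$ to $A_1$; this follows either from the explicit construction in Proposition \ref{interprop} or from optimality of $\kappa_{\lambda,\lambda'}$ at all intermediate times combined with Lemma \ref{triangleineqlemma}.
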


In the Riemannian case, the Brunn-Minkowski inequality is known to imply the curvature-dimension condition \cite{MPR}. The extent to which this reverse implication holds for arbitrary Lagrangians will be the subject of a separate work.

\medskip
Theorem \ref{BMthm0} follows from a more general Borell-Brascamp-Lieb-type inequality, Theorem \ref{BBLthm}. In addition to the Brunn-Minkowski and Borell-Brascamp-Lieb inequalities, we show that the curvature-dimension condition implies other familiar results from Riemannian geometry, which were extended to Finsler manifolds and metric measure spaces in \cite{CMS,Sturm2,LV,Oh09,Oh18,CM17}: a Bonnet-Myers theorem (Theorem \ref{Bonnetthm}), a Bishop-Gromov inequality (Theorem \ref{BGthm}), isoperimetric inequalities by comparison to one-dimensional model spaces (Theorem \ref{isoperimthm}), and Functional inequalities (Theorem \ref{pointhm} and Theorem \ref{lsthm}).
% , a comparison theorem for the Laplacian of the cost function (Theorem \ref{laplacianthm}), and a Bonnet-Myers theorem (Theorem \ref{Bonnetthm}). 
 
\medskip
We discuss two types of examples of Lagrangians on Riemannian manifolds: \emph{Classical} Lagrangians, which are the sum of kinetic energy, potential energy and a magnetic vector potential, and \emph{isotropic} Lagrangians, which are spherically-symmetric on each fiber. As a particular case of the former, we obtain from Theorem \ref{BMthm0} a higher-dimensional analogue of the horocyclic Brunn-Minkowski inequality proved in \cite{AK}:

\begin{Theorem}[Horocyclic Brunn-Minkowski inequality in complex hyperbolic space]\label{horoBMthm}
    Let $\mathbb{C}\HH^d$ denote the complex hyperbolic space of complex dimension $d$. Let $A_0,A_1 \subseteq \mathbb{C}\HH^d$ be Borel sets of positive measure and let $0\le\lambda\le 1$. Denote by $A_\lambda$ the set of points of the form $\gamma(\lambda \ell)$, where $\gamma:[0,\ell]\to \mathbb{C}\HH^d$ is a unit-speed horocycle contained in a single complex geodesic and satisfying $\gamma(0) \in A_0$ and $\gamma(\ell) \in A_1$. Then
    $$\Vol(A_\lambda)^{1/n} \ge (1-\lambda)\cdot\Vol(A_0)^{1/n} + \lambda\cdot\Vol(A_1)^{1/n},$$
    where $\Vol$ denotes the hyperbolic 
    volume measure and $n = 2d$.
\end{Theorem}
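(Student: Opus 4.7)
The plan is to realize the unit-speed horocyclic flow on $\mathbb{C}\HH^d$ as the Euler--Lagrange flow on the zero-energy level of a suitable magnetic Lagrangian, verify that $(\Vol, L)$ satisfies $\CD(0, n)$ with $n = 2d$, and then deduce the inequality from Theorem \ref{BMthm0}.

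Let $\omega$ denote the Kähler form of $\mathbb{C}\HH^d$ (normalized so that the holomorphic sectional curvature equals $-4$), let $J$ be the complex structure, and let $\alpha$ be a global primitive of $2\omega$ (which exists since $\mathbb{C}\HH^d$ is contractible). Consider the classical magnetic Lagrangian
$$L(v) = \tfrac{1}{2}|v|_g^2 + \alpha(v) + \tfrac{1}{2}, \qquad v \in TM.$$
A direct Legendre transform yields energy $E(v) = \tfrac{1}{2}(|v|_g^2 - 1)$, so $SM$ is the unit sphere bundle, and the Euler--Lagrange equation on $SM$ reads $\nabla_{\dot\gamma}\dot\gamma = 2J\dot\gamma$. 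Since $\nabla J = 0$ and the real 2-plane spanned by $\dot\gamma$ and $J\dot\gamma$ is $J$-invariant, it remains tangent to the totally-geodesic complex line determined by the initial data; hence the trajectory stays inside this complex line. Since the complex line is isometric to the hyperbolic plane of curvature $-4$, in which a unit-speed curve of geodesic curvature $2$ is exactly a horocycle, the Euler--Lagrange trajectories on $SM$ are precisely the unit-speed horocycles contained in single complex geodesics, and the minimizing extremals for the cost $\cc$ are minimizing arcs of such.

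Next, apply the formula for the weighted Ricci curvature $\Ric_{\Vol, N}$ of a classical Lagrangian (developed earlier in the paper for magnetic-potential Lagrangians) to this specific $L$. Given $v \in SM$, decompose the orthogonal complement of $v$ in $T_{\pi v}\mathbb{C}\HH^d$ into the line $\mathbb{R}Jv$, which lies inside the complex line, and the $(2d-2)$-dimensional subspace orthogonal to the complex line. Using $\nabla J = 0$, the identity $\Ric_g = -2(d+1)g$, and the rank-one structure of the Kähler curvature tensor, one checks that the contribution of the magnetic force $2Jv$ together with the tangential deviation correction (present in our framework but absent in the Finsler setting, as discussed after Theorem \ref{mainthm}) precisely cancels the negative Riemannian Ricci contribution, yielding $\Ric_{\Vol, n}(v) = 0$ on $SM$. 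Thus $(\Vol, L)$ satisfies $\CD(0, 2d)$. The hard step here is verifying that this cancellation is exact: the balancing depends critically on the magnetic strength being $2$ and the holomorphic sectional curvature being $-4$, and it is precisely this matching that forces the sharp Brunn--Minkowski exponent $N = n$.

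Finally, applying Theorem \ref{BMthm0} with $K = 0$ and $N = n$ to the sets $A_0, A_1$ yields
$$\Vol(A_\lambda^{\mathrm{min}})^{1/n} \ge (1-\lambda)\Vol(A_0)^{1/n} + \lambda\Vol(A_1)^{1/n},$$
where $A_\lambda^{\mathrm{min}}$ is the set of points $\gamma(\lambda\ell)$ along minimizing Euler--Lagrange extremals $\gamma:[0,\ell]\to\mathbb{C}\HH^d$ with $\gamma(0)\in A_0$ and $\gamma(\ell)\in A_1$. By the identification from the first step, every such minimizing extremal is a unit-speed horocycle contained in a complex geodesic, so $A_\lambda^{\mathrm{min}} \subseteq A_\lambda$ and $\Vol(A_\lambda) \ge \Vol(A_\lambda^{\mathrm{min}})$, completing the proof.
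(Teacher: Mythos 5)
Your approach is the same as the paper's: realize the horocycle flow as the Euler--Lagrange flow of a magnetic Lagrangian $L = \tfrac12 g + \tfrac12 - \eta$ with $d\eta$ proportional to the K\"ahler form, show $\Ric_{\Vol_g,n}\equiv 0$ on $SM$ (the paper does this via Corollary \ref{Kahlercor}), and invoke Theorem \ref{BMthm0} with $K=0$, $N=n$. Your normalization (holomorphic sectional curvature $-4$, magnetic two-form $2\omega$) differs from the paper's ($-1$ and $\omega$), but the cancellation in the weighted Ricci curvature works out the same way after rescaling. The observation that the minimizing-extremal set $A_\lambda^{\mathrm{min}}$ is contained in the $A_\lambda$ of the theorem statement is correct and matches the paper's implicit reasoning.

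However, there is a genuine gap: you do not verify assumptions (I)--(III) of Section \ref{assumpsec}, and in particular you do not verify \emph{supercriticality}. This is not a formality. Your Lagrangian has Ma{\~n}{\'e} critical value $c(L)=0$ (as the paper notes following Definition \ref{SCdef}), so it sits exactly on the critical threshold, and supercriticality must be established by a direct argument rather than deduced from $c(L)<0$. The paper's proof does this in the Siegel model: it exhibits an explicit primitive $\eta$ of the K\"ahler form with $|\eta|_g \equiv 1$, shows that the action $\Len_g[\gamma] - \int_\gamma \eta$ is nonnegative with equality only along integral curves of a specific vector field $V$ whose integral curves are (non-closed) horocycles, and deduces that every closed curve has strictly positive action. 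This same argument simultaneously shows that every oriented horocycle is uniquely minimizing, which is needed for assumption (II). Without these verifications, the needle-decomposition machinery underlying Theorem \ref{BMthm0} does not apply, and the claim that ``minimizing extremals for the cost $\cc$ are minimizing arcs of such'' is unjustified -- it is not even clear a priori that $\cc > -\infty$ or that minimizers exist. You would need to supply this computation (or an equivalent one in your normalization) to complete the proof.
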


We also obtain the following spherical companion to the horocyclic Brunn-Minkowski inequality, where this time the interpolating curves are \emph{contact magnetic geodesics} on odd-dimensional spheres, see Section \ref{magsec} for the definition. Each fiber of the Hopf fibration $\pi_{\mathrm{Hopf}}:S^{2d+1}\to\mathbb{C}\mathbf{P}^d$ is a contact magnetic geodesic.

\begin{Theorem}[Brunn-Minkowski inequality for contact magnetic geodesics on $S^{2d+1}$]\label{contactthm}
    Let $S^{2d+1} = \{z \in \mathbb{C}^{d+1}\, \mid \, |z|=1\}$ denote the unit sphere in $(d+1)$-dimensional complex Euclidean space. Let $A_0,A_1 \subseteq S^{2d+1}$ be Borel sets of positive measure and let $0\le\lambda\le 1$. Fix $0\le s < 1$ and denote by $A_\lambda$ the set of points of the form $\gamma(\lambda \ell)$, where $\gamma:[0,\ell]\to S^{2d+1}$ is a unit-speed contact magnetic geodesic of strength $s$ satisfying $\gamma(0) \in A_0$ and $\gamma(\ell) \in A_1$. Then
    $$\Vol(A_\lambda)^{1/n} \ge (1-\lambda)\cdot\Vol(A_0)^{1/n} + \lambda\cdot\Vol(A_1)^{1/n},$$
    where $\Vol$ denotes the spherical volume measure and $n = 2d+1$.
\end{Theorem}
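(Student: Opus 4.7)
The plan is to deduce Theorem \ref{contactthm} from Theorem \ref{BMthm0} applied to a specific magnetic Lagrangian on $S^{2d+1}$ whose Euler-Lagrange flow on a fixed energy level produces unit-speed contact magnetic geodesics of strength $s$. The desired conclusion is exactly \eqref{BMsimpleeq} in the regime $K=0$, $N=n=2d+1$, so by the equivalence (i)$\iff$(iii) in Theorem \ref{mainthm} it suffices to establish the pointwise curvature bound $\Ric_{\mu,n}\ge 0$ on $SM$, with $\mu=\Vol$ the round volume.

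I would first fix the Lagrangian. Let $\alpha$ be the standard contact $1$-form on $S^{2d+1}\subseteq\mathbb{C}^{d+1}$, namely the restriction of $\tfrac{1}{2}\sum_j(x_j\,dy_j-y_j\,dx_j)$, whose metric dual is the Reeb field tangent to the Hopf fibers. Set
$$L(v)=\tfrac{1}{2}(g(v,v)-1)+s\,\alpha(v),\qquad v\in TS^{2d+1}.$$
A direct Legendre-transform calculation gives energy $E(v)=\tfrac{1}{2}(g(v,v)-1)$, so $SM=E^{-1}(0)$ coincides with the unit tangent bundle, and the Euler-Lagrange equation reduces to $\nabla_{\dot\gamma}\dot\gamma=s\,Y(\dot\gamma)$, where $Y$ is the Lorentz operator of $d\alpha$. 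Solutions on $SM$ are therefore precisely the unit-speed contact magnetic geodesics of strength $s$, and $\Exp_\lambda$ agrees with the magnetic exponential defining the interpolation set $A_\lambda$.

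The main obstacle is the curvature computation. I would apply the explicit formula for $\Ric_{\mu,N}$ of a classical (magnetic) Lagrangian developed earlier in the paper, specialized to $\mu=\Vol$ and $N=n$. The Riemannian Ricci tensor of the round $S^{2d+1}$ equals $(n-1)g$, while $s\,d\alpha$ acts on the contact hyperplane as $2sJ_0$, where $J_0$ is the induced almost complex structure. The Sasakian identities imply that $|Y(v)|^2$ for $v\in SM$ depends on $v$ only through the combination $1-\alpha(v)^2$, and combining this with the \emph{deviation} term from Section \ref{spraysec} accounting for the non-homogeneity of the magnetic flow should yield a nonnegative expression for every $v\in SM$ whenever $0\le s<1$, with equality approached along the Reeb direction (corresponding to the Hopf fiber). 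The delicate point is to verify this at a general unit vector, interpolating between the horizontal directions, where the magnetic effect is maximal, and the Reeb direction, with the sharp weight $N=n$ rather than a larger effective dimension; this is where the separation between tangential and normal Hessian components, emphasized in the discussion following Theorem \ref{mainthm}, is essential. Once $\Ric_{\mu,n}\ge 0$ on $SM$ is established, the inequality of Theorem \ref{contactthm} follows directly from the $\CD(0,n)$ case of \eqref{BMsimpleeq} in Theorem \ref{BMthm0}.
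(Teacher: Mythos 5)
Your strategy is exactly the paper's: model contact magnetic geodesics of strength $s$ as zero-energy extremals of a classical magnetic Lagrangian on $S^{2d+1}$, compute $\Ric_{\Vol_g,n}$ using the explicit formula for classical Lagrangians (Corollary \ref{weightedmagcor}), establish $\Ric_{\Vol_g,n}\ge 0$ on $SM$ for $0\le s<1$, and invoke Theorem \ref{BMthm0} with $K=0$, $N=n$. However, there are concrete slips. First, the Lagrangian is written incorrectly: with $L(v)=\tfrac{1}{2}(g(v,v)-1)+s\alpha(v)$, the associated energy is $E(v)=\tfrac12 g(v,v)+\tfrac12$, which never vanishes, so $SM=E^{-1}(0)$ is empty (the claimed $E=(g-1)/2$ requires the constant $+\tfrac12$, not $-\tfrac12$). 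As written, the Lagrangian also violates supercriticality, since supercriticality forces $L\vert_{\bz}>0$ and here $L(0)=-\tfrac12$. The intended Lagrangian is $L_s=\tfrac{g+1}{2}-s\eta$ (the sign of the magnetic term is cosmetic, but the constant is not).

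Second, the entire weight of the theorem rests on the Ricci estimate, and you assert rather than verify it. With Corollary \ref{weightedmagcor}, one must compute $\div_g\rY=-(4d)\eta$, $|\rY|_g^2=8d$, $|\rY v|^2 = 4|v|^2-4\eta(v)^2$, and $\Lambda_\perp^2$, then combine to get $\Ric_{\Vol_g,n}=2d(s^2+2s\eta(v)+1)+2s^2(1-\eta(v)^2)$ on $\{|v|_g=1\}$; the minimum over $\eta(v)\in[-1,1]$ is $2d(s-1)^2$, attained at the Reeb direction, which is positive for $s<1$. This is a short but indispensable calculation. Finally, you do not address the assumptions (I)–(III): supercriticality fails precisely at $s=1$ (the action degenerates along Reeb trajectories), and conditions (II) and (III) need to be checked — the paper does so by combining supercriticality for $|s|<1$ with compactness of $S^{2d+1}$ and the analysis of \cite{ABM}. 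Filling these three items would make the argument complete.
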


When $s=0$, Theorem \ref{contactthm} follows from the (stronger) spherical Brunn-Minkowski inequality \cite{Cordero,CMS}, since contact magnetic geodesics of strength $0$ are ordinary geodesics (great circles). 

\medskip
Another interesting example is the Lagrangian of the classical \emph{many-body system} consisting of $k$ bodies $x_1,\dots,x_k \in \RR^d$ ($d \ge 3$), with masses $m_i > 0$ and gravitational constant $G > 0$:
$$L = \frac12\sum_{i=1}^km_i|\dot x_i|^2 + \frac12 + \sum_{1 \le i <  j \le k}\frac{Gm_im_j}{|x_i-x_j|}$$
(the constant $1/2$ is added since we always restrict to the energy level $\{E = 0\}$). We prove in Section \ref{magsec} that this Lagrangian satisfies $\CD(0,\infty)$ with respect to the Lebesgue measure on $(\RR^d)^k$; in fact, this remains true if we replace the last term with any positive function $U = U(x_1,\dots,x_k)$ which is superharmonic as a function on $(\RR^d)^k$, see \cite{Ag07,Lee,Oh14} and Remark \ref{shrmk} below.

\subsection{On the proofs}\label{methodssec}
In the proof of the implication (ii)$\implies$(iii) in Theorem \ref{mainthm}, as well as in most of the proofs in Section \ref{appsec}, we use the \emph{needle decomposition technique}, put forth by Klartag \cite{Kl} as a method for proving geometric and functional inequalities on Riemannian manifolds. Roughly speaking, a needle decomposition is a partition of a Riemannian manifold into a disjoint collection of geodesic segments, constructed so as to enable a change of variables which reduces the desired inequality to an analogous inequality on the real line. This strategy, known as \emph{localization}, originated in the works Gromov and Milman \cite{GM}, and Lov{\'a}sz and Simonovits \cite{LS}, and the idea behind it can be traced back to Payne and Weinberger \cite{PW}. Since the work of Klartag, the needle decomposition technique has been extended to metric measure spaces by Cavalletti and Mondino \cite{CM}, to Finsler manifolds by Ohta \cite{Oh18}, and more recently to Lorentzian metric measure spaces by Braun and McCann \cite{BrMc} and Cavalletti and Mondino \cite{CM24}. 

\medskip
% In Euclidean space or on the sphere, needle decompositions can be achieved by successive hyperplane bisections \cite{GM,LS}, while on general Riemannian manifolds, where no such operations are available, 
The solution to the Monge-Kantorovich problem by Evans and Gangbo \cite{EG} and the ensuing works of Caffarelli, Feldman and McCann \cite{CFM} and Feldman and McCann \cite{FeMc}, provided a detailed description of the $L^1$ optimal transport map, which was utilized by Klartag in his construction of needle decompositions on Riemannian manifolds. Of importance to us are also the works of Bernard and Buffoni \cite{BB}, Figalli \cite{Fig} and Fathi and Figalli \cite{FF} on the Monge-Kantorovich problem for general Lagrangian costs, which give a clear idea of how to generalize the needle decomposition procedure to the Lagrangian setting. We adopt some of the terminology used in those works.

\medskip
The following needle decomposition theorem resembles the main result in \cite{Kl}.

\begin{Theorem}[Needle decomposition for $\CD(K,N)$ Lagrangians]\label{needlethm}
    Let $M$ be a smooth manifold and let $L:TM \to \RR$ be a Lagrangian satisfying assumptions (I)-(III) from Section \ref{assumpsec}. Let $\mu$ be a measure on $M$ with a smooth density. Suppose that the pair $(\mu,L)$ satisfies $\CD_{}({K},N)$ for some ${K} \in \RR$ and some $N \in (-\infty,\infty]\setminus[1,n)$. Let $f : M \to \RR$ be a $\mu$-integrable function satisfying $$\int_Mfd\mu = 0.$$
    Assume that for some $x_0 \in M$,
    $$\int_M\left(|\cc(x_0,\cdot)| + |\cc(\cdot,x_0)|\right)fd\mu < \infty.$$
    Then there exists a collection $\{\mu_\alpha\}_{\alpha \in \sA}$ of Borel measures on $M$ and a measure $\nu$ on the set $\sA$ such that the following hold:
    \begin{enumerate}[$(i)$]
        \item For $\nu$-almost every $\alpha \in \sA$, the measure $\mu_\alpha$ is either a Dirac measure, or a measure of the form
        \begin{equation}\label{needleformeq}\mu_\alpha = \left(\gamma_\alpha\right)_*m_\alpha\end{equation}
        where $m_\alpha$ is a measure on an interval $I_\alpha \subseteq \RR$ satisfying $\CD({K},N)$ with respect to the Euclidean metric on $\RR$, and $\gamma_\alpha : I_\alpha \to M$ is a minimizing extremal.
        \item \underline{Disinte}g\underline{ration o}f\underline{ measure}: The measure $\mu$ disintegrates as 
        $$\mu = \int_{\sA}\mu_\alpha d\nu(\alpha).$$
        More precisely, for every Borel function $h : M \to \RR$, the function $\alpha\mapsto\int_Mhd\mu_\alpha$ is $\nu$-measurable and
        \begin{equation}\label{disinteq}\int_Mh\,d\mu = \int_{\sA}\left(\int_Mhd\mu_\alpha\right)d\nu(\alpha).\end{equation}
        \item \underline{Mass Balance}: For $\nu$-almost every $\alpha \in \sA$,
        \begin{equation}\label{MBeq}
            \int_Mfd\mu_\alpha = 0.
        \end{equation}
        Moreover, if $\mu_\alpha$ takes the form \eqref{needleformeq}, then for every $t \in \RR$,
        \begin{equation}\label{detailedmbeq}
            \int_Mfd\mu_{\alpha,t} \ge 0 \qquad \text{ where }\qquad \mu_{\alpha,t} = (\gamma_\alpha)_*\left(m_\alpha\vert_{[t,\infty)}\right).
        \end{equation}
    \end{enumerate}
\end{Theorem}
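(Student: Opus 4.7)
The strategy follows Klartag's needle decomposition scheme adapted to the Lagrangian setting, building on the $L^1$ optimal transport theory for general Lagrangian costs developed in \cite{BB,Fig,FF}. The plan is to construct needles from the transport rays of an $L^1$-optimal transport problem between the positive and negative parts of $f\mu$, disintegrate $\mu$ along these rays, and then use the $\CD(K,N)$ hypothesis (in the Bochner form \eqref{mainBochnereq}) to upgrade each conditional measure to a one-dimensional $\CD(K,N)$ density. First I would set up the Monge-Kantorovich problem with cost $\cc$ between $f_+\mu$ and $f_-\mu$, which have equal finite mass since $\int f d\mu = 0$; the integrability hypothesis on $|\cc(x_0,\cdot)| + |\cc(\cdot,x_0)|$ ensures both marginals have finite first moment, so Kantorovich duality yields a potential $u : M \to \RR$ with $u(y) - u(x) \le \cc(x,y)$ globally and equality on the support of an optimal coupling. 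Such a $u$ is \emph{calibrating}: along any minimizing extremal $\gamma : I \to M$ with $u(\gamma(t)) - u(\gamma(s)) = \int_s^t L(\dot\gamma) d\tau$, it is differentiable, satisfies $H(du) = 0$, and its gradient agrees with $\dot\gamma$.

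Next I would show that the maximal such \emph{transport rays} partition the \emph{transport set} $\sT \subseteq M$ (the union of interiors of positive-length maximal rays) into pairwise disjoint minimizing extremals. The disjointness uses strict convexity of $L$ in the fibers together with cyclical monotonicity of the optimal coupling to preclude two rays from crossing at an interior point, as in \cite{FeMc,CFM,Kl,FF}. Points outside $\sT$ in $\supp|f|\mu$ contribute the Dirac needles in $(i)$. A measurable selection of base points equips the space of rays with a Borel structure $(\sA,\nu)$, and the disintegration theorem gives $\mu\vert_\sT = \int_\sA \mu_\alpha \, d\nu(\alpha)$ with $\mu_\alpha$ supported on a single ray in the form \eqref{needleformeq}. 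The mass balance \eqref{MBeq} comes from optimality: a ray with $\int f d\mu_\alpha \neq 0$ on a positive-$\nu$ set of $\alpha$ would admit a perturbation of the transport strictly decreasing its cost. The monotone form \eqref{detailedmbeq} reflects the fact that along each ray mass flows from $f_+$ to $f_-$ in the direction of decreasing $u$, so truncating at parameter $t$ leaves a downstream sub-needle carrying non-positive net $f$-mass.

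The core step is showing that each one-dimensional conditional density $m_\alpha = \rho_\alpha(t)\,dt$ satisfies $\CD(K,N)$ on $I_\alpha \subseteq \RR$, i.e.\ the Riccati-type inequality $(\log\rho_\alpha)'' + \frac{1}{N-1}\bigl((\log\rho_\alpha)'\bigr)^2 \le -K$ for $N < \infty$ (with the usual variant for $N=\infty$). Writing $\rho_\alpha$ as the Jacobian of the Lagrangian exponential-type map that sweeps a transversal slab of rays, one identifies $(\log\rho_\alpha)'(t) = \bL u(\gamma_\alpha(t))$; differentiating along $\nabla u$ converts \eqref{mainBochnereq} into exactly the desired one-dimensional estimate. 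The step I expect to require the most care is this last identification: in the non-Finslerian case the Hessian of $u$ has a nontrivial tangential component coming from the deviation of the Euler-Lagrange flow, so the nonlinear-connection formalism of Section \ref{spraysec} is needed to verify that the tangential contributions to the second variation of $u$ along $\nabla u$ drop out exactly so as to produce the sharp divisor $N-1$ rather than $N$, as flagged in the discussion following Theorem \ref{mainthm}.
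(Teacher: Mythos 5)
Your high-level strategy matches the paper's: take a Kantorovich potential $u$ for $f$, decompose the strain set of $u$ into maximal transport rays, disintegrate $\mu$ along the rays, and show the conditional densities are $\CD(K,N)$-needles by propagating the Bochner inequality \eqref{mainBochnereq} along each ray. The mass-balance argument you sketch is also correct in spirit, corresponding to Lemma \ref{dmblemma} and Corollary \ref{mbcor} (which perturb the \emph{dual} potential $u$ by an inf-convolution rather than perturbing the primal coupling, but the mechanism is the one you describe).

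However, there is a genuine gap: you treat $u$ as if it were smooth enough to apply \eqref{mainBochnereq} directly. A Kantorovich potential is a priori only locally Lipschitz (Proposition \ref{dominatedprop}(a)), so $\bL u = \div_\mu\nabla u$ has no pointwise meaning, and $(d\bL u)(\nabla u)$ involves third derivatives of $u$ which certainly do not exist. Your phrase ``Writing $\rho_\alpha$ as the Jacobian of the Lagrangian exponential-type map that sweeps a transversal slab of rays'' glosses over the fact that this map is a priori only Lipschitz, hence its Jacobian is merely $L^\infty_{\loc}$, whereas Definition \ref{needledef} requires $\rho_\alpha$ to be $C^2$ so that the Riccati inequality \eqref{CDneedleeq} makes sense. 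Closing this gap is the main technical content of Section \ref{domsec} and the proof of Theorem \ref{needlethm2}: (a) $C^{1,1}$ regularity of $u$ on the strain set via a Fathi-type second-order Taylor estimate and Whitney extension (Theorem \ref{regularitythm}); (b) the ray-cluster decomposition and a Lipschitz parametrization whose Jacobian is $C^1$ in the ray parameter (Definition \ref{clusterdef}, Proposition \ref{flattenprop}, Lemma \ref{coverlemma}); and (c), at almost every interior point of almost every ray, the construction via Lemma \ref{characlemma} of a genuine $C^3$ local Hamilton--Jacobi solution $\tilde u$ whose second-order jet matches that of the $C^{1,1}$ regularizer $u_k$, since the $\CD(K,N)$ hypothesis as stated in Definition \ref{CDdef} applies only to such local solutions. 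None of this appears in your outline.

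Relatedly, your final paragraph locates the difficulty in the wrong place. The separation of the Hessian of $u$ into tangential and normal components, and the consequent appearance of the sharp divisor $N-1$, is carried out once and for all in Proposition \ref{Weightedbochnerprop} (the proof of the equivalence (i)$\iff$(ii) of Theorem \ref{mainthm}), which is a prerequisite to Theorem \ref{needlethm}, not part of its proof. Once $\CD(K,N)$ is assumed in the Bochner form \eqref{mainBochnereq}, the divisor $N-1$ is already there; the needle decomposition only has to transfer this pointwise differential inequality to the one-dimensional conditional densities $\rho_\alpha$, and it is exactly in this transfer that the regularity machinery above is unavoidable.
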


Our proof of Theorem \ref{needlethm} is modeled on \cite{Kl}. While some modifications are due in the Lagrangian setting, many of the arguments carry over quite 
directly. Some of the arguments were simplified, notably the proof of the regularity theorem for dominated functions, Theorem \ref{regularitythm}. We hope that our nearly self-contained proof will serve as an additional source of reference to the rather involved construction laid out in \cite{Kl}. 

\medskip
The paper is organized as follows. In Section \ref{presec} we give some basic background on Lagrangians and Hamiltonians, and specify our assumptions on the Lagrangian. In Section \ref{ricsec}, after invoking some definitions from the theory of nonlinear connections, we define the weighted Ricci curvature and prove the equivalence (i)$\iff$(ii) in Theorem \ref{mainthm}. In Section \ref{domsec} we discuss dominated functions, which generalize $1$-Lipschitz functions from the Riemannian setting. In Section \ref{transportsec} we discuss optimal transport with Lagrangian cost and prove Theorem \ref{needlethm} and the equivalence (ii)$\iff$(iii) in Theorem \ref{mainthm}. In Section \ref{appsec} we use Theorem \ref{needlethm} to prove various results under the curvature-dimension condition. In Section \ref{examplesec} we consider examples, and prove Theorems \ref{horoBMthm} and \ref{contactthm}.

\subsection*{Acknowledgements}
I would like to express my deepest gratitude to my advisor, Prof. Bo'az Klartag, for his unwavering support and encouragement, and for being a true mentor and role model. I am also grateful to Prof. Itai Benjamini, for years of professional and personal mentorship; to Prof. Yanir Rubinstein, for hosting me at the University of Maryland during the writing of parts of this work; to Roee Leder, for helpful discussions; and to Prof. Uri Bader, for pointing me to a useful reference. Supported by a grant from the ISF.

\section{Lagrangians and Hamiltonians}\label{presec}
    \subsection{Tonelli Lagrangians}
    Let $M$ be a smooth manifold. A (\emph{time-independent}, or \emph{autonomous}) \emph{Lagrangian} is a function $$L : TM \to \RR.$$ References for the standard definitions and results mentioned below include \cite{CI,FM,CS} and \cite[Appendix B]{FF}. We denote the zero section of $TM$ by $\bz$.

    \begin{definition}[Tonelli Lagrangian]\normalfont
     We shall say that a Lagrangian $L$ is \emph{Tonelli} if it satisfies the following assumptions:

    \begin{itemize}
        \item \underline{Smoothness}: The Lagrangian $L$ is $C^1$-smooth on $TM$ and $C^2$-smooth on $TM\setminus\bz$.
        \item \underline{Strict convexit}y: The restriction of $L$ to each fiber of $TM$ is strictly convex, i.e. its Hessian is positive definite.
        \item \underline{Su}p\underline{erlinearit}y: For every compact set $A \subseteq M$ and every Riemannian metric $g$ on $M$ there exists a constant $C_{A,g} > 0$ such that
        $$L(v) \ge |v|_g - C_{A,g} \qquad \text{ for all } \, v \in T_xM, \, x \in A.$$ 
    \end{itemize}
\end{definition}

Naturally associated to a Tonelli Lagrangian is a \emph{Hamiltonian}, which is the function 
$$H : T^*M \to \RR, \qqquad H(p) : = \sup_{v \in T_xM}\left[p(v) - L(v)\right], \qquad p \in T^*_xM, \, x \in M.$$

Strict convexity and superlinearity of $L$ imply that the supremum is achieved at a unique point $\cL p \in T_xM$, where 
$$\cL  : T^*M \to TM$$
 is the Legendre transform, defined in local canonical coordinates $(x^j,v^j)$ on $TM$ and $(x^j,p_j)$ on $T^*M$ by the relation
\begin{equation}p_j = \frac{\partial L}{\partial v^j}\Big\vert_{\cL  p} \qquad 1\le j \le n,\end{equation}
or equivalently
\begin{equation}(\cL p)^j = \frac{\partial H}{\partial p^j}\Big\vert_{p}, \qquad 1\le j \le n.\end{equation}

Moreover, the Hamiltonian $H$ is $C^1$ smooth on $T^*M$ and $C^2$-smooth on $T^*M\setminus \cL^{-1}(\bz)$, strictly convex and superlinear, and the Legendre transform is a homeomorphism from $T^*M$ to $TM$ and a $C^1$ diffeomorphism from $T^*M\setminus\cL^{-1}(\bz)$ to $TM\setminus\bz$. 

\medskip
An \emph{extremal} is a $C^2$ curve $\gamma$ on $M$ solving the \emph{Euler-Lagrange equation}
\begin{equation}\label{ELeq}\frac{d}{dt}\left(\frac{\partial L}{\partial v^j}\Big\vert_{\dot\gamma(t)}\right) = \frac{\partial L}{\partial x^j}\Big \vert_{\dot\gamma(t)}, \qquad 1\le j \le n.\end{equation}

Since \eqref{ELeq} is a second-order ordinary differential equation, its solutions are projections to $M$ of integral curves of a flow on $TM$, namely the \emph{Euler-Lagrange flow} $\Phi^L$, whose infinitesimal generator is the \emph{Euler-Lagrange vector field} $X_L$ on $TM$.

\medskip
A \emph{Tonelli minimizer} is a piecewise-$C^1$ curve $\gamma:[a,b] \to M$ minimizing the \emph{action}
$$\int_a^bL(\dot\gamma(t))dt$$
among all piecewise-$C^1$ curves joining its endpoints and defined on the interval $[a,b]$. By Tonelli's theorem, every Tonelli minimizer is an extremal. 

\medskip
The Hamiltonian $H$ gives rise to a Hamiltonian flow $\Phi^H_t$ on $T^*M$, which is the flow of the vector field $X_H$ defined by the relation $$dH = \omega_0(X_H,\cdot),$$ where $\omega_0$ is the canonical symplectic form on $T^*M$ (given in canonical local coordinates on $T^*M$ by $dx^i\wedge dp_i$). The Euler-Lagrange flow and the Hamiltonian flow are conjugate via the Legendre transform:
$$\Phi^L_t : = \cL\circ\Phi_t^H\circ\cL^{-1}.$$
Solutions to \eqref{ELeq} are projections to $M$ of integral curves of the flow $\Phi^L$, and therefore also of the flow $\Phi^H$. The \emph{energy} is the function
$$E : TM \to \RR \qquad E : = H\circ\cL^{-1}.$$
If $\gamma$ is any solution to \eqref{ELeq} then $E(\dot\gamma)$ is constant. The definitions of $H,\cL$ and $E$ imply that
\begin{equation}
    \label{EplusLeq}E(v) + L(v) = (\cL^{-1}v)(v), \qquad v \in TM.
\end{equation}

\medskip
We define the \emph{indicatrix bundle} by
    $$SM : = \{v \in TM \, \vert \, E(v) = 0\}.$$
Thus $SM = \cL (S^*M)$, where
    $$S^*M := \{p \in T^*M \, \vert \, H(p) = 0\}.$$

\begin{remark*}
    We will concern ourselves only with the dynamics on $SM$. This is, of course, a normalization, and other energy levels can be considered by adding a constant to $L$ as long as it remains supercritical. In the Riemannian case we take the Lagrangian to be $L(v) = (|v|^2+1)/2$, in which case $SM$ is the unit tangent bundle since the energy is $E(v) = (|v|^2-1)/2$.
\end{remark*}
% \begin{lemma}\label{partialvkElemma}
%     For every $v \in TM$ and every $1 \le k \le n$,
%     $$(\partial_{v^k}E)(v) = \sum_{j=1}^nv^j\cdot\left(\frac{\partial^2L}{\partial v^jv^k}\right)(v).$$
% \end{lemma}
% \begin{proof}
%     By \eqref{Legeq1} and \eqref{Legeq2},
%     \begin{align*}
%         (\partial_{v^k}E)(v) & = (\partial_{v^k}(H\circ\cL^{-1}))(v)\\
%         & = \sum_{j=1}^n\left(\partial_{p^j}H\right)\big\vert_{\cL^{-1}(v)}\cdot \left(\partial_{v^k}\left(\cL^{-1}\right)_j\right)\Big\vert_v\\
%         & = \sum_{j=1}^nv^j\cdot \partial^2_{v^jv^k}L
%     \end{align*}
%     as desired.
% \end{proof}
% \begin{lemma}\label{starlemma}
%     For every $x \in M$, the indicatrix $S_xM = SM \cap T_xM$ is star shaped about the origin.
% \end{lemma}
% \begin{proof}
%     By Lemma \ref{partialvkElemma} and strong convexity of $L$,
%     \begin{align*}
%         \sum_{i=1}^nv^i\cdot\frac{\partial E}{\partial v^i} = \sum_{i,j=1}^nv^iv^j\cdot\frac{\partial^2 L}{\partial v^iv^j} > 0.
%     \end{align*}
%      It follows that the energy is strictly increasing on rays emanating from the origin, whence the indicatrix is star shaped about the origin.
% \end{proof}

The (time-independent) \emph{Hamilton-Jacobi equation} is the first-order nonlinear partial differential equation
\begin{equation}\tag{HJ}\label{HJeq}H(du) = 0.\end{equation}
A local solution to \eqref{HJeq} with prescribed second-order data at a point can always be found:

\begin{lemma}\label{characlemma}
    Let $x \in M$, let $p \in S^*_xM$ and let $W$ be a Lagrangian subspace of $T_pT^*M$ which is contained in the kernel of $dH$. Let $k \ge 2$ and assume that $H$ is $C^k$ in a neighborhood of $p$ and that $0$ is a regular value of $H$. Then there exists a neighborhood $U \ni x$ and a $C^{k+1}$ solution $u: U \to \RR$ to \eqref{HJeq} such that $du\vert_x = p$ and the tangent space to  the graph of $du$ at $p$ is $W$.
\end{lemma}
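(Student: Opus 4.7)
The plan is to prove this by the classical method of characteristics: I will construct an $n$-dimensional Lagrangian submanifold $\Lambda \subset T^*M$ such that $p \in \Lambda$, $T_p\Lambda = W$ and $\Lambda \subset S^*M = H^{-1}(0)$. The tacit requirement that $W$ be the tangent at $p$ to the graph of a $1$-form forces $W$ to project isomorphically onto $T_xM$, so $\pi|_\Lambda:\Lambda \to M$ is a local diffeomorphism near $p$, and $\Lambda$ is the graph of a closed $1$-form $\alpha$ on some neighborhood $U \ni x$. The Poincar\'e lemma then yields $\alpha = du$ for a function $u:U\to\RR$ which is one derivative more regular than $\alpha$; the identity $H(du)\equiv 0$ holds because $\Lambda \subset S^*M$, while $du|_x = p$ and $T_p\Graph(du) = W$ follow from the construction.

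To build $\Lambda$, first observe that $X_H(p) \in W$: the hyperplane $\ker dH|_p$ is coisotropic in $T_pT^*M$ with symplectic orthogonal $\RR X_H(p)$, and $W$ being Lagrangian and contained in $\ker dH$ yields $\RR X_H(p) = (\ker dH)^\omega \subset W^\omega = W$. Pick an $(n-1)$-dimensional linear complement $V \subset W$ to $\RR X_H(p)$ in $W$; then $V$ is an isotropic subspace of $T_pS^*M$ transverse to $X_H(p)$. The key geometric step is to produce an $(n-1)$-dimensional isotropic submanifold $\Sigma \subset S^*M$ with $p \in \Sigma$ and $T_p\Sigma = V$. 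Working in local Darboux coordinates centered at $p$, the image of $V$ in the reduced symplectic space $T_pS^*M / \RR X_H(p)$ is Lagrangian of maximal dimension $n-1$, hence realized as the tangent at $p$ of a local Lagrangian submanifold of the quotient; lifting back to $S^*M$ produces the desired $\Sigma$.

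Next I set $\Lambda = \{\Phi^H_t(q) : q \in \Sigma,\ |t| < \eps\}$. Transversality of $X_H(p)$ to $V = T_p\Sigma$ makes $(t,q)\mapsto\Phi^H_t(q)$ an embedding near $(0,p)$ for small $\eps$, so $\Lambda$ is an embedded $n$-manifold. Because $\Phi^H$ preserves both $\omega_0$ and $H$, the manifold $\Lambda$ is Lagrangian and contained in $S^*M$; by construction $T_p\Lambda = V \oplus \RR X_H(p) = W$, completing the proof. The claimed $C^{k+1}$ regularity of $u$ is read off from the $C^k$-regularity of the Hamiltonian flow and the gain of one derivative in the passage $\alpha \mapsto u$ via the Poincar\'e lemma.

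The main technical obstacle is the construction of the initial isotropic submanifold $\Sigma$ sitting inside $S^*M$ with prescribed tangent $V$; this is precisely where the hypotheses that $0$ is a regular value of $H$ (making $S^*M$ a $C^k$ hypersurface near $p$) and that $W \subset \ker dH$ (ensuring $V \subset T_pS^*M$) enter decisively. Once $\Sigma$ is in hand, everything else is a standard consequence of the invariance of $\omega_0$ and $H$ under $\Phi^H$.
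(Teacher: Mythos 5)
Your proof is correct and follows essentially the same route as the paper: you observe $X_H|_p \in W$ from the Lagrangian condition, choose an $(n-1)$-dimensional isotropic complement to $\RR X_H|_p$ in $W$, realize it as the tangent to an isotropic submanifold of $S^*M$ via Darboux coordinates on a symplectic transversal (the paper's $Q_0$, your ``lifting back from the quotient'' -- these are the same device), and then flow out by $\Phi^H_t$. The only minor difference is that you spell out the Poincar\'e-lemma passage from the Lagrangian submanifold to the function $u$ and the implicit graph condition on $W$, which the paper leaves tacit.
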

\begin{proof}
    This amounts to finding a $C^k$ Lagrangian submanifold of $T^*M$ through $p$ which is contained in $S^*M$, a goal that can be  achieved using the method of characteristics, see e.g. \cite[Section 3.2]{Evans}, \cite[Section 1.5]{CS} or \cite[Section 5]{Fat03}. We include a short proof for completeness. By our assumptions, 
	$$X_H\vert_p \in W\subseteq T_pS^*M.$$
	Let $W_0\subseteq W$ be an $(n-1)$-dimensional subspace not containing $X_H$, and let $Q_0$ be any $(2n-2)$-dimensional submanifold of $S^*M$ containing $p$ and transverse to $X_H$, such that $W_0\subseteq T_pQ_0$. Since $0$ is a regular value of $H$, the submanifold $S^*M$ is $C^k$-smooth, so the submanifold $Q_0$ can be taken to be $C^k$ as well. The canonical symplectic form $\omega_0$ restricts to a symplectic form on $Q_0$, and $W_0$ is a Lagrangian subspace of $T_pQ_0$. Using Darboux coordinates on $Q_0$ in a neighborhood of $p$, we can find a Lagrangian submanifold $N_0 \subseteq Q_0$ containing $p$, such that $T_pN_0 = W_0$. 
    
    \medskip
    Since the Hamiltonian vector field $X_H$ is transverse to $N_0$, by applying a flowout of $X_H$ to the submanifold $N_0$ we obtain a Lagrangian submanifold $N \subseteq T^*M$ (see e.g. \cite[Theorem 22.23]{LeeSM}). Since $N_0 \subseteq S^*M$ and $X_H$ is tangent to $S^*M$, the submanifold $N$ is contained in $S^*M$, and by our construction, the tangent space to $N$ at $p$ is spanned by $W_0$ and $X_H$ hence equals $W$.  Since $H$ is $C^k$ in a neighborhood of $p$, the Hamiltonian vector field $X_H$ is $C^{k-1}$, whence the resulting Lagrangian submanifold $N$ is $C^k$.
\end{proof}

\subsection{Cost and supercriticality}\label{costsec}

A Lagrangian gives rise to a cost function on $M\times M$, obtained by minimizing the action among all curves joining a given pair of points. If $L$ is a Riemannian or Finslerian Lagrangian, then this cost is simply distance.

\begin{definition}[Cost and minimizing extremals]\normalfont
    The \emph{cost} associated with $L$ is the function 
    $$\cc: M\times M \to \RR, \qquad \cc(x,x') : = \inf_\gamma\int L(\dot\gamma(t))dt,$$
    where the infimum is over all piecewise-$C^1$ curves joining $x$ to $x'$, without restriction on their domain. A curve $\gamma$ achieving the infimum above will be called a \emph{minimizing extremal}. Clearly if $\gamma$ is a minimizing extremal then it is a Tonelli minimizer, hence an extremal. If $\gamma$ is defined on an unbounded interval then we say that it is a minimizing extremal if its restriction to every finite subinterval is a minimizing extremal. 
\end{definition}

If $\gamma$ is a minimizing extremal then
$$E(\dot\gamma) \equiv 0.$$
Indeed, $E(\dot\gamma)$ is constant since $\gamma$ solves the Euler-Lagrange equation, and the fact that this constant is $0$ can be proved by considering the variation $\gamma_\lambda(t) = \gamma(\lambda t)$ and differentiating the action $\int_0^{T/\lambda}L(\dot\gamma_\lambda(t))dt$ at $\lambda =1$. Thus, every minimizing extremal is the projection to $M$ of an integral curve of the Euler-Lagrange flow $\Phi^L$ on the indicatrix bundle $SM$.

\begin{lemma}\label{triangleineqlemma}
    The function $\cc$ satisfies the triangle inequality:
    $$\cc(x,x') + \cc(x',x'') \ge \cc(x,x''), \qquad x,x',x'' \in M,$$
    with equality if and only if $x,x',x''$ lie on a single minimizing extremal in this order.
    A curve $\gamma : I \to M$ is a minimizing extremal if and only if for every $t < t' < t'' \in I$,
    \begin{equation}\label{gammacollineq}\cc(\gamma(t),\gamma(t')) + \cc(\gamma(t'),\gamma(t'')) = \cc(\gamma(t),\gamma(t'')).\end{equation}
\end{lemma}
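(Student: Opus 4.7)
The plan is to derive the triangle inequality from concatenation of admissible competitors, the equality case from concatenation of \emph{minimizing} extremals together with Tonelli regularity, and the characterization of minimizing extremals as a consequence of the equality case.

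Starting with the triangle inequality: given piecewise-$C^1$ curves $\gamma_1$ from $x$ to $x'$ and $\gamma_2$ from $x'$ to $x''$, I would translate the domain of $\gamma_2$ so that it begins where $\gamma_1$ ends, concatenate, and use additivity of the action (taking advantage of the fact that the infimum in the definition of $\cc$ is taken over curves with \emph{arbitrary} domain). Taking the infimum over both competitors yields $\cc(x,x'')\le\cc(x,x')+\cc(x',x'')$. For the equality case, one direction is easy: if $x,x',x''$ lie on a minimizing extremal $\sigma$ in order, the two subpieces are themselves minimizing extremals (otherwise an improvement on a subpiece could be spliced in to beat $\sigma$), and additivity of the action gives \eqref{gammacollineq}. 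For the other direction, suppose $\cc(x,x')+\cc(x',x'')=\cc(x,x'')$ and concatenate minimizing extremals $\gamma_1,\gamma_2$ from $x$ to $x'$ and from $x'$ to $x''$ (which exist by the Tonelli hypotheses) into a piecewise-$C^1$ curve $\gamma$ from $x$ to $x''$ whose action equals $\cc(x,x'')$, hence is a Tonelli minimizer. Because the velocity of a minimizing extremal lies on $SM$, which avoids the zero section and on which $L$ is $C^2$, Tonelli regularity forces the two velocities at the junction to coincide and upgrades $\gamma$ to a $C^2$ solution of the Euler-Lagrange equation throughout --- i.e.\ a genuine minimizing extremal through $x,x',x''$ in order.

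For the final characterization, if $\gamma:I\to M$ is a minimizing extremal and $t<t'<t''$ lie in $I$, then $\gamma|_{[t,t'']}$ is again a minimizing extremal containing $\gamma(t),\gamma(t'),\gamma(t'')$ in this order, and \eqref{gammacollineq} is the equality case applied to this triple. Conversely, assuming \eqref{gammacollineq} for every triple, the equality case forces any three ordered points of $\gamma$ onto a single minimizing extremal; locally, where the minimizing extremal between two close points is unique, this pins the trace of $\gamma$ onto a single minimizing extremal, and uniqueness of the Euler-Lagrange flow with prescribed initial conditions identifies $\gamma$ with that extremal on small subintervals, which a connectedness argument propagates to every finite subinterval of $I$.

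The main obstacle I anticipate is the Tonelli regularity step in the equality case of the triangle inequality: one must show that the two glued minimizing extremals automatically share a common velocity at the junction, so that the concatenation is $C^2$ rather than merely piecewise-$C^1$. This crucially uses strict convexity and $C^2$ smoothness of $L$ on $TM\setminus\bz$ via standard results from the calculus of variations, and is precisely the ingredient linking the variational and dynamical pictures needed to promote a piecewise extremal to a bona fide solution of the Euler-Lagrange equation.
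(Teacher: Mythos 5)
Your handling of the triangle inequality, of both directions of the equality case (concatenation plus Tonelli regularity at the junction), and of the forward implication of the final characterization follows the paper's argument. In particular, your observation that the junction must be $C^1$ because $L$ is strictly convex and $C^2$ on a neighborhood of $SM$, which avoids the zero section, is exactly the hypothesis that makes Tonelli's regularity theorem applicable there.

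The reverse implication --- that \eqref{gammacollineq} for every triple forces $\gamma$ to be a minimizing extremal --- is where your sketch, like the lemma as stated, runs into trouble. The condition \eqref{gammacollineq} involves only the points $\gamma(t),\gamma(t'),\gamma(t'')$ and is therefore invariant under every increasing reparametrization of $\gamma$, whereas being a minimizing extremal fixes the parametrization: the action functional is not reparametrization-invariant (since $L$ is not $1$-homogeneous), and achieving the free-time infimum $\cc$ forces the zero-energy speed by strict convexity. Thus if $\sigma$ is a minimizing extremal and $\tilde\gamma=\sigma\circ\phi$ for a smooth increasing bijection $\phi$ with $\phi'\not\equiv 1$, then $\tilde\gamma$ satisfies \eqref{gammacollineq} for every triple yet has strictly larger action than $\sigma$ and so is not a minimizing extremal. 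Your appeal to ``uniqueness of the Euler--Lagrange flow with prescribed initial conditions'' to identify $\gamma$ with the extremal tacitly assumes that $\gamma$ already solves the Euler--Lagrange equation; without that, the hypothesis only constrains the trace, not the speed. The paper's own proof does not address this direction either, so the gap is inherited from the source; a clean statement would either hypothesize that $\gamma$ is an extremal, or conclude only that $\gamma$ is an increasing reparametrization of a minimizing extremal.
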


\begin{proof}
    The triangle inequality is a direct consequence of the fact that the action is additive with respect to concatenation of curves. If equality holds, then by concatenating the minimizing extremal joining $x$ to $x'$ to the one joining $x'$ to $x''$, we obtain a minimizing extremal joining $x$ to $x''$ which must be a solution to the Euler-Lagrange equation by Tonelli's theorem, whence $x,x',x''$ lie on a single minimizing extremal. If $\gamma$ is a minimizing extremal then equality must hold in \eqref{gammacollineq}, for otherwise the concatenation of the minimizing extremals from $x$ to $x'$ and from $x'$ to $x''$ gives a curve joining $x$ to $x''$ with smaller action than that of $\gamma$. 
\end{proof}

% The following standard lemma regarding ``gauge freedom'' of the Lagrangian is easy to verify.

% \begin{lemma}\label{gaugelemma}
%     Let $L:TM \to \RR$ be a Tonelli Lagrangian and let $\phi:M \to \RR$ be a smooth function. Define a new Lagrangian $\tilde L$ by
%     $$\tilde L(v) : = L(v) - d\phi(v) , \qquad v \in TM.$$
%     Then a curve $\gamma$ is a minimizing extremal with respect to $\tilde L$ if and only if it is a minimizing extremal with respect to $L$. The cost function $\tilde \cc$ associated to $\tilde L$ is given by
%     $$\tilde\cc(x,x') = \cc(x,x') - \phi(x') + \phi(x), \qquad x,x' \in M,$$
%     and the Hamiltonian and Legendre transform associated to $\tilde L$ are given by
%     $$\tilde H(p) = H(p+d\phi) \qquad \text{ and } \qquad \tilde \cL p = \cL(p+d\phi), \qquad p \in T^*M.$$
% \end{lemma}

We shall impose a condition on our Lagrangian which is known as Ma{\~n}{\'e} supercriticality \cite{Man,CDI,CIPP,BB}. In order for the cost $\cc$ to make sense, the action of a closed curve should be at the very least nonnegative; indeed, otherwise, by traversing a closed curve with negative action many times, every pair of points could be joined by a curve with arbitrarily low action, and we would have $\cc\equiv-\infty$. We shall require a bit more, in order to avoid some intricacies in the critical case.

\begin{definition}\label{SCdef}\normalfont
    A Tonelli Lagrangian $L:TM \to \RR$ will be called \emph{supercritical} if for every closed curve $\gamma:[a,b] \to M$,
    $$\int_a^b L(\dot\gamma(t))dt > 0.$$
    In particular,
        $$\cc(x,x') + \cc(x',x) > 0$$
    for every $x \ne x' \in M$.    
\end{definition}

\begin{remark*}
    By taking $\gamma$ in Definition \ref{SCdef} to be constant we see that supercriticality implies that
    $$L\vert_{\bz} > 0,$$
    and consequently, by \eqref{EplusLeq}, that
    $$E\vert_{\bz} < 0.$$
    In particular,
    $$SM\cap \bz = \varnothing.$$
\end{remark*}

If $c(L)$ denotes the Ma{\~n}{\'e} critical value of $L$ (which is the infimum over constants $c \in \RR$ such that the cost associated with $L + c$ is not identically $-\infty$, see \cite{CDI}) then we have the chain of implications:
$$c(L) < 0 \implies \text{$L$ is supercritical in the sense of Definition \ref{SCdef}} \implies c(L) \le 0.$$

Both implications cannot be reversed: the  Lagrangian \eqref{horoLagrangianeq} on the hyperbolic plane, which gives rise to the horocycle flow, satisfies the middle condition but not the leftmost, while the contact magnetic Lagrangian \eqref{contactLagrangianeq} on, say, the unit sphere $S^3\subseteq\mathbb{C}^4$, satisfies the rightmost condition but not the middle one when $s=1$.

% \medskip
% The following condition is equivalent to the condition $c(L) < 0$; will shall not assume it in most of what follows.

% \begin{definition}\normalfont
%     A Tonelli Lagrangian $L : TM \to \RR$ will be called \emph{strictly supercritical} if there exists a constant $\eps_0 > 0$ such that for every closed curve $\gamma:[a,b] \to M$,
%     $$\int_a^b L(\dot\gamma(t))dt \ge \eps_0\cdot(b-a).$$
% \end{definition}

% \begin{lemma}
%     The Lagrangian $L$ is strictly supercritical if and only if there exists a smooth function $\phi : M \to \RR$ such that the Lagrangian
%     $$\tilde L : = L - d\phi$$
%     is bounded below on $TM$ by a positive constant.
% \end{lemma}
% \begin{proof}
%     If $\tilde L \ge \eps_0 > 0$ then clearly $\tilde L$ is strictly supercritical, and thus so is $L$, since the integral of $L$ and $\tilde L$ on closed curves is the same. Conversely, if $L$ is strictly supercritical, then there exists a smooth function $\phi : M \to \RR$ satisfying
%     $$H(d\phi) \le -\eps_0/2,$$
%     see \cite[Proposition 7]{CIPP} or \cite[Section 8]{FM} (the idea is to smooth out a function of the form $\cc_\eps(x_0,\cdot)$ for some $x_0 \in M$ and some $0<\eps<\eps_0$, where $\cc_\eps$ is the cost function associated to the Lagrangian $L -\eps$). But then it follows directly from the definition of the Hamiltonian $H$ that $L(v) - d\phi(v) \ge \eps_0/2$ for every $v \in TM$.
% \end{proof}

\subsection{Assumptions on the Lagrangian}\label{assumpsec}

    We make the following assumptions on the Lagrangian $L$:
    \begin{enumerate}[(I)]
        \item The Lagrangian $L$ is Tonelli and supercritical, and $0$ is a regular value of the energy $E$.
        \item For every $x,x' \in M$ there exists at least one minimizing extremal joining $x$ to $x'$. In particular, $M$ is connected.
        \item For every compact set $A \subseteq M$ there exists a compact set $\tilde A \supseteq A$ such that every minimizing extremal with endpoints in $A$ is contained in $\tilde A$.
    \end{enumerate}
\begin{remark*}
    We do not require completeness of the Euler-Lagrange flow. In particular, if $L$ is a Finsler Lagrangian, we don't require the Finsler manifold to be complete (in this case assumption (II) means that the manifold is geodesically-convex).
\end{remark*}
\medskip
Under assumptions (I)-(III), the cost $\cc$ is finite and continuous. Moreover
$$\cc(x,x) = 0 \qquad \text{for all $x \in M$},$$
as can be seen by taking $\gamma$ to be a constant curve defined on an arbitrarily short time interval.

\begin{example}\normalfont\label{Lagrangiansexample}
    In the following examples, the Lagrangian $L$ satisfies all the assumptions.
    \begin{itemize}
        \item \underline{Tonelli La}g\underline{ran}g\underline{ians on com}p\underline{lete Riemannian manifolds}: $(M,g)$ is a complete Riemannian manifold and $L = L' + c$ where $L'$ is any Tonelli Lagrangian satisfying uniform superlinearity and convexity conditions and $c$ is a sufficiently large constant, see \cite{FM,CDI}. If $M$ is compact, then uniform superlinearity and convexity follow from the Tonelli assumption. 
        \item \underline{Geodesicall}y\underline{ convex Riemannian and Finsler metrics}: $(M,g)$ is a geodesically-convex Riemannian manifold and 
        $$L(v) = \frac{|v|_g^q+ q - 1}{q}, \qquad v \in TM$$
        for some $q > 1$. More generally, $|v|_g$ can be replaced by $F(v)$, where $F$ is a Finsler metric. See \cite{OhBook} for more details on Finsler metrics.
        \item \underline{Classical La}g\underline{ran}g\underline{ians}: $(M,g)$ is a Riemannian manifold, and 
        \begin{equation*}
            L(v) = \frac{|v|_g^2}{2} + U(x) - \eta(v), \qquad v \in T_xM, \, \, x \in M,
        \end{equation*}
        where $U$ is a smooth positive function and $\eta$ is a 1-form, see Section \ref{magsec} (note the difference in sign from the convention used in classical mechanics). If $|\eta|_g < \sqrt{2U}$ then $L$ is supercritical; if, in addition, $g$ is complete and $U$ is bounded from above and from below by positive constants then $L$ satisfies (I)-(III).
    \end{itemize} 
\end{example}

Supercriticality, together with assumptions (II) and (III), imply some important global properties.

\begin{definition}[Diameter]\normalfont
    The \emph{diameter} of a set $A \subseteq M$ is defined to be
    $$\diam(A) : = \sup\{\ell > 0 \, \mid \, \text{$\exists$ a minimizing extremal $\gamma:[0,\ell] \to M$ such that $\gamma(0),\gamma(\ell) \in A$}\}.$$
\end{definition}
\begin{lemma}\label{diamlemma}
    Every compact set has finite diameter. If the Euler-Lagrange flow is complete then the converse is true: a set of finite diameter is contained in a compact set.
\end{lemma}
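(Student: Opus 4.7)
The lemma splits into two assertions, which I address separately.

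\emph{First assertion: compact implies finite diameter.} Given a compact set $A\subseteq M$, I would invoke assumption (III) to produce a compact enlargement $\tilde A\supseteq A$ that contains every minimizing extremal with endpoints in $A$. The key preliminary is that the restricted indicatrix
$$K_0 := SM\cap\pi^{-1}(\tilde A) \subseteq TM$$
is compact: it is closed because $SM = E^{-1}(0)$ and $0$ is a regular value of $E$, and it is fiber-bounded because the Hamiltonian $H$ is superlinear on each fiber of $T^*M$ and $\cL$ is a fiber-preserving homeomorphism, with uniform constants over the compact base $\tilde A$. For every minimizing extremal $\gamma:[0,\ell]\to M$ with endpoints in $A$, assumption (III) forces $\dot\gamma([0,\ell])\subseteq K_0$, and the action satisfies $\int_0^\ell L(\dot\gamma)\,dt = \cc(\gamma(0),\gamma(\ell))\le\sup_{A\times A}\cc < \infty$ by continuity of the cost.

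The main obstacle is that supercriticality does not force $L$ to be positive on $K_0$, so one cannot merely bound $\ell$ by dividing a bound on the action by a uniform positive lower bound for $L$: already for the one-dimensional example $L(v) = v^2/2 - av + b$ with $b > 0$ and $a > \sqrt{2b}$, $L$ assumes a negative value on $SM$. My plan is to exploit two consequences of supercriticality: first, minimizing extremals are injective, since a self-intersection $\gamma(t_1) = \gamma(t_2)$ with $t_1 < t_2$ would make $\gamma|_{[t_1,t_2]}$ a closed curve of action $\cc(\gamma(t_1),\gamma(t_2)) = \cc(x,x) = 0$, contradicting Definition \ref{SCdef}; second, by continuity of $\cc$, the function $(x,y)\mapsto\cc(x,y)+\cc(y,x)$ attains a strictly positive minimum $\delta_\rho > 0$ on the compact set $\{(x,y)\in\tilde A\times\tilde A : d_g(x,y)\ge\rho\}$ for any background Riemannian metric $g$ and any $\rho > 0$. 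Assume for contradiction a sequence $\gamma_n:[0,\ell_n]\to\tilde A$ of minimizing extremals with $\ell_n\to\infty$ and endpoints in $A$. Compactness of $K_0$ gives $|\dot\gamma_n|_g\ge m_0 > 0$, so the $g$-length of $\gamma_n$ exceeds $m_0\ell_n\to\infty$ while the image remains in compact $\tilde A$. Injectivity together with a pigeonhole argument then produces many sub-intervals $[s_k, t_k]\subseteq[0,\ell_n]$ on which $\gamma_n$ almost returns to itself; closing each near-recurrence by a short minimizing extremal and summing the positive contributions $\cc(\gamma_n(s_k),\gamma_n(t_k))+\cc(\gamma_n(t_k),\gamma_n(s_k))\ge\delta_\rho$ over the $g$-separated portions yields a lower bound on the total action of $\gamma_n$ that grows with $\ell_n$, contradicting the uniform upper bound. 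Making this pigeonhole-and-closing step fully quantitative is the principal technical difficulty.

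\emph{Second assertion: completeness plus finite diameter implies containment in a compact set.} Fix $x_0\in A$. For each $x\in A$, assumption (II) yields a minimizing extremal $\gamma:[0,\ell]\to M$ with $\gamma(0) = x_0$, $\gamma(\ell) = x$, and $\ell\le D := \diam(A) < \infty$. The initial velocity $\dot\gamma(0)$ lies in $SM\cap T_{x_0}M$, which is compact by the fiber-boundedness argument above. Completeness of the Euler-Lagrange flow ensures that the map
$$F:(SM\cap T_{x_0}M)\times[0,D]\to M, \qquad F(v,t) := \pi\bigl(\Phi^L_t(v)\bigr)$$
is well-defined and continuous on its compact domain, hence has compact image, and this image contains $A$.
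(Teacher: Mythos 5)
The second assertion (completeness plus finite diameter implies containment in a compact set) is correct and argued exactly as in the paper: fix $x_0 \in A$, note every $x \in A$ has the form $\pi(\Phi^L_t(v))$ for some $v \in S_{x_0}M$ and $t \in [0,D]$, and use compactness of $S_{x_0}M\times[0,D]$ together with continuity of $\pi\circ\Phi^L$. The preliminaries for the first assertion — compactness of the restricted indicatrix $K_0 = SM\cap\pi^{-1}(\tilde A)$, injectivity of minimizing extremals from supercriticality — are also fine.

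Where the proposal has a genuine gap is the pigeonhole step for the first assertion, and you flag it yourself (``making this pigeonhole-and-closing step fully quantitative is the principal technical difficulty''). The obstruction is not merely technical. Your plan wants sub-intervals $[s_k,t_k]$ on which $\gamma_n$ ``almost returns to itself'' and then invokes $\cc(\gamma_n(s_k),\gamma_n(t_k))+\cc(\gamma_n(t_k),\gamma_n(s_k))\ge\delta_\rho$, which requires $d_g(\gamma_n(s_k),\gamma_n(t_k))\ge\rho$. Near-recurrence (endpoints close) and $\rho$-separation (endpoints far) pull in opposite directions, so the two hypotheses cannot be met by the same pair. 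And if you instead take $\gamma_n(s_k)\approx\gamma_n(t_k)$, then $\cc(\gamma_n(t_k),\gamma_n(s_k))$ is close to $0$ by continuity of $\cc$, so the sum is only known to be $\ge 0$ in the limit, with no fixed positive quantity to accumulate over the pieces. Additivity of the action gives no lower bound on the individual segment action $\cc(\gamma_n(s_k),\gamma_n(t_k))$, because — as you yourself note — $L$ can be negative on $SM$.

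The paper instead follows \cite[Theorem V]{CDI}. After producing $\tilde A$ and the compact indicatrix $S\tilde A$, one passes to a subsequential limit of the curves $\gamma_i$ (via convergence of $\dot\gamma_i(0)$ in the compact set $S\tilde A$) to obtain a single minimizing extremal $\gamma:[0,\infty)\to\tilde A$ on the full ray. Compactness of $S\tilde A$ and the fact that $SM$ misses the zero section then yield an increasing integer sequence $m_j$ and a fixed $0<\eps<1$ such that $(\dot\gamma(m_j),\dot\gamma(m_j+\eps))$ converges in $S\tilde A\times S\tilde A$ to a pair with distinct basepoints $z\ne w$. Because $\gamma$ restricted to $[m_j,m_j+\eps]$ and to $[m_j+\eps,m_{j+1}]$ is minimizing, additivity of the action gives the telescoping identity $\cc(z_j,w_j)+\cc(w_j,z_{j+1})=\cc(z_j,z_{j+1})$, and continuity of $\cc$ forces $\cc(z,w)+\cc(w,z)=\cc(z,z)=0$, contradicting supercriticality. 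The reason to pass to the infinite ray rather than work with each $\gamma_n$ directly is precisely to obtain an exact recurrence structure along a single fixed curve, so that the telescoping closes exactly in the limit rather than approximately; this telescoping device is the ingredient your pigeonhole outline is missing.
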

\begin{proof}
    Assume by contradiction that there exists a compact set $A \subseteq M$ and a sequence of minimizing extremals $\gamma_i:[0,t_i]\to M$  with endpoints in $A$ such that $t_i \to \infty$. By assumption (III), the curves $\gamma_i$ are all contained in a fixed compact set $\tilde A \supseteq A$. By superlinearily of $H$, the set
    $$S\tilde A = \{v \in T_xM \, \mid \, E(v) = 0, \, x \in \tilde A\}$$
    is compact, and $\dot\gamma_i(0)\in S\tilde A$ for all $i$ since $\gamma_i$ are minimizing extremals. We may therefore pass to a subsequence and assume that $\dot\gamma_i(0)$ converge to a tangent vector $v \in T_xM$ for some $x \in A$. Since each $\gamma_i$ is a minimizing extremal, the sequence $\gamma_i$ converges on compact subsets of the half-line $[0,\infty)$ to a minimizing extremal $\gamma$ defined on the entire ray $[0,\infty)$ and contained in the compact set $\tilde A$. Now we use the argument in \cite[Theorem V]{CDI} to get a contradiction to supercriticality. Since $E(\dot\gamma) \equiv 0$, compactness of $S\tilde A$ and the fact that the $SM$ does not intersect the zero section imply that there exists an increasing sequence $m_j$ of integers and some $0 < \eps < 1$ such that the sequence 
	$$\{(\dot\gamma(m_j),\dot\gamma(m_j+\eps))\}_{j =1}^\infty \in SM \times SM$$
	converges to some element of $SM\times SM$ whose components have distinct basepoints $z,w \in \tilde A$, i.e.
	$$z_j : = \gamma(m_j) \to z  \qquad \text{ and } \qquad w_j : = \gamma(m_j + \eps) \to w .$$
	Since $\gamma$ is a minimizing extremal, 
	\begin{align*}
		\cc(z,w) + \cc(w,z) & = 
		\lim_{j \to \infty}\left(\cc(z_j,w_j) + \cc(w_j,z_{j+1})\right)
		\\ & = \lim_{j \to \infty} \left(\int_{m_j}^{m_j + \eps}L(\dot\gamma(t))dt + \int_{m_j + \eps}^{m_{j+1}}L(\dot\gamma(t))dt\right)\\
		& = \lim_{j\to \infty} \left(\int_{m_j}^{m_{j+1}}L(\dot\gamma(t))dt\right)\\
		& = \lim_{j \to \infty} \cc(z_j,z_{j+1}) \\
        & = \cc(z,z)\\
		& = 0,
	\end{align*}
	a contradiction to the assumption that $L$ is supercritical.

    \medskip
    Assume now that $A$ is a closed set of diameter $D < \infty$ and that the Euler-Lagrange flow is complete, i.e. $\Phi^L:\RR\times TM$ is defined and continuous on all of $\RR \times TM$ (where we write $\Phi^L(t,x) = \Phi^L_t(x))$. Let $x_0 \in A$. By the definition of diameter, the set $A$ is contained in the set 
    $$\pi(\Phi^L(S_{x_0}M\times[0,D])),$$
    which is compact (here $S_{x_0}M = SM\cap T_{x_0}M$ is the indicatrix at $x_0$).
\end{proof}

\begin{lemma}\label{localconvlemma}
    For every $x \in M$ and every neighborhood $U \ni x$ there exists a neighborhood $U \supseteq U' \ni x$ such that every $x',x'' \in U'$ are joined by a unique minimizing extremal which lies entirely in $U$. In particular, for every extremal $\gamma$ there exists $t > 0$ such that $\gamma\vert_{[0,t]}$ is uniquely minimizing.
\end{lemma}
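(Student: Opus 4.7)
The plan is to argue by contradiction. Fix a compact neighborhood $A\subseteq U$ of $x$, and let $\tilde A\supseteq A$ be the compact set from assumption (III). Suppose the first assertion fails; then there are sequences $y_i,z_i\to x$ witnessing either (A) a minimizing extremal $\gamma_i$ from $y_i$ to $z_i$ exiting $U$, or (B) two distinct minimizing extremals $\gamma_i\ne\tilde\gamma_i$ from $y_i$ to $z_i$. I would handle (A) by showing $\gamma_i\subseteq U$ for large $i$, and (B) by a local injectivity argument for the endpoint map of the Euler-Lagrange flow.

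\emph{Step 1 (short length).} Each $\gamma_i$ lies in $\tilde A$, and its initial velocity $\dot\gamma_i(0)$ lies in the compact set $S\tilde A\subseteq SM$, which is disjoint from $\bz$ by supercriticality. Let $\ell_i$ be the domain length of $\gamma_i$. I claim $\ell_i\to 0$. The case $\ell_i\to\infty$ is ruled out verbatim by the argument in the second paragraph of the proof of Lemma \ref{diamlemma}. If $\ell_i\to\ell_\infty\in(0,\infty)$ along a subsequence, extract a further subsequence with $\dot\gamma_i(0)\to v_\infty\in S_xM$; $C^1$-dependence of $\Phi^L$ on initial data (valid since $L$ is $C^2$ off $\bz$ and $v_\infty\notin\bz$) yields $\gamma_i\to\gamma_\infty$ in $C^1([0,\ell_\infty])$, where $\gamma_\infty$ is a nonconstant closed extremal from $x$ to $x$. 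Its action equals $\lim\cc(y_i,z_i)=\cc(x,x)=0$ by continuity of $\cc$, contradicting supercriticality. Hence $\ell_i\to 0$, and boundedness of the initial velocities forces $\gamma_i\subseteq U$ for large $i$, disposing of (A).

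\emph{Step 2 (endpoint map).} For $y$ near $x$ define
\[
\exp_y:S_yM\times[0,\infty)\to M,\qquad \exp_y(v,\ell):=\pi(\Phi^L_\ell(v)).
\]
In local coordinates centered at $y$ one has $\exp_y(v,\ell)=y+\ell v+O(\ell^2)$, and the flat model $(v,\ell)\mapsto y+\ell v$ is a $C^1$ diffeomorphism from $S_yM\times(0,\infty)$ onto $T_yM\setminus\{0\}$ (a blow-up of the origin). A Jacobi-field / inverse-function-theorem argument then shows $\exp_y$ is a $C^1$ local diffeomorphism on $S_yM\times(0,\delta_y)$ for some $\delta_y>0$, and injective on $S_yM\times[0,\delta_y)$ after collapsing $\{\ell=0\}$ to $y$. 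Compactness of $S_xM$ and continuous dependence of $\Phi^L$ on $(y,v)$ give a uniform $\delta>0$ and a neighborhood $W\ni x$ such that $\exp_y$ is injective on $S_yM\times[0,\delta)$ for all $y\in W$. In case (B), Step 1 gives $y_i\in W$ and $\ell_i,\tilde\ell_i<\delta$ for large $i$; the identity $\exp_{y_i}(\dot\gamma_i(0),\ell_i)=\exp_{y_i}(\dot{\tilde\gamma}_i(0),\tilde\ell_i)=z_i$ forces $\gamma_i=\tilde\gamma_i$, the desired contradiction.

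\emph{Second assertion and main obstacle.} For an extremal $\gamma$ (necessarily with $\dot\gamma(0)\in SM$ for the statement to be nonvacuous), apply the first part at $\gamma(0)$; for $t<\delta$ small, the unique minimizing extremal from $\gamma(0)$ to $\gamma(t)$ provided by the first part must coincide with $\gamma|_{[0,t]}$ by the same injectivity of $\exp_{\gamma(0)}$, hence $\gamma|_{[0,t]}$ is uniquely minimizing. The technical heart of the proof is the uniform local diffeomorphism property in Step 2. It is routine ODE / IFT material, but the degeneration of $\exp_y$ along $\{\ell=0\}$—a blow-down of the $(n-1)$-dimensional indicatrix to a point—requires treating $\exp_y$ as a $C^1$-perturbation of the flat polar model $(v,\ell)\mapsto\ell v$ in coordinates, and then extracting uniformity in the base point $y$ by a compactness argument on $S_xM$.
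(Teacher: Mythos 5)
Your proof is correct and follows essentially the same approach as the paper: both arguments combine the injectivity of the endpoint map near zero length (via the inverse function theorem) with supercriticality ruling out closed minimizing extremals. The logical organization differs slightly — you first establish $\ell_i\to 0$ and then invoke endpoint-map injectivity, whereas the paper uses the inverse function theorem to rule out both lengths tending to zero and then derives a contradiction from boundedness and the no-closed-extremal argument — but the ideas are the same, and your Step~2 simply fills in the uniformity detail that the paper leaves implicit in the phrase ``the inverse function theorem implies.''
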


\begin{proof}
    Assume by contradiction that for every neighborhood $U' \ni x$ there exist $x',x''$ which are joined by two distinct minimizing extremals. Then there are two sequences $\gamma_i:[0,T_i] \to M$ and $\theta_i:[0,S_i] \to M$ of minimizing extremals such that $\gamma_i$ and $\theta_i$ are distinct for each $i\ge 1$, but share the same endpoints, and those endpoints converge to $x$. By passing to a subsequence we may assume that $\dot\gamma_i(0),\dot\theta_i(0)$ both converge to $v,w \in S_xM$, respectively. Since $\gamma_i$ and $\theta_i$ are distinct solutions to Euler-Lagrange, the inverse function theorem implies that it cannot be the case that both $T_i$ and $S_i$ converge to zero. On the other hand, they are both bounded by Lemma \ref{diamlemma}. Thus at least one of the sequences of curves has to have a subsequence converging to a closed curve; but a limit of minimizing extremals is a minimizing extremal, and closed minimizing extremals do not exist by supercriticality. This contradiction shows that there exists a neighborhood $U'\subseteq U$ containing $x$, such that every $x',x'' \in U'$ are joined by a unique minimizing extremal. Furthermore, if $U'$ is small enough then this minimizing extremal must lie in $U$. Indeed, otherwise there exists a sequence $\gamma_i$ of minimizing extremals with both endpoints tending to $x$ and with at least one point outside $U$; a similar argument shows that such a sequence will converge to a closed minimizing extremal which, again, is ruled out by supercriticality.
\end{proof}

Let $u : M \to \RR$ be a $C^1$ function. The \emph{gradient} of $u$ is the vector field
$$\nabla u : = \cL du.$$

\begin{lemma}\label{HJlemma}
   Let $U \subseteq M$ be an open set and let $u$ be a $C^1$ function. Then $u$ solves the Hamilton-Jacobi equation \eqref{HJeq} if and only if the integral curves of $\nabla u$ are zero-energy extremals.
\end{lemma}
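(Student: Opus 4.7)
The plan is to exploit the Fenchel--Young inequality for the Legendre transform: for any $x \in M$, $v \in T_xM$ and $p \in T_x^*M$ one has $p(v) \le L(v) + H(p)$, with equality if and only if $v = \cL p$. Applied with $p = du\vert_x$, this shows that $H(du) \equiv 0$ on $U$ is equivalent to the pointwise inequality $du\vert_x(v) \le L(v)$ for all $v \in T_xM$, together with the equality $du\vert_x(\nabla u(x)) = L(\nabla u(x))$ (which is automatic once $H(du)=0$, since $\nabla u = \cL du$).

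For the forward implication, I would take any $C^1$ integral curve $\gamma:[a,b]\to U$ of $\nabla u$ and compute, using the equality case and the fundamental theorem of calculus,
\begin{equation*}
\int_a^b L(\dot\gamma(t))\,dt = \int_a^b du\vert_{\gamma(t)}(\dot\gamma(t))\,dt = u(\gamma(b)) - u(\gamma(a)).
\end{equation*}
For any other piecewise-$C^1$ curve $\sigma:[a,b]\to M$ with the same endpoints, the inequality $du(v) \le L(v)$ yields $\int_a^b L(\dot\sigma)\,dt \ge u(\gamma(b)) - u(\gamma(a))$, so $\gamma$ is a Tonelli minimizer. Tonelli's theorem then upgrades $\gamma$ to a $C^2$ extremal, and its energy vanishes because $\dot\gamma(t) = \cL du\vert_{\gamma(t)} \in \cL(S^*M) = SM$.

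For the converse, I would fix $x \in U$; since $u$ is $C^1$ the vector field $\nabla u$ is continuous, so Peano's theorem provides an integral curve $\gamma$ of $\nabla u$ starting at $x$. By hypothesis $\gamma$ is a zero-energy extremal, so $\nabla u(x) = \dot\gamma(0) \in SM$, whence $du\vert_x = \cL^{-1}\nabla u(x) \in S^*M$ and $H(du\vert_x) = 0$.

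The main obstacle is the regularity upgrade in the forward direction: because $u$ is only $C^1$, the vector field $\nabla u$ is merely continuous and its integral curves are a priori just $C^1$ (and possibly non-unique), so one cannot verify the Euler--Lagrange equation for $\gamma$ directly. The Fenchel--Young argument dodges this by first showing that $\gamma$ minimizes the Tonelli action on every subinterval, and then invoking the regularity statement in Tonelli's theorem; this is legitimate only because the relevant velocities $\nabla u(x)$ lie in $SM$, which by supercriticality is disjoint from the zero section where $L$ can fail to be $C^2$.
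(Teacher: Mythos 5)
Your argument follows the same Fenchel--Young strategy as the paper's proof, and the reverse implication and the observation about regularity near the zero section are correct. However, there is a gap in the forward direction that the paper treats carefully and that you pass over.

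The issue is the sentence ``For any other piecewise-$C^1$ curve $\sigma:[a,b]\to M$ with the same endpoints, the inequality $du(v)\le L(v)$ yields $\int_a^b L(\dot\sigma)\,dt\ge u(\gamma(b))-u(\gamma(a))$, so $\gamma$ is a Tonelli minimizer.'' The inequality $du(v)\le L(v)$ is available only where $u$ is defined, i.e. on $U$. A competitor $\sigma$ can leave $U$, and for such $\sigma$ the bound $\int L(\dot\sigma)\ge u(\gamma(b))-u(\gamma(a))$ does not follow. So at this point you have only shown that $\gamma$ minimizes the action among curves that \emph{stay in} $U$, which is not the definition of a Tonelli minimizer, and the regularity part of Tonelli's theorem as stated in Section 2.1 applies to global minimizers on the fixed interval. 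The paper closes exactly this gap by first invoking Lemma~\ref{localconvlemma}: for $t',t''$ close enough to a given $t$, the points $\gamma(t'),\gamma(t'')$ are joined by a \emph{unique} minimizing extremal, and that extremal is \emph{contained in $U$}. One then compares: that minimizer, being in $U$, has action $\ge u(\gamma(t''))-u(\gamma(t'))$ by Fenchel--Young, while $\gamma\vert_{[t',t'']}$ achieves exactly $u(\gamma(t''))-u(\gamma(t'))$; hence $\gamma\vert_{[t',t'']}$ attains the global infimum and, by uniqueness, \emph{is} that minimizing extremal, so it is a $C^2$ zero-energy extremal on $[t',t'']$. You could alternatively patch your version by arguing that $\gamma$ is a local minimizer (among curves in the open set $U$), then running the first-variation/du Bois-Reymond argument and bootstrapping via strict convexity of $L$ on $SM$; but either way, a lemma like Lemma~\ref{localconvlemma} or an explicit local-minimizer regularity argument is needed, and invoking ``Tonelli's theorem'' alone does not supply it.
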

\begin{proof}
    If integral curves of $\nabla u$ are zero-energy extremals then in particular $0 = E(\nabla u) = H(du)$. Conversely, suppose that $H(du) = 0$ on $U$. Let $T >0$ and let $\theta:[0,T] \to U$ be a piecewise-$C^1$ curve. By the definitions of $H$ and $\cL$,
    \begin{align*}
        \int_0^T\left[du(\dot\theta(t)) - L(\dot\theta(t))\right]dt & \le  \int_0^T\sup_{v \in T_{\theta(t)}M}\left[du(v) - L(v)\right]dt\\
        & = \int_0^TH(du\vert_{\theta(t)})dt \\
        & = 0,
    \end{align*}
    with equality if and only if $\dot\theta \equiv \cL du = \nabla u$.
    It follows that
    $$\int_0^TL(\dot\theta(t))dt \ge u(\theta(T)) - u(\theta(0)),$$
    and equality holds if and only if $\theta$ is an integral curve of $\nabla u$. 
    
    \medskip
    Now let $T' > 0$, let $\gamma:[0,T'] \to U$ be an integral curve of $\nabla u$ and let $t \in [0,T']$. By Lemma \ref{localconvlemma}, there exists $\eps >0$ such that for every $t-\eps<t'<t''<t+\eps$, the points $\gamma(t')$ and $\gamma(t'')$ are joined by a unique minimizing extremal which is contained in $U$; the above argument shows that this minimizing extremal must be $\gamma\vert_{[t',t'']}$.
\end{proof}

\section{Ricci curvature of Lagrangians}\label{ricsec}

The notion of Ricci curvature for Lagrangians developed below is equivalent to the one introduced in Lee \cite{Lee} following Agrachev and Gamkrelidze \cite{AG} (see also Ohta \cite{Oh14}). However, our approach is closer to the one taken in Grifone \cite{Gr} (see also \cite{FL,BM,MHSS}). We will depart from the above sources when dealing with \emph{weighted} Ricci curvature, since we wish to incorporate into it the tangential component of the volume distortion of the Euler-Lagrange flow; this will be done in Section \ref{weightedsec}. 

\subsection{Semisprays and connections}\label{spraysec}

We state below without proof various well-known facts regarding nonlinear connections, and refer the reader to \cite{Gr,KMS,Lee,MHSS,Sh} for more details. Let $M$ be a smooth manifold and let 
$$\pi:TM \to M$$
be its tangent bundle. Denote by
$$d\pi : TTM \to TM$$
the differential of the projection $\pi$.

\medskip
For the reader's convenience, most of the constructions described below will be presented both in coordinate-free notation and in local coordinates; in the latter case we use canonical local coordinates
$$x^1,\dots,x^n,v^1,\dots,v^n$$
on $TM$. Einstein's summation convention is in force throughout. The notation $\partial_{x^i}$ stands for both the coordinate vector fields on $M$ and their counterparts on $TM$.

\medskip
The \emph{canonical lift} of a curve $\gamma$ on $M$ is the curve $\dot\gamma$ on $TM$, and the \emph{canonical lift} of a vector field $X$ on $M$ is the vector field
$$X_*X \quad \text{ on } \quad \Graph(X).$$
Here by $X_*X$ we mean the pushforward of the vector field $X$ via the map $X : M \to TM$, whose image is denoted by $$\Graph(X)\subseteq TM.$$
In coordinates, if $X = X^i\partial_{x^i}$, then
$$X_*X = X^i\partial_{x^i} + (X^i\partial_{x^i}X^j)\partial_{v^j}.$$

The \emph{vertical bundle} $\cV TM$ is the subbundle of $TTM$ defined by:
$$\cV TM := d\pi^{-1}(\bz) =  \{\xi \in TTM \, \mid \, d\pi(\xi) = 0 \}.$$
In coordinates,
$$\cV TM = \mathrm{span}(\partial_{v^1},\dots,\partial_{v^n}).$$
A section of $\cV TM$ is called a \emph{vertical vector field}. There is a natural bundle map 
$$\downarrow\,:\cV TM\to TM$$
given in coordinates by
$$\downarrow:\cV_vTM\to T_xM, \qquad \downarrow(\partial_{v^i}) = \partial_{x^i}, \qquad v\in T_xM, \,\,x\in M.$$

Given a vector field $X$ on $M$, there is a unique vertical vector field $\uparrow X$ on $TM$, called the \emph{vertical lift} of $X$, such that $\downarrow(\uparrow X)\vert_v = X\vert_x$ for every $x \in M$ and every $v \in T_xM$. In coordinates,
$$\uparrow (X^i\partial_{x^i}) = X^i\partial_{v^i}.$$
The \emph{almost tangent structure} is the fiberwise linear map $\rJ:TTM \to \cV TM$ given in local coordinates by
$$\rJ\partial_{x^i} = \partial_{v_i},\qquad \rJ\partial_{v^i} = 0, \qquad 1 \le i \le n.$$
Note that $\rJ^2 = 0$. The coordinate-free definitions of $\rJ$ and $\downarrow$ involve the pullback bundle $\pi^*TM$, see \cite{Gr} for details.

\medskip
Denote by $V$ the vertical vector field on $TM$ whose flow is given by $v \mapsto e^{t}v$ for all $v \in TM$ and $t \in \RR$. In coordinates:

$$V = v^i\partial_{v^i}.$$

\medskip
A \emph{semispray} is a vector field $\Sigma$ on $TM$ which is smooth on $TM\setminus\bz$ and satisfies
\begin{equation}\label{sprayeq}
    \rJ\Sigma = V.
\end{equation}

Condition \eqref{sprayeq} means that integral curves of $\Sigma$ are canonical lifts of curves on $M$. In coordinates, a semispray $\Sigma$ takes the form

$$\Sigma = v^i\partial_{x^i} + \Sigma^i\partial_{v^i}$$
for some locally-defined functions $\Sigma^i$ obeying the transformation rule
$$\tilde\Sigma^i = \frac{\partial\tilde x^i}{\partial x^j}\,\Sigma^j + \frac{\partial^2\tilde x^i}{\partial x^j\partial x^k}\,v^jv^k$$
under a change of coordinates $x^i \mapsto \tilde x^i$. A curve $\gamma$ on $M$ is called a \emph{geodesic} of the semispray $\Sigma$ if its canonical lift $\dot\gamma$ is an integral curve of $\Sigma$.

\medskip
A semispray $\Sigma$ gives rise to a (nonlinear) \emph{ connection} on $TM$. This is by definition a splitting of the double tangent bundle $TTM$ into a Whitney sum:
$$TTM = \cV TM\oplus \cH TM,$$
where $\cH TM$ is a subbundle called the \emph{horizontal bundle}; a vector $\xi \in \cH TM$ is  called \emph{horizontal}. Thus specifying a connection is the same as specifying a fiberwise linear projection $\pi_\cV$ whose image is $\cV TM$. Let
$$\pi_\cH : = \rI - \pi_\cV$$
denote the resulting projection onto $\cH TM$.  The connection associated to a semispray $\Sigma$ is characterized by the relation
\begin{equation}\label{piHdef}2\pi_\cH = \rI + [\rJ,\Sigma]= \rI  + [\rJ\,\cdot\,,\Sigma] - \rJ[\,\cdot\,,\Sigma],\end{equation}
where $[\cdot,\cdot]$ is the Fr{\"o}licher-Nijenhuis bracket. In coordinates:

$$\pi_{\cH}(\partial_{x^i}) = \partial_{x^i} - \Gamma_i^j\partial_{v^j}, \qquad \pi_{\cH}(\partial_{v^i}) = 0, \qquad 1 \le i \le n,$$
where $\Gamma_i^j$ are the \emph{connection coefficients}:
$$\Gamma_i^j : = -\frac12\cdot\partial_{v^i}\Sigma^j.$$
It follows from \eqref{sprayeq} and \eqref{piHdef} that
\begin{equation*}
    \pi_\cV\Sigma = \Lambda : = \frac{\Sigma - [V,\Sigma]}{2}.
\end{equation*}
In coordinates:
$$\Lambda = \Lambda^j\partial_{v^j} \qquad \text{ where } \qquad \Lambda^j : = v^i\Gamma_i^j + \Sigma^j.$$
\begin{remark}
    The vector field $\Lambda$ measures the deviation of the semispray $\Sigma$ from being 2- homogeneous on each fiber. Semisprays satisfying $\Lambda = 0$ are called \emph{sprays}. The infinitesimal generator of the geodesic flow of a Finsler manifold is a spray. 
\end{remark}
\medskip
Associated to a connection is an \emph{almost complex structure} $\rF$ on $TTM$, characterized by the relations
$$\rF\rJ = \pi_{\cH} \qquad \text{ and }\qquad \rF\pi_{\cH} = -\rJ.$$
In coordinates:
$$\rF(\partial_{v^i}) = \partial_{x^i} - \Gamma_i^j\partial_{v^j}, \qquad \rF\left(\partial_{x^i} - \Gamma_i^j\partial_{v^j}\right) = -\partial_{v^i}, \qquad 1 \le i \le n.$$
Clearly $\rF^2 = - \rI$. For convenience let us define vector and covector fields

$$E_i : = \partial_{x^i} - \Gamma_i^j\partial_{v^j}, \qquad \eps^j : = dv^j + \Gamma_i^jdx^i, \qquad 1 \le i \le n.$$
Together with the vector and covector fields $dx^i$ and $\partial_{v^i}$, they constitute a local frame $(E_i,\partial_{v^i})$ and a dual coframe $(dx^i,\eps^i)$. 
With this notation,
\begin{equation}\label{frameeq}\rJ = dx^i\otimes \partial_{v^i}, \quad  \pi_\cH = dx^i\otimes E_i, \quad \pi_\cV = \eps^i\otimes\partial_{v^i} \quad \text{ and } \quad \rF = d\eps^i\otimes E_i - dx^i\otimes \partial_{v^i}.\end{equation}

\medskip
The \emph{covariant derivative} induced by the connection is given for two vector fields $X,Y$ on $M$ by
$$\rD_XY : = \,\downarrow\pi_{\cV}(Y_*X).$$
This is well defined since $Y_*X$ is a vector field on the submanifold $\Graph(Y)$, which intersects each fiber once, so the fiberwise linear map $\downarrow$ maps it to a vector field on $M$. More precisely
$$\rD_XY\vert_x = \,(\downarrow\circ\,\pi_{\cV}\circ dY)(X\vert_x), \qquad x\in M.$$
In coordinates, if $X = X^i\partial_{x^i}$ and $Y = Y^i\partial_{x^i}$, then
$$\rD_XY = \left(X^k\partial_{x^k}Y^j + X^k\cdot (\Gamma_k^j\circ Y)\right)\partial_{x^j}.$$
Note that $\rD_XY$ is linear only in $X$. If $X$ is a vector field on $M$ then
\begin{equation}\label{Lambdaeq}X_*X =\Sigma\vert_{\mathrm{Graph(X)}} \qquad \iff \quad \rD_XX = \,\downarrow(\Lambda\circ X).\end{equation}
Indeed, $\rJ(X_*X) = V = \rJ\Sigma$ since $X_*X$ is a canonical lift, so by \eqref{frameeq}, $X_*X = \Sigma$ if and only if $\pi_\cV(X_*X) = \pi_{\cV}\Sigma = \Lambda$. Now apply $\downarrow$ to both sides.

\begin{definition}[Ricci curvature]\normalfont 
    We define the \emph{Ricci curvature} of the semispray $\Sigma$ by
    $$\Ric : TM\setminus\bz \to \RR, \qquad \Ric : = \tr(\rF\pi_\cV[\Sigma,\pi_\cH]).$$
    In coordinates:
    \begin{equation}\label{Riccicoordeq}
    \begin{split}
    \Ric = & \eps^i([\Sigma,E_i]).
    \end{split}\end{equation}
\end{definition}

\begin{remark*}\normalfont
    The \emph{curvature} of a connection is the vector-valued two-form on $TM$ defined by
    $$\rR : = -\frac12[\pi_{\cH},\pi_{\cH}].$$
    % By the definition of the Fr{\"o}licher-Nijenhuis bracket and the integrability of the vertical bundle, this means that for pair of vector fields $\xi,\zeta$ on $TM$,
    % $$\rR(\xi,\zeta) = -\pi_{\cV}[\pi_\cH\xi,\pi_\cH\zeta].$$
    In coordinates:
    $$\rR = \rR_{ij}^k\,dx^i\otimes dx^j\otimes \partial_{v^k} \qquad \text{ where } \qquad \rR_{ij}^k = \partial_{x^i}\Gamma_j^k - \partial_{x^j}\Gamma_i^k + \Gamma_i^\ell\partial_{v^\ell}\Gamma_j^k - \Gamma_j^\ell\partial_{v^\ell}\Gamma_i^k.$$
    % In particular, for every vector field $\xi$ on $TM$,
    % \begin{equation}\label{simpleReq}
    %     \begin{split}
    %     \rR(\Sigma,\xi) & = \pi_\cV[\pi_\cH\xi,\Sigma - \Lambda]\\
    %     & = (\pi_\cV[\pi_\cH,\Sigma] + [\pi_\cV,\Lambda])\xi,
    %     \end{split}
    % \end{equation}
    % i.e
    % $$i_\Sigma\rR = \pi_\cV[\pi_\cH,\Sigma] + [\pi_\cV,\Lambda].$$
    If $\Lambda = 0$ then $\Ric = -\tr(\rF \circ i_\Sigma\rR) = \eps^i(\rR(E_i,\Sigma))$ and we have the formula
    $$\Ric = v^j(\partial_{x^i}\Gamma_j^i - \partial_{x^j}\Gamma_i^i + \Gamma_i^k\partial_{v^k}\Gamma_j^i - \Gamma_j^k\partial_{v^k}\Gamma_i^i).$$
\end{remark*}
\begin{example}\normalfont
    If $\Sigma$ is the geodesic vector field of a Riemannian metric $g$, then $\Ric$ is the Ricci curvature of the metric, viewed as a function on $TM$ (i.e. $v \mapsto \Ric_g(v) = \Ric_g(v,v)$). More generally, if $\Sigma$ is the geodesic vector field of a Finsler metric $F$ then $\Ric$ is the Finslerian Ricci curvature (see \cite[Section 5.3]{OhBook}; keep in mind that our notation is a bit different). This corresponds to taking $L = F^2/2$ in the next section.
\end{example}
\subsection{The connection associated to a Lagrangian}\label{Lagrangianconnectionsec}

Let $L:TM \to \RR$ be a Lagrangian satisfying assumptions (I)-(III) from Section \ref{assumpsec}. 
In the present chapter we shall also assume that $L$ is smooth on $TM \setminus \bz$. The Legendre transform
$$\cL : T^*M \to TM$$
is then a diffeomorphism from $T^*M\setminus\bz$ to $TM\setminus\bz$. 
The Euler-Lagrange vector field 
$$\Sigma : = X_L$$
is a semispray, given in coordinates by

$$\Sigma = v^i\partial_{x^i} + \Sigma^i\partial_{v^i} \qquad \text{ where } \qquad \Sigma^i = L^{v^iv^j}\left(L_{x^j} - v^kL_{v^jx^k}\right).$$
Here subscripts denote partial derivatives, and superscripts indicate that a matrix inverse was taken. A computation reveals that the connection coefficients are

\begin{equation}\label{LagrangianGammaeq}\Gamma_i^j = \frac12 L^{v^jv^k}\left(\Sigma L_{v^iv^k} + L_{v^kx^i} - L_{v^ix^k}\right).\end{equation}

% and that

% $$\Lambda^j  = \frac12 L^{v^jv^k}\left(v^i\left(\Sigma L_{v^iv^k} - L_{v^kx^i} - L_{v^ix^k}\right) + L_{x^k}\right).$$

Here and below, a vector field followed by a function (e.g. $Xf$ or $X(f)$) will always denote the derivative of the function with respect to the vector field. Let 
$$\omega_0^*: = (\cL^{-1})^*\omega_0$$
denote the pullback of  the canonical symplectic form $\omega_0$ on $T^*M$ via the inverse Legendre transform, and let $g$ be the $(2,0)$-tensor on $TM$ given by
$$g := \omega_0^*(\rF\,\cdot,\cdot),$$
where $\rF$ is the almost complex structure introduced in the previous section. In coordinates:
\begin{equation}\label{gdef}\omega_0^* = g_{ij}\,dx^i\wedge \eps^j \qquad \text{ and } \qquad g = g_{ij}\,dx^i dx^j, \qquad \text{ where }\quad g_{ij} := L_{v^iv^j}.\end{equation}
If we take traces in \eqref{LagrangianGammaeq}, the second and third terms on the right hand side cancel out, and we get
\begin{equation}\label{trGammaeq}\Gamma_i^i = \frac12 L^{v^iv^k}\Sigma L_{v^kv^i} = \Sigma(\log\sqrt{\det g}).\end{equation}

We will also need the quantities $\Lambda_\parallel, \Lambda_\perp^2 :TM\setminus\bz \to \RR$ defined by

$$\Lambda_\parallel : = \frac{g(\rF\Lambda,\Sigma)}{g(\Sigma,\Sigma)} \qquad \text{ and } \qquad \Lambda_\perp^2 : = \frac{g(\rF\Lambda_\perp,\rF\Lambda_\perp)}{g(\Sigma,\Sigma)}, \qquad \text{ where } \qquad \rF\Lambda_\perp : = \rF\Lambda - \Lambda_\parallel\Sigma.$$
In coordinates,
\begin{equation}\label{Lambdacomponentsdef}\Lambda_\parallel = \frac{g_{ij}v^i\Lambda^j}{g_{ij}v^iv^j} \qquad \text{ and } \qquad \Lambda_\perp^2 = \frac{g_{ij}(\Lambda^i - \Lambda_\parallel v^i)(\Lambda^j - \Lambda_\parallel v^j)}{g_{ij}v^iv^j}.\end{equation}

\medskip
Recall that the gradient of a $C^1$ function $u$ is given by $\nabla u = \cL du$. In coordinates,
$$L_{v^i}(\nabla u) = (du)_i = \frac{\partial u}{\partial x^i}.$$
Differentiation with respect to $x^j$ gives
$$L_{v^ix^j}(\nabla u) + L_{v^iv^k}(\nabla u)\partial_{x^j}(\nabla u)^k = \frac{\partial^2u}{\partial x^i\partial x^j},$$
and after rearrangement,
\begin{equation}\label{partialxinablaueq}\partial_{x^i}(\nabla u)^j = L^{v^jv^k}(\nabla u)\left(\partial^2_{x^ix^k}u - L_{v^kx^i}(\nabla u)\right).\end{equation}

\medskip
Let $\nabla^2u$ be the $(1,1)$ form on $M$ given by 
$$(\nabla^2 u)(X) = \rD_X\nabla u$$
for every vector field $X$. In coordinates:
\begin{equation}\label{nabla2ucooreq}\nabla^2u = (\nabla^2u)_i^j\,dx^i\otimes \partial_{x^j} \qquad \text{ where } \qquad (\nabla^2u)_i^j = \partial_{x^i}(\nabla u)^j + \Gamma_i^j\circ\nabla u.\end{equation}
Denote also
$$\Delta u : = \tr(\nabla^2u).$$

From \eqref{LagrangianGammaeq}, \eqref{gdef} and \eqref{partialxinablaueq} it follows that
$$
    (g_{ik}\circ \nabla u)(\nabla^2u)_j^k = \partial^2_{x^ix^j}u + \frac12 \left(\Sigma L_{v^iv^j} - L_{v^jx^i} - L_{v^ix^j}\right)\circ(\nabla u).
$$
In particular, since the right hand side is symmetric, the operator $\nabla^2u$ is self-adjoint with respect to the Riemannian metric 
$$(\nabla u)^*g = (g_{ij}\circ\nabla u)dx^idx^j.$$ 

\begin{proposition}[The Bochner-Weitzenb{\"o}ck formula, cf. \text{\cite[Theorem 4.4]{Oh14}}]\label{Bochnerprop}
    Let $U\subseteq M$ be an open set and let $u:U\to \RR$ be a solution to the Hamilton-Jacobi equation \eqref{HJeq}. Then:
    \begin{align}\label{appBochnereq}
        (d\Delta u)(\nabla u) + |\nabla^2u|^2 + \Ric(\nabla u)=0,
    \end{align}
    where $|\cdot|$ is the Frobenius norm with respect to the Riemannian metric $(\nabla u)^*g$.    
\end{proposition}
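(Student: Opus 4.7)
The plan is to prove \eqref{appBochnereq} by a direct coordinate calculation exploiting one structural consequence of the Hamilton-Jacobi equation. Write $X := \nabla u$. By Lemma \ref{HJlemma}, the integral curves of $X$ are extremals, which in canonical coordinates is the identity
$$X(X^i) = X^j\partial_{x^j}X^i = \Sigma^i\circ X.$$
Equivalently $X_*X = \Sigma|_{\Graph(X)}$, and hence for every smooth function $f:TM\to\RR$,
$$X(f\circ X) = (\Sigma f)\circ X.$$
This ``transfer identity'' is the only place where the Hamilton-Jacobi equation enters.

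Starting from $\Delta u = \partial_{x^i}X^i + \Gamma_i^i\circ X$ (from \eqref{nabla2ucooreq}), I split $X(\Delta u)$ into two pieces. The transfer identity immediately gives $X(\Gamma_i^i\circ X) = (\Sigma\Gamma_i^i)\circ X$. For the other piece, commuting partials rewrites $X(\partial_{x^i}X^i)$ as $\partial_{x^i}(X(X^i)) - (\partial_{x^i}X^j)(\partial_{x^j}X^i)$; substituting $X(X^i) = \Sigma^i\circ X$, applying the chain rule with $\partial_{v^k}\Sigma^i = -2\Gamma_k^i$, and using $\partial_{x^j}X^k = (\nabla^2 u)_j^k - \Gamma_j^k\circ X$ from \eqref{nabla2ucooreq}, the mixed $\Gamma\cdot\nabla^2u$ cross-terms cancel after relabeling and one obtains
$$X(\partial_{x^i}X^i) = (\partial_{x^i}\Sigma^i)\circ X + (\Gamma_i^j\Gamma_j^i)\circ X - (\nabla^2 u)_i^j(\nabla^2 u)_j^i.$$

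Independently, expanding the Fr{\"o}licher-Nijenhuis bracket $[\Sigma,E_i]$ in coordinates via $E_i = \partial_{x^i} - \Gamma_i^k\partial_{v^k}$ and pairing with $\eps^i = dv^i + \Gamma_j^idx^j$ as in \eqref{Riccicoordeq} — once more using $\partial_{v^j}\Sigma^k = -2\Gamma_j^k$ — collapses the Ricci curvature to
$$\Ric = -\Sigma\Gamma_i^i - \partial_{x^i}\Sigma^i - \Gamma_i^j\Gamma_j^i.$$
Adding $X(\Delta u) + \Ric\circ X$ yields exact cancellation of the three ``connection'' terms $\Sigma\Gamma_i^i$, $\partial_{x^i}\Sigma^i$ and $\Gamma_i^j\Gamma_j^i$, leaving precisely $-(\nabla^2 u)_i^j(\nabla^2 u)_j^i$. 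The symmetry of $(g_{ik}\circ X)(\nabla^2 u)_j^k$ in $i,j$ recorded just before the proposition says that $\nabla^2 u$ is self-adjoint with respect to $(\nabla u)^*g$, so in the corresponding Frobenius norm $|\nabla^2 u|^2 = (\nabla^2 u)_i^j(\nabla^2 u)_j^i$, completing the identity.

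The main obstacle is combinatorial bookkeeping: the nonlinearity of the connection forces a correction $-\Gamma_j^k\circ X$ in $\partial_{x^j}X^k$ and a factor $-2\Gamma_k^i$ in $\partial_{v^k}\Sigma^i$, generating several $\Gamma\cdot\Gamma$ and $\Gamma\cdot\nabla^2u$ cross-terms that must be matched precisely against their counterparts coming from $[\Sigma,E_i]$. Once the sign and index conventions are fixed the computation is mechanical, and no direct use is made of the deviation $\Lambda$ — the separation of tangential and normal components of $\nabla^2 u$ that $\Lambda$ governs will only become relevant when passing from \eqref{appBochnereq} to the sharp curvature-dimension inequality \eqref{mainBochnereq}.
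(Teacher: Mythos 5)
Your coordinate computation is correct, and it establishes \eqref{appBochnereq} by a genuinely different route than the paper. The paper argues invariantly: it lifts a test vector field $X$ to $\xi$ on $TU$, splits it as $\pi_\cV\xi + \pi_\cH\xi$, uses the auxiliary identity \eqref{downarrowpiveq} to turn $\pi_\cV[\pi_\cH\xi,\Sigma]$ into $-[\nabla^2u,\nabla u] + (\nabla^2u)^2$, and then traces after setting $X = \partial_{x^i}$. Your version unpacks the same algebra directly in canonical coordinates: the only dynamical input is the transfer identity $X(f\circ X) = (\Sigma f)\circ X$, exactly matching the paper's $(\nabla u)_*(\nabla u) = \Sigma\vert_{\Graph(\nabla u)}$, and the rest is the bookkeeping of $\partial_{v^k}\Sigma^i = -2\Gamma_k^i$, $\partial_{x^j}X^k = (\nabla^2u)_j^k - \Gamma_j^k\circ X$, and the coordinate formula $\Ric = \eps^i([\Sigma,E_i]) = -\partial_{x^i}\Sigma^i - \Sigma\Gamma_i^i - \Gamma_i^j\Gamma_j^i$. (I checked each of these cancellations; the $\Gamma\cdot\nabla^2u$ cross-terms and the $\Gamma\cdot\Gamma$ and $\Sigma\Gamma_i^i$, $\partial_{x^i}\Sigma^i$ terms do cancel as you claim, and the residue $(\nabla^2u)_i^j(\nabla^2u)_j^i$ equals the Frobenius norm by the self-adjointness of $\nabla^2u$ established just before the proposition, which does not itself use the Hamilton--Jacobi equation.) The trade-off is standard: the paper's proof re-uses the horizontal/vertical machinery already built, isolates the one Lie-bracket lemma \eqref{downarrowpiveq}, and is essentially index-free after that; yours avoids the Fr\"olicher--Nijenhuis formalism entirely and is checkable step-by-step but requires careful tracking of signs. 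One small precision worth adding: the Hamilton--Jacobi equation is used not only for the transfer identity but also to guarantee $\Graph(\nabla u) \subseteq SM \subseteq TM\setminus\bz$, which is what makes the compositions $\Gamma_i^j\circ X$, $\Sigma^i\circ X$, and $\eps^i$ well-defined along the graph (these objects are only defined away from the zero section). Your remark that the deviation $\Lambda$ plays no role at this stage is accurate and matches the paper.
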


\begin{lemma}
    For every vector field $Y$ on $U$, we have
    \begin{equation}\label{downarrowpiveq}\downarrow(\pi_\cV[\uparrow Y,\Sigma])\vert_{\Graph(\nabla u)} = - (\nabla^2u)Y + [Y,\nabla u].\end{equation}
\end{lemma}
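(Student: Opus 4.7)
The plan is to prove the identity by a direct computation in canonical local coordinates $(x^i,v^i)$ on $TM$, using the explicit formulas for the semispray, the vertical projection, and the $(1,1)$-tensor $\nabla^2u$ that are already recorded. Since both sides of the identity are tensorial in $Y$, and in fact depend only on the $1$-jet of $Y$ at each base point, working pointwise in coordinates loses no generality. Note also that the Hamilton-Jacobi equation plays no role; the identity should hold for any $C^2$ function $u$.

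First I would write $Y = Y^i\partial_{x^i}$, so that $\uparrow Y = Y^i\partial_{v^i}$ with coefficients depending only on $x$, and compute the Lie bracket with $\Sigma = v^i\partial_{x^i} + \Sigma^i\partial_{v^i}$. The mixed second-derivative terms cancel because the $Y^i$ are independent of the fiber coordinates, and using $\Gamma_i^j = -\tfrac12\partial_{v^i}\Sigma^j$ the bracket becomes
\begin{equation*}
[\uparrow Y,\Sigma] = Y^j\partial_{x^j} - \bigl(2Y^i\Gamma_i^j + v^k\partial_{x^k}Y^j\bigr)\partial_{v^j}.
\end{equation*}
Next, applying the projection $\pi_\cV = \eps^i\otimes\partial_{v^i}$ from \eqref{frameeq}, with $\eps^i = dv^i + \Gamma_k^i\,dx^k$, causes partial cancellation in the $\Gamma$-term: the horizontal correction $Y^k\Gamma_k^i$ absorbs one of the two copies of $Y^m\Gamma_m^i$, giving
\begin{equation*}
\pi_\cV[\uparrow Y,\Sigma] = \bigl(-Y^m\Gamma_m^i - v^k\partial_{x^k}Y^i\bigr)\partial_{v^i}.
\end{equation*}
Restricting to $\Graph(\nabla u)$ by substituting $v^k = (\nabla u)^k$ and then applying $\downarrow$ (which sends $\partial_{v^i}$ to $\partial_{x^i}$) yields the base vector field $\bigl(-Y^m(\Gamma_m^i\circ\nabla u) - (\nabla u)^k\partial_{x^k}Y^i\bigr)\partial_{x^i}$.

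To match this with the right-hand side, I would expand $(\nabla^2u)Y$ via the coordinate formula \eqref{nabla2ucooreq} and use the standard expression $[Y,\nabla u]^i = Y^k\partial_{x^k}(\nabla u)^i - (\nabla u)^k\partial_{x^k}Y^i$ for the Lie bracket on $M$. The $Y^k\partial_{x^k}(\nabla u)^i$ contributions cancel between $-(\nabla^2u)Y$ and $[Y,\nabla u]$, leaving exactly the expression just obtained. I expect the only delicate step to be the application of $\pi_\cV$: the $\Gamma$-correction in $\eps^i$ is precisely what reduces the coefficient of $Y^m\Gamma_m^i$ from $-2$ to $-1$, and $-1$ is what $-(\nabla^2u)Y$ requires; any sign slip there would leave a residual $\Gamma$-term that cannot be absorbed by $[Y,\nabla u]$.
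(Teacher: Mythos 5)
Your computation is correct and follows essentially the same route as the paper's own proof: a direct coordinate calculation of $[\uparrow Y,\Sigma]$ using $\Gamma_i^j = -\tfrac12\partial_{v^i}\Sigma^j$, followed by application of $\pi_\cV$ and $\downarrow$, and matching against \eqref{nabla2ucooreq}. The only cosmetic difference is that the paper rewrites $Y^j\partial_{x^j} = Y^jE_j + Y^j\Gamma_j^i\partial_{v^i}$ and uses $\pi_\cV E_j = 0$, whereas you apply $\pi_\cV = \eps^i\otimes\partial_{v^i}$ directly; these are the same cancellation, and your side remarks (tensoriality in the $1$-jet of $Y$, non-use of the Hamilton-Jacobi equation) are both accurate.
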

\begin{proof}
    By the definitions of $\uparrow,\Gamma_i^j, E_j$ and $\pi_\cV$,
    \begin{align*}
        \pi_\cV[\uparrow Y,\Sigma] & = \pi_\cV[Y^j\partial_{v^j},v^i\partial_{x^i} + \Sigma^i\partial_{v^i}] \\
        & = \pi_\cV(Y^j\partial_{x^j} + Y^j\partial_{v^j}\Sigma^i\partial_{v^i} - v^i\partial_{x^i}Y^j\partial_{v^j}) \\
        & = \pi_\cV(Y^j\partial_{x^j} - 2Y^j\Gamma_j^i\partial_{v^i}- v^i\partial_{x^i}Y^j\partial_{v^j}) \\
        & =\pi_\cV(Y^jE_j - Y^j\Gamma_j^i\partial_{v^i}- v^i\partial_{x^i}Y^j\partial_{v^j}) \\
        & = - Y^i\Gamma_i^j\partial_{v^j}- v^i\partial_{x^i}Y^j\partial_{v^j}.
    \end{align*}
    Thus, by the definition of $\downarrow$,
    \begin{align*}
        \downarrow(\pi_\cV[\uparrow Y,\Sigma])\vert_{\Graph(\nabla u)} & = \downarrow(- Y^i\Gamma_i^j\partial_{v^j}- v^i\partial_{x^i}Y^j\partial_{v^j})\vert_{\Graph(\nabla u)} \\
        & = (- Y^i\Gamma_i^j\circ\nabla u- (\nabla u)^i\partial_{x^i}Y^j)\partial_{x^j}\\
        & = (- Y^i\Gamma_i^j\circ\nabla u- Y^i\partial_{x^i}(\nabla u)^j)\partial_{x^j} + [Y,\nabla u]\\
        & = - (\nabla^2u)Y + [Y,\nabla u],
    \end{align*}
    and \eqref{downarrowpiveq} is proved.
\end{proof}

\begin{proof}[Proof of Proposition \ref{Bochnerprop}]
    We may assume that $U$ is contained in a coordinate domain. By Lemma \ref{HJlemma}, the integral curves of $\nabla u$ are zero-energy extremals, so their canonical lifts are integral curves of $\Sigma$ contained in $SM$. Equivalently,
    $$\Graph(\nabla u)\subseteq SM \qquad \text{ and } \qquad (\nabla u)_*(\nabla u) = \Sigma\vert_{\Graph(\nabla u)}.$$
    Let $X$ be a vector field on $U$. Let $\xi$ be any vector field on $TU$ satisfying
    $$\xi\vert_{\Graph(\nabla u)} = (\nabla u)_*X.$$
    By the definitions of $\rD_X\nabla u $ and $\uparrow$, the vector fields $\pi_\cV\xi$ and $\uparrow \, \rD_X\nabla u$ coincide on $\Graph(\nabla u)$; since $\Sigma$ is tangent to the graph, it follows that they have the same Lie bracket with $\Sigma$. Thus, on $\Graph(\nabla u)$ we have
    \begin{align*}
        \pi_\cV[\pi_\cH\xi,\Sigma] & = \pi_\cV[\xi,\Sigma] - \pi_\cV[\pi_\cV\xi,\Sigma]\\
        & = \pi_\cV[(\nabla u)_*X,(\nabla u)_*(\nabla u)] - \pi_\cV[\uparrow\rD_X\nabla u,\Sigma]\\
        & = \pi_\cV(\nabla u)_*[X,\nabla u] - \pi_\cV[\uparrow(\nabla^2u)X,\Sigma].
    \end{align*}
    Applying $\downarrow$ to both sides and using \eqref{downarrowpiveq} with $Y = (\nabla^2u)X$, we get
    \begin{align*}
        \downarrow\left(\pi_\cV[\pi_\cH\xi,\Sigma]\vert_{\Graph(\nabla u)}\right) & = \rD_{[X,\nabla u]}\nabla u + (\nabla^2u)^2X - [(\nabla^2u)X,\nabla u]\\
        & = (\nabla^2 u)[X,\nabla u] - [(\nabla^2u)X,\nabla u] + (\nabla^2u)^2X\\
        & = -[\nabla^2u,\nabla u]X + (\nabla^2u)^2X.
    \end{align*}
    Now set $X = \partial_{x^i}$ and sum over $i$; since 
    $$\pi_\cH(\nabla u)_*\partial_{x^i} = E_i \qquad \text{ and } \qquad dx^i\circ\downarrow \circ \,\pi_\cV= \eps^i,$$
    formula \eqref{Riccicoordeq} implies that the left hand side becomes $-\Ric(\nabla u)$, and we get
    \begin{align*}
        -\Ric(\nabla u) &= -\tr([\nabla^2u,\nabla u]) + \tr((\nabla^2u)^2)\\
        & = (d\Delta u)(\nabla u) + |\nabla^2u|^2,
    \end{align*}
    as desired (the identity $\tr([A,X]) = -X\tr A = -(d\Tr A)(X)$ is easy to verify in coordinates).
\end{proof}
\begin{remark*}
    If we do not take traces in the above proof, then we obtain a \emph{Riccati equation} for the $(1,1)$-tensor $\nabla^2u$. Write
    \begin{equation*}
        R_{\nabla u} : = \left[\eps^j\left(\pi_\cV[\Sigma,E_i]\right)\circ \nabla u\right]\,dx^i\otimes\partial_{x^j}.
    \end{equation*}
    Then the $(1,1)$ tensor $\nabla^2u$ satisfies
    \begin{equation*}
        [\nabla u,\nabla^2u] + (\nabla^2u)^2 + R_{\nabla u} = 0,
    \end{equation*}
    cf. \text{\cite[Section 4]{Lee}}.
\end{remark*}

\subsection{Weighted Ricci curvature}\label{weightedsec}
Recall (see e.g. \cite{LeeSM}) that a \emph{positive density} on $M$ is a smooth choice of a function
$$\omega_x: (T_xM)^n \to [0,\infty), \qquad \, x \in M,$$
with the property that for every linear transformation $T : T_xM \to T_xM$,
$$\omega_x(T(v_1),\dots,T(v_n)) = |\det T|\cdot\omega_x(v_1,\dots,v_n), \qquad v_1,\dots,v_n \in T_xM.$$
If $M$ is oriented, then specifying a positive density amounts to specifying a volume form. We shall say that a measure $\mu$ on $M$ has a smooth density if there exists a positive density $\omega$ on $M$, which we will call the \emph{density} of $\mu$, such that for every open set $U \subseteq M$,
$$\mu(U) = \int_U \omega.$$
From now on, we fix a smooth manifold $M$, a Lagrangian $L:TM\to \RR$ satisfying assumptions (I)-(III) from Section \ref{presec}, and a measure $\mu$ with a smooth density $\omega$. We denote by $H:T^*M\to \RR$ the corresponding Hamiltonian.

\medskip
Define an operator $\bL$ on $C^2$ functions $u:M \to \RR$ by:
$$\bL  u : = \div_\mu({\nabla}u),$$
where for a vector field $X$, the function $\div_\mu X$ is defined by the relation
$$\sL_X\omega = (\div_\mu X)\omega,$$
where $\mathscr L$ denotes Lie derivative and $\omega$ is the density of the measure $\mu$. If the function $u$ is $C^{1,1}$ then $\bL u$ still makes sense as an element of $L^\infty_{\mathrm{loc}}$, and is well-defined at every point of differentiability of $\nabla u$.

\medskip 
The operator $\nabla$, and therefore also the operator $\bL$, are not linear in general. Furthermore, since $d\phi(\nabla u)$ and $du(\nabla\phi)$ may differ, it is \emph{not} true in general that $\int (\bL u)\cdot\phi d\mu = \int u \cdot (\bL \phi)d\mu$. We mention in passing that the equation $\bL u = 0$ is the Euler-Lagrange equation for the functional $\cE(u) = \int_MH(du)d\mu$.

\medskip
There exists a function $\psi: TM\setminus\bz \to \RR$ such that, in any canonical local chart $(x^i,v^i)$ on $TM$,
$$\pi^*\omega = e^{-\psi}\,\sqrt{\det g}\,|dx^1\wedge \dots \wedge dx^n|.$$
Indeed, it is not hard to check that the right hand side is well defined, i.e. does not depend on the choice of canonical coordinates; that such $\psi$ exists then follows from the fact that both sides vanish on an $n$-tuple of vectors if one of the vectors is vertical (objects with this property are sometimes called \emph{semi-basic}). 

\medskip
\begin{definition}[Weighted Ricci curvature]\label{weightedriccidef}\normalfont
    For every $N \in (-\infty,\infty]\setminus[1,n]$, we define the \emph{weighted Ricci curvature} $\Ric_{\mu,N}$ by
    $$\Ric_{\mu,N}:TM\setminus\bz \to \RR, \qquad \Ric_{\mu,N} : = \Ric_\mu - \frac{(\Sigma\psi - \Lambda_\parallel)^2}{N-n} + 2\Lambda_\perp^2+\Lambda_\parallel^2,$$
    where
    $$\Ric_\mu : = \Ric + \Sigma^2(\psi),$$    
    and $\Lambda_\parallel$ and $\Lambda_\perp^2$ are defined in \eqref{Lambdacomponentsdef}. Here, by $\Sigma^2(\psi)$ we mean $\Sigma(\Sigma(\psi))$, i.e. the second derivative of the function $\psi$ by the vector field $\Sigma$. If $\Sigma\psi - \Lambda_\parallel = 0$ then we also define $$\Ric_{\mu,n} := \Ric_\mu + 2\Lambda_\perp^2+\Lambda_\parallel^2.$$
\end{definition}
\begin{remark*}
    Note that $\Ric_\mu \ne \Ric_{\mu,\infty}$ unless $\Lambda = 0$.
\end{remark*} 
\begin{proposition}[Weighted Bochner-Weitzenb{\"o}ck formula]\label{Weightedbochnerprop}
    Let $U\subseteq M$ be an open set and let $u:U\to \RR$ be a solution to the Hamilton-Jacobi equation \eqref{HJeq}. Then:
    \begin{align}\label{weightedBochnereq}
        (d\bL u)(\nabla u) + |\nabla^2u|^2 + \Ric_{\mu}(\nabla u) & = 0,
    \end{align}
    where $|\cdot|$ is the Frobenius norm with respect to the Riemannian metric $(\nabla u)^*g$. Moreover, for every $N \in (-\infty,\infty]\setminus[1,n)$,
    \begin{equation}\label{weightedBochnerineq}
        (d\bL u)(\nabla u) + \frac{(\bL u)^2}{N-1} + \Ric_{\mu,N}(\nabla u) \le 0.
    \end{equation}
\end{proposition}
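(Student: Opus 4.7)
The plan is to reduce \eqref{weightedBochnereq} to the unweighted formula \eqref{appBochnereq}, and then obtain the inequality \eqref{weightedBochnerineq} via a refined Cauchy-Schwarz adapted to the tangential/normal decomposition of the Hessian.

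First I would establish the pointwise identity
\[
\bL u = \Delta u - (\Sigma\psi)\circ\nabla u.
\]
In a canonical chart write $\mu = \omega_0\,|dx^1\wedge\cdots\wedge dx^n|$; the defining relation for $\psi$ reads $\log\omega_0(x) = -\psi(x,v) + \log\sqrt{\det g(x,v)}$ for every $v$, and differentiating in $v^i$ yields the semi-basic identity $\partial_{v^i}\psi = \partial_{v^i}\log\sqrt{\det g}$. Comparing the coordinate expressions $\bL u = \partial_{x^i}(\nabla u)^i + (\nabla u)^i\partial_{x^i}\log\omega_0$ and, via \eqref{trGammaeq}, $\Delta u = \partial_{x^i}(\nabla u)^i + (\Sigma\log\sqrt{\det g})\circ\nabla u$, one finds after substitution that all terms involving $\log\omega_0$ and $\log\sqrt{\det g}$ cancel except $-(\Sigma\psi)\circ\nabla u$. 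Then, since by Lemma \ref{HJlemma} integral curves of $\nabla u$ lift to integral curves of $\Sigma$ on $\Graph(\nabla u)$, one has $d(f\circ\nabla u)(\nabla u) = (\Sigma f)\circ\nabla u$ for every $f\in C^1(TM\setminus\bz)$; applying this to $f = \Sigma\psi$ and subtracting from \eqref{appBochnereq} yields \eqref{weightedBochnereq} upon invoking the definition $\Ric_\mu = \Ric + \Sigma^2\psi$.

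To prove \eqref{weightedBochnerineq}, I would bound $|\nabla^2 u|^2$ from below via a tangential/normal decomposition. By Lemma \ref{HJlemma} and \eqref{Lambdaeq}, $\nabla^2 u(\nabla u) = \downarrow\Lambda\circ\nabla u$, and $\nabla^2 u$ is self-adjoint with respect to $(\nabla u)^* g$. In an orthonormal frame with first vector $\nabla u/|\nabla u|$, these two facts force the $(1,1)$-entry of $\nabla^2 u$ to equal $\Lambda_\parallel$ and the sum of squares of the first-row off-diagonal entries to equal $\Lambda_\perp^2$—both directly from \eqref{Lambdacomponentsdef}. Applying Cauchy-Schwarz to the $(n-1)\times(n-1)$ lower-right block, whose trace equals $\Delta u - \Lambda_\parallel$, gives
\[
|\nabla^2 u|^2 \ge \Lambda_\parallel^2 + 2\Lambda_\perp^2 + \frac{(\Delta u - \Lambda_\parallel)^2}{n-1}.
\]

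Finally, setting $a = \bL u$ and $b = (\Sigma\psi - \Lambda_\parallel)\circ\nabla u$, the identity from the first step gives $\Delta u - \Lambda_\parallel = a+b$, so combining \eqref{weightedBochnereq} with the Hessian bound reduces \eqref{weightedBochnerineq} to the pointwise algebraic inequality $(a+b)^2/(n-1) + b^2/(N-n) \ge a^2/(N-1)$. This is an instance of the perfect-square identity
\[
(N-1)(n-1)(N-n)\Bigl[\tfrac{(a+b)^2}{n-1} + \tfrac{b^2}{N-n} - \tfrac{a^2}{N-1}\Bigr] = \bigl[(N-n)a + (N-1)b\bigr]^2,
\]
and the hypothesis $N\notin[1,n)$ ensures the prefactor $(N-1)(N-n)$ is nonnegative so that no sign flips on dividing through. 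The boundary cases $N=n$ (where $b=0$ by definition of $\Ric_{\mu,n}$) and $N=\infty$ (where one needs only the trivial bound $|\nabla^2 u|^2 \ge \Lambda_\parallel^2 + 2\Lambda_\perp^2$) follow directly. The conceptually important step is the tangential/normal separation of the Hessian: it is precisely what replaces $N$ by $N-1$ in the denominator and what produces the extra $\Lambda_\parallel^2, \Lambda_\perp^2$ terms in the definition of $\Ric_{\mu,N}$, reflecting the non-homogeneity of the Euler-Lagrange flow noted in the discussion following Theorem \ref{mainthm}.
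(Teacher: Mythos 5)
Your proposal follows essentially the same route as the paper's proof: reduce to the unweighted Bochner formula \eqref{appBochnereq} via the identity $\bL u = \Delta u - (\Sigma\psi)\circ\nabla u$ (using that $(\nabla u)_*\nabla u = \Sigma$ on $\Graph(\nabla u)$ to turn $\nabla u$-derivatives of composites into $\Sigma$-derivatives), then decompose $\nabla^2 u$ into its tangential part (first row and column in a $\nabla u$-adapted orthonormal frame, with Frobenius norm squared $\Lambda_\parallel^2 + 2\Lambda_\perp^2$) and the orthogonal block (whose trace is $\Delta u - \Lambda_\parallel$), and apply Cauchy--Schwarz to the latter. The only cosmetic deviation is in the final algebraic step: the paper invokes the standard inequality $\tfrac{x^2}{a}+\tfrac{y^2}{b}\ge\tfrac{(x-y)^2}{a+b}$ with $(x,y,a,b)=(\Sigma\psi-\Lambda_\parallel,\;\Lambda_\parallel-\Delta u,\;N-n,\;n-1)$, whereas you verify the same inequality via the explicit perfect-square identity $(N-1)(n-1)(N-n)\bigl[\tfrac{(a+b)^2}{n-1}+\tfrac{b^2}{N-n}-\tfrac{a^2}{N-1}\bigr]=\bigl[(N-n)a+(N-1)b\bigr]^2$ (which checks out), and you correctly track the sign constraints coming from $N\notin[1,n)$ and the boundary cases $N=n$, $N=\infty$. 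Both are the same estimate; yours just exposes the discriminant-style cancellation that the cited inequality packages away.
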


\begin{proof}
Since $u$ solves the Hamilton-Jacobi equation, 
\begin{equation}\label{nablaustarnablaueq}
    (\nabla u)_*(\nabla u) = \Sigma\vert_{\Graph(\nabla u)}.
\end{equation}
By the definition of $\psi$, the measure $\mu$ is given by 
$$\mu = e^{-\psi\circ\nabla u}\,\Vol_{(\nabla u)^*g}.$$
Thus, using \eqref{trGammaeq}, \eqref{nabla2ucooreq} and \eqref{nablaustarnablaueq},
\begin{align*}
    \bL u & = \div_\mu(\nabla u)\\  
    & = \partial_{x^j}(\nabla u)^j + (\nabla u)(\log\sqrt{\det (\nabla u)^*g}-\psi\circ\nabla u)\\
    & = \partial_{x^j}(\nabla u)^j + \Gamma_j^j\circ\nabla u - (\Sigma\psi)\circ\nabla u\\
    & = (\nabla^2u)_j^j - (\Sigma\psi)\circ\nabla u\\
    & = \Delta u - (\Sigma\psi)\circ\nabla u,
\end{align*}
and therefore
$$
    (d\bL u)(\nabla u) = (d\Delta u)(\nabla u) - (\Sigma^2\psi)\circ\nabla u.
$$
Together with \eqref{appBochnereq} and the definition of $\Ric_\mu$ this implies \eqref{weightedBochnereq}.

\medskip
Let us abbreviate, by a slight abuse of notation,
$$\Lambda : = \,\downarrow\,\Lambda\circ\nabla u, \qquad \Lambda_\parallel : = \Lambda_\parallel\circ\nabla u, \qquad \Lambda_\perp^2 = \Lambda_\perp^2\circ\nabla u.$$
The definitions of $\Lambda_\parallel$ and $\Lambda_\perp^2$ imply that
\begin{equation*}
    \Lambda_\parallel = \frac{\left<\Lambda,\nabla u\right>_{(\nabla u)^*g}}{|\nabla u|^2_{(\nabla u)^*g}} \qquad \text{ and } \qquad \Lambda_\perp^2 = \frac{|\Lambda - \Lambda_\parallel\nabla u|_{(\nabla u)^*g}^2}{|\nabla u|^2_{(\nabla u)^*g}}.
\end{equation*}
In the sequel, norms and inner products are with resepct to $(\nabla u)^*g$. By \eqref{Lambdaeq},
$$(\nabla^2u)\nabla u = \rD_{\nabla u}\nabla u = \Lambda.$$
We can therefore write
$$\nabla ^2u = \nabla^2_\parallel u + \nabla^2_\perp u,$$
where $\nabla^2_\perp u$ is self-adjoint and annihilates $\nabla u$, and
    \begin{align*}\nabla^2_\parallel u 
    & = \frac{(\nabla u)^\flat \otimes(\Lambda - \Lambda_\parallel \nabla u) + (\Lambda - \Lambda_\parallel\nabla u)^\flat\otimes \nabla u}{\left<\nabla u ,\nabla u\right>} + \Lambda_\parallel\cdot\frac{(\nabla u)^\flat\otimes \nabla u}{\left<\nabla u ,\nabla u\right>}.
    \end{align*}
Here $\flat$ is the musical isomorphism $v^\flat = \left<v,\cdot\right>$. Note that the summands in the definition of $\nabla^2_\parallel u$ are mutually orthogonal in the Frobenius inner product, since $\Lambda_\parallel \nabla u$ is the component of $\Lambda$ in the direction of $\nabla u$. Hence by the definition of $\Lambda_\perp^2$ and $\Lambda_\parallel$,
$$|\nabla_\parallel^2u|^2 = 2\Lambda_\perp^2 + \Lambda_\parallel^2.$$
By the Cauchy-Schwarz inequality,
\begin{equation}\label{HessuCSeq}
    \left|\nabla^2_\perp u\right|^2 \ge \frac{\tr(\nabla^2_\perp u)^2}{n-1} = \frac{(\Delta u - \Lambda_\parallel)^2}{n-1},
\end{equation}
where the second equality holds true since $\left<(\nabla^2u)\nabla u,\nabla u\right>/\left<\nabla u,\nabla u\right> = \Lambda_\parallel$. Thus
\begin{align}\label{Hessuboundeq}
    |\nabla^2u|^2 \ge  \frac{(\Delta u - \Lambda_\parallel)^2}{n-1} + |\nabla_\parallel^2u|^2 = \frac{(\Delta u - \Lambda_\parallel)^2}{n-1} + 2\Lambda_\perp^2 + \Lambda_\parallel^2.
\end{align} 
As in \cite[Lemma 3.21]{Kl}, we now use the inequality
	$$\frac{x^2}{a} + \frac{y^2}{b} \ge \frac{(x-y)^2}{a + b}$$
	which holds for $b>0$ and $a\notin[-b,0]$, to obtain
	\begin{equation}\label{squaretrickeq}
		\frac{(\Sigma\psi - \Lambda_\parallel)^2}{N - n} + \frac{(\Delta u - \Lambda_\parallel)^2}{n-1} \ge \frac{(\Sigma\psi - \Delta u)^2}{N - 1} = \frac{(\bL u)^2}{N-1}
	\end{equation}
	since $\bL u = \Delta u - (\Sigma\psi)\circ\nabla u$.  Note that this inequality still holds true (and in fact becomes an equality) if $N = n$ and $\Sigma\psi - \Lambda_\parallel = 0$ and we understand the first term on the left hand side to be zero. Putting together \eqref{weightedBochnereq}, \eqref{Hessuboundeq} and \eqref{squaretrickeq} we get
    \begin{align*}
        (d\bL u)(\nabla u) + \Ric_{\mu,N}(\nabla u) & = (d\bL u)(\nabla u) + \Ric_{\mu}(\nabla u) - \frac{(\Sigma\psi - \Lambda_\parallel)^2}{N-n} + 2\Lambda_\perp^2+\Lambda_\parallel^2\\
        & \le -|\nabla^2u|^2  - \frac{(\Sigma\psi - \Lambda_\parallel)^2}{N-n}+ 2\Lambda_\perp^2+\Lambda_\parallel^2\\
        & \le -\frac{(\Delta u - \Lambda_\parallel)^2}{n-1} + \frac{(\Delta u - \Lambda_\parallel)^2}{n-1} - \frac{(\bL u)^2}{N-1}\\
        & = -\frac{(\bL u)^2}{N-1},
    \end{align*}
    as desired.
\end{proof}
From the proof of Proposition \ref{Weightedbochnerprop} we obtain also the equality case:
\begin{lemma}\label{equalitylemma}
    For every $x\in M$ and every $v \in S_xM$, there exists a solution $u$ to the Hamilton-Jacobi equation defined in neighborhood of $x$ such that $\nabla u\vert_x = v$ and equality holds in \eqref{weightedBochnerineq} at the point $x$.
    % moreover
    %     $$\nabla^2u\vert_x = \frac{(\nabla u)^\flat \otimes \Lambda + \Lambda^\flat \otimes \nabla u}{\left<\nabla u ,\nabla u\right>} - \Lambda_\parallel\cdot\frac{(\nabla u)^\flat \otimes \nabla u}{\left<\nabla u ,\nabla u\right>} + c_0\cdot \pi_{(\nabla u)^\perp},$$
    % where $\pi_{(\nabla u)^\perp}$ indicates projection onto the orthogonal complement of $\nabla u$,
    %     $$\Lambda := \,\downarrow\Lambda(v), \qquad \Lambda_\parallel : = \Lambda_\parallel(v),$$
    % and
    %     $$c_0 : = \frac{\Lambda_\parallel - \Sigma\psi}{N-n}.$$
    % If $N = n$ and $\Lambda_\parallel =\Sigma\psi$ then we can take $c_0$ to be any constant.
\end{lemma}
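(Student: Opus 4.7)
The plan is to prescribe the $2$-jet of $u$ at $x$ so as to force equality in every inequality used in the proof of Proposition \ref{Weightedbochnerprop}, and then to invoke Lemma \ref{characlemma} to realize this $2$-jet by an honest local solution of \eqref{HJeq}. First I would identify the equality conditions by tracing back through that proof. Two non-trivial inequalities are used: the Cauchy-Schwarz bound \eqref{HessuCSeq} on $\nabla^2_\perp u$, and the elementary algebraic inequality \eqref{squaretrickeq}. Equality in the first requires the compression of $\nabla^2_\perp u$ to $(v)^\perp$ (orthogonality with respect to $(\nabla u)^* g$) to be a scalar multiple of the identity, while equality in the second requires
\[
    (n-1)(\Sigma\psi - \Lambda_\parallel) + (N-n)(\Delta u - \Lambda_\parallel) = 0,
\]
with the obvious conventions in the degenerate cases $N = n$ (where by hypothesis $\Sigma\psi = \Lambda_\parallel$) and $N = \infty$ (where the condition reduces to $\Delta u = \Lambda_\parallel$).

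Next I would construct a symmetric operator $B : T_xM \to T_xM$ realizing this prescription at $x$. By \eqref{Lambdaeq}, any HJ solution $u$ satisfies $Bv = \Lambda$, where $\Lambda := {\downarrow}\Lambda(v) \in T_xM$; combined with self-adjointness of $B$ with respect to $g|_v$, this determines $B$ on $\mathrm{span}\{v\}$ and the $v$-component of $Bw$ for each $w \in (v)^\perp$. The remaining freedom -- the compression of $B$ to $(v)^\perp$ -- I would set equal to $\frac{\Delta u - \Lambda_\parallel}{n-1} \cdot \mathrm{id}_{(v)^\perp}$, where the scalar is the value forced by the equality condition above.

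Finally, I would apply Lemma \ref{characlemma} with $p = \cL^{-1} v \in S_x^* M$ and $W \subseteq T_p T^*M$ the graph of the symmetric bilinear form associated to $B$ under the canonical splitting of $T_p T^* M$. Symmetry of $B$ makes $W$ Lagrangian, and since $L$ is smooth on $TM \setminus \bz$ the Hamiltonian $H$ is smooth near $p$; the lemma then produces a local $C^3$ solution $u$ with $\nabla u|_x = v$ and $\nabla^2 u|_x = B$. By construction both of the inequalities identified above are saturated at $x$, so equality holds in \eqref{weightedBochnerineq} at $x$. The main obstacle is verifying that the abstract constraint $W \subseteq \ker dH$ of Lemma \ref{characlemma} translates exactly to the imposed equation $Bv = \Lambda$; this reduces, via \eqref{Legeq2} and the formula \eqref{LagrangianGammaeq} for the connection coefficients, to a short coordinate computation matching the Hamiltonian vector field with the Euler-Lagrange semispray at $v$.
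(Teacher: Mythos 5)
Your proposal is correct and follows essentially the same approach as the paper: trace the two inequalities \eqref{HessuCSeq} and \eqref{squaretrickeq} to their equality cases, prescribe $\nabla^2 u|_x$ accordingly (with the compression to $(\nabla u)^\perp$ a scalar multiple of the identity), and realize this second-order data via Lemma \ref{characlemma}. You are somewhat more explicit than the paper about how Lemma \ref{characlemma} is invoked — in particular, that $W\subseteq\ker dH$ unpacks, via $X_H\in W = W^\omega$, to the constraint $Bv = \Lambda$, leaving the compression of $B$ to $(v)^\perp$ free — which is a useful clarification of a step the paper glosses over.
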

\begin{proof}
    Equality holds in \eqref{HessuCSeq} if and only if $\nabla^2_\perp u$ is a scalar multiple of $\pi_{(\nabla u)^\perp}$, where $\pi_{(\nabla u)^\perp}$ denotes projection onto the orthogonal complement of $\nabla u$, while equality holds in \eqref{squaretrickeq} if and only if 
    \begin{equation}\label{Deltauequality}
        (N-n)\Delta u + (n-1)(\Sigma \psi)\circ\nabla u - (N-1)\Lambda_\parallel= 0.
    \end{equation}
    By Lemma \ref{characlemma}, for any $c_0 \in \RR$ we can find a local solution $u$ to the Hamilton-Jacobi equation satisfying $\nabla u\vert_x = v$ and $(\nabla ^2 u)^\perp = c_0\cdot\pi_{(\nabla u)^\perp}$. Taking
    $$c_0 : = \frac{\Lambda_\parallel - \Sigma\psi}{N-n}$$
    guarantees that \eqref{Deltauequality} holds (if $N = n$ and $\Sigma\psi = \Lambda_\parallel$ then any choice of $c_0$ works).
\end{proof}
\subsection{The curvature-dimension condition}
From Proposition \ref{Weightedbochnerprop} and Lemma \ref{equalitylemma} we obtain the equivalence (i)$\iff$(ii) in Theorem \ref{mainthm}:
\begin{corollary}\label{equivalencecor}
    Let $L$ be a Lagrangian satisfying assumptions (I)-(III) from Section \ref{presec} and smooth on $TM \setminus \bz$. For every $N \in (-\infty,\infty]\setminus[1,n)$ and ${K} \in \RR$, the following are equivalent:
    \begin{enumerate}
        \item $\Ric_{\mu,N} \ge {K}$ on $SM$.
        \item For every local solution $u$ to the Hamilton-Jacobi equation $H(du) = 0$, 
        \begin{equation*}(d\bL u)(\nabla u) + \frac{(\bL u)^2}{N-1} + {K} \le 0.\end{equation*}
    \end{enumerate}
\end{corollary}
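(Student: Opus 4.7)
The plan is to deduce the corollary directly from Proposition \ref{Weightedbochnerprop} together with the equality case recorded in Lemma \ref{equalitylemma}. Both ingredients are already in hand, so the argument is essentially bookkeeping, but the direction (2)$\implies$(1) requires invoking Lemma \ref{equalitylemma} in the right way to exhibit, at an arbitrary point of $SM$, a Hamilton--Jacobi solution that saturates the Bochner inequality.

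For (1)$\implies$(2): assume $\Ric_{\mu,N}\ge K$ on $SM$. Let $u$ be any local solution to $H(du)=0$. By Lemma \ref{HJlemma}, the image of $\nabla u$ lies in $SM$, so $\Ric_{\mu,N}(\nabla u)\ge K$ pointwise. Substituting this lower bound into \eqref{weightedBochnerineq} gives
\[(d\bL u)(\nabla u) + \frac{(\bL u)^2}{N-1} + K \;\le\; (d\bL u)(\nabla u) + \frac{(\bL u)^2}{N-1} + \Ric_{\mu,N}(\nabla u) \;\le\; 0,\]
which is exactly (2).

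For (2)$\implies$(1): fix $x\in M$ and $v\in S_xM$. By Lemma \ref{equalitylemma}, there is a local solution $u$ to the Hamilton--Jacobi equation on a neighborhood of $x$ satisfying $\nabla u\vert_x = v$ and realizing equality in \eqref{weightedBochnerineq} at $x$, i.e.\
\[(d\bL u)(\nabla u) + \frac{(\bL u)^2}{N-1} + \Ric_{\mu,N}(\nabla u) \;=\; 0 \quad \text{at } x.\]
Combining this identity with the hypothesis (2) applied to the same $u$ at $x$ yields $\Ric_{\mu,N}(v)\ge K$. Since $x$ and $v\in S_xM$ were arbitrary, (1) follows.

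The only non-routine content here is Lemma \ref{equalitylemma}, which was already established using the freedom in Lemma \ref{characlemma} to prescribe the tangent space to the graph of $du$ at a point; the corollary itself is then a one-line matching of the two inequalities. There is no real obstacle beyond making sure the range $N\in(-\infty,\infty]\setminus[1,n)$ is respected so that both the Bochner inequality and the equality case apply, which they do by the very hypotheses of Proposition \ref{Weightedbochnerprop} and Lemma \ref{equalitylemma}.
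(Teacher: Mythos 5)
Your proof is correct and follows the paper's intended argument exactly: the paper itself presents the corollary as an immediate consequence of Proposition \ref{Weightedbochnerprop} (giving (1)$\implies$(2)) and Lemma \ref{equalitylemma} (giving (2)$\implies$(1)), and your two directions fill in precisely that. The only cosmetic remark is that $\nabla u \in SM$ whenever $H(du)=0$ follows more directly from the identity $E\circ\nabla u = H\circ du$ than from Lemma \ref{HJlemma}, but that citation is also valid.
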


\begin{definition}[The curvature-dimension condition]\label{CDdef}\normalfont
    If any of the equivalent conditions in Corollary \ref{equivalencecor} holds, then we say that the pair $(\mu,L)$ satisfies $\CD({K},N)$. 
\end{definition}
The second condition in Corollary \ref{equivalencecor} still makes sense if $L$ is only $C^2$, since by Lemma \ref{characlemma} we can always find a local $C^3$ solution with prescribed second-order data at a point. Thus we can take this condition to be the definition of the $\CD(K,N)$ condition for $C^2$ Lagrangians. We will show in Theorem \ref{DCthm} that the equivalence (ii)$\iff$(iii) in Theorem \ref{mainthm} requires only $C^2$ regularity of $L$. The third condition in Theorem \ref{mainthm} can thus be taken as the definition of the $\CD(K,N)$ condition for Lagrangians of regularity lower than $C^2$.

\begin{example}[Weighted Riemannian manifolds]\normalfont
    Let $(M,g)$ be an $n$-dimensional Riemannian manifold, let $L$ be the Riemannian Lagrangian
    $$L(v) = \frac{|v|_g^2 + 1}{2}, \qquad v \in T_xM, \, x \in M,$$
    and let
    $$\mu = e^{-\psi}\Vol_g,$$ where $\psi : M \to \RR$ is a smooth function and $\Vol_g$ is the Riemannian measure. The gradient is the Riemannian gradient and the operator $\bL$ is given by
    $$\bL u = \Delta_gu - \left<\nabla^g\psi,\nabla^gu\right>,$$
    where $\Delta_g$ is the Riemannian Laplacian and $\nabla^g$ is the Riemannian gradient.  The Hamilton-Jacobi equation is $|\nabla u|_g \equiv 1$. 
    The $\CD_{}({K},N)$ condition for is then equivalent to the inequality
    $$\Ric_g + \Hess_g\psi - \frac{(d\psi)^2}{N-n} \ge {K}\cdot g,$$
    see e.g. \cite[Chapter 14, Theorem 14.8]{Vil}.
\end{example}
\begin{example}[Weighted Finsler manifolds]\normalfont
    If $L = (F^2+1)/2$, where $F$ is a Finsler metric, then the $\CD_{}({K},N)$ condition is equivalent to the condition $\Ric_{\mu,N} \ge {K}$, where $\Ric_{\mu,N}$ is the weighted Ricci curvature of the Finsler metric $F$ and the measure $\mu$, see \cite[Part II]{OhBook}.
\end{example}
\begin{example}[Needles]\label{needleexample}\normalfont
    Let $M = I \subseteq \RR$ be an interval, let $l$ be the Lagrangian $l = (dt^2+1)/2$ (where $dt^2$ is the Euclidean metric on $I$), and let $m = e^{-\psi}m_0$, where $m_0$ is the Lebesgue measure on $I$ and $\psi : I \to \RR$ is a smooth function. Then the pair $(m,l)$ satisfies $\CD({K},N)$ if and only if
    $$\ddot\psi - \frac{\dot\psi^2}{N-1} \ge {K}.$$
\end{example}
More examples will be discussed in detail in Section \ref{examplesec}.

\section{Dominated functions}\label{domsec}

From this point on, we fix a smooth manifold $M$, a Lagrangian $L$ satisfying assumptions (I)-(III) from Section \ref{assumpsec}, and a measure $\mu$ with a smooth density.

\medskip
In this chapter we will establish some properties of \emph{dominated functions}. In the language of optimal transport, dominated functions are precisely \emph{$\cc$-convex functions}, see e.g. \cite[Chapter 5]{Vil}; from the analytic perspective, they are viscosity subsolutions to the Hamilton-Jacobi equation, see \cite[Section 5]{FM}. In the Riemannian case, dominated functions are simply 1-Lipschitz functions. The results of the present chapter will be used in the next chapter to construct needle decompositions and displacement interpolations.

\begin{definition}\normalfont
    A function $u : M \to \RR$ is said to be \emph{dominated} if for every piecewise-$C^1$ curve $\gamma:[a,b] \to M$,
    \begin{equation*}u(\gamma(b)) - u(\gamma(a)) \le \int_a^bL(\dot\gamma(t))dt.\end{equation*}
    Equivalently, 
    $$u(x') - u(x) \le \cc(x,x') \qquad \text{ for all } x,x' \in M.$$
\end{definition}

 \begin{definition}[Calibrated curves]\normalfont
    A \emph{calibrated curve} is a piecewise-$C^1$ curve $\gamma : I \to M$, where $I \subseteq \RR$ is an interval, which satisfies
     $$u(\gamma(t')) - u(\gamma(t)) = \int_t^{t'} L(\dot\gamma(s))ds \qquad \text{ for all }t < t', \quad t,t' \in I.$$
 \end{definition}

\begin{remark*}\normalfont
    The notion of a dominated function depends on the Lagrangian $L$, and the notion of a calibrated curve depends on both $L$ and the dominated function $u$. Since it will always be clear from the context which Lagrangian and which dominated function we are considering, we will not indicate this dependence in the notation.
\end{remark*}

 Let us state a few elementary properties of dominated functions and calibrated curves, whose simple proofs can be found in \cite{Cont,CI,FM}.

\begin{proposition}\label{dominatedprop}
    Let $u : M \to \RR$ be a dominated function and let $\gamma:(a,b)\to M$ be a calibrated curve. Then
    \begin{enumerate}[(a)]
        \item The function $u$ is locally Lipschitz.
        \item The curve $\gamma$ is a minimizing extremal.
        \item For all $ t < t' \in I$, 
        \begin{equation*}
            u(\gamma(t')) - u(\gamma(t)) = \cc(\gamma(t),\gamma(t')).
        \end{equation*}
        \item The function $u$ is differentiable at $\gamma(t)$ for all $t \in (a,b)$ and satisfies $$\dot\gamma(t) = {\nabla}u\vert_{\gamma(t)} \qquad \text{ and } \qquad H(du\vert_{\gamma(t)}) = 0.$$
        Together with (b), this implies that for all $a < t < t' < b$,
        \begin{equation*}
            du\vert_{\gamma(t')} = \Phi^H_{t'-t}\, du\vert_{\gamma(t)}. 
        \end{equation*}
        \item If two calibrated curves contain a common point in their relative interiors, then they are both part of a single calibrated curve.   
    \end{enumerate}
\end{proposition}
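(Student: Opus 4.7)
For (a), the plan is to reduce local Lipschitzness of $u$ to that of the cost $\cc$. Fix $x \in M$ and a compact neighborhood $K$ of $x$ equipped with some auxiliary Riemannian metric $g$. For $x_1, x_2 \in K$ sufficiently close, connect them by a unit-speed $g$-geodesic of length $d_g(x_1,x_2)$ staying in $K$; since $L$ is bounded on the compact set $\{v \in TK : |v|_g = 1\}$, this gives $\cc(x_1,x_2) \le C \cdot d_g(x_1,x_2)$ and symmetrically for $\cc(x_2,x_1)$. Domination then yields $|u(x_1) - u(x_2)| \le C \cdot d_g(x_1,x_2)$.

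For (b) and (c), the key identity is the chain
\[
\int_t^{t'} L(\dot\gamma) ds = u(\gamma(t')) - u(\gamma(t)) \le \cc(\gamma(t),\gamma(t')) \le \int_t^{t'} L(\dot\gamma) ds,
\]
where the first equality uses calibration, the next inequality uses domination, and the last uses the definition of $\cc$. So both inequalities are equalities, proving $\gamma$ is a minimizing extremal on every subinterval and giving the cost formula. Tonelli's theorem then upgrades $\gamma$ to a genuine extremal.

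Step (d) is the substantial part. Fix $t \in (a,b)$ and pick $a_0 \in (a,t)$ and $b_0 \in (t,b)$ close enough to $t$ that $\gamma\vert_{[a_0, b_0]}$ lies in a coordinate patch where the Euler-Lagrange flow is well-controlled. Define
\[
v(y) := \cc(\gamma(a_0), y), \qquad w(y) := -\cc(y, \gamma(b_0)).
\]
By domination and part (c), $u - v$ attains a local maximum at $\gamma(t)$ while $u - w$ attains a local minimum at $\gamma(t)$. The plan is to show that both $v$ and $w$ are of class $C^1$ near $\gamma(t)$ with
\[
dv\vert_{\gamma(t)} = dw\vert_{\gamma(t)} = \cL^{-1}(\dot\gamma(t)).
\]
Once this is established, the maximum principle for $u - v$ forces $\cL^{-1}(\dot\gamma(t)) \in D^+ u(\gamma(t))$ and the minimum principle for $u - w$ forces $\cL^{-1}(\dot\gamma(t)) \in D^- u(\gamma(t))$; nonemptiness of both semidifferentials at a common point implies $u$ is differentiable at $\gamma(t)$ with $du\vert_{\gamma(t)} = \cL^{-1}(\dot\gamma(t))$, equivalently $\nabla u\vert_{\gamma(t)} = \dot\gamma(t)$. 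Since $\dot\gamma(t) \in SM$, we then get $H(du\vert_{\gamma(t)}) = E(\dot\gamma(t)) = 0$, and the Hamiltonian flow assertion follows because the curve $t' \mapsto du\vert_{\gamma(t')}$ is precisely the lift of $\gamma$ to $S^*M$ along which $X_H$ is tangent.

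The main technical obstacle is thus the $C^1$-smoothness of $v$ and $w$ near $\gamma(t)$. For $v$, the idea is to use the partial exponential map
\[
E_s : S_{\gamma(a_0)} M \to M, \qquad E_s(\xi) := \pi(\Phi^L_s(\xi)).
\]
By Lemma \ref{localconvlemma}, shrinking $b_0 - a_0$ if necessary, $E_{t-a_0}$ is a local diffeomorphism near $\dot\gamma(a_0)$, so each $y$ close to $\gamma(t)$ is connected to $\gamma(a_0)$ by a unique minimizing extremal $\theta_y$ of duration $t - a_0$ depending smoothly on $y$. Hence $v(y) = \int_0^{t-a_0} L(\dot\theta_y) ds$ is smooth in $y$, and the classical first-variation formula at the terminal endpoint gives $dv\vert_{\gamma(t)} = \cL^{-1}(\dot\gamma(t))$. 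The argument for $w$ is identical, running the flow backward from $\gamma(b_0)$. This is where most of the work sits, and it is the only step requiring real care beyond bookkeeping.

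Finally, for (e), suppose $\gamma_1 : I_1 \to M$ and $\gamma_2 : I_2 \to M$ are calibrated curves with $\gamma_1(t_1) = \gamma_2(t_2) = x$ in their relative interiors. By (d) applied to each, $\dot\gamma_1(t_1) = \nabla u\vert_x = \dot\gamma_2(t_2)$; since both curves solve the Euler-Lagrange equation (by (b)) with the same initial data at $x$, they coincide on the overlap by uniqueness of solutions to Euler-Lagrange, and concatenate to a single calibrated curve on $(I_1 - t_1) \cup (I_2 - t_2)$ (after reparametrization).
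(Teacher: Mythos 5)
Your overall plan is sound, and the strategy for (d) — touching the graph of $u$ from above and below by smooth ``local cost'' functions $v$ and $w$ — is a legitimate geometric alternative to the linear-perturbation/Taylor-expansion argument used in the references the paper cites (and reprised in the paper's own Lemma~\ref{taylorlemma}). Parts (a), (b), (c) and (e) are correct. However, there is a genuine error in the way you justify smoothness of $v$ in (d).

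You claim that $E_{t-a_0} : S_{\gamma(a_0)}M \to M$, $\xi \mapsto \pi(\Phi^L_{t-a_0}\xi)$, is a local diffeomorphism near $\dot\gamma(a_0)$. This cannot be: since $0$ is a regular value of the energy, the indicatrix $S_{\gamma(a_0)}M$ is an $(n-1)$-dimensional submanifold of $T_{\gamma(a_0)}M$, and a map from an $(n-1)$-manifold to the $n$-manifold $M$ is never a local diffeomorphism. Relatedly, the claim that the minimizing extremal from $\gamma(a_0)$ to a nearby point $y$ has duration exactly $t - a_0$ is false; since $\cc$ allows a free time interval, the duration of the unique minimizing extremal varies with $y$. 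The map you actually want is the $n$-dimensional one
\[
(\xi, s) \longmapsto \pi\bigl(\Phi^L_s\xi\bigr), \qquad \xi \in S_{\gamma(a_0)}M, \quad s > 0,
\]
whose invertibility at $(\dot\gamma(a_0), t - a_0)$ does not follow from the uniqueness assertion in Lemma~\ref{localconvlemma} alone — it requires that $\gamma$ have no conjugate point at time $t$ relative to $a_0$. This does hold, because by (b) the restriction $\gamma\vert_{[a_0,t']}$ is still minimizing for some $t' > t$ (recall $t < b$), and a minimizing extremal admits no conjugate points strictly between its endpoints. With this replacement the map is a local diffeomorphism, both the initial direction and the duration depend smoothly on $y$, and $v$ is $C^2$ near $\gamma(t)$. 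The first-variation formula, now computed with a free endpoint time, still yields $dv\vert_{\gamma(t)} = \cL^{-1}\dot\gamma(t)$: the boundary term proportional to the time variation carries the factor $L - \langle\cL^{-1}\dot\gamma,\dot\gamma\rangle = -E(\dot\gamma) = 0$, so it drops out. After this correction your argument for (d), and hence for (e), goes through.
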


Here and in the sequel, we say that a function between two smooth manifolds is \emph{locally Lipschitz} if it is locally Lipschitz when expressed in local coordinates about any given point. This notion is independent of the coordinates chosen.

\subsection{Transport rays}

For the rest of the chapter we fix a dominated function $u:M \to \RR$.

\begin{definition}[Transport rays]\normalfont
    A set ${\alpha} \subseteq M$ will be called a \emph{transport ray} if
    \begin{enumerate}[(i)]
        \item for every $x,x' \in {\alpha}$,
        \begin{equation}\label{trcond}\text{either} \quad u(x') - u(x) = \cc(x,x') \quad \text{ or } \quad u(x) - u(x') = \cc(x',x).\end{equation}
        \item ${\alpha}$ is maximal with respect to inclusion, i.e. there does not exist a set with the above property which strictly contains the set  ${\alpha}$.
    \end{enumerate} 
     A transport ray can be a singleton; in this case we say that it is \emph{degenerate}; otherwise it will be called \emph{nondegenerate}. Note that for every $x,y \in M$ satisfying \eqref{trcond}  there exists a transport ray containing $x$ and $y$ (and in particular, every point in $M$ is contained in some transport ray). Indeed, if we write $x \sim x'$ when \eqref{trcond} holds, then the triangle inequality and the fact that the function $u$ is dominated imply that $\sim$ is an equivalence relation; the equivalence classes of $\sim$ are then precisely the transport rays.
\end{definition}

\begin{lemma}\label{transportraycalibratedlemma}
    Every transport ray is the image of a unique calibrated curve. Conversely, the image of a calibrated curve with a maximal domain of definition is a transport ray.
\end{lemma}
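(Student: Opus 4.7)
The plan is to read the lemma as asserting a bijection between transport rays and maximal calibrated curves (where ``unique'' means unique up to translation of the parameter). The starting point is the observation that on a transport ray $\alpha$, the relation $x \preceq x'$ defined by $u(x') - u(x) = \cc(x,x')$ is a \emph{total order}: comparability is precisely condition (i); antisymmetry follows from supercriticality, since if $x \preceq x'$ and $x' \preceq x$ with $x \ne x'$, Definition \ref{SCdef} would give $\cc(x,x')+\cc(x',x)>0$ while the two relations force the sum to vanish; and transitivity follows from $u(x_3) - u(x_1) = \cc(x_1,x_2) + \cc(x_2,x_3) \ge \cc(x_1,x_3) \ge u(x_3)-u(x_1)$, which pinches into equality.

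For the forward direction, given $y_0\prec y_1$ in $\alpha$, assumption (II) yields a minimizing extremal joining them, and a standard splitting argument (dominatedness on each subinterval together with the total identity $\cc(y_0,y_1)=u(y_1)-u(y_0)$) shows it is in fact calibrated. For $y_0\prec y_1\prec y_2$, Lemma \ref{triangleineqlemma} supplies a single minimizing extremal through all three, so the calibrated curves between nested pairs are consistent. I would then form the directed family $\{\gamma_{y_0,y}\}_{y_0\prec y\in\alpha}$ (extending on the other side of $y_0$ analogously) and take its union to obtain a curve $\gamma : I\to M$ whose image contains $\alpha$. Uniqueness up to translation of $I$ comes from Proposition \ref{dominatedprop}(d) combined with uniqueness of Euler--Lagrange integral curves: at any interior point $p=\gamma(t)$ one has $\dot\gamma(t)=\nabla u\vert_p$, so two calibrated curves with the same image must share this tangent field wherever both lie in the interior.

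For the converse, let $\gamma : I\to M$ be a calibrated curve with maximal domain. For any $t\le t'$ in $I$, calibration together with Proposition \ref{dominatedprop}(b) gives $u(\gamma(t'))-u(\gamma(t))=\int_t^{t'}L(\dot\gamma)\,ds=\cc(\gamma(t),\gamma(t'))$, so $\gamma(I)$ satisfies condition (i). To verify (ii), I would suppose some $y\notin\gamma(I)$ is comparable to every element of $\gamma(I)$. The total order on $\gamma(I)\cup\{y\}$ places $y$ either above all of $\gamma(I)$, below all of it, or strictly between two of its points. In the sandwiched case, Lemma \ref{triangleineqlemma} produces a minimizing extremal through $y$ which, by the tangent-matching argument at an interior point of $\gamma$, must coincide with $\gamma$, giving $y\in\gamma(I)$, a contradiction. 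In the above/below cases, concatenating $\gamma$ with the minimizing extremal to $y$ and invoking Proposition \ref{dominatedprop}(e) at an overlap interior point produces a strictly larger calibrated curve, contradicting the maximality of $I$.

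The main technical obstacle is showing in the forward direction that the image of $\gamma$ equals $\alpha$, not merely that it is contained in $\alpha$ or contains it. Concretely, for an interior point $z$ of a calibrated segment $\gamma_{y_0,y}$ built from two elements of $\alpha$, one must verify that $z$ is comparable to every $w\in\alpha$ so that maximality of $\alpha$ forces $z\in\alpha$. The easy cases are $w\preceq y_0$ and $y\preceq w$; the delicate case is $y_0\prec w\prec y$, in which $w$ and $z$ both lie on (potentially distinct) minimizing extremals from $y_0$ to $y$. This is resolved using differentiability of $u$ at interior points (Proposition \ref{dominatedprop}(d)) to pin down the tangent vector, and then ODE uniqueness for the Euler--Lagrange flow to force the two candidate extremals to coincide.
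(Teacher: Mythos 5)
Your proposal follows essentially the same route as the paper: organize the transport-ray comparability relation into a total order (which the paper does implicitly in its three-way case analysis via supercriticality), use Lemma \ref{triangleineqlemma} plus the pinching argument to place triples on calibrated extremals, and close the gap between $\gamma(I)$ and $\alpha$ via differentiability of $u$ at strain points together with uniqueness of the Euler--Lagrange flow. The only place you under-specify is the ``delicate case'': two calibrated extremals from $y_0$ to $y$ need not share an \emph{interior} point a priori, so ``ODE uniqueness'' does not apply directly; the fix (used implicitly in the paper) is to note that $u$ is differentiable at, say, the midpoint of $\gamma_{y_0,y}$, so any calibrated curve emanating from that point is forced to have tangent $\nabla u$ there even at its endpoint, after which ODE uniqueness does apply.
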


\begin{proof}
    Let ${\alpha} \subseteq M$ be a transport ray. We may assume that ${\alpha}$ is not a singleton, for otherwise we can take the calibrated curve $\gamma$ to be constant. Let $x \ne y \in {\alpha}$. Then without loss of generality, \begin{equation}\label{transportrayeq1}u(y) - u(x) = \cc(x,y).\end{equation}
      Let $\gamma$ be a minimizing extremal satisfying $\gamma(0) = x$ and $\gamma(T) = y$ for some $T > 0$, and let $t \in [0,T]$. Then, since $u$ is dominated, the quantities
    $$\cc(x,\gamma(t)) - [u(\gamma(t))-u(x)] \qquad \text{ and } \qquad \cc(\gamma(t),y) - [u(y) - u(\gamma(t))]$$
    are both nonnegative. But by Lemma \ref{triangleineqlemma} and \eqref{transportrayeq1} they sum to zero:
    $$\cc(x,\gamma(t)) + \cc(\gamma(t),y) - [u(y) - u(x)] = \cc(x,y) - [u(y) -u(x)]=0.$$
    Thus both summands vanish, and in particular
    $$\cc(\gamma(t),y) = u(y) - u(\gamma(t)).$$
    Let $t' \in (t,T)$. By the same argument as above, 
    $$\cc(\gamma(t),\gamma(t')) = u(\gamma(t')) - u(\gamma(t)).$$
    We have shown that for every $0 < t < t' <T$,
    $$u(\gamma(t')) - u(\gamma(t)) = \cc(\gamma(t),\gamma(t')) = \int_t^{t'}L(\dot\gamma(t))dt,$$
    whence $\gamma\vert_{[0,T]}$ is a calibrated curve (the second equality holds since $\gamma$ is a minimizing extremal). We will show that the set $\alpha$ is contained in the image of $\gamma$. Here we take $\gamma$ to be defined on a maximal interval on which it is a minimizing extremal; this interval may be larger than $[0,T]$. 

    \medskip
    Let $z \in {\alpha}$. By the definition of a transport ray, we have
    $$u(z) - u(y) \in \{\cc(y,z),-\cc(z,y)\} \qquad \text{ and } \qquad u(x) - u(z) \in \{\cc(z,x),-\cc(x,z)\}.$$
    It cannot be the case that both $u(z) - u(y) = \cc(y,z)$ and $u(x)-u(z) = \cc(z,x)$, because then 
    $$\cc(x,y) + \cc(y,z) + \cc(z,x) = 0,$$
    a contradiction to the assumption of supercriticality. An analysis of the remaining cases shows that there exists a relabeling $x',y',z'$ of the points $x,y,z$ such that
    $$\cc(x',y') + \cc(y',z') = \cc(x',z').$$
    It then follows from Lemma \ref{triangleineqlemma} that the points $x,y,z$ lie on a single minimizing extremal, i.e. $z = \gamma(t)$ for some $t \in \RR$.
    
    \medskip
    Since $z$ is an arbitrarily element of ${\alpha}$, the above arguments show that there exists an interval $I \subseteq \RR$ such that
    \begin{itemize}
        \item The set ${\alpha}$ is contained in $\gamma(I)$,
        \item The curve $\gamma\vert_I$ is calibrated.
    \end{itemize} 
    
    \medskip
    In fact, we may take the interval $I$ to be the convex hull of $\gamma^{-1}({\alpha})$. We claim that ${\alpha} = \gamma(I)$. Indeed, let $t \in I$. By maximality of ${\alpha}$, in order to prove that $\gamma(t) \in {\alpha}$ it suffices to prove that for every $x \in {\alpha}$, 
    \begin{equation}\label{transportcalibneedeq}\text{either $\qquad \cc(x,\gamma(t)) = u(\gamma(t)) - u(x)\qquad $ or $\qquad\cc(\gamma(t),x) = u(x) - u(\gamma(t))$.}\end{equation}
    Let $x \in {\alpha}$. Then $$x = \gamma(t')$$ for some $t' \in \RR$. Assume that $t' < t$ (the other case is analogous). Since $t$ lies in the convex hull of $\gamma^{-1}({\alpha})$, there exists also $t'' \ge t$ such that $\gamma(t'') \in {\alpha}$. By the definition of $\alpha$,
    $$\text{either $\quad \cc(\gamma(t'),\gamma(t'')) = u(\gamma(t'')) - u(\gamma(t'))\quad $ or $\quad\cc(\gamma(t''),\gamma(t')) = u(\gamma(t')) - u(\gamma(t''))$.}$$
    But $t''>t'$ and $\gamma\vert_I$ is calibrated, so the former holds, whence
    $$u(\gamma(t'')) - u(\gamma(t')) = \cc(\gamma(t'),\gamma(t'')) = \cc(\gamma(t'),\gamma(t)) + \cc(\gamma(t),\gamma(t'')).$$
    Since the function $u$ is dominated, this necessarily implies that
    $$\cc(\gamma(t'),\gamma(t)) = u(\gamma(t)) - u(\gamma(t')).$$
    This proves \eqref{transportcalibneedeq}, thus completing the proof that ${\alpha} = \gamma(I)$.

    \medskip
    In the other direction, if $\gamma : I \to M$ is a calibrated curve defined on a maximal interval, then its image $\gamma(I)$ clearly satisfies \eqref{trcond}. If $\gamma(I)$ it is contained in a larger set with this property, then the previous argument shows that the interval of definition of $\gamma$ can be extended further, in contradiction to the assumption.
\end{proof}

\begin{definition}[Upper transport rays, initial and final points]\normalfont
    An \emph{upper transport ray} is a set of the form 
    $$\gamma(I\cap(a,\infty)),$$
    where $\gamma : I \to M$ is a non-constant, maximal calibrated curve. When $a = \inf I > -\infty$, the point $\gamma(a)$ will be called the \emph{initial point} of the transport ray $\gamma(I)$. \emph{Final points} of transport rays are defined analogously, with $(a,\infty)$ replaced by $(-\infty,b)$.
\end{definition}

\begin{definition}[Saturation and transport sets]\normalfont
    Let $A \subseteq M$.
    \begin{itemize}
        \item The \emph{saturation} of $A$, denoted by $\hat A$, is defined to be the union of all transport rays intersecting $A$. If the set $A$ is Borel measurable set satisfies $\hat A = A$ then we say that $A$ is a \emph{transport set}.
        \item The \emph{upper saturation} of $A$, denoted by $A^+$, is defined to be the union of all upper transport rays whose initial point lies in $A$. A Borel measurable set $A$ satisfying $A^+ = A$ will be called an \emph{upper transport set}.
    \end{itemize}
    See Figure \ref{satfig}.
\end{definition}

\begin{figure}[t]
    \includegraphics[width=\textwidth]{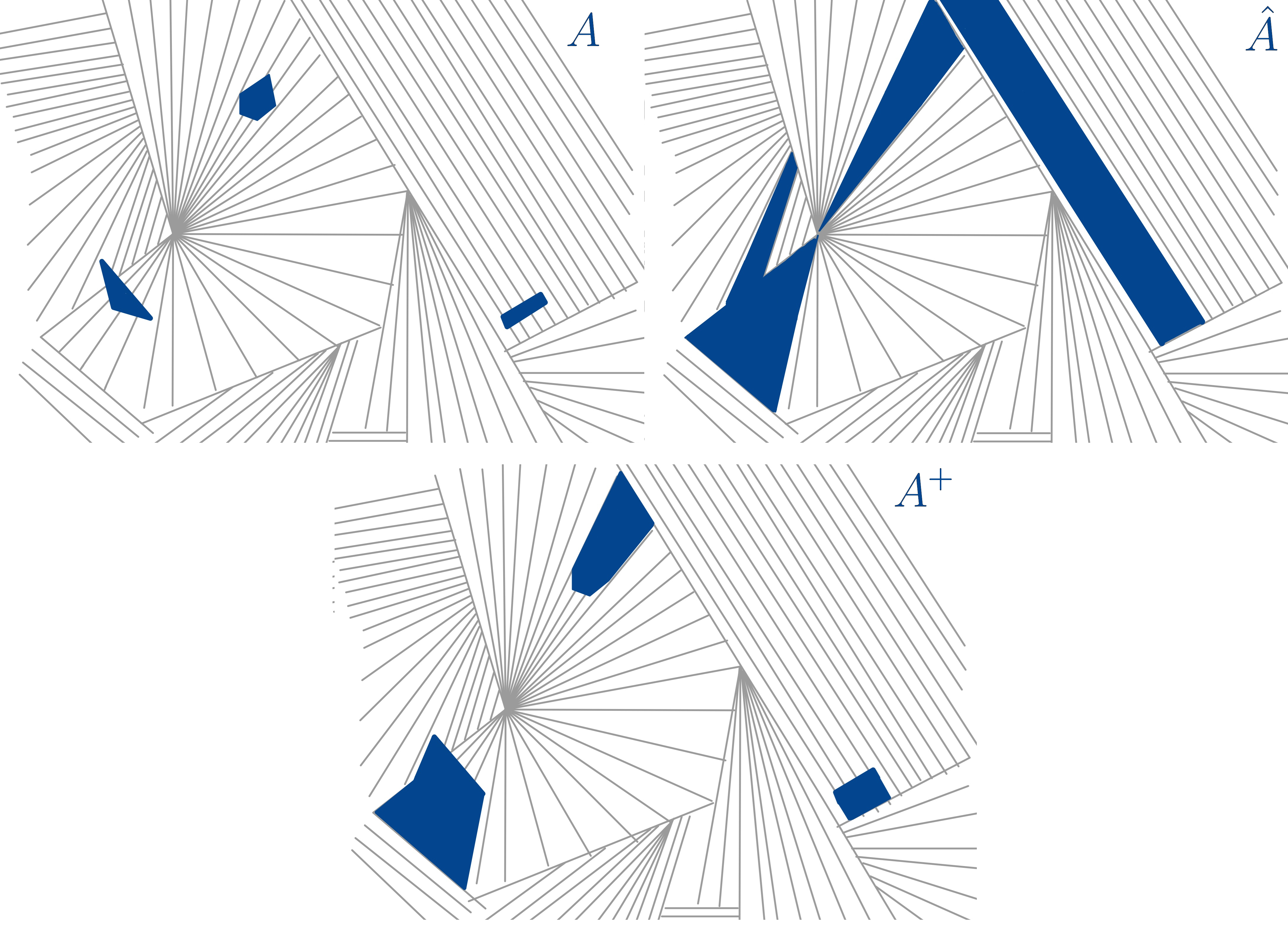}
    \caption{A set and its saturation and upper saturation.}
    \label{satfig}
\end{figure}

\begin{lemma}\label{endslemma}
    The sets of all initial and final points of transport rays have zero measure. 
\end{lemma}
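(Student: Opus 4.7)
The plan is to show that the set of initial points is contained in a countable union of $(n-1)$-dimensional Lipschitz graphs in local charts, which have $\mu$-measure zero; by symmetry (running time backwards along calibrated curves) the same argument handles final points.

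First I would decompose. For each $k\in\NN$ let $A_k$ denote the set of initial points $x$ such that the associated calibrated curve $\gamma_x$ extends to time at least $1/k$; by Lemma \ref{transportraycalibratedlemma} every initial point of a nondegenerate ray lies in some $A_k$. Using assumption (III) I restrict further to a relatively compact chart $U$, so that all the curves $\{\gamma_x\vert_{[0,1/k]}\}_{x\in A_k\cap U}$ remain inside a fixed compact set; it then suffices to prove $\mu(A_k\cap U)=0$.

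Next I would parametrize by moving into the interior of each ray. For $x\in A_k\cap U$ set $y_x:=\gamma_x(1/(2k))$. By Proposition \ref{dominatedprop}(d) the function $u$ is differentiable at $y_x$ with $\nabla u\vert_{y_x}=\dot\gamma_x(1/(2k))$, so $x$ is recovered from $y_x$ via $x=\pi(\Phi^L_{-1/(2k)}(\nabla u\vert_{y_x}))$. The map $x\mapsto y_x$ is continuous by the stability of minimizing extremals under assumption (III), and injective by Proposition \ref{dominatedprop}(e).

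The core of the proof is a local Lipschitz estimate showing that, near any $x_0\in A_k\cap U$, the set $A_k$ is contained in an $(n-1)$-dimensional hypersurface transverse to $\dot\gamma_{x_0}(0^+)$. I would invoke Lemma \ref{characlemma} to produce a local $C^2$ solution $\bar u$ of $H(d\bar u)=0$ near $x_0$ whose gradient at $x_0$ equals the forward velocity $\dot\gamma_{x_0}(0^+):=\lim_{t\to0^+}\Phi^L_{-t}(\nabla u\vert_{y_{x_0}})$. Since $u$ is dominated while integral curves of $\nabla\bar u$ are calibrated for $\bar u$ (proof of Lemma \ref{HJlemma}), one obtains $u\le\bar u+\mathrm{const}$ in a neighborhood of $x_0$, with equality along the characteristic of $\bar u$ through $x_0$. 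The fact that $x_0$ is an \emph{initial} point means precisely that this characteristic, extended backwards from $x_0$, fails to remain a calibrated curve of $u$; strict convexity of $L$ (assumption (I)) then converts this qualitative fact into a one-sided quantitative bound, forcing every nearby $x\in A_k$ to lie on an $(n-1)$-dimensional Lipschitz graph through $x_0$ (in the chart). Covering $A_k\cap U$ by countably many such graphs gives $\mu(A_k\cap U)=0$. The main obstacle is precisely this last step: upgrading "no backward calibrated extension" at $x_0$ into a Lipschitz-graph constraint on neighboring initial points. In the Riemannian case this is standard via semi-concavity of the distance function, but in the Lagrangian setting one must work directly with the local characteristic solutions furnished by Lemma \ref{characlemma} and use the uniform convexity of $L$ on compact subsets of $SM$.
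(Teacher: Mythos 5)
Your Step 3 has a genuine gap, as you acknowledge, and I do not see how to close it along the lines you sketch. The failure of backward calibrated extension at $x_0$ is a qualitative, pointwise statement; strict convexity of $L$ does not upgrade it to a quantitative Lipschitz-graph constraint on \emph{nearby} initial points, and the set of initial points of a general dominated function need not, even locally, lie on a Lipschitz hypersurface. The comparison $u\le\bar u+\mathrm{const}$ is also more delicate than stated: $u$ is only a viscosity subsolution, and a local $C^3$ solution from Lemma \ref{characlemma} controls $u$ along its own characteristics but not in a full neighborhood.

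Your Step 2 already contains the key ingredient of the intended argument (the paper defers to Lemma 3.8 of \cite{Kl}), which is disjointness-and-counting rather than flattening. Fix $k$, a compact set $K$, and a compact $E\subseteq A_k\cap K$. For $t\in[1/(4k),1/(2k)]$ put $\Psi_t(x):=\gamma_x(t)$. Each $\Psi_t(E)$ lies in a fixed $\cS_{k'}$, since $\gamma_x(t)$ sits at distance at least $1/(4k)$ from both ends of its ray. For $t\ne t'$ the sets $\Psi_t(E)$ and $\Psi_{t'}(E)$ are disjoint: a common point would be an interior point of two calibrated curves, so by Proposition \ref{dominatedprop}(e) they lie on a single transport ray, which has a unique initial point, forcing $x=x'$ and hence $t=t'$. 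By Theorem \ref{regularitythm}, $du=du_{k'}$ is Lipschitz on $\cS_{k'}$, so $\Psi_t^{-1}(y)=\pi(\Phi^L_{-t}(\nabla u|_y))$ is Lipschitz on $\Psi_t(E)$ with a constant uniform over $t$, giving $\mu(\Psi_t(E))\ge c\,\mu(E)$ for some $c>0$ independent of $t$. Countably many distinct values of $t$ then produce infinitely many pairwise disjoint sets of measure at least $c\,\mu(E)$, all inside one compact set by assumption (III), which forces $\mu(E)=0$. Inner regularity and exhaustion over $k$ and $K$ finish; final points follow by reversing the Lagrangian as in Corollary \ref{mbcor}.
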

\begin{proof}
    This a consequence of Lemma \ref{measurableendslemma}, and the proof is the same as in the Riemannian setting. We refer the reader to \cite[Lemma 3.8]{Kl} for more details.
\end{proof}

Let $\cS \subseteq M$ denote the \emph{strain set} of $u$, which is defined to be the set of points which lie in the relative interior of some nondegenerate transport ray. By Lemma \ref{transportraycalibratedlemma},
$$\cS : = \{\gamma(t)\, \mid \, \gamma:[a,b] \to M \, \text{is a calibrated curve and } t \in (a,b)\}.$$

\medskip
For every $x \in \cS$ we denote by 
$$\gamma_x:I_x \to M$$
the parametrization of the transport ray through $x$ as a calibrated curve, where
\begin{equation}\label{Ixeq}I_x :=(a(x),b(x)) \subseteq \RR\end{equation}
is taken to be the interior of the maximal interval of definition of $\gamma_x$, translated so that
$$\gamma_x(0) = x.$$
The image $\gamma_x(I_x)$ is a transport ray containing $x$ by Lemma \ref{transportraycalibratedlemma}. By Proposition \ref{dominatedprop},
 the function $u$ is differentiable at $\gamma_x(t)$ for every $t \in I_x$, with
\begin{equation}\label{dotgammaxeq}\dot\gamma_x(t)  = {\nabla}u\vert_{\gamma_x(t)}, \quad t \in I_x.\end{equation}

\begin{lemma}\label{measurableendslemma}
    The set $\cS$ and the functions $a,b$ are Borel measurable.
\end{lemma}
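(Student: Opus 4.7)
The plan is to first show that $\cS$ is Borel and then, using the representation of the calibrated curve through each $x \in \cS$ as an Euler--Lagrange orbit, read off $a(x)$ and $b(x)$ from a countable family of Borel conditions.

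For the measurability of $\cS$, I would characterize it as $\cS = \cS^+ \cap \cS^-$, where
$$\cS^+ := \{x \in M : \exists\, z \ne x \text{ with } u(z) - u(x) = \cc(x, z)\}$$
and $\cS^-$ is defined symmetrically. The inclusion $\cS \subseteq \cS^+ \cap \cS^-$ is immediate from Proposition \ref{dominatedprop}(c), while the reverse uses Proposition \ref{dominatedprop}(e) to concatenate a calibrated curve ending at $x$ with one starting at $x$, producing a calibrated curve with $x$ in its interior. Fix an auxiliary Riemannian distance $d$ on $M$. For each $n \ge 1$, the set
$$C_n^+ := \{(x, z) \in M \times M : d(x, z) \ge 1/n,\ u(z) - u(x) = \cc(x, z)\}$$
is closed by continuity of $u$ and $\cc$, and is $\sigma$-compact since $M$ is. Its projection onto the first factor is $\sigma$-compact, hence $F_\sigma$, and $\cS^+ = \bigcup_{n \ge 1} \pi_1(C_n^+)$ is $F_\sigma$; the same argument shows $\cS^-$ is $F_\sigma$, so $\cS$ is Borel.

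For the measurability of $b$ (and, symmetrically, of $a$), Proposition \ref{dominatedprop} gives that for every $x \in \cS$, $u$ is differentiable at $x$ with $\nabla u\vert_x \in SM$, and the calibrated curve $\gamma_x$ is the Euler--Lagrange orbit $\gamma_x(t) = \pi(\Phi_t^L(\nabla u\vert_x))$. Since $u$ is locally Lipschitz (Proposition \ref{dominatedprop}(a)), the map $\nabla u : \cS \to SM$ is Borel measurable: derivatives of locally Lipschitz functions are Borel wherever they exist. For each rational $q > 0$ I introduce the predicate
$$P(x, q) := \bigl[\Phi_s^L(\nabla u\vert_x) \text{ is defined and } u(\pi(\Phi_s^L(\nabla u\vert_x))) - u(x) = \cc(x, \pi(\Phi_s^L(\nabla u\vert_x))) \text{ for every } s \in [0, q]\bigr].$$
By continuity in $s$ of the flow, of $u$, and of $\cc$, the universal quantifier over $s \in [0, q]$ may be replaced by the one over $\mathbb{Q} \cap [0, q]$; as a countable conjunction of Borel conditions, $P(\cdot, q)$ is therefore Borel on $\cS$. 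The essential claim is $P(x, q) \Longleftrightarrow q \le b(x)$. The forward direction is Proposition \ref{dominatedprop}(c) applied along $\gamma_x$. For the converse, if $P(x, q)$ holds then $s \mapsto \pi(\Phi_s^L(\nabla u\vert_x))$ on $[0, q]$ is a calibrated curve through $x$ with initial velocity $\nabla u\vert_x$, which by Proposition \ref{dominatedprop}(e) must be a subcurve of the maximal calibrated curve $\gamma_x$, forcing $q \le b(x)$. Hence
$$\{x \in \cS : b(x) > c\} = \bigcup_{q \in \mathbb{Q},\ q > c} \{x \in \cS : P(x, q) \text{ holds}\}$$
is Borel for every $c \in \RR$, so $b$ is Borel; the argument for $a$ is the mirror image, using negative times.

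The main delicacy is the equivalence $P(x, q) \Longleftrightarrow q \le b(x)$. A priori, an Euler--Lagrange trajectory starting at $(x, \nabla u\vert_x)$ could re-enter $T^+(x)$ after leaving the maximal calibrated range, so imposing $\pi(\Phi_q^L(\nabla u\vert_x)) \in T^+(x)$ only at the single time $s = q$ would not suffice. Requiring calibration throughout the segment $[0, q]$ rules out this pathology via ODE uniqueness for Euler--Lagrange extremals together with Proposition \ref{dominatedprop}(e), and with the injectivity of calibrated curves that follows from supercriticality (as in the proof of Lemma \ref{transportraycalibratedlemma}). This is the reason the predicate $P$ must be formulated with a universal, rather than a pointwise, quantifier in $s$.
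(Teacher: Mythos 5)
The paper itself delegates this lemma to references (\cite{BB}, \cite{Fig}, \cite{Kl}), so your self-contained argument is genuinely new territory. The treatment of $\cS$ is sound: the identity $\cS = \cS^+ \cap \cS^-$ follows from Lemma \ref{triangleineqlemma} (the triangle equality forces $z^-,x,z^+$ onto one minimizing extremal, which is calibrated with $x$ interior), and the $\sigma$-compact projection argument for $F_\sigma$-ness is correct. The Borel measurability of $\nabla u$ on $\cS$ and of $s\mapsto\Phi_s^L(\nabla u\vert_x)$ is also fine.

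The gap is in the step ``if $P(x,q)$ holds then $s\mapsto\pi(\Phi_s^L(\nabla u\vert_x))$ on $[0,q]$ is a calibrated curve.'' Your predicate only records $u(\theta(s)) - u(x) = \cc(x,\theta(s))$, whereas calibration demands $u(\theta(s)) - u(x) = \int_0^s L(\dot\theta)$; the two differ precisely when $\theta\vert_{[0,s]}$ fails to be a minimizing extremal, which is exactly what happens for $s$ past $b(x)$ (think of a Riemannian geodesic running past the cut locus of $p$ when $u = d(p,\cdot)$). So $P(x,q)\Rightarrow\theta\vert_{[0,q]}$ calibrated is not immediate, and Proposition \ref{dominatedprop}(e) cannot be invoked until calibration is established. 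One can in fact prove $P(x,q)\Rightarrow q\le b(x)$: first show that $u(z)-u(x)=\cc(x,z)$ with $x\in\cS$ forces $z$ onto the (unique) transport ray through $x$ --- the concatenation of $\gamma_x\vert_{(-\eps,0]}$ with the minimizing extremal from $x$ to $z$ is calibrated and contains $x$ in its interior, so it sits inside $\gamma_x(\overline{I_x})$; then $\gamma_x^{-1}\circ\theta$ is $C^1$ on $[0,q]$, equals the identity on $[0,b(x)]$, and maps into $\overline{I_x}$, so its one-sided derivative $1$ at $b(x)$ contradicts $q>b(x)$. But this is considerably more than ``ODE uniqueness plus \ref{dominatedprop}(e),'' and your closing paragraph does not supply it. A cleaner repair is to replace $P(x,q)$ with the predicate $u(\theta(q))-u(x)=\int_0^q L(\dot\theta(s))\,ds$, which is Borel for the same reasons and which, by domination and additivity of the action, \emph{is} equivalent to $\theta\vert_{[0,q]}$ being calibrated; then Proposition \ref{dominatedprop}(e) together with the maximality of $\gamma_x$ gives $q\in\overline{I_x}$ directly, and the set $\{b>c\}$ is obtained as before.
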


\begin{proof}
    See \cite[Lemma 10]{BB}, \cite[Lemma 2.10]{Fig} or \cite[Lemma 2.9]{Kl}.
\end{proof}

\subsection{Strain set regularity}

In this section we prove $C^{1,1}$ regularity of the dominated function $u$ inside its strain set $\cS$. This is a crucial ingredient in the proof of the needle decomposition theorem. For each $k \ge 1$ define $\cS_k \subseteq \cS$ by
$$\cS_k : = \{x \in \cS \, \mid \, (-2^{-k},2^{-k})\subseteq I_x\}.$$
Note that
$$\cS = \bigcup_{k=1}^\infty\cS_k.$$

\begin{lemma}
    The set $\cS_k$ is closed for all $k \ge 1$.
\end{lemma}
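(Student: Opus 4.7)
The goal is to show that any limit point of $\cS_k$ remains in $\cS_k$. So let $x_n \in \cS_k$ with $x_n \to x \in M$, and let $\gamma_n := \gamma_{x_n}$ be the calibrated curve through $x_n$, defined at least on $[-2^{-k}, 2^{-k}]$. Proposition \ref{dominatedprop}(d) identifies its initial velocity as $v_n := \dot\gamma_n(0) = \nabla u|_{x_n} \in S_{x_n}M$. The first step is to extract a convergent subsequence of initial velocities: by superlinearity of $L$, the restriction of $SM$ to the compact set $\{x\} \cup \{x_n\}_n$ is compact, so we may pass to a subsequence with $v_n \to v \in S_xM$.

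The key technical step is to establish that the trajectories $\gamma_n|_{[-2^{-k},2^{-k}]}$ all lie in a single compact subset $K \subseteq M$. Here I would follow the scheme of the proof of Lemma \ref{diamlemma}, combining assumption (III) with supercriticality: if a subsequence of $\gamma_n$ escaped every compact set, then using compactness of the indicatrix over the compact base $\{x\}\cup\{x_n\}_n$ one could, by passing to a further diagonal subsequence and shifting along the $\gamma_n$'s, produce in the limit a nontrivial closed quasi-orbit with zero action, contradicting the assumption that $L$ is supercritical. Once this uniform compactness is in hand, continuous dependence of ODE solutions on initial conditions ensures that the Euler-Lagrange flow $\Phi^L_t(v)$ exists for all $t\in[-2^{-k},2^{-k}]$, and that $\gamma_n$ together with their derivatives converge uniformly on this interval to $\gamma(t) := \pi\Phi^L_t(v)$.

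It remains to verify that $\gamma$ is a calibrated curve through $x$. Passing to the limit in the calibration identity
\[
u(\gamma_n(t')) - u(\gamma_n(t)) = \int_t^{t'} L(\dot\gamma_n(s))\,ds,
\]
using continuity of $u$ (by Proposition \ref{dominatedprop}(a)) and of $L$ together with the uniform convergence $\dot\gamma_n \to \dot\gamma$, yields the same identity for $\gamma$. Thus $\gamma$ is a calibrated curve on $[-2^{-k},2^{-k}]$ with $\gamma(0)=x$, and by Lemma \ref{transportraycalibratedlemma} it extends to a maximal calibrated curve whose domain $I_x$ contains $(-2^{-k},2^{-k})$, so $x \in \cS_k$.

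The main obstacle is the uniform compactness step. Since the Euler-Lagrange flow need not be complete, the maximal existence time of the flow starting at $v$ is only lower semicontinuous in $v$, so the convergence $v_n \to v$ together with each $\gamma_n$ being defined on $[-2^{-k},2^{-k}]$ does not, by general ODE theory alone, imply that the flow from $v$ is defined for the same time. The calibrated (hence globally minimizing) nature of the $\gamma_n$'s, filtered through assumption (III) and supercriticality, is precisely what rules out this pathological scenario.
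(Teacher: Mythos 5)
Your proof follows the same approach as the paper's: characterize $\cS_k$ via calibrated curves defined on $(-2^{-k},2^{-k})$, extract a convergent subsequence of initial velocities $v_n\to v\in S_xM$ using compactness of the indicatrix, argue that the corresponding calibrated curves converge, and pass to the limit in the calibration identity. The paper's proof is terser and simply asserts pointwise convergence of the curves, implicitly assuming that the limiting trajectory $t\mapsto\pi(\Phi^L_t(v))$ exists on all of $(-2^{-k},2^{-k})$; you are right to flag that this is not automatic when the Euler--Lagrange flow is incomplete, since the maximal forward and backward existence times are only lower semicontinuous in the initial condition, and the $\gamma_n$ need not a priori lie in a fixed compact set.

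However, the resolution you sketch does not fit the failure mode. The supercriticality argument in Lemma \ref{diamlemma} derives its contradiction from minimizing extremals of unbounded time trapped in a fixed compact set, which forces near-recurrence of the lifted trajectory in $SM$. Here the time interval is fixed at $\pm 2^{-k}$, so there is no recurrence to exploit; the danger is escape to infinity in bounded time, and if you shift along the $\gamma_n$'s to points near the escaping tail, the corresponding velocities no longer lie in the indicatrix over a compact base, so the diagonalization and limit you propose cannot be carried out. The compactness step should instead be handled via assumption (III): for any $s$ strictly less than the forward existence time of $v$, the points $\gamma_n(s)$ converge to $\gamma(s)$, so $\{x_n\}_n\cup\{\gamma_n(s)\}_n$ is compact and (III) yields a uniform compact set containing $\gamma_n([0,s])$; one then needs a bootstrap in $s$ (using, for example, the fact that $\dot\gamma_n(s)=\nabla u\vert_{\gamma_n(s)}$ lies in the compact indicatrix over the compact set already constructed, so superlinearity gives a uniform speed bound preventing rapid escape) to push $s$ up to $2^{-k}$. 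You have identified the genuinely subtle point; the mechanism you invoke to close it is the wrong one.
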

\begin{proof}
    The set $\cS_k$ can be characterized as the set of all points $x \in M$ such that there exists a calibrated curve $\gamma:(-2^{-k},2^{-k})\to M$ satisfying $\gamma(0) = x$. By continuity of $u$ and $\cc$, the pointwise limit of a sequence of calibrated curves is a calibrated curve. If a sequence $x_i \in \cS_k$ converges to some $x \in M$, then by compactness of the indicatrix, the sequence $\dot\gamma_{x_i}(0)$ has a convergent subsequence, and the corresponding subsequence of the sequence  $\gamma_{x^i}\vert_{(-2^{-k},2^{-k})}$ converges pointwise to a calibrated curve $\gamma$ satisfying $\gamma(0) = x$, whence $x \in \cS_k$.
\end{proof}
A function $\phi:M\to \RR$ will be called $C^{1,1}$ if its differential $d\phi:M \to T^*M$ is locally Lipschitz. 

 \begin{theorem}[cf. \text{\cite{Ber}, \cite[Theorem 2.10]{Kl}}]\label{regularitythm}
	For every $k \ge 1$ there exists a $C^{1,1}$ function $u_k : M \to \RR$ such that
		$$u_k \equiv u \qquad \text{ and } \qquad du_k \equiv du \qquad \text{ on } \cS_k.$$
\end{theorem}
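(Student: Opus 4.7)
The plan is to establish a uniform two-sided quadratic Taylor estimate for $u$ on compact subsets of $\cS_k$, and then to apply a Whitney-type $C^{1,1}$ extension theorem. The key tool is the following barrier construction. Fix $x \in \cS_k$ and set $r := 2^{-k-1}$; since $(-2^{-k},2^{-k}) \subseteq I_x$, the points $z^-_x := \gamma_x(-r)$ and $z^+_x := \gamma_x(r)$ lie strictly inside the transport ray through $x$. Define
$$\phi_x(y) := u(z^-_x) + \cc(z^-_x,y), \qquad \psi_x(y) := u(z^+_x) - \cc(y,z^+_x).$$
Domination of $u$ combined with calibration of $\gamma_x$ gives $\psi_x \le u \le \phi_x$ on $M$ with equality at $x$; hence $d\phi_x|_x = du|_x = d\psi_x|_x$, recovering the differentiability already asserted in Proposition~\ref{dominatedprop}(d).

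The heart of the matter is to show that both $\phi_x$ and $\psi_x$ are $C^2$ on a fixed neighborhood of $x$ with uniformly bounded second derivatives as $x$ ranges over a compact set $A \subseteq \cS_k$. I would first argue that $\gamma_x|_{[-r,r]}$ is a strictly minimizing extremal whose interior avoids the cut locus of $z^-_x$, because $\gamma_x$ extends as a calibrated (hence minimizing) extremal past $x$ up to time $2^{-k} > r$. By the inverse function theorem applied to the map $(v,t) \mapsto \pi(\Phi^L_t(v))$ at $(\dot\gamma_x(-r), r)$, the function $y \mapsto \cc(z^-_x,y)$ is $C^2$ in a neighborhood of $x$. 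Using closedness of $\cS_k$ (Lemma~\ref{Skclosedlemma}) together with continuous dependence of $\dot\gamma_x(0)$ on $x \in A$ (via compactness of the relevant part of $SM$, assumption (III), and uniqueness of the calibrated extension), the basepoints $z^\pm_x$ and the shifted velocities $\dot\gamma_x(\pm r) \in TM \setminus \bz$ remain in a common compact set and vary continuously with $x \in A$. By $C^2$-smooth dependence on initial data of $\Phi^L$ on this compact set, one obtains a neighborhood $U$ of $A$ and a constant $C = C(A,k)$ for which, in a chart,
$$\phi_x(y) \le u(x) + du|_x(y - x) + C|y-x|^2, \qquad \psi_x(y) \ge u(x) + du|_x(y-x) - C|y-x|^2$$
for all $x \in A$ and $y \in U$. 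Sandwiching $u$ then transfers the same two-sided quadratic control to $u$ itself.

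From this two-sided estimate one deduces in a standard way that $x \mapsto du|_x$ is locally Lipschitz on $\cS_k$: applying the bound at $x$ and at a nearby $x' \in A$ and evaluating at a suitably chosen $y$ yields $|du|_{x'} - du|_x| \le C'|x - x'|$. Therefore the pair $(u, du)$ restricted to the closed set $\cS_k$ satisfies the Whitney $C^{1,1}$ compatibility conditions in every chart. The classical Whitney extension theorem for $C^{1,1}$ functions produces a $C^{1,1}$ extension chart by chart, and a smooth partition of unity (which preserves $C^{1,1}$ regularity) patches these into a single $u_k \in C^{1,1}(M)$ with $u_k = u$ and $du_k = du$ on $\cS_k$.

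The main obstacle is the uniform $C^2$ control of the barriers in the second paragraph: pointwise smoothness of $\cc(z^-_x,\cdot)$ near $x$ amounts merely to the absence of cut locus along a single extremal, and one has to promote this to uniform bounds on a neighborhood that does not shrink as $x$ varies. This is where the definition of $\cS_k$ is used in an essential way, since the built-in two-sided extension by a fixed amount $2^{-k}$ keeps $\dot\gamma_x(\pm r)$ in a compact subset of $TM \setminus \bz$, enabling the desired compactness-plus-continuous-dependence argument.
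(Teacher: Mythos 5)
Your barrier construction $\phi_x,\psi_x$ is a genuinely different route from the paper's, which follows Fathi's argument and is reproduced in Lemma~\ref{taylorlemma}: instead of invoking any regularity of the cost function $\cc(z_x^-,\cdot)$, the paper compares the actions of the calibrated curve $\gamma_x$ and of a perturbed competitor $\theta(t)=\gamma_x(t)+(t+\eps_0/2)v_0$ directly, using only domination of $u$, calibration of $\gamma_x$, the Euler--Lagrange equation (for the integration by parts that produces $du|_x(x'-x)$), and $C^2$ bounds on $L$ on a compact set away from the zero section. That argument never needs to know anything about minimizing extremals other than the one through $x$, and in particular it never touches cut loci or conjugate points.

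The crucial step in your approach, the assertion that $y\mapsto\cc(z_x^-,y)$ is $C^2$ near $x$ (with uniform bounds as $x$ varies over a compact subset of $\cS_k$), hides a genuine gap. Applying the inverse function theorem to $(v,t)\mapsto\pi(\Phi^L_t(v))$ at $(\dot\gamma_x(-r),r)$ presupposes that the differential is invertible there, i.e.\ that $x$ is not conjugate to $z_x^-$ along $\gamma_x$. Your justification (``$\gamma_x$ extends as a calibrated extremal past $x$'') indeed implies this, but only via Morse/Jacobi index theory for Tonelli Lagrangians, a piece of machinery that is neither in the paper nor trivial. Moreover, even granting the IFT, it only produces a local family of \emph{extremals} from $z_x^-$; to identify $\cc(z_x^-,y)$ with the action of the nearby extremal one must rule out far-away minimizing competitors, i.e.\ establish openness of the complement of the cut set, a further compactness/stability argument you do not supply. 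Finally, the ``uniform size'' claim requires that the distance from $x$ to the conjugate/cut set of $z_x^-$ be bounded below as $x$ ranges over $A\subseteq\cS_k$; this follows from closedness of $\cS_k$ plus continuity, but only after all of the above has been set up. None of these are fatal, but collectively they are the hard part of your proof, and they are exactly what Fathi's action-comparison argument (and hence the paper) is engineered to avoid. The rest of your outline---deriving the Lipschitz estimate for $du$ from the two-sided quadratic estimate, then Whitney extension and a partition of unity---matches the paper's Lemma~\ref{whitneylemma} and the proof of Theorem~\ref{regularitythm} and is fine.
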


If $u$ is $C^1$ then one can take $u_k = u$, see \cite{Fat03}. In the case where $L$ is Riemannian, Theorem \ref{regularitythm} was proved in \cite{Kl}. Our proof is a combination of  arguments given in \cite{Fat03} and \cite{Kl} : we prove $C^{1,1}$ regularity of $u$ on $\cS_k$ using the argument in \cite{Fat03} and then apply Whitney's extension theorem as was done in \cite{Kl}. For a proof of Theorem \ref{regularitythm} when $L$ is Ma{\~n}{\'e} critical, see \cite{Ber}.

\begin{lemma}[cf. \text{\cite[Theorem 3.2]{Fat03}}]\label{taylorlemma}
	For every $x_0 \in M$ and every  $k \ge 1$ there exists a coordinate neighborhood $U_{x_0} \ni x_0$ and a constant $C_{x_0}$ such that for every $x \in U \cap \cS_k$,
	$$|u(x) + (du\vert_x)(x'-x) - u(x')| \le C_{x_0}\cdot |x'-x|^2 \qquad \text{ for all } x' \in U.$$
	Here we use the coordinate chart to view $x,x'$ as points in $\RR^n$ and $du\vert_x$ as an element in $(\RR^n)^*$, and the norm is the Euclidean norm.
\end{lemma}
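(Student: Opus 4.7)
The plan is to sandwich $u$ near each strain point $x$ between two smooth cost functions based at auxiliary points on the calibrated curve through $x$, Taylor-expand these cost functions, and use compactness to extract a uniform quadratic remainder.

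First I would fix $\delta \in (0,\eps)$ and, for each $x \in U_{x_0} \cap \cS_\eps$, set $y^-_x := \gamma_x(-\delta)$ and $y^+_x := \gamma_x(\delta)$; these exist because $(-\eps,\eps) \subseteq I_x$. Since $\gamma_x\vert_{[-\delta,\delta]}$ is calibrated, we have the exact equalities $u(x) - u(y^-_x) = \cc(y^-_x,x)$ and $u(y^+_x) - u(x) = \cc(x,y^+_x)$. Combining them with the general domination bounds $u(x') - u(y^-_x) \le \cc(y^-_x,x')$ and $u(y^+_x) - u(x') \le \cc(x',y^+_x)$ yields the sandwich inequality
$$\cc(x, y^+_x) - \cc(x', y^+_x) \;\le\; u(x') - u(x) \;\le\; \cc(y^-_x, x') - \cc(y^-_x, x).$$

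Next I would Taylor-expand the two cost functions at $x$. By Proposition \ref{dominatedprop}(b) the curve $\gamma_x\vert_{[-\delta,\delta]}$ is a minimizing extremal, and since it strictly extends past each endpoint (being interior to a longer calibrated curve), the standard non-conjugacy argument --- realized by the inverse function theorem applied to the time-$\delta$ exponential map $v \mapsto \pi\circ\Phi^L_\delta(v)$ on the energy level $SM$ --- shows that $\cc(y^-_x,\cdot)$ is $C^2$ in a neighborhood of $x$, with differential equal to $du\vert_x$ at $x$ by Proposition \ref{dominatedprop}(d); analogously, $\cc(\cdot, y^+_x)$ is $C^2$ near $x$ with differential $-du\vert_x$. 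Inserting the resulting second-order expansions
$$\cc(y^-_x, x') - \cc(y^-_x, x) = du\vert_x(x'-x) + O(|x'-x|^2), \qquad \cc(x, y^+_x) - \cc(x', y^+_x) = du\vert_x(x'-x) + O(|x'-x|^2)$$
into the sandwich gives the desired inequality with a constant $C(x)$ that a priori depends on $x$.

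Finally, to obtain a uniform constant $C_{x_0}$, I would shrink $U_{x_0}$ so that its closure lies in a single coordinate chart. The set $K := \overline{U_{x_0}} \cap \cS_\eps$ is then compact by Lemma \ref{Skclosedlemma}, and essentially the continuity argument used in the proof of that lemma shows that $x \mapsto (y^-_x, y^+_x)$ is continuous on $K$. The Hessians of $\cc(y^-_x,\cdot)$ and $\cc(\cdot, y^+_x)$ at points near $x$ therefore depend continuously on $x \in K$ and are uniformly bounded, so Step 2 runs with a single constant. For $x'$ at macroscopic distance from $x$ the estimate is automatic after absorbing the local Lipschitz constant of $u$ (Proposition \ref{dominatedprop}(a)) and the bound $\sup_K |du\vert_x|$ into $C_{x_0}$. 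The main obstacle is the uniform $C^2$ control of the cost functions in Step 2: it rests on non-conjugacy along interior points of a minimizing extremal, local uniqueness of the shooting problem on the energy level $SM$, and the continuity of $x \mapsto y^\pm_x$, all made quantitative by compactness of $K$ together with the fact that the pairs $(y^\pm_x, x)$ remain uniformly off the diagonal of $M\times M$.
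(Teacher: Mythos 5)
Your proposal is logically sound, but it takes a genuinely different route from the paper. The paper proves the two-sided quadratic estimate directly: it compares the action of the calibrated curve $\gamma_x\vert_{[-\eps_0/2,0]}$ with the action of the explicit straight-line perturbation $\theta(t) = \gamma_x(t) + (t+\eps_0/2)v_0$, applies domination and calibration to bound $u(x')-u(x)$, Taylor-expands the Lagrangian $L$ along $\theta$, and integrates by parts using the Euler--Lagrange equation; the error is then controlled purely by second derivatives of $L$ on a compact set bounded away from the zero section. You instead sandwich $u$ between the cost functions $\cc(y_x^-,\cdot)$ and $-\cc(\cdot,y_x^+)$ (suitably shifted) and Taylor-expand those. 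The correctness of your expansion rests on the assertion that $\cc(y_x^-,\cdot)$ and $\cc(\cdot,y_x^+)$ are $C^2$ near $x$ with uniformly bounded Hessians, which in turn rests on (a) absence of conjugate points in the interior of a minimizing extremal for Tonelli Lagrangians, (b) stable local uniqueness of the minimizer so that $\cc$ coincides with the smooth branch given by the inverse exponential, and (c) joint continuity of the inverse exponential and of $d\Phi^L$ in the basepoint and endpoint. None of these is established in the paper (and (a) is a real theorem, typically proved via an index-form or Jacobi-field argument). The paper's direct variational comparison sidesteps all of this: it never needs the cost to be twice differentiable, only $L$ to be $C^2$ away from the zero section. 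Your approach is the standard touching-by-smooth-test-functions argument from viscosity solution theory and buys conceptual clarity, but at the price of invoking external structural facts about Tonelli minimizers; the paper's argument is more elementary, more self-contained, and more robust to the paper's low-regularity hypothesis on $L$.
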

\begin{proof}
	Since $u$ is locally Lipschitz and the Legendre transform is continuous, there exists a neighborhood $U_0 \ni x_0$ and a constant $C_0$ such that $|\nabla u| \le C_0$ on $U_0 \cap \cS_k$ (the function $u$ is differentiable on $\cS_k$ by Proposition \ref{dominatedprop}). Since calibrated curves are integral curves of $\nabla u$, there exist $0 < \eps_0 <2^{-k} $ and neighborhoods $U_0 \supseteq U' \supseteq U \ni x_0$ such that every calibrated curve $\gamma:(-\eps_0,\eps_0) \to M$ with $\gamma(0) \in U$ is contained inside $U'$. In fact, we can take $U$ and $U'$ to be concentric balls about $x_0$ of radius $r$ and $R$ respectively, with respect to some auxiliary Riemannian metric, and assume that every calibrated curve $\gamma:(-{\eps_0},{\eps_0}) \to M$ with $\gamma(0) \in U$ is contained in a ball centered at $x_0$ of radius $R - 2r$. From now on we work in normal coordinates on $U'$, identifying it with an open ball in $\RR^n$ centered at the origin, and identifying $TU'$ with $U' \times \RR^n$. 

	\medskip
	Let $x \in U \cap \cS_k$. Since ${\eps_0} < 2^{-k}$, there exists a calibrated curve $\gamma:(-{\eps_0},{\eps_0}) \to M$  with $\gamma(0) = x$ . By construction, the curve $\gamma$ is contained in a ball of radius $R-2r$ centered at the origin. Let $x' \in U$ and set
	$$v_0 : = \frac{x' - x}{{\eps_0} / 2}.$$
	Define
	$$\theta(t) : = \gamma(t)  + (t + {\eps_0}/2)\cdot v_0 , \qquad -{\eps_0}/2 \le t \le 0.$$
	Then 
		$$\theta(-{\eps_0}/2) = \gamma(-{\eps_0}/2) \qquad \text{ and } \qquad \theta(0) = x'.$$
	Since $\gamma$ is contained in a ball of radius $R-2r$, and $|(t+{\eps_0}/2)v_0 | \le 2|x'-x| \le 2r$, the line segment $\theta$ lies in $U'$. We have
	$$w(t) : = (\theta(t),\dot\theta(t)) - (\gamma(t),\dot\gamma(t)) = ((t+{\eps_0}/2)v_0 ,v_0 ).$$
	Since $u$ is dominated,
	$$u(x') \le u(\gamma(-{\eps_0}/2)) + \int_{-{\eps_0}/2}^0L(\theta(t),\dot\theta(t))ds$$
	and since $\gamma$ is calibrated,
	$$u(x) = u(\gamma(-{\eps_0}/2)) + \int_{-{\eps_0}/2}^0L(\gamma(t),\dot\gamma(t))dt,$$
	whence
	\begin{align*}
		u(x') - u(x) & \le \int_{-{\eps_0}/2}^0\left[L(\theta(t),\dot\theta(t)) - L(\gamma(t),\dot\gamma(t))\right]dt\\
		& = \int_{-{\eps_0}/2}^0 \left[dL\vert_{(\gamma(t),\dot\gamma(t))}\cdot w(t) + O(|w(t)|^2)\right]dt\\
		& = \left[\int_{-{\eps_0}/2}^0\left((t+\eps_0/2)\cdot\frac{\partial L}{\partial x}\bigg\vert_{(\gamma(t),\dot\gamma(t))} + \frac{\partial L}{\partial v}\bigg\vert_{(\gamma(t),\dot\gamma(t))}\right)dt\right]\cdot v_0  + O\left(\frac{|x'-x|^2}{\eps_0}\right)
	\end{align*}
	where we write $L = L(x,v)$ for $(x,v) \in U'\times\RR^n \cong TU'$. Since $\gamma$ is calibrated, it is an extremal and thus satisfies the Euler-Lagrange equation \eqref{ELeq}. Substituting \eqref{ELeq} into the first summand in the integrand and then integrating it by parts, we obtain
	\begin{align*}
		u(x') - u(x) \le \frac{\eps_0}{2}\cdot\frac{\partial L}{\partial v}\bigg\vert_{(\gamma(0),\dot\gamma(0))}\cdot v_0  + O\left(\frac{|x'-x|^2}{\eps_0}\right) = \frac{\partial L}{\partial v}\bigg\vert_{(x,\dot\gamma(0))}\cdot(x'-x) + O\left(\frac{|x'-x|^2}{\eps_0}\right).
	\end{align*}
	
	Since $u$ is differentiable at $x$, and the first term is linear in $x'$ and vanishes when $x' = x$, it must equal $(du\vert_x)(x'-x)$. The remainder term is $O(|x'-x|^2)$ with a constant depending only on ${\eps_0}$ and on second derivatives of $L$ on a compact set independent of $x$ and $x'$, which is bounded away from the zero section; indeed, since $E(\dot\gamma)\equiv 0$, the straight line $s \mapsto \dot\gamma(t) + s(t+\eps_0/2)v_0 $ joining $\dot\gamma(t)$ to $\dot\theta(t)$ is bounded away from zero  independently of $x,x'$ and $t$, provided that $U'$ is sufficiently small. 
    
    \medskip
    A similar analysis on the interval $[0,{\eps_0}/2]$ gives the bound in the other direction.
\end{proof}

\begin{lemma}\label{whitneylemma}
	For every $x_0 \in M$ and every $k \ge 1$ there exists a neighborhood $U_{x_0,k} \ni x_0$ and a $C^{1,1}$ function $u_{x_0,k} : U_{x_0,k} \to \RR$ such that
	\begin{equation}\label{ux0eq} u_{x_0,k} = u \qquad \text{ and } \qquad du_{x_0,k} = du \qquad \text{ on } U_{x_0,k}\cap\cS_k.\end{equation}
\end{lemma}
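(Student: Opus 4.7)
The plan is to invoke the Whitney extension theorem for $C^{1,1}$ functions, applied to the 1-jet $(u,du)$ restricted to $\cS_k$ inside a coordinate chart around $x_0$. Recall that Whitney's theorem in this regularity says: given a closed set $E\subseteq\RR^n$ and functions $f:E\to\RR$, $p:E\to(\RR^n)^*$, a $C^{1,1}$ extension of $(f,p)$ to a neighborhood of $E$ exists provided that, for some constant $M$, both
$$
    |f(y)-f(x)-p(x)(y-x)| \le M|y-x|^2
    \qquad\text{ and }\qquad
    |p(y)-p(x)| \le M|y-x|
$$
hold for all $x,y\in E$. We fix a coordinate chart identifying a neighborhood $U\ni x_0$ with an open subset of $\RR^n$ and set $E:=\cS_k\cap U$; this set is closed in $U$ by Lemma \ref{Skclosedlemma}. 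Taking $f=u|_E$ and $p=du|_E$ (viewed in coordinates), the first (Taylor-remainder) hypothesis is immediate from Lemma \ref{taylorlemma} applied with $\eps=2^{-k}$, after shrinking $U$ so that it coincides with the neighborhood $U_{x_0}$ of that lemma.

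The main obstacle is verifying the second (Lipschitz) hypothesis, since Lemma \ref{taylorlemma} by itself only controls the component of $du\vert_y-du\vert_x$ in the direction $y-x$. The idea is to apply Lemma \ref{taylorlemma} at \emph{both} endpoints $x,y\in E$ simultaneously: for every $z\in U$,
\begin{align*}
    \left|u(z)-u(x)-(du\vert_x)(z-x)\right| & \le C_{x_0}|z-x|^2,\\
    \left|u(z)-u(y)-(du\vert_y)(z-y)\right| & \le C_{x_0}|z-y|^2.
\end{align*}
Subtracting and writing $w := du\vert_y - du\vert_x$, $s := z-x$, and $A := u(y)-u(x)-(du\vert_y)(y-x)$ (so that $|A|\le C_{x_0}|y-x|^2$ from the second inequality at $z=x$), one arrives at
$$|w(s) + A| \le C_{x_0}\bigl(|s|^2 + |s-(y-x)|^2\bigr),$$
valid for every $s$ with $x+s\in U$. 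Choose $s = \lambda v$, where $v\in\RR^n$ represents $w$ via the Euclidean inner product and $\lambda = |y-x|/|w|$ (set $s=0$ if $w=0$), and bound $|s-(y-x)|^2 \le 2|s|^2 + 2|y-x|^2$; this extracts $|w| \le C'|y-x|$ for some uniform constant $C'$. The estimate is valid on a smaller neighborhood $U_{x_0,k}\subseteq U$ where the auxiliary point $x+s$ remains in $U$, using that $du$ is bounded on $E$ by continuity.

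With both Whitney conditions satisfied on $E$, the Whitney extension theorem produces the desired $C^{1,1}$ function $u_{x_0,k}:U_{x_0,k}\to\RR$ satisfying \eqref{ux0eq}. The heart of the argument is the Lipschitz estimate of the previous paragraph, derived by the simultaneous use of Lemma \ref{taylorlemma} at two nearby points of $\cS_k$; once this is in place, the conclusion reduces to a direct invocation of a classical extension theorem.
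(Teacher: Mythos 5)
Your proposal is correct and takes essentially the same route as the paper: apply the Taylor estimate of Lemma \ref{taylorlemma} at two points of $\cS_k$, evaluate both against a third auxiliary point aligned with the covector $du\vert_y-du\vert_x$ to extract the Lipschitz bound on $du$, verify boundedness, and invoke Whitney's $C^{1,1}$ extension theorem; the paper's derivation of the Lipschitz estimate \eqref{uestimates3} (following Fathi) differs only in bookkeeping (a point $x''$ with a side condition on distances, versus your $s=\lambda v$), and both give the same $O(|y-x|)$ bound.
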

\begin{proof}
	Fix $k \ge 1$ and $x_0 \in M$ and let $U_{x_0},C_{x_0}$ be as in the conclusion of Lemma \ref{taylorlemma}; then
    \begin{equation}\label{uestimates2}|u(x') - u(x) - (du\vert_x)(x'-x)| \le C_{x_0,k}\cdot|x' - x|^2 \qquad \forall \, x,x' \in U_{x_0,k}\cap\cS_k,\end{equation}
	where we view $U_{x_0,k}$ as a subset of $\RR^n$ using a fixed coordinate chart depending only on $x_0$.  Since $u$ is locally Lipschitz, we can also assume that
	\begin{equation}\label{uestimates1}|u| \le C_{x_0,k} \qquad \text{ and } \qquad |du| \le C_{x_0,k} \qquad \text{ on } \quad U_{x_0,k}\cap\cS_k.\end{equation}
	Inequality \eqref{uestimates2} also implies that, by replacing $C_{x_0,k}$ by a larger constant, we can guarantee that
	\begin{equation}\label{uestimates3}|du\vert_x - du\vert_{x'}| \le C_{x_0,k}\cdot|x-x'| \qquad \forall \, x,x' \in U_{x_0,k}\cap \cS_k.\end{equation}
	Indeed, this is the content of \cite[Lemma 3.8]{Fat03}; the proof is simple so we repeat it. Let $x,x{'} \in U_{x_0,k}\cap \cS_k$ and choose $x'' \in U_{x_0,k}$ with 
    \begin{equation}\label{absxxprimeeq}
        |x-x''| \le \min\{|x-x{'}|,|x''-x{'}|\}.
    \end{equation}
    Then
	\begin{align*}
		& \,|du\vert_x - du\vert_{x{'}}| \cdot |x{'} - x''|\\
		= & \,|(du\vert_x)(x{'}-x) + (du\vert_x)(x-x'') - (du\vert_{x{'}})(x{'}-x'')|\\
		\le & \,|(du\vert_x)(x{'}-x) + u(x) - u(x{'})|+|(du\vert_x)(x-x'') - u(x) + u(x'')|\\
		& +|u(x{'}) - u(x'') - (du\vert_{x{'}})(x{'}-x'')|\\
		\le & \,C_{x_0,k}\cdot\left(|x{'}-x|^2 + |x-x''|^2 + |x{'}-x''|^2\right) \\
		\le & \,5C_{x_0,k}\cdot |x{'}-x''|\cdot|x-x{'}|,
	\end{align*}
    where in the penultimate passage we used \eqref{uestimates2}, and in the last passage we used the triangle inequality and \eqref{absxxprimeeq}.	
    
    \medskip
    It now follows from \eqref{uestimates2}, \eqref{uestimates1}, \eqref{uestimates3} and Whitney's extension theorem (see \cite[Chapter VI]{Stein} or \cite[Section 2.2]{Kl}) that there exists a $C^{1,1}$ function $u_{x_0,k}:U_{x_0,k} \to \RR$ satisfying \eqref{ux0eq}.
\end{proof}

\begin{proof}[Proof of Theorem \ref{regularitythm}]
	Let $k\ge 1$. Lemma \ref{whitneylemma} provides us with a cover $\{U_{x,k}\}_{x \in M}$ of $M$. Extracting a locally finite refinement, we obtain a countable collection $\{U_{j,k},u_{j,k}\}_{j=1}^\infty$ such that $M = \bigcup_jU_{j,k}$ and such that each $u_{j,k}$ is $C^{1,1}$ on $U_{j,k}$ and satisfies
	$$u_{j,k} = u \qquad \text{and} \qquad  du_{j,k} = du \qquad \text{on } U_{j,k}\cap\cS_k.$$
	Let $\{\phi_{j,k}\}_{k\ge 1}$ be a partition of unity subordinate to $\{U_{j,k}\}$, i.e. each $\phi_{j,k}$ is smooth and compactly supported on $U_{j,k}$ and $\sum_j\phi_{j,k} \equiv 1$ on $M$. Set
	$$u_k : = \sum_{j=1}^\infty \phi_{j,k}u_{j,k}.$$
	Then $u_k$ is $C^{1,1}$, and since $\{\phi_{j,k}\}$ is a partition of unity,
	$u_k\equiv u$ and $du_k \equiv du$ on $\cS_k$.
\end{proof}

\subsection{Ray clusters}

Our next task is to divide the strain set $\cS$ into countably many \emph{ray clusters}, which are transport sets admitting a convenient parametrization. The section loosely follows Chapter 3 of \cite{Kl}. Let $\{u_k\}$ be a sequence of $C^{1,1}$ functions on $M$ satisfying
\begin{equation}\label{ukeq}u_k \equiv u \qquad \text{ and } \qquad du_k \equiv du \qquad \text{ on } \cS_k;\end{equation}
such a sequence exists by Theorem \ref{regularitythm}.

\begin{definition}[Twice differentiable function]\normalfont
    A differentiable function $\phi : M \to \RR$ will be called \emph{differentiable twice} at a point $x \in M$ if the map $d\phi : M \to T^*M$ is differentiable at $x$, and moreover the exterior derivative of the one-form $d\phi$ vanishes at $x$:
    $$dd\phi\vert_x = 0.$$
    Equivalently, the tangent space to the graph of $d\phi$ at the point $p : = d\phi\vert_x$ exists and is a Lagrangian subspace of $T_pT^*M$.
\end{definition}

\begin{lemma}[\text{\cite[Corollary 3.13]{Kl}}]\label{dtlemma}
    Let $\phi: M \to \RR$ be a $C^{1,1}$ function. Then $\phi$ is differentiable twice almost everywhere.
\end{lemma}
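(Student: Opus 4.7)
The plan is to work in a local coordinate chart around an arbitrary point of $M$ and reduce the claim to two well-known facts about $C^{1,1}$ functions on open subsets $U \subseteq \RR^n$. Since $\phi$ is $C^{1,1}$, in such coordinates the gradient $\nabla \phi : U \to \RR^n$ is locally Lipschitz. Rademacher's theorem then yields that $\nabla \phi$ is differentiable at almost every $x \in U$; at each such point the pointwise Hessian $\nabla^2\phi(x) \in \RR^{n\times n}$ exists, and this is precisely the statement that $d\phi : M \to T^*M$ is differentiable at $x$.

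It remains to verify the second requirement, $dd\phi\vert_x = 0$, which in coordinates is the symmetry of the Hessian $\partial_i\partial_j\phi(x) = \partial_j\partial_i\phi(x)$. For this I would use the standard mollification argument. Let $\phi_\eps := \phi \ast \rho_\eps$ be a mollification by a smooth radial mollifier on a precompact subdomain. Each $\phi_\eps$ is smooth, so $\partial_i\partial_j\phi_\eps \equiv \partial_j\partial_i\phi_\eps$ identically. Because $\nabla \phi$ is locally Lipschitz, the weak second derivatives $\partial_i\partial_j\phi$ lie in $L^\infty_{\mathrm{loc}}$, and the second derivatives of $\phi_\eps$ are given by convolution with the weak derivatives of $\nabla\phi$; hence they are uniformly bounded in $L^\infty_{\mathrm{loc}}$ and converge in $L^p_{\mathrm{loc}}$ for every $p<\infty$ to the weak derivatives. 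Extracting a subsequence converging a.e.\ gives $\partial_i\partial_j\phi = \partial_j\partial_i\phi$ a.e.\ in the weak sense. At every Rademacher differentiability point of $\nabla\phi$, the pointwise derivatives of $\nabla\phi$ coincide with its weak derivatives (by Lebesgue differentiation applied to the $L^\infty$ density), so the classical Hessian is symmetric a.e.

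Finally, the condition $dd\phi\vert_x = 0$ and the differentiability of $d\phi$ at $x$ are intrinsic to $M$, so covering $M$ by countably many coordinate charts and taking a union of null sets produces a full-measure set on which $\phi$ is twice differentiable.

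The main obstacle is not conceptual but bookkeeping: namely, the identification at each Rademacher point of the classical Hessian with the a.e.-symmetric weak Hessian. This is handled by the observation that the weak derivative of a locally Lipschitz function is, at each point of classical differentiability, given by the classical derivative—an elementary consequence of the fundamental theorem of calculus along line segments together with the Lebesgue differentiation theorem.
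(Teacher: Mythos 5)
The paper itself offers no proof of this lemma; it simply cites \cite[Corollary~3.13]{Kl}. Your argument correctly supplies the standard proof, and to my knowledge it is essentially the argument Klartag uses: Rademacher's theorem applied to the locally Lipschitz map $d\phi$ gives differentiability of $d\phi$ almost everywhere, and symmetry of the Hessian at a.e.\ Rademacher point follows by identifying the pointwise Rademacher derivative of each $\partial_i\phi$ with its distributional derivative (valid a.e.\ for Lipschitz functions) and then invoking the symmetry of the weak Hessian. Both of your two steps hold up: the identification of $dd\phi\vert_x = 0$ with symmetry of the pointwise Hessian is exactly what the paper's coordinate-free definition amounts to, and the passage from ``the classical Hessian equals the weak Hessian a.e.'' to ``the classical Hessian is symmetric a.e.'' is sound since both equalities hold on a full-measure set. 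Two small remarks. First, the mollification step is dispensable for the symmetry of the \emph{weak} Hessian, since mixed distributional partials always commute (Schwarz for distributions); the mollification is only doing real work in showing the Rademacher derivatives represent the weak ones, and there it is essentially the one-dimensional absolute-continuity-plus-Fubini argument in disguise. Second, a well-known alternative route is Alexandrov's theorem: locally $\phi + C|x|^2$ is convex for $C$ a local Lipschitz constant of $d\phi$, so $\phi$ is a difference of convex functions and twice differentiable a.e.\ by Alexandrov; this avoids the weak-derivative bookkeeping at the cost of quoting a deeper theorem.
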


Here and below, the term \emph{almost everywhere} is understood with respect to some, hence any, choice of density on the manifold $M$. 

\begin{definition}[Ray Clusters]\normalfont\label{clusterdef}
    A compact set $R_0\subseteq \cS$ will be called a \emph{seed of a ray cluster} if there exist an open, connected, bounded  set $\Omega_0\subseteq \RR^{n-1}$, a smooth embedding $F_0:\overline{\Omega}_0 \to M$, and $k_0 \ge 1$ such that:
    \begin{enumerate}[$(i)$]
        \item Every transport ray intersects the set $R_0$ at most once.
        \item The hypersurface $F_0(\Omega_0)$ is transverse to the vector field ${\nabla}u_{k_0}$.
        \item The functions $a$ and $b$ defined in \eqref{Ixeq} are continuous on the set $R_0$.
        \item The function $u_{k_0}$ is differentiable twice on the set $R_0$.
        \item $R_0 \subseteq F_0(\Omega_0)\cap \cS_{k_0}.$
    \end{enumerate}
    A \emph{ray cluster} is a Borel set $R \subseteq M$  satisfying 
    $$R = \hat R \subseteq \hat R_0$$
    where $R_0$ is a seed of a ray cluster. Thus a ray cluster is a transport set contained in the saturation of some seed of a ray cluster. See Figure \ref{clusterfig}.
\end{definition}
\begin{figure}
    \centering
    \includegraphics[width=.5\textwidth]{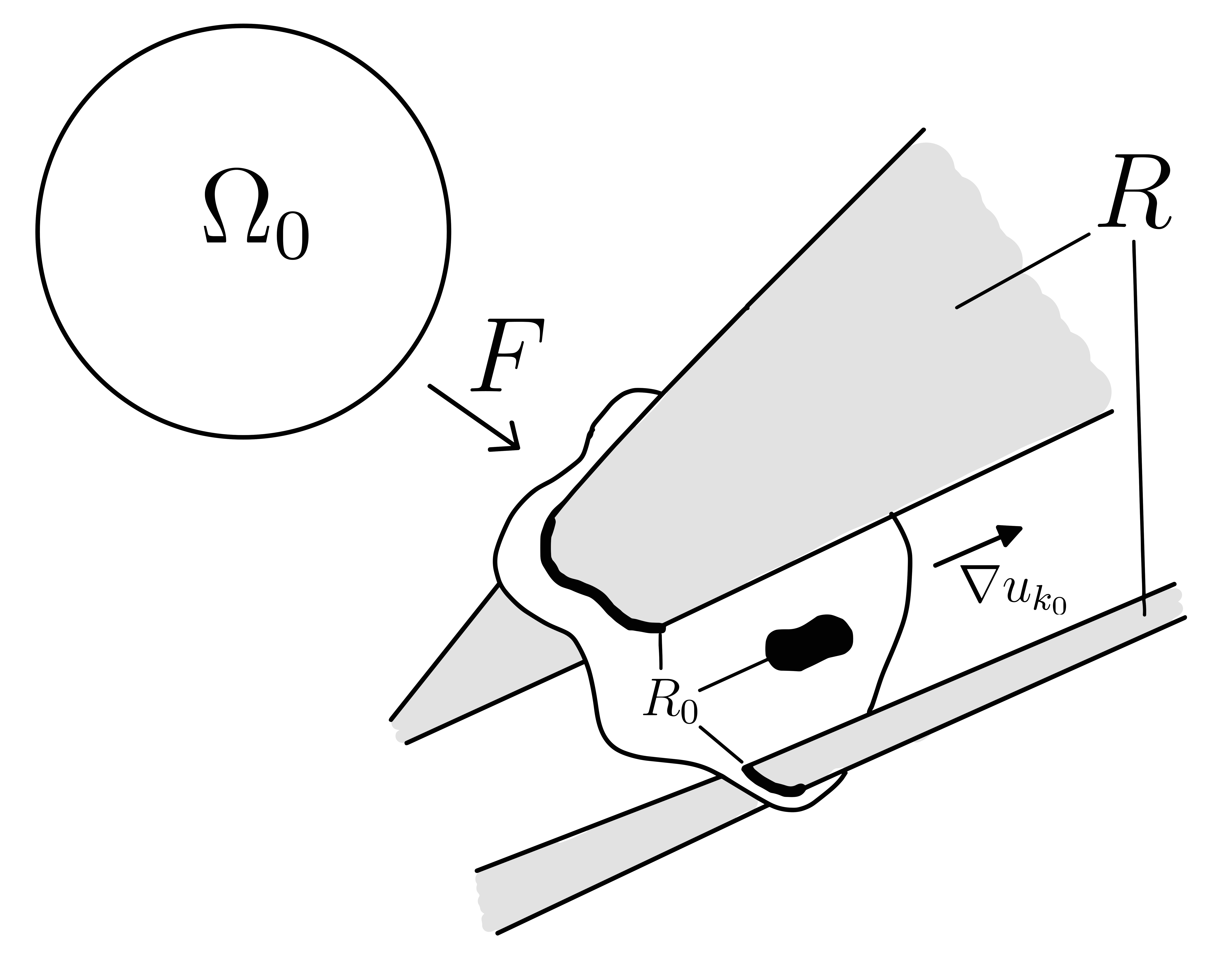}
    \caption{A ray cluster.}
    \label{clusterfig}
\end{figure}
\begin{remark*}
    Our definition of a ray cluster is different from the one given in \cite{Kl}.
\end{remark*}

Immediately from the definition we get:

\begin{lemma}\label{subsetlemma}
    A transport set contained in a ray cluster is itself a ray cluster.
\end{lemma}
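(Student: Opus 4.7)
The plan is to observe that the seed property is preserved under passing to transport subsets, because being a seed is a condition that only involves the seed set itself (and a chosen chart/function around it), not the ray cluster it spawns. So I expect the proof to reduce to checking, from the definitions, that the seed that witnesses $R$ being a ray cluster can also be used as the seed for the smaller transport set $R'$.

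Concretely, let $R' \subseteq R$ be a transport set, where $R$ is a ray cluster. By Definition \ref{clusterdef}, there exists a seed $R_0$ of a ray cluster together with (implicitly) a set $\Omega_0$, embedding $F_0$, and integer $k_0$ verifying conditions $(i)$--$(v)$, such that
$$R = \hat R \subseteq \hat R_0.$$
I will show that $R_0$ itself also serves as a seed for $R'$. First, $R_0$ is already a seed of a ray cluster by hypothesis, so conditions $(i)$--$(v)$ require no re-checking. Second, $R'$ is Borel (this is part of the definition of a transport set). Third, since $R'$ is a transport set, $R' = \hat{R'}$. Finally, the inclusions
$$R' \subseteq R \subseteq \hat R_0$$
give $R' \subseteq \hat R_0$. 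These three points are exactly the defining conditions of a ray cluster as spelled out in Definition \ref{clusterdef}, so $R'$ is a ray cluster.

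I do not anticipate any obstacle here: the lemma is essentially a tautology built into the definition, whose purpose seems to be to license subsequent arguments in which one restricts a given ray cluster to a measurable sub-transport-set (for example, the intersection of a ray cluster with a Borel set that is itself a union of full transport rays). The only thing one must be mildly careful about is not to confuse the seed $R_0$ (which is required to be compact) with the ray cluster it generates (which is only required to be Borel); the argument above sidesteps this by reusing the same seed rather than trying to manufacture a new compact seed inside $R'$.
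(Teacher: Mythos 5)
Your argument is correct and is exactly the paper's intent: the paper states the lemma with only the remark ``Immediately from the definition we get,'' and the content is precisely what you spell out — a transport subset $R'\subseteq R$ satisfies $R'=\hat{R'}$ and $R'\subseteq R\subseteq \hat R_0$, so the same seed $R_0$ witnesses that $R'$ is a ray cluster.
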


We now show that each ray cluster admits a locally-Lipschitz parametrization. 
The \emph{Jacobian determinant} of a function $F : \RR^{n} \to M$ at a point of differentiability $y = (y^1,\dots,y^n) \in \RR^n$ is defined by
$$\textstyle\det_\mu dF\vert_y := \omega(dF\vert_y(\partial/\partial y^1),\dots,dF\vert_z(\partial/\partial y^n)),$$
where $\omega$ is the density of $\mu$.

\begin{figure}[t]
    \centering
    \includegraphics[width =.4\textwidth]{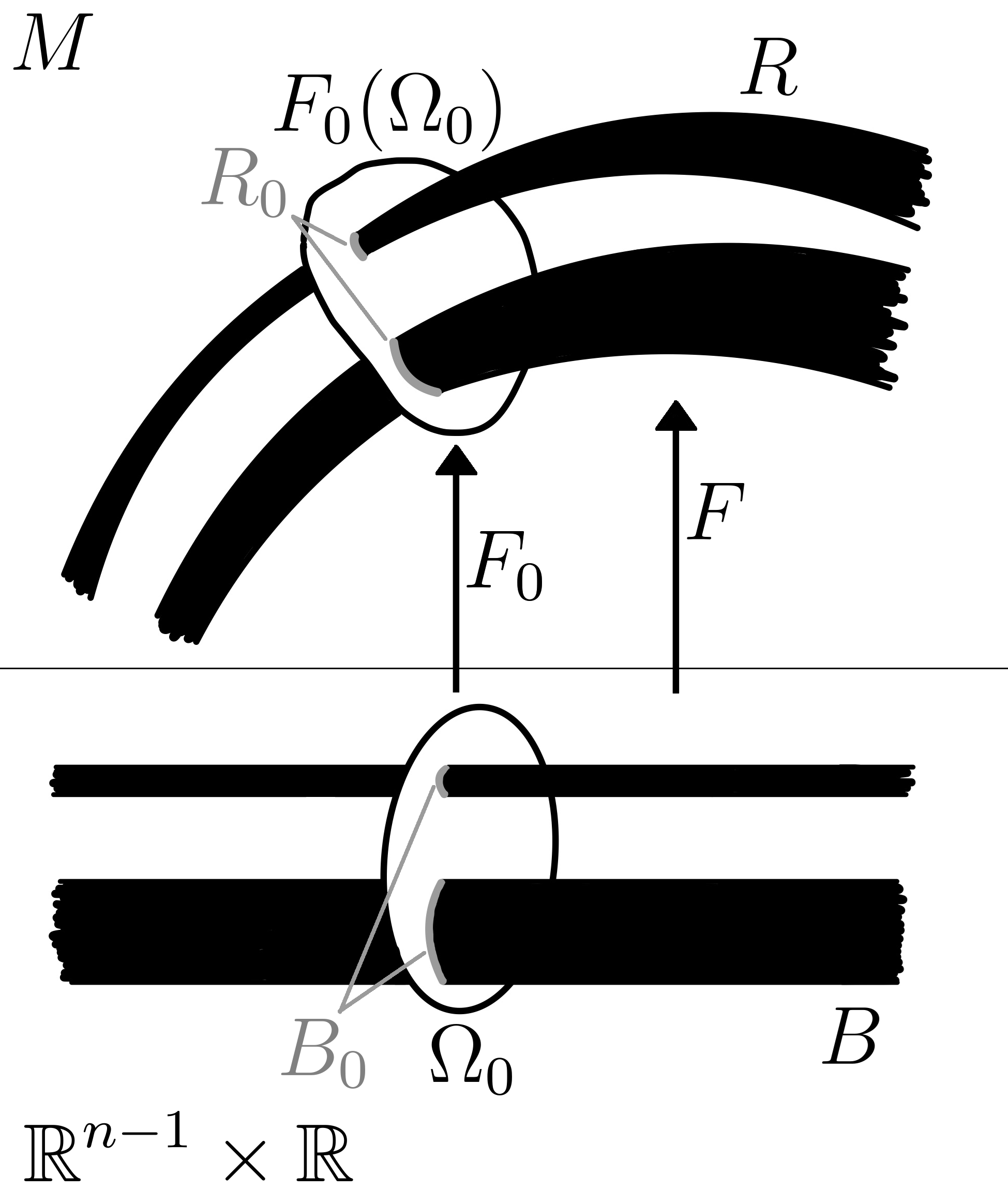}
    \caption{A ray cluster and its parametrization.}
    \label{flattenfig}
\end{figure}

\begin{proposition}[Lipschitz parametrization of ray clusters, cf. \text{\cite[Proposition 3.5]{Kl}}]\label{flattenprop}
    Let $R$ be a ray cluster. There exist a Borel set $B \subseteq \RR^{n-1}\times \RR$ and a locally-Lipschitz, one-to-one map $F : B \to R$ with the following properties:
    \begin{enumerate}[$(i)$]
        \item The set $B$ takes the form
        \begin{equation}\label{Bdef}
            B = \{(y,t) \in \RR^{n-1}\times \RR \, \mid \, y \in B_0, \quad t \in (a_y,b_y)\},
        \end{equation}
        where $B_0 \subseteq \RR^{n-1}$ is a compact set and $-\infty \le a_y < b_y \le \infty$ are continuous functions of $y$. See Figure \ref{flattenfig}.
        \item For every $y \in B_0$, the curve $t \mapsto F(y,t)$ is calibrated, and its image is the relative interior of a transport ray.
        \item For every Lebesgue density point $y \in B_0$ and every $t \in (a_y,b_y)$, the point $F(y,t)$ is a Lebesgue density point of the ray cluster $R$, the map $F$ is differentiable at $(y,t)$, and the Jacobian 
        $$J(y,t): =\textstyle\det_\mu dF\vert_{(y,t)}, \qquad (y,t) \in B$$
        is positive, continuously differentiable in the $t$ variable, and for sufficiently large $k$ satisfies
        $$\frac{\dot J(y,t)}{J(y,t)} = \bL u_k(F(y,t))  \qquad \text{ for all } t \in (a_y + 2^{-k},b_y - 2^{-k}),$$
        where dot denotes differentiation with respect to $t$.
        \item We have the change of variables formula
        \begin{equation}\label{COVeq}
            \int_Rhd\mu = \int_{B_0}\int_{a_y}^{b_y}h(F(y,t)) \cdot J(y,t)\,dt\,dy
        \end{equation}
        for every Borel function $h : M \to \RR$. In particular,
        \begin{equation}\label{nulleq1}\mu\left(R\setminus F(B)\right) = 0.\end{equation}
    \end{enumerate}
\end{proposition}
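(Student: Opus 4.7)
The plan is to construct $F$ by flowing from the seed $R_0$ along transport rays. Set $B_0 := F_0^{-1}(R_0\cap R)$; although $R_0\cap R$ is only Borel in general, $F_0^{-1}(R_0)$ is compact, and by exhausting $R_0\cap R$ from inside by compact subsets we may work with one compact piece at a time. For $y\in B_0$ let $a_y := a(F_0(y))$ and $b_y := b(F_0(y))$, which are continuous by condition (iii) of Definition \ref{clusterdef}. Define
\[F(y,t) := \gamma_{F_0(y)}(t) = \pi\bigl(\Phi^L_t(\nabla u|_{F_0(y)})\bigr), \qquad (y,t)\in B,\]
with $B$ as in \eqref{Bdef}. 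By Lemma \ref{transportraycalibratedlemma}, $t\mapsto F(y,t)$ parametrizes the relative interior of the transport ray through $F_0(y)$, giving (ii), and injectivity of $F$ follows from property (i) of the seed (distinct $y$ give distinct transport rays) combined with injectivity of each calibrated curve as a minimizing extremal.

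For local Lipschitz regularity, apply \eqref{ukeq}: on $R_0\subseteq\cS_{k_0}$ one has $du = du_{k_0}$, and $du_{k_0}$ is locally Lipschitz since $u_{k_0}$ is $C^{1,1}$. Hence $y\mapsto \nabla u|_{F_0(y)} = \cL\,du_{k_0}|_{F_0(y)}$ is locally Lipschitz; composing with the Euler-Lagrange flow $\Phi^L$, which is $C^1$ on $TM\setminus\bz$ since $L$ is $C^2$ there, yields $F$ locally Lipschitz. Rademacher's theorem then gives a.e.\ differentiability, and injectivity together with Lipschitz regularity ensures that at every Lebesgue density point $y$ of $B_0$, the point $F(y,t)$ is a Lebesgue density point of $R$ for every $t\in(a_y,b_y)$.

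The Jacobian formula in (iii) emerges from identifying $F$ locally with the flow of $\nabla u_k$. For $k\ge k_0$ sufficiently large that the subinterval $(a_y+2^{-k}, b_y-2^{-k})$ is nonempty, every point $F(y,t)$ with $t$ in this subinterval lies in $\cS_k$, so $\nabla u = \nabla u_k$ there and the transport ray is an integral curve of the locally Lipschitz vector field $\nabla u_k$:
\[F(y,t) = \Phi_{t-s}^{\nabla u_k}(F(y,s))\quad\text{for $s,t$ in the subinterval}.\]
Multiplicativity of Jacobians reduces the derivation to the classical ODE $\tfrac{d}{dt}\log\det_\mu d\Phi_t^{\nabla u_k} = \div_\mu\nabla u_k\circ\Phi_t^{\nabla u_k} = \bL u_k\circ\Phi_t^{\nabla u_k}$, which holds wherever $u_k$ is twice differentiable---a full-measure set by Lemma \ref{dtlemma}. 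Positivity of $J$ follows from the $C^1$ flow description together with transversality of $F_0$ to $\nabla u$ at $t=0$ (condition (ii) of the seed) and $\nabla u \ne 0$ on $\cS_{k_0}\subseteq SM$.

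Finally, (iv) is the area formula for injective locally Lipschitz maps; the null-set identity $\mu(R\setminus F(B))=0$ reduces to two observations, namely that the endpoints of transport rays form a $\mu$-null set by Lemma \ref{endslemma}, and that $R\subseteq\hat R_0$ forces every non-endpoint of $R$ to lie on a ray through $R_0$ and hence in the image of $F$ (up to the negligible image of non-density points of $B_0$, whose $\mu$-smallness follows from the change of variables formula itself). I expect the main obstacle to be promoting the almost-everywhere Jacobian identity to a pointwise identity at every Lebesgue density point: this requires combining the twice-differentiability of $u_{k_0}$ at the base point (condition (iv) of Definition \ref{clusterdef}) with propagation of regularity along the $C^1$ Euler-Lagrange flow, and is the Lagrangian counterpart of the delicate $C^{1,1}$ argument in \cite{Kl}.
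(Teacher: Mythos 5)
Your overall architecture matches the paper's: define $F(y,t) = \gamma_{F_0(y)}(t)$, use $C^{1,1}$ regularity from Theorem \ref{regularitythm} for local Lipschitz continuity, and invoke the area formula for (iv). However, there are two genuine gaps.

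First, the claim that ``injectivity together with Lipschitz regularity ensures that at every Lebesgue density point $y$ of $B_0$, the point $F(y,t)$ is a Lebesgue density point of $R$'' is false as stated: an injective Lipschitz map between Euclidean spaces need not preserve density points (it can collapse volume). What is actually needed is a lower bound on $dF$ at the point in question, i.e.\ the differential must exist and be invertible there; the paper establishes this via the bi-Lipschitz estimate $d(x',x) = O(d(z',z))$ in Lemma \ref{Gktlemma2}, which in turn relies on the twice-differentiability of $u_{k_0}$ at the base point $x=F_0(y)$ (condition (iv) of Definition \ref{clusterdef}) and on the $C^1$ regularity of the Hamiltonian flow. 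Your density-point argument also needs the transversality/shrinking-nicely construction at $t=0$ to get started (the sets $A_r$ in the paper's proof), which you don't address.

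Second, the argument for (iii) — that $J(y,t)$ is $C^1$ in $t$ and satisfies the ODE $\dot J/J = \bL u_k\circ F$ \emph{pointwise at every density point} — is deferred rather than proved. You reduce it to the flow of $\nabla u_k$ and note that the Liouville identity holds wherever $u_k$ is twice differentiable, which by Lemma \ref{dtlemma} is only a.e.; the proposition demands it at \emph{every} density point $y$ and \emph{every} $t$. The paper's Lemma \ref{Gktlemma2}(iii) handles this by propagating the second-order data of $u_k$ at $x$ along the Hamiltonian flow to obtain the formula $Tdu_k\vert_{\gamma_x(t)} = d\Phi_t^H\vert_{du_k\vert_x}\circ Tdu_k\vert_x\circ (dG_k^t\vert_x)^{-1}$, whence $\bL u_k\circ\gamma_x$ is well defined and $C^1$ on $I_{x,k}$. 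You correctly identify in your closing sentence that this is ``the main obstacle,'' but flagging the obstacle does not amount to overcoming it: without the propagation lemma, points (iii) and the density-point part of the proposition remain unproven.

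Minor points: positivity of $J$ requires a connectivity argument (the sign of $J$ is constant on each ray by invertibility and is positive at $t=0$ by transversality of $F_0(\Omega_0)$ to $\nabla u_{k_0}$); and the null-set identity \eqref{nulleq1} is exactly Lemma \ref{endslemma} and does not circularly rely on the change of variables formula, as your phrasing hints.
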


\begin{remark*}
    Since the function $F$ is defined on the set $B$ which is not open, by \emph{differentiable at $(y,t)$} we mean that some locally-Lipschitz extension of $F$ to an open set containing $B$ is  differentiable at $(y,t)$, and that the differential is uniquely determined by the values of $F$ on the set $B$ (this is true when $(y,t)$ is a Lebesgue density point of $B$). We do not dwell on this technical point here; the details are treated carefully in \cite[Chapter 3]{Kl}.
\end{remark*}

Before proving Proposition \ref{flattenprop}, let us establish some preliminary lemmas (and in particular, justify the expression $\bL u_k$ appearing in Proposition \ref{flattenprop}, as $u_k$ is only $C^{1,1}$). For $x \in \cS$ set 
$$I_{x,k} : = I_x\cap\gamma_x^{-1}(\cS_{k-1}) = \left(a(x) + 2^{-k+1},b(x) - 2^{-k+1}\right),$$
with the understanding that $I_{x,k} = \varnothing$ if $a(x) + 2^{-k+1} \ge b(x) - 2^{-k+1}$.

\begin{lemma}\label{Gktlemma}
    For $t \in \RR$ and $k \ge 1$, set
    $$A_k^t : = \{x \in \cS_k \, \mid \, t \in I_x\}.$$
    The map
    $$G_k^t : U_k^t \to M \qquad  \qquad G_k^t(x) := \pi(\Phi_t^H(du_k\vert_x))$$
    is defined and locally-Lipschitz on an open set $U_k^t\supseteq A_k^t$, and
    $$G_k^t(x) = \gamma_x(t) \qquad \text{ for all $x \in A_k^t$.}$$
\end{lemma}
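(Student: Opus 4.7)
The strategy is to first verify the identity $G_k^t(x)=\gamma_x(t)$ on $A_k^t$, which fixes what the map has to do, and then to realize $G_k^t$ as a composition of a locally Lipschitz map, a $C^1$ local flow, and a smooth projection, so that openness of its domain and local Lipschitzness come for free from well-posedness of the Hamiltonian flow.

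\textbf{Step 1 (the identity on $A_k^t$).} Fix $x\in A_k^t$, so $x\in\cS_k$ and $t\in I_x$. By \eqref{ukeq} we have $du_k|_x=du|_x$. Since $x$ lies in the relative interior of a nondegenerate transport ray parametrized by the calibrated curve $\gamma_x$, Proposition \ref{dominatedprop}(d) gives $H(du|_x)=0$ and, for every $s\in I_x$,
\[
du|_{\gamma_x(s)}=\Phi^H_s\bigl(du|_x\bigr).
\]
Applied at $s=t$, this yields $\Phi_t^H(du_k|_x)=du|_{\gamma_x(t)}$, whose projection to $M$ is exactly $\gamma_x(t)$. Thus $G_k^t(x)=\gamma_x(t)$, as required.

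\textbf{Step 2 (open domain of definition).} By assumption (I) the Lagrangian is smooth on $TM\setminus\bz$ and supercritical, so $SM\cap\bz=\varnothing$ and hence $S^*M\cap\cL^{-1}(\bz)=\varnothing$. Because $H$ is $C^2$ on the open set $T^*M\setminus\cL^{-1}(\bz)$, the vector field $X_H$ is $C^1$ there and generates a $C^1$ local flow $\Phi^H$; let $\cD_t\subseteq T^*M\setminus\cL^{-1}(\bz)$ denote the (open) set of covectors at which $\Phi_s^H$ is defined for all $s$ between $0$ and $t$. Since $u_k$ is $C^{1,1}$ by Theorem \ref{regularitythm}, the map
\[
\Theta_k:M\to T^*M,\qquad \Theta_k(y):=du_k|_y
\]
is locally Lipschitz, hence continuous. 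Define
\[
U_k^t := \Theta_k^{-1}(\cD_t),
\]
which is open in $M$, and set $G_k^t(y):=\pi(\Phi_t^H(\Theta_k(y)))$ for $y\in U_k^t$. To see that $A_k^t\subseteq U_k^t$, note that for $x\in A_k^t$ the entire trajectory $s\mapsto\Phi_s^H(du_k|_x)=du|_{\gamma_x(s)}$, $s\in[0,t]$ (or $[t,0]$), lies in $S^*M\subseteq T^*M\setminus\cL^{-1}(\bz)$, so $du_k|_x\in\cD_t$.

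\textbf{Step 3 (local Lipschitzness).} On $U_k^t$ the map $G_k^t$ is the composition
\[
y\stackrel{\Theta_k}{\longmapsto} du_k|_y\stackrel{\Phi_t^H}{\longmapsto}\Phi_t^H(du_k|_y)\stackrel{\pi}{\longmapsto}\pi\bigl(\Phi_t^H(du_k|_y)\bigr).
\]
The first factor is locally Lipschitz because $u_k\in C^{1,1}$; the second is $C^1$ on $\cD_t$ by smooth dependence of the flow on initial data (using that $X_H$ is $C^1$ there), hence locally Lipschitz; the third is smooth. The composition is therefore locally Lipschitz, which completes the proof.

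\textbf{Main obstacle.} The only subtle point is making sure that the Hamiltonian flow is well-defined at $du_k|_y$ up to time $t$ on an open neighborhood of $A_k^t$; this is precisely why we restrict to $\cD_t$. The argument works because Proposition \ref{dominatedprop}(d) pins the trajectory through $du|_x$ to $S^*M$, which is compactly contained in $T^*M\setminus\cL^{-1}(\bz)$ by supercriticality, so $\cD_t$ contains an open neighborhood of $\Theta_k(A_k^t)$ and no completeness of $\Phi^H$ is needed.
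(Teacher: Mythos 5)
Your proof is correct and follows essentially the same route as the paper's: you establish local Lipschitzness of $G_k^t$ on an open set by composing the locally Lipschitz map $x\mapsto du_k|_x$ with the $C^1$ local Hamiltonian flow and the smooth projection $\pi$, and you verify the identity $G_k^t(x)=\gamma_x(t)$ for $x\in A_k^t$ using \eqref{ukeq} and the fact that calibrated curves lift to integral curves of $\Phi^H$ (you invoke Proposition~\ref{dominatedprop}(d) directly, whereas the paper unwinds this through $\Phi^L=\cL\circ\Phi^H\circ\cL^{-1}$ and \eqref{dotgammaxeq} — the same mechanism). The extra care you take with the domain $\cD_t$ and the avoidance of $\cL^{-1}(\bz)$ is a more explicit version of what the paper compresses into ``general existence and smoothness theorems for ODEs.''
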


\begin{lemma}\label{endcontlemma}
    Let $R$ be a ray cluster and let $R_0$ and $k_0$ be as in Definition \ref{clusterdef}. The functions $a,b$ are continuous on the set $R\cap \cS_k$ for all $k \ge k_0$.
\end{lemma}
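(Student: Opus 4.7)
The plan is to parametrize the ray cluster $R$ by its seed $R_0$ and thereby reduce continuity of $a,b$ on $R\cap\cS_k$ to their continuity on $R_0$, which is built into Definition \ref{clusterdef}(iii). Every transport ray contained in $\hat R_0$ meets $R_0$ in exactly one point by Definition \ref{clusterdef}(i); for $x\in R$ denote this intersection point by $y(x)\in R_0$ and let $s(x)\in I_{y(x)}$ be the unique parameter with $x=\gamma_{y(x)}(s(x))$. Translating the maximal calibrated interval yields
$$a(x)=a(y(x))-s(x),\qquad b(x)=b(y(x))-s(x),$$
so it suffices to show that the maps $y:R\cap\cS_k\to R_0$ and $s:R\cap\cS_k\to\RR$ are continuous.

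Fix $x_n\to x$ in $R\cap\cS_k$ and abbreviate $y_n:=y(x_n)$, $s_n:=s(x_n)$, $y:=y(x)$, $s:=s(x)$. By a standard subsequence argument it is enough to produce, from any subsequence, a further subsequence on which $y_n\to y$ and $s_n\to s$. Compactness of $R_0$ yields a subsequence with $y_n\to\tilde y\in R_0$. The calibrated segments $\gamma_{y_n}\vert_{[\min(0,s_n),\max(0,s_n)]}$ are minimizing extremals (Proposition \ref{dominatedprop}(b)) whose endpoints lie in the compact set $A:=R_0\cup\{x_n\}_{n\ge 1}\cup\{x\}$, so Lemma \ref{diamlemma} delivers $|s_n|\le\diam(A)<\infty$. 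Extract a further subsequence along which $s_n\to\tilde s\in\RR$.

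To identify $\tilde y$ and $\tilde s$, I would pass to pointwise limits of the calibrated segments. Since $R_0\subseteq\cS_{k_0}$, Theorem \ref{regularitythm} gives $du\equiv du_{k_0}$ on $R_0$ with $du_{k_0}$ continuous, whence $\dot\gamma_{y_n}(0)=\nabla u\vert_{y_n}\to\nabla u\vert_{\tilde y}=\dot\gamma_{\tilde y}(0)$. Continuity of the Euler-Lagrange flow together with the compactness argument already used in the proof of Lemma \ref{diamlemma}, combined with continuity of $u$ and of the action (so that calibration is preserved under pointwise limits), produces a calibrated curve $\tilde\gamma:[\min(0,\tilde s),\max(0,\tilde s)]\to M$ starting at $\tilde y$ and ending at $x$. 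Because $x\in\cS$ lies in the relative interior of the transport ray through $y$, the calibrated curve $\gamma_y$ extends $\tilde\gamma$ past $x$, so the extension and $\gamma_y$ share the interior point $x$; Proposition \ref{dominatedprop}(e) identifies them as parts of a single maximal calibrated curve, i.e.\ they trace the same transport ray. Definition \ref{clusterdef}(i) then forces $\tilde y=y$, and injectivity of $\gamma_y$ on $I_y$ (which follows from supercriticality, since a self-intersection would produce a closed curve of zero cost) gives $\tilde s=s$.

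The only step requiring genuine care is the uniform bound $|s_n|\le\diam(A)$; everything else is continuous dependence of minimizing extremals on their endpoints combined with the uniqueness built into Definitions \ref{clusterdef}(i). Once $y_n\to y$ and $s_n\to s$ are established, continuity of $a,b$ on $R_0$ gives $a(x_n)=a(y_n)-s_n\to a(y)-s=a(x)$ and the analogous statement for $b$, completing the proof.
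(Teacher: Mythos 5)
Your proof is correct and follows essentially the same route as the paper's: reduce continuity of $a,b$ on $R\cap\cS_k$ to continuity of the seed-point and parameter maps, extract convergent subsequences using compactness of $R_0$ and Lemma \ref{diamlemma}, and identify the limits via continuity of $\nabla u_k$, continuous dependence of the Euler-Lagrange flow, and the uniqueness built into Definition \ref{clusterdef}(i) and Proposition \ref{dominatedprop}(e). The only cosmetic difference is that the paper identifies the limit point by invoking the map $G_k^{t}$ from Lemma \ref{Gktlemma} rather than by passing to a pointwise limit of calibrated curves, but the underlying mechanism is the same.
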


\begin{lemma}[Regularity of $G_k^t$ and $u_k$ on ray clusters]\label{Gktlemma2}
    Let $R_0 \subseteq \cS$ be a seed of a ray cluster and let $R$ be a ray cluster satisfying $R = \hat R \subseteq \hat R_0$. 
    Let $x \in R_0$ be a Lebesgue density point of the ray cluster $R$, let $k_0$ be as in Definition \ref{clusterdef} and let $k\ge k_0$. For every $t \in I_{x,k}$ the following hold:
    \begin{enumerate}[(i)]
        \item The point $\gamma_x(t)$ is a Lebesgue density point of the set $R \cap \cS_k$.
        \item The map $G_k^t$ is defined and differentiable at $x$, and its differential is invertible and depends $C^1$-smoothly on $t$.
        \item The map $du_k:M \to T^*M$ is differentiable at $\gamma_x(t)$ and its differential depends $C^1$-smoothly on $t$. In particular, the function $\bL u_k\circ\gamma_x$ is well defined and continuous on $I_{x,k}$.
    \end{enumerate}
\end{lemma}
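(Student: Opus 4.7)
My plan is to use the twice-differentiability of $u_{k_0}$ at $x$ provided by Definition \ref{clusterdef}(iv), together with symplecticity of the Hamiltonian flow, to transport regularity along $\gamma_x$. The point $p := du_{k_0}|_x = du|_x$ lies in $S^*M$, which avoids $\cL^{-1}(\bz)$ by supercriticality, so $\Phi^H_s$ is smooth near $p$ for every $s$ near $t$. Twice differentiability supplies a Lagrangian tangent space $W_0 \subseteq T_p T^*M$ to $\Graph(du_{k_0})$ at $p$, and since $\Phi^H_s$ is symplectic, $W_s := d\Phi^H_s|_p(W_0)$ forms a $C^1$-family of Lagrangian subspaces of $T_{\Phi^H_s(p)}T^*M$.

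For part (ii), writing $G_{k_0}^s = \pi \circ \Phi^H_s \circ du_{k_0}$ and applying the chain rule, $dG_{k_0}^s|_x$ has image $d\pi(W_s)$, so its invertibility reduces to showing that $W_s$ is transverse to the vertical bundle $\cV T^*M$. At $s = 0$ this is exactly the transversality required by Definition \ref{clusterdef}(ii), and it persists throughout $s \in I_x$ because $\gamma_x$ is calibrated, hence action-minimizing, and admits no conjugate points along its relative interior---a standard second-variation argument for Tonelli Lagrangians. The $C^1$-dependence on $s$ is inherited from $\Phi^H_s$.

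For part (i), $G_{k_0}^t$ restricts to a bi-Lipschitz homeomorphism from a neighborhood $U$ of $x$ onto a neighborhood of $\gamma_x(t)$, by Lemma \ref{Gktlemma} and part (ii). For $y \in R \cap \cS_{k_0} \cap U$ one has $G_{k_0}^t(y) = \gamma_y(t)$, which lies in $R$ (because $R$ is a transport set) and in $\cS_k$ once $U$ is shrunk, using continuity of $a, b$ on $R_0$ (Definition \ref{clusterdef}(iii)) to arrange $t \in (a(y) + 2^{-k}, b(y) - 2^{-k})$. Moreover, points of $R$ near $x$ are of the form $\gamma_z(s)$ for $z \in R_0$ near $x$ and small $s$, and for $(z, s)$ in a small product neighborhood of $(x, 0)$ one has $\gamma_z(s) \in \cS_{k_0} \subseteq \cS_k$; hence $R \cap \cS_{k_0}$ has density $1$ at $x$ whenever $R$ does, and the bi-Lipschitz map $G_{k_0}^t$ transfers this to density $1$ of $R \cap \cS_k$ at $\gamma_x(t)$.

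For part (iii), $N_t := \Phi^H_t(\Graph(du_{k_0}|_U))$ is a Lagrangian submanifold of $T^*M$ passing through $\Phi^H_t(p) = du|_{\gamma_x(t)}$, and by (ii) it is locally the graph of an exact 1-form $d\tilde u_t$ near $\gamma_x(t)$, for a $C^1$ function $\tilde u_t$ solving the Hamilton-Jacobi equation. On $G_{k_0}^t(R \cap \cS_{k_0} \cap U)$, which by (i) has density $1$ at $\gamma_x(t)$, one has $d\tilde u_t = du = du_k$. Since $du_k$ is Lipschitz and agrees with the differentiable map $d\tilde u_t$ on a set of density $1$ at $\gamma_x(t)$, it is differentiable at $\gamma_x(t)$ with the same differential; the $C^1$-dependence on $t$ is inherited from $\Phi^H_t$, and continuity of $\bL u_k \circ \gamma_x$ on $I_{x,k}$ is immediate. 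The main obstacle throughout is the invertibility in part (ii), i.e. the no-conjugate-points property of calibrated curves, which is the backbone of the entire transfer of regularity via the Hamiltonian flow.
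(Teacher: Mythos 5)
You take a genuinely different route from the paper, and part of it has a gap.

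For the invertibility in part (ii), the paper does not invoke any no-conjugate-points property of calibrated curves or any symplectic transversality argument. Instead it observes that, for $x'$ in $U\cap R$ only, one has the identity $x' = (\pi\circ\Phi_{-t}^H\circ du_k)(z')$ with $z' := G_k^t(x')$, which exhibits the Lipschitz map $\pi\circ\Phi_{-t}^H\circ du_k$ as a left inverse of $G_k^t$ on $U\cap R$. Combined with the hypothesis that $x$ is a Lebesgue density point of $R$, this gives $d(x',x)=O(d(z',z))$ for $x'\in U\cap R$, and since such $x'$ approximate every direction at $x$, the differential $dG_k^t|_x$ must be injective. This completely sidesteps the second-variation discussion. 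Your plan, by contrast, requires the theorem that the transported Lagrangian planes $W_s=d\Phi^H_s(W_0)$ stay transverse to the vertical; this is a real fact to prove in the Tonelli (non-Riemannian, non-Finsler) setting, especially since $W_0$ is the tangent to the graph of the Whitney extension $u_{k_0}$ at a single twice-differentiability point rather than the graph of a genuine classical Hamilton--Jacobi solution. You also argue with $u_{k_0}$ throughout, whereas the lemma concerns $u_k$ for general $k\ge k_0$; an extra step is needed to match the two at $x$ to second order.

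The more concrete gap is in part (i). You claim that $G_{k_0}^t$ ``restricts to a bi-Lipschitz homeomorphism from a neighborhood $U$ of $x$ onto a neighborhood of $\gamma_x(t)$, by Lemma \ref{Gktlemma} and part (ii).'' Lemma \ref{Gktlemma} gives only local Lipschitzness, and part (ii) gives only that the derivative at the single point $x$ is invertible. For a merely Lipschitz map the inverse function theorem is unavailable -- already on the line, $f(s)=s+s^2\sin(1/s)$, $f(0)=0$, has $f'(0)=1$ yet is not injective near $0$ -- so openness of $G_k^t$ near $x$ does not follow, and a bi-Lipschitz local homeomorphism on a full neighborhood cannot be asserted. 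The paper never claims openness; it proves the two-sided Lipschitz estimate on $R\cap U$ only (where the Hamiltonian-flow identity holds) and transfers density through that restricted correspondence, using that $z'=G_k^t(x')$ lands in $R\cap\cS_k$ by \eqref{gammaxprimeeq}. Your ``$\gamma_z(s)\in\cS_{k_0}$ for $(z,s)$ near $(x,0)$'' is also not quite right: $z\in\cS_{k_0}$ only guarantees $(-2^{-k_0},2^{-k_0})\subseteq I_z$, so $\gamma_z(s)\in\cS_{k_0}$ can fail for $s\ne0$; one only gets $\cS_{k_0+1}$, and even this needs the $I_{x,k}\subseteq I_{x',k+1}$ bookkeeping the paper does via Lemma \ref{endcontlemma}.

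Part (iii) is closer in spirit: both you and the paper identify the second-order data of $du_k$ at $\gamma_x(t)$ with the $d\Phi_t^H$-image of the data at $x$. The paper carries this out by a direct Taylor estimate on $du_k$ using Lipschitzness and the density point, which is more robust than passing through a smooth Lagrangian submanifold $N_t$, since $du_k$ is only Lipschitz and its graph off the strain set is uncontrolled.
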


\begin{proof}[Proof of Lemma \ref{Gktlemma}]
    Since $du_k$ is locally Lipschitz and the Hamiltonian flow $\Phi_t^H$ is $C^1$, general existence and smoothness theorems for ordinary differential equations imply that the map $G_k^t$ is defined and locally-Lipschitz on an open set $U_k^t \subseteq M$. 

    \medskip
    Let $x\in A_k^t$. Then $du\vert_x = du_k\vert_x$ by \eqref{ukeq}. Therefore, by \eqref{dotgammaxeq} and the fact that $\gamma_x$ is an extremal, 
    \begin{align*}
        G_k^t(x) & = \pi\left(\Phi_t^H(du_k\vert_x)\right)\\
        & = \pi\left(\Phi_t^H(du\vert_x)\right)\\
        & = \pi\left(\Phi_t^L({\nabla}u\vert_x)\right)\\
        & = \pi\left(\Phi_t^L(\dot\gamma_x(0))\right)\\
        & = \gamma_x(t),
    \end{align*}
    and in particular $G_k^t$ is defined at $x$, i.e $x \in U_k^t$.
\end{proof}

\begin{proof}[Proof of Lemma \ref{endcontlemma}]
    Since $R = \hat R \subseteq \hat R_0$, for each $x \in R\cap\cS_k$ there exist $x_0 \in R_0$ and $t_0 \in I_{x_0,k+1}$ such that $x = \gamma_{x_0}(t_0)$, and then we have
    $$a(x) = a(x_0) - t_0 \qquad \text{ and } \qquad b(x) = b(x_0) - t_0.$$
    Thus, since $a$ and $b$ are continuous on $R_0$, it suffices to show that $x_0$ and $t_0$ depend continuously on $x$, i.e. that if $x_i$ is a sequence of points in $R_0$ and $t_i \in I_{x_i,k+1}$ is a sequence of real numbers such that $$\gamma_{x_i}(t_i) \to \gamma_{x_0}(t_0)\in \cS_k,$$ then $\lim x_i = x_0$ and $\lim t_i = t_0$. By compactness of $R_0$, we may assume that the sequence $x_i$ converges to some $x_0' \in R_0 \subseteq \cS_{k_0}\subseteq\cS_k$. We may also assume that $t_i$ converge to some $t_0'$, since they are uniformly bounded by Lemma \ref{diamlemma}. Note that $t_0' \in I_{x_0'}$ by continuity of the functions $a,b$ on $R_0$. By \eqref{ukeq} and the continuity of $\nabla u_k$,
    $$\lim\dot\gamma_{x^i}(0) = \lim\nabla u\vert_{x_i} = \lim \nabla u_k\vert_{x^i} = \nabla u_k\vert_{x_0'}.$$
    It follows that
    $$\gamma_{x_0}(t_0) = x = \lim_i\gamma_{x^i}(t_i) = \lim_i\pi(\Phi_{t_i}^L(\dot\gamma_{x^i}(0))) = \pi(\Phi_{t_0'}^L(\nabla u_k\vert_{x_0'})) =G_k^{t_0'}(x_0') = \gamma_{x_0'}(t_0')$$
    by Lemma \ref{Gktlemma}. But then it follows that the point $x$ lies both on a transport ray through $x_0$ and on a transport ray through $x_0'$; by Proposition \ref{dominatedprop}, the two transport rays coincide, so since each transport ray intersects $R_0$ at most once, $x_0 = x_0'$ and $t_0 = t_0' = \lim t_i$.
\end{proof}

\begin{proof}[Proof of Lemma \ref{Gktlemma2}]
    Fix $t\in I_{x,k}$. Since $x \in R_0 \subseteq \cS_{k_0} \subseteq \cS_k$, Lemma \ref{Gktlemma} implies that there exists a neighborhood $U\ni x$ which is contained in the domain $U_k^t$ of the function $G_k^t$. By Lemma \ref{endcontlemma}, the neighborhood $U$ can be chosen such that 
    $$I_{x,k} \subseteq I_{x',k+1} \qquad \text{ for all } \quad x'\in U\cap R.$$
    It follows that
    \begin{equation}\label{gammaxprimeeq}
        \gamma_{x'}(I_{x,k}) \subseteq \gamma_{x'}(I_{x',k+1}) = \gamma_{x'}(a(x') + 2^{-k},b(x') - 2^{-k}) \subseteq \cS_k \qquad \text{for all} \quad x' \in U\cap R.\end{equation}
    It follows from \eqref{ukeq}, Lemma \ref{Gktlemma} and Proposition \ref{dominatedprop}(d) that whenever $x'\in U\cap R$,
	\begin{equation}\label{Gconj}du_k\vert_{G_k^t(x')} = \Phi_t^H(du_k\vert_{x'}) \qquad \text{for all $t \in I_{x,k}$.}\end{equation}
	Since $x \in R_0$, the function $u_k$ is differentiable twice at $x$, whence $G_k^t = \pi \circ \Phi_t^H\circ du_k$ is differentiable at $x$, with its derivative given by the chain rule:
    \begin{equation*}
        (dG_k^t)\vert_x = d\pi \circ d\Phi_t^H\circ (Tdu_k)\vert_x.
    \end{equation*}
    
    Here and below we denote by $$(Tdu_k)\vert_x : T_xM \to T_{du_k\vert_x}T^*M$$ the differential of the map $du_k : M \to T^*M$ at the point $x$, so as not to confuse it with the exterior derivative $ddu_k$. Since the Hamiltonian vector field $X_H$ is $C^1$, the differential $d\Phi_t^H$ of its flow is $C^1$ with respect to $t$. Therefore the differential of $G_k^t$ at $x$ is $C^1$ with respect to $t$.

    \medskip
    Write
	$$z :=  G_k^t(x) = \gamma_x(t).$$
	Let $x' \in U\cap R$ and let $z' :=  G_k^t(x')$. By \eqref{Gconj}, 
	$$x' = (\pi\circ du_k)(x') =  (\pi\circ \Phi_{-t}^H \circ \Phi_t^H \circ du_k)(x') =(\pi\circ \Phi_{-t}^H \circ du_k \circ  G_k^t)(x') = (\pi \circ \Phi_{-t}^H \circ du_k)(z').$$
	For the rest of the proof fix some Riemannian metric on $U$. Since $du_k$ is locally-Lipschitz and $\Phi_{-t}^H$ is $C^1$, it follows that
	$$d(x',x) = O(d(z',z)).$$
	As $x'$ is an arbitrary point in $U\cap R$ and $x$ is a Lebesgue density point of $R$, this proves that the differential of the map $ G_k^t$ is invertible at $x$. Moreover, since $x$ is a Lebesgue point of $R$ and $z' \in R \cap \cS_k$ by \eqref{gammaxprimeeq}, it also follows that $z$ is a Lebesgue density point of $R\cap \cS_k$. Thus (i) and (ii) are proved.
    	
	\medskip
	Let $v \in T_zM$ and let $w \in T_xM$ satisfy $$d G_k^t\vert_{x}(w) = v.$$
    Let $\theta$ be a curve satisfying $\dot\theta(0) = v$ and let $r_j > 0$ be any sequence converging to zero. 	Since $x$ is a Lebesgue density point of $R$, there exists a sequence $x_j \in U\cap R$ converging to $x$ such that
	$$d(x_j,\exp_x(r_j \cdot w)) = o(r_j),$$
	where $\exp$ is the exponential map of the auxiliary Riemannian metric. Write 
	$$z_j : =  G_k^t(x_j).$$
	 In the following computation we abuse notation by writing ${p} = {p}' + o(r_j)$ for ${p},{p}' \in T^*M$  to indicate that $\lim_{j \to \infty}r_j^{-1}d({p},{p}') \to 0$, where the distance is measured using, say, the Sasaki metric on $T^*M$. By \eqref{Gconj} and local Lipschitz continuity of $du_k$ and $\Phi_t^H$,
	\begin{align*}
		du_k\vert_{z_j} & = (du_k\circ G_k^t)(x_j)\\
		& = (\Phi_t^H\circ du_k)(x_j)\\
		& = (\Phi_t^H\circ du_k)(\exp_x(r_jw)) + o(r_j).
	\end{align*}
	Thus, by second order differentiability of $u_k$ at $x$, 
	\begin{align*}
		du_k\vert_{z_j} & = \Phi_t^H(du_k\vert_x) + r_j \cdot (d\Phi_t^H \circ Tdu_k\vert_x)(w) + o(r_j)\\
		& = du_k\vert_z + r_j\cdot \left[d\Phi_t^H\circ Tdu_k\vert_x\circ (d G_k^t\vert_x)^{-1}\right](v) + o(r_j),
	\end{align*}
	where in the second passage we used \eqref{Gconj} and the definition of $w$. On the other hand, by local Lipschitz continuity of $ G_k^t$ and the fact that $d G_k^t(w) = v = \dot\theta(0)$,
	$$du_k\vert_{z_j} = (du_k\circ  G_k^t)(x_j) = (du_k\circ  G_k^t)(\exp_x(r_j\cdot w)) + o(r_j) = du_k\vert_{\theta(r_j)} + o(r_j).$$
	Putting the last two computations together we get
	$$du_k\vert_{\theta(r_j)} = du_k\vert_z + r_j \cdot \left[d\Phi_t^H\circ Tdu_k\vert_x\circ (d G_k^t\vert_x)^{-1}\right](v) + o(r_j).$$
	Recall that $\theta$ is an arbitrary curve emanating from $z = \gamma_x(t)$ with initial velocity $v \in T_zM$. Hence the above formula implies the existence of the differential $Tdu_k\vert_{\gamma_x(t)}$ and establishes the formula
    $$Tdu_k\vert_{\gamma_x(t)} = d\Phi_t^H\vert_{du_k\vert_x}\circ  Tdu_k\vert_x\circ (d G_k^t\vert_x)^{-1}, \qquad t \in I_{x,k}.$$
    The maps $(dG_k^t\vert_x)^{-1}$ and $d\Phi_t^H\vert_{du_k\vert_x}$ are continuously differentiable in $t$, whence the differential $ Tdu\vert_{\gamma_x(t)}$ is continuously differentiable in $t$ for $t \in I_{x,k}$.
\end{proof}

We are now ready to prove Proposition \ref{flattenprop}.

\begin{proof}[Proof of Proposition \ref{flattenprop}]
    By the definition of a ray cluster, there exists a seed of a ray cluster $R_0$ such that $R = \hat R\subseteq  \hat R_0$. Let $F_0:\overline{\Omega}_0 \to M$ and $k_0 $ be as in Definition \ref{clusterdef}. Set 
    $$B_0 : = F_0^{-1}(R_0).$$
    Since the map $F_0$ is a smooth embedding, the set $B_0$ is compact, and since $R_0 \subseteq F_0(\Omega_0)$, 
    $$F_0(B_0) = R_0.$$
    Let $y \in B_0$, and write 
    $$x : = F_0(y)$$
    Set
    $$a_y : = a(x) \qquad \text{ and } \qquad b_y:=b(x),$$
    and define
    \begin{equation}\label{Fdef}
        F(y,t) := \gamma_x(t). \qquad t \in (a_y,b_y).
    \end{equation}
    Then $F$ is defined on the set $B \subseteq \RR^{n-1}\times \RR$ given by \eqref{Bdef}, and for every $y \in B_0$ the curve $F(y,\cdot) = \gamma_x$ is calibrated by the definition of $\gamma_x$. The functions $a_y,b_y$ are continuous on $B_0$ by Definition \ref{clusterdef}. Thus (i) and (ii) are proved.
    
    \medskip
    Suppose that $y$ is a Lebesgue density point of $B_0$. Then $x$ is a Lebesgue density point of the ray cluster $R$. Indeed, since the functions $a,b$ are continuous at the point $x$, there exists $\eps > 0$ and a neighborhood $U \ni x$ such that $(-\eps,\eps) \subseteq I_{x'}$ for every $x' \in R_0 \cap U$. It follows that for sufficiently small $r > 0$, the ray cluster $R$ contains the set
    $$A_r : = \{\gamma_{x'}(t) \, \mid \, x' = F_0(y'), \quad y' \in B_0, \quad |y - y'| < r, \quad -r< t < r\},$$
    whose measure, since $y$ is a Lebesgue density point of $B_0$, is
    \begin{align*}
        \mu(A_r) & = \omega(dF_0(\partial/\partial x^1),\dots,dF_0(\partial/\partial x^{n-1}),{\nabla}u\vert_x)\cdot c_{n-1}r^{n-1}\cdot 2r + o(r^{n-1}) \\
        & = (1 + o(1))\mu(V_r)\qquad \text{ as $r \to 0$,}
    \end{align*}
    where $c_{n-1}$ is the volume of the unit ball in $\RR^{n-1}$, and 
    $$V_r : = \{\gamma_{x'}(t) \, \mid \, x' = F_0(y'), \quad y' \in \Omega_0, \quad |y - y'| <r, \quad -r < t < r\}.$$
    Since $F(\Omega_0)$ is a smooth embedded hypersurface transverse to ${\nabla}u$, the sets $V_r$ shrink nicely to $x$ (see \cite[Section 3.4]{Fol}); so since $A_r \subseteq R$, it follows that $x$ is a Lebesgue density point of $R$. By Lemma \ref{Gktlemma2}, the point $\gamma_x(t)$ is a Lebesgue density point of $R$ for all $t \in I_{x,k}$.
    
    \medskip
    Let $k \ge k_0$. Since $F_0(y)= x\in R_0 \subseteq \cS_{k_0} \subseteq \cS_k$, Lemma \ref{Gktlemma} implies that
    $$F(y,t) = G_k^t(F_0(y)). \qquad \text{for all $t \in I_{x,k}$}.$$
    In particular, by Lemma \ref{Gktlemma}, the function $F$ extends to a locally-Lipschitz function on an open set containing $B_0\times\{0\}$, namely $\tilde F(y,t) : = G_k^t(F_0(t))$. By Lemma \ref{Gktlemma2}, the function $G_k^t$ is differentiable at $x$ for all $t \in I_{x,\eps}$ with an invertible derivative which is continuously differentiable with respect to $t$. Furthermore, 
    \begin{equation}\label{dFpartialteq}
        dF\vert_{(x,t)}(\partial/\partial t) = \dot \gamma_x(t) = {\nabla}u_k\vert_{\gamma_x(t)} = \Phi^L_t({\nabla}u_k\vert_x)\qquad \text{ for all $t \in I_{x,k}$.}
    \end{equation}
    Thus, the fact that the image of $F_0$ is transverse to ${\nabla}u = {\nabla}u_k$ at $x$ implies that $F$ is differentiable at $(y,t)$ with an invertible differential depending $C^1$-smoothly on $t$, for all $t \in I_{x,k}$. Since this last statement is true for every $k \ge k_0$ and $I_x = \bigcup_{k \ge k_0}I_{x,k}$, we conclude that it is true on the entire interval $I_x = (a_y,b_y)$. 
    
    \medskip
    Since the differential of $F$ is invertible at $(y,t)$ for all $t \in (a_y,b_y)$, the function $J = \det_\mu dF$ does not change sign on the line $\{y\} \times (a_y,b_y)$. But the set $F(B_0\times\{0\}) = R_0$ is contained in the hypersurface $F_0(\Omega_0)$, which is smooth and transverse to ${\nabla}u_{k_0}$, so the sign of $J$ is constant on $B_0\times\{0\}$. We can therefore take the Jacobian determinant $J$ to be positive at $(y,t)$ for every Lebesgue point $y \in B_0$ and every $t \in (a_y,b_y)$. 
    
    \medskip
    Since $dF$ is $C^1$ with respect to $t$, the vector fields $dF(\partial/\partial y_i)$ are $C^1$ along $\gamma_x$. By \eqref{dFpartialteq} and the definition of the Jacobian determinant, on $I_{x,k}$ we have
    \begin{align*}
        \dot J & = \frac{d}{dt}\omega(dF(\partial/\partial y_1),\dots,dF(\partial /\partial y_{n-1}),dF(\partial /\partial t))\\
        & = (\sL_{\nabla u_k}\omega)(dF(\partial/\partial y_1),\dots,dF(\partial /\partial y_{n-1}),dF(\partial /\partial t))\\
        & = (\bL u_k\circ F)\cdot\omega(dF(\partial/\partial y_1),\dots,dF(\partial /\partial y_{n-1}),dF(\partial /\partial t))\\
        & = (\bL u_k\circ F)\cdot J.
    \end{align*}
    This finishes the proof of (iii). 
    
    \medskip
    A standard change of variables formula for Lipschitz maps (see \cite[Theorem 3.9]{EvGa} and the discussion at the end of the proof of \cite[ Proposition 3.5]{Kl}) implies that for every Borel function $f : B \to \RR$,
    $$\int_{B_0}\int_{a_y}^{b_y}f(y,t)\cdot J(y,t)\,dt\,dy = \int_{F(B)}f(F^{-1}(x))d\mu(x).$$
    If $h : M \to \RR$ is a Borel function, then applying the above change of variables formula to $f : = h \circ F$ gives
    $$\int_{B_0}\int_{a_y}^{b_y}h(F(y,t))\cdot J(y,t) \, dt \, dy = \int_{F(B)}h(x)d\mu(x).$$
    Thus, in order to prove formula \eqref{COVeq}, it remains to show \eqref{nulleq1}. The definition \eqref{Fdef} of $F$ implies that $F(B)$ consists of \emph{relative interiors} of transport rays intersecting the set $R_0 = F(B_0)$; on the other hand, $R$ consists of the \emph{full} transport rays, i.e. $R = \hat R_0 = \widehat{F(B_0)}$. Thus $R\setminus F(B)$ consists of \emph{initial and final points} of transport rays intersecting $R_0$, whence \eqref{nulleq1} follows from Lemma \ref{endslemma}.
\end{proof}

After having established the desired properties of ray clusters, let us show that the strain set can be decomposed into ray clusters, up to a null set.

\begin{lemma}\label{localclusterlemma}
    For every $k \ge 1$ and every $x_0 \in \cS_k$, there exists a neighborhood $U \ni x_0$ such that the set $U\cap \cS_{k-1}$ is contained, up to measure zero, in a countable collection of ray clusters.
\end{lemma}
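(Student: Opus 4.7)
The strategy is to construct a smooth embedded hypersurface $\Sigma_0$ near $x_0$ transverse to $\nabla u_k$, decompose $\Sigma_0 \cap \cS_k$ into countably many compact seeds, and cover $U \cap \cS_{k-1}$ by the saturations of these seeds. Since $u_k$ is $C^{1,1}$ and $\nabla u_k\vert_{x_0} = \nabla u\vert_{x_0} \ne 0$ (by Proposition \ref{dominatedprop} and $SM \cap \bz = \varnothing$), fix smooth coordinates $(y, t) \in \RR^{n-1} \times \RR$ on a neighborhood $U_0 \ni x_0$ in which every smooth hypersurface $\{t = t_0\}$ is transverse to $\nabla u_k$ on $U_0$. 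By Lemma \ref{dtlemma}, the set $T$ where $u_k$ is twice differentiable has full $\mu$-measure, and by Fubini in these coordinates, for almost every $t_0$ the slice $\{t = t_0\}$ meets $T$ in a set of full $(n-1)$-dim Lebesgue measure. Pick such a $t_0$ close to $0$ and set $\Sigma_0 := \{t = t_0\} \cap U_0$. The Lipschitz flow of $\nabla u_k$ gives a bi-Lipschitz homeomorphism $\Psi: \Sigma_0 \times (-\delta, \delta) \to U$, with $U \ni x_0$ and $\delta < 2^{-k}$. For $x \in U \cap \cS_{k-1}$, the curve $\gamma_x$ stays in $\cS_k$ on $(-2^{-k}, 2^{-k})$, where it coincides with an integral curve of $\nabla u_k$ (by Proposition \ref{dominatedprop} and $\nabla u = \nabla u_k$ on $\cS_k$), so $\gamma_x$ crosses $\Sigma_0$ at a unique point $y_x \in \Sigma_0 \cap \cS_k$.

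\textbf{Extracting the seeds.} By Lemma \ref{Skclosedlemma}, $\Sigma_0 \cap \cS_k$ is closed in $\Sigma_0$; by Lemma \ref{measurableendslemma}, the endpoint functions $a, b$ are Borel on it. Apply Lusin's theorem to the restrictions of $a, b$ to $\Sigma_0 \cap \cS_k \cap T$ (with respect to $(n-1)$-dim Lebesgue measure on $\Sigma_0$) to extract a countable collection of compact sets $\{K_j\}_{j=1}^\infty$ on which $a$ and $b$ are continuous, whose union covers $\Sigma_0 \cap \cS_k$ up to an $(n-1)$-dim Lebesgue null set. Conditions (ii)--(v) of Definition \ref{clusterdef} with $k_0 = k$ are then built into each $K_j$: transversality from the choice of $\Sigma_0$; continuity of $a, b$ from Lusin; twice-differentiability from $K_j \subseteq T$; and inclusion from $K_j \subseteq \Sigma_0 \cap \cS_k$. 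A smooth embedding $F_0^j: \overline{\Omega_0^j} \to \Sigma_0$ with $K_j \subseteq F_0^j(\Omega_0^j)$ is provided by the smooth coordinate chart, restricted to a bounded open neighborhood of $K_j$ inside $\Sigma_0$.

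\textbf{Main obstacle and coverage.} The main obstacle is condition (i) of Definition \ref{clusterdef}: every transport ray must meet $K_j$ at most once. Within the flow-box $U$, the $t$-coordinate is strictly monotone along any integral curve of $\nabla u_k$, so two crossings of $\Sigma_0 = \{t = t_0\}$ by the same calibrated curve inside $U$ are impossible; a double intersection would therefore force $\gamma_x$ to exit $U$ and return. We rule this out by further countable subdivision of each $K_j$ into sufficiently small compact pieces, using a local-injectivity argument in the spirit of Lemma \ref{localconvlemma}: compactness of the set of initial velocities over $K_j$, combined with supercriticality (which precludes short closed loops), gives a uniform lower bound on the ``return radius'' of calibrated curves issuing from points of $K_j$, so that by shrinking each piece we forbid any return to it. With (i) secured, each saturation $\widehat{K_j}$ is a ray cluster. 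Every $x \in U \cap \cS_{k-1}$ with $y_x \in \bigcup_j K_j$ lies in $\bigcup_j \widehat{K_j}$, while the exceptional set of $x$'s with $y_x \in \Sigma_0 \cap \cS_k \setminus \bigcup_j K_j$ is contained in $\Psi\bigl((\Sigma_0 \cap \cS_k \setminus \bigcup_j K_j) \times (-\delta, \delta)\bigr)$, which has $\mu$-measure zero by the bi-Lipschitz property of $\Psi$ and the fact that the $(n-1)$-dim set in the argument is Lebesgue null.
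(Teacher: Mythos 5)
Your proof is correct and follows essentially the same strategy as the paper: construct a transverse hypersurface on which $u_k$ is twice differentiable almost everywhere (via Lemma \ref{dtlemma} and Fubini), use Lusin's theorem for the endpoint functions $a,b$ to extract compact seeds, secure condition~(i) of Definition \ref{clusterdef} by a compactness-plus-supercriticality argument, and show that the saturations of these seeds cover $U\cap\cS_{k-1}$ up to a null set via the bi-Lipschitz flowbox of $\nabla u_k$. The only organizational difference is in the treatment of condition~(i): the paper first shrinks $\Omega_0$ around $x_0$ so that the seed is hit at most once by every transport ray (a limiting double intersection would give a closed transport ray, contradicting supercriticality via Lemma \ref{diamlemma}), and only afterwards applies Lusin; you apply Lusin first, obtaining the compacta $K_j$, and then subdivide each $K_j$ below a uniform ``return radius'' obtained from the same compactness-and-supercriticality argument applied to $K_j$. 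Both orderings give the same result; your step on condition~(i) is stated more compactly than the paper's and could be expanded (the ``return radius'' bound deserves the explicit contradiction argument, including why the limiting time interval is strictly positive, which uses the transversality of $\nabla u_k$ to $\Sigma_0$), but the idea is sound.
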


\begin{proof}
    Let $k \ge 1$ and $x_0 \in \cS_k$. Fix $0 < \ell < 2^{-k}$ and $\delta>0$ small, to be specified later. 
    Let
    $$\Omega_0 : = (-\delta,\delta)^{n-1}\subseteq\RR^{n-1}$$
    and let 
    $$\vphi:\Omega_0\times(-\ell,\ell) \to U_0 \subseteq M$$
    be a diffeomorphism onto an open set $U_0$ containing $x_0$, such that for every $c \in (-\ell,\ell)$ the hypersurface $\vphi\left(\Omega_0\times\{c\}\right)$ is transverse to the vector field ${\nabla}u_k$ (this is possible for small enough $\delta,\ell$, since the vector field ${\nabla}u_k$ is locally-Lipschitz and does not vanish at $x_0$). See Figure \ref{localrcfig}. It is also possible to choose $\vphi$ which extends continuously to $\overline{\Omega}_0\times[-\ell,\ell]$.
    
    \medskip 
    By Lemma \ref{dtlemma}, the function $u_k$ is differentiable twice almost everywhere, so we can choose the map $\vphi$ such that $u_k$ is differentiable at $\vphi(y,0)$ for every $y$ in a set $\Omega_0'\subseteq\Omega_0$ of full measure. We can furthermore take $\vphi^{-1}(x_0)$ to be arbitrarily close to $0$.

    \begin{figure}
        \centering
        \includegraphics[width = .8\textwidth]{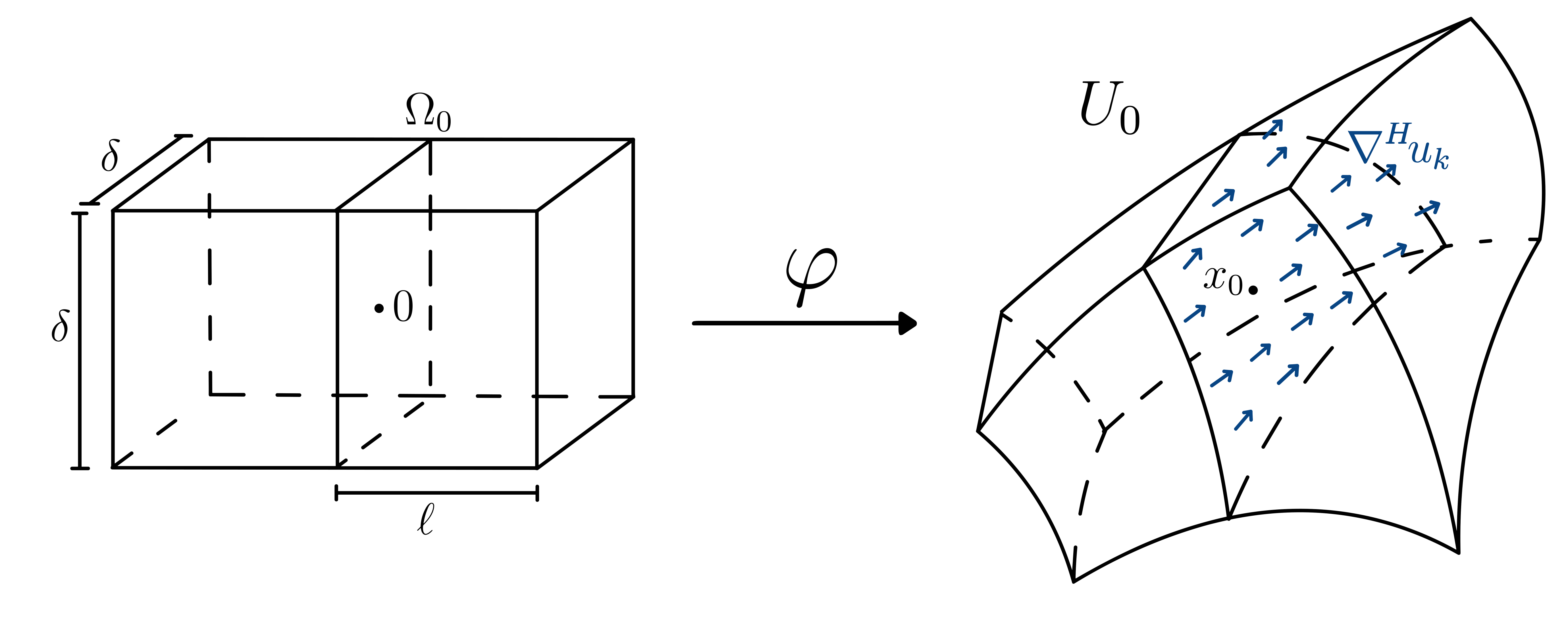}
        \caption{Proof of Lemma \ref{localclusterlemma}}
        \label{localrcfig}
    \end{figure}
    \medskip
    Set
    $$F_0(y) : = \vphi(y,0) \qquad y \in \Omega_0$$
    and 
    $$\tilde R_0 : = F_0(\Omega_0')\cap \cS_k.$$
    Properties (ii),(iv) and (v) of Definition \ref{clusterdef} hold for $\tilde R_0$ by construction, with $k_0 = k$. We claim that if $\delta$ is small enough then property (i) holds, i.e., every transport ray intersects the set $\tilde R_0$ at most once.

   \medskip
   Indeed, otherwise there exists a  sequence $\{\gamma_i\}_{i=1}^\infty$ of transport rays and sequences $s_i < t_i$ such that for each $i \ge 1$, the points $z_i:=\gamma_i(s_i)$ and $w_i:=\gamma_i(t_i)$ belong to  $\tilde R_0$, and both converge to $x_0$ as $i \to \infty$. By reparametrization, we may assume that $s_i = 0$ for all $i$. Since $\tilde R_0 \subseteq \cS_k$, we have
   $$\dot\gamma_i(0) = {\nabla}u\vert_{z_i} = {\nabla}u_k\vert_{z_i}\qquad \text{ and }\qquad \dot\gamma_i(t_i) = {\nabla}u\vert_{w_i} = {\nabla}u_k\vert_{w_i}.$$
   By continuity of the vector field ${\nabla}u_k$, it follows that 
   $$\lim_i\dot\gamma_i(0) = \lim_i\dot\gamma_i(t_i) = {\nabla}u_k\vert_{x_0} = {\nabla}u\vert_{x_0} = \dot\gamma_{x_0}(0).$$
   Lemma \ref{diamlemma} implies that $t_i$ are uniformly bounded, so by passing to a further subsequence we can assume that $t_i \to T > 0$ (the limit is positive since $\dot\gamma_i(0)$ converge to a nonzero vector). It follows that the sequence $\gamma_i\vert_{[0,t_i]}$ converges to a closed transport ray,  which is impossible since a transport ray is a minimizing extremal by Lemma \ref{transportraycalibratedlemma}, and supercriticality implies that minimizing extremals cannot be closed. This finishes  the proof of property (i) in Definition \ref{clusterdef}, provided that $\delta$ is chosen sufficiently small. 
   
   \medskip
   We need to show that we can replace $\tilde R_0$ by a set $R_0$ which is compact and verifies (iii), i.e. the functions $a,b$ are continuous on $R_0$. While this may not be the case for $\tilde R_0$, we claim that it is true for a countable collection of subsets $R_0^j \subseteq \tilde R_0$ which cover $\tilde R_0$ up to measure zero. Indeed, since the functions $a(x),b(x)$ are Borel measurable, Lusin's theorem implies that there exists a sequence $\Omega_0^1 \subseteq \Omega_0^2 \subseteq \dots \subseteq \Omega_0'$ of compact sets such that the functions $a(x),b(x)$ are continuous on $F_0(\Omega_0^j)$ for each $j$, and the union $$\Omega_0'':=\bigcup_j\Omega_0^j$$
   has full measure in $\Omega_0'$. Our construction guarantees that for every $j \ge 1$, the set
   $$R_0^j : = F_0(\Omega_0^j)\cap\cS_k$$
   is a seed of a ray cluster.
   
   \medskip
   Finally, we need to find a neighborhood $U \ni x_0$ such that the set $U\cap\cS_{k-1}$ is contained, up to measure zero, in the union of the ray clusters $$R^j : = \hat R_0^j.$$
   Set
    $$U : = \{G_k^t(F_0(y)) \, \mid \, y \in \Omega_0, \, \, t \in (-\ell,\ell)\}.$$

    Then $U$ is an open neighborhood of $x_0$, since the hypersurface $F_0(\Omega_0)$ is transverse to ${\nabla}u_k$ and the point $x_0$ was taken to be close to $F_0(\Omega_0)$. Since $\Omega_0''$ has full measure in $\Omega_0$, the set
    $$U'' : = \{G_k^t(F_0(y)) \, \mid \, y \in \Omega_0'', \, \, t \in (-\ell,\ell)\}$$
    has full measure in $U$ if $\ell$ is sufficiently small; thus it suffices to prove that 
    $$U''\cap\cS_{k-1}\subseteq \bigcup_j R^j.$$
   Let $x' \in U''\cap\cS_{k-1}$. By the definition of $U''$, we may write 
   $$x' = G_k^t(x) = \gamma_x(t), \qquad \text{ where } \qquad x = F_0(y) \qquad \text{ for some $y \in \Omega_0^j$ and some $t \in (-\ell,\ell)$.}$$
   Since $|t| < \ell < 2^{-k}$ and $\gamma_x(t) =x' \in \cS_{k-1}$, necessarily $x \in \cS_k$, whence $x \in R_0^j$ by the definition of $R_0^j$. Since $x'$ lies on a transport ray through $x$, it follows that $x' \in \hat R_0^j = R^j$. This finishes the proof of the lemma. 
\end{proof}

\begin{lemma}[cf. \text{\cite[Lemma 3.15]{Kl}}]\label{coverlemma}
    The saturation $\hat \cS$ of the strain set $\cS$ can be covered, up to a set of zero measure, by a countable collection of pairwise-disjoint ray clusters.
\end{lemma}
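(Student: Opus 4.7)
The plan is to apply Lemma \ref{localclusterlemma} in concert with second-countability of $M$ to produce a countable cover, and then to disjointify it and transfer it from $\cS$ to $\hat\cS$.

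First, I would fix $k \ge 1$ and cover $\cS_k$ by ray clusters up to measure zero. Since $\cS_k\subseteq \cS_{k+1}$ (the condition defining $\cS_k$ becomes weaker as $k$ grows), for every $x_0 \in \cS_k$ we may invoke Lemma \ref{localclusterlemma} with $k+1$ in place of $k$, obtaining an open neighborhood $U_{x_0}$ of $x_0$ such that $U_{x_0}\cap\cS_k$ is contained, up to a $\mu$-null set, in a countable collection of ray clusters. Since $M$ is second-countable, hence Lindel\"of, we may extract a countable subcover $\{U_j\}$ of $\cS_k$ from $\{U_{x_0}\}_{x_0\in\cS_k}$; concatenating the associated countable families produces a countable collection of ray clusters covering $\cS_k$ up to measure zero. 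Taking the union over $k\ge 1$ and using $\cS=\bigcup_k \cS_k$ yields a countable collection $\{R_j\}_{j=1}^\infty$ of ray clusters with
$$\mu\Bigl(\cS\setminus \textstyle\bigcup_j R_j\Bigr)=0.$$

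To transfer the cover from $\cS$ to $\hat\cS$, I would note that each $R_j$ equals its own saturation, so the same is true of $\bigcup_j R_j$, and
$$\hat\cS\setminus \textstyle\bigcup_j R_j \;\subseteq\; (\hat\cS\setminus \cS)\,\cup\,\Bigl(\cS\setminus \textstyle\bigcup_j R_j\Bigr).$$
The first set on the right consists of initial and final points of transport rays meeting $\cS$, which is $\mu$-null by Lemma \ref{endslemma}; the second is $\mu$-null by construction.

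It remains to make the $R_j$'s pairwise disjoint. Set
$$R_j':= R_j\setminus (R_1\cup\cdots\cup R_{j-1}),\qquad j\ge 1.$$
Each $R_j'$ is Borel, and it is a transport set: if $x\in R_j'$ lies on a transport ray ${\alpha}$, then ${\alpha}\subseteq R_j$ (since $R_j$ is a transport set containing $x$), while ${\alpha}\cap R_i=\varnothing$ for every $i<j$ (since otherwise $R_i\supseteq {\alpha}\ni x$, contradicting $x\notin R_i$), and hence ${\alpha}\subseteq R_j'$. As $R_j'\subseteq R_j\subseteq \hat R_{0,j}$ for the seed $R_{0,j}$ associated with $R_j$, Lemma \ref{subsetlemma} implies that $R_j'$ is a ray cluster. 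By construction the $R_j'$'s are pairwise disjoint and $\bigcup_j R_j'=\bigcup_j R_j$, yielding the required cover. I do not anticipate a serious obstacle in this argument; the only delicate verification is that the class of ray clusters is closed under set differences, which as above reduces at once to the preservation of the transport-set property under differences together with Lemma \ref{subsetlemma}.
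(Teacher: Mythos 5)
Your proof is correct and follows essentially the same route as the paper: cover each $\cS_k$ by ray clusters up to measure zero via Lemma \ref{localclusterlemma} and second-countability, take the countable union over $k$, upgrade from $\cS$ to $\hat\cS$ using Lemma \ref{endslemma}, and disjointify via set differences plus Lemma \ref{subsetlemma}. Your version is slightly more explicit than the paper's in two places — shifting the index to $k+1$ so the hypotheses of Lemma \ref{localclusterlemma} line up exactly with what is covered, and spelling out that the passage from $\cS$ to $\hat\cS$ adds only initial/final points, which are null by Lemma \ref{endslemma} — but these are refinements of the same argument, not a different one.
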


\begin{proof}
    Let $k \ge 1$. By Lemma \ref{localclusterlemma}, the set $\cS_k$ can be covered by a collection $\{U_x\}_{x \in \cS_k}$ of open sets, such that the intersection $U_x\cap\cS_k$ is contained, up to measure zero, in the union of countably many ray clusters. Extracting a countable subcover of $\{U_x\}_{x \in \cS_k}$, we see that $\cS_k$ can be covered, up to measure zero, by countably many ray clusters. Since $\cS = \bigcup_k\cS_k$, there exists a countable collection $\{\tilde R^j\}_{j=1}^\infty$ of ray clusters such that
    $$\mu\left(\cS\setminus\bigcup_{j=1}^\infty \tilde R^j\right) = 0.$$
    But ray clusters are transport sets; hence
    $$\mu\left(\hat \cS\setminus\bigcup_{j=1}^\infty \tilde R^k\right) = 0.$$
    Setting 
    $$R_1 : = \tilde R_1, \qquad R_j : = \tilde R_j \setminus \bigcup_{i=1}^{j-1}\tilde R_i, \quad j \ge 2$$
    and using Lemma \ref{subsetlemma} we obtain the desired cover.
\end{proof}

\section{Optimal transport}\label{transportsec}

In this chapter we discuss optimal transport with the cost $\cc$. Most of the results below are either known, or follow from a straightforward modification of the arguments in the Riemannian case. For more on optimal transport with Lagrangian costs see \cite{BB,BB07,FF,Fig,Vil}. Our main goals are to prove the needle decomposition theorem, Theorem \ref{needlethm} (restated below as Theorem \ref{needlethm2}), and to establish the equivalence (ii)$\iff$(iii) in Theorem \ref{mainthm}.

\subsection{The Monge-Kantorovich problem}\label{MKsec}

Set 
$$\cP_1(L): = \left\{\begin{array}{c}\text{Absolutely continuous Borel probability measures $\mu_0$ on $M$}\\ \text{such that $\int(|\cc(x_0,\cdot)|+|\cc(\cdot,x_0)|)d\mu_0 < \infty$ for some $x_0 \in M$}\end{array}\right\}.$$
As before, absolute continuity is with respect to any measure on $M$ with a smooth, positive density (in particular, with respect to $\mu$). For $\mu_0,\mu_1\in \cP_1(L)$, define
\begin{equation}\label{CCeq}\CC(\mu_0,\mu_1) := \inf_{\kappa}\int_{M\times M}\cc(x_0,x_1) \, d\kappa(x_0,x_1),\end{equation}
where the infimum is over all \emph{couplings} $\kappa$ between $\mu_0$ and $\mu_1$, i.e. measures on $M\times M$ such that
$$(\pi_0)_*\kappa = \mu_0 \qquad \text{ and } \qquad (\pi_1)_*\kappa = \mu_1,$$
where $\pi_i : M\times M\to M$ denote the projections onto the individual factors. 

\medskip
A classical observation due to Kantorovich \cite[Chapter VIII]{KA} is that the minimization problem \eqref{CCeq} admits a dual formulation. Fix a reference measure $\mu$ on $M$ with a smooth density. For a measurable function $f:M\to \RR$ define
\begin{equation}\label{MKeq}
        \mathrm{K}[f]:=\sup_u\int_Mufd\mu,
\end{equation}
where the supremum is over all dominated functions $u:M\to \RR$ (we will soon add conditions on $f$ which ensure that this is well-defined). A minimizer in \eqref{CCeq} is called an \emph{optimal coupling} between $\mu_0$ and $\mu_1$, and a maximizer in \eqref{MKeq} is called a \emph{Kantorovich potential} for the function $f$. 

\begin{theorem}[Kantorovich Duality \text{\cite[Theorem 5.10]{Vil}}]\label{KDthm}
    For every $\mu_0 = f_0\mu,\mu_1 = f_1\mu \in \cP_1(L)$,
    $$\CC(\mu_0,\mu_1) = \mathrm{K}[f_1-f_0].$$
    Furthermore, for every coupling $\kappa$ between $\mu_0$ and $\mu_1$ and every dominated function $u$, the following are equivalent:
    \begin{enumerate}[(i)]
        \item The coupling $\kappa$ is an optimal coupling between $\mu_0$ and $\mu_1$ and the function $u$ is a Kantorovich potential for $f_1-f_0$.
        \item For $\kappa$-a.e. $(x_0,x_1) \in M\times M$,
        $$u(x_1) - u(x_0) = \cc(x_0,x_1).$$
    \end{enumerate}
\end{theorem}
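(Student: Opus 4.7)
The plan is to adapt the classical proof of Kantorovich duality (cf.\ \cite[Theorem 5.10]{Vil}) to the present Lagrangian setting. Under assumptions (I)--(III), the cost $\cc$ is finite and continuous on $M\times M$, and the triangle inequality of Lemma \ref{triangleineqlemma} plays the role of the usual metric triangle inequality, so the standard $\cc$-convex-duality machinery should apply with only cosmetic modifications. I would organize the proof in three steps.

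First, weak duality. Let $\kappa$ be any coupling of $\mu_0=f_0\mu$ and $\mu_1=f_1\mu$ and let $u$ be any dominated function. Proposition \ref{dominatedprop}(a) gives local Lipschitz bounds on $u$, and the first-moment hypothesis $\mu_0,\mu_1\in\cP_1(L)$ combined with domination of $u$ (applied along minimizing extremals joining a fixed $x_0$ to a generic point) guarantees that $u$ is integrable against $\mu_0$ and $\mu_1$. Since $u(x_1)-u(x_0)\le \cc(x_0,x_1)$ pointwise, integration against $\kappa$ yields
$$\int_M u(f_1-f_0)\,d\mu=\int_{M\times M}\!\bigl(u(x_1)-u(x_0)\bigr)d\kappa\le \int_{M\times M}\!\cc\,d\kappa.$$
Taking supremum over $u$ and infimum over $\kappa$ gives $\mathrm{K}[f_1-f_0]\le \CC(\mu_0,\mu_1)$.

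Second, strong duality. I would extract an optimal coupling $\kappa^*$ by a direct method: the set of couplings is narrowly compact by tightness of $\mu_0,\mu_1$, and $\kappa\mapsto\int\cc\,d\kappa$ is narrowly lower semicontinuous because $\cc$ is continuous and bounded from below on compact subsets. Then I would run the Rockafellar--R{\"u}schendorf construction on the $\cc$-cyclically monotone support of $\kappa^*$: fix $(\bar x_0,\bar x_1)$ in the support and define
$$u(x) := \sup\Bigl\{\sum_{i=1}^k \bigl[\cc(x_{i-1},y_{i-1})-\cc(x_i,y_{i-1})\bigr] + \cc(x_k,x)-\cc(\bar x_0,\bar x_1)\Bigr\},$$
where the supremum is over all finite chains $(x_i,y_i)_{i\le k}$ in $\mathrm{supp}(\kappa^*)$ with $y_k=x$ and $x_0=\bar x_0$. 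Using the triangle inequality, one checks that $u$ is dominated, and that $u(x_1)-u(x_0)=\cc(x_0,x_1)$ on $\mathrm{supp}(\kappa^*)$. Integrating this equality against $\kappa^*$ gives $\int u(f_1-f_0)\,d\mu=\CC(\mu_0,\mu_1)$, which combined with the weak inequality proves strong duality.

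Finally, the equivalence (i)$\iff$(ii) is immediate: strong duality together with the pointwise domination inequality forces $u(x_1)-u(x_0)=\cc(x_0,x_1)$ for $\kappa$-a.e.\ $(x_0,x_1)$ whenever both $u$ and $\kappa$ are optimal; conversely, an a.e.\ equality integrated back against $\kappa$ yields joint optimality in view of weak duality. The main obstacle is controlling integrability of the constructed potential $u$ against $f_0,f_1$: in the non-compact setting we cannot immediately appeal to boundedness, and it is precisely the first-moment assumption defining $\cP_1(L)$ that forces $|u(x)|\lesssim |\cc(x_0,x)|+|\cc(x,x_0)|+\mathrm{const}$, bringing $u$ into $L^1(\mu_0)\cap L^1(\mu_1)$. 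A secondary technicality is the possible asymmetry $\cc(x,y)\ne \cc(y,x)$, which means the $\cc$-transform and its reverse $\bar\cc$-transform must be handled separately, but this is handled uniformly by working directly with the domination inequality rather than with symmetric $\cc$-concavity.
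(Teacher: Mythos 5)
The paper supplies no proof of Theorem \ref{KDthm}: it is stated as a citation to \cite[Theorem 5.10]{Vil}, so there is no internal argument to compare yours against. Your sketch reconstructs, in outline, the standard route found in that reference (weak duality by integrating the pointwise domination inequality; strong duality via $\cc$-cyclical monotonicity of an optimal coupling and Rockafellar's construction of a $\cc$-convex potential; the optimality criterion by a.e.\ equality in the domination inequality), and the overall strategy is sound. However, three of the steps as written contain gaps that would need to be filled before this stands as a proof.

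First, the reason you give for narrow lower semicontinuity of $\kappa\mapsto\int\cc\,d\kappa$ --- that $\cc$ is ``continuous and bounded from below on compact subsets'' --- does not suffice for a cost that may change sign and is unbounded on noncompact $M$. The standard repair uses the triangle inequality of Lemma \ref{triangleineqlemma}: for a fixed reference point $z_0$ one has $\cc(x_0,x_1)\ge -\cc(x_0,z_0)-\cc(z_0,x_1)\ge -|\cc(x_0,z_0)|-|\cc(z_0,x_1)|$, so $\cc + a\oplus b\ge 0$ with $a\in L^1(\mu_0)$, $b\in L^1(\mu_1)$ furnished by the $\cP_1(L)$ hypothesis; since $\int(a\oplus b)\,d\kappa$ is constant over couplings, one reduces to the nonnegative-cost case. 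Second, the constraint ``$y_k=x$'' in your Rockafellar formula is an indexing slip: $x$ is the free point at which the potential is evaluated and is generally not a second marginal of any support point; the chain $(x_i,y_i)$ ranges over $\mathrm{supp}(\kappa^*)$ anchored at $(\bar x_0,\bar x_1)$, and the free variable enters only through the terminal term $\cc(x_k,x)$. Third, you invoke $\cc$-cyclical monotonicity of $\mathrm{supp}(\kappa^*)$ without establishing it; this is not a formal consequence of optimality but requires the lower semicontinuity of $\cc$ and finiteness of $\CC(\mu_0,\mu_1)$, and it is the pivot on which the Rockafellar construction turns, so it should be stated and either verified or explicitly cited. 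With these three points repaired (and the integrability bound on the constructed $u$, which you correctly handle via the $\cP_1(L)$ moments after normalizing $u(z_0)=0$), the argument would be complete.
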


\begin{lemma}[cf. \text{\cite[Proposition 4]{BB}}]\label{MKlemma}
    Let $f : M \to \RR$ be a $\mu$-integrable function satisfying
    \begin{equation}\label{fmomenteq}\int_M\left(|\cc(x_0,\cdot)| + |\cc(\cdot,x_0)|\right)|f|\,d\mu < \infty\end{equation}
    and
    $$\int_Mfd\mu = 0.$$
    Then the supremum in \eqref{MKeq} is attained, i.e., there exists a Kantorovich potential for the function $f$.
\end{lemma}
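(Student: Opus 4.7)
The plan is the classical direct-method argument: take a maximizing sequence of dominated functions, normalize them to be uniformly bounded, extract a subsequence converging locally uniformly, and pass to the limit in the objective using the moment hypothesis as an integrable majorant.

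First I would normalize. Fix $x_0 \in M$ as in \eqref{fmomenteq}. Since $\int f\,d\mu = 0$, the functional $u \mapsto \int uf\,d\mu$ is invariant under addition of constants, so for any dominated $u$ we may replace it by $u - u(x_0)$ and assume $u(x_0)=0$. For any such $u$, the definition of dominatedness together with assumption (II) (existence of minimizing extremals joining $x_0$ to $x$ and vice versa) yields the two-sided bound
\begin{equation*}
-\cc(x,x_0) \;\le\; u(x) \;\le\; \cc(x_0,x) \qquad \text{for all } x \in M.
\end{equation*}
Thus $|u(x)| \le |\cc(x_0,x)|+|\cc(x,x_0)|$, which by \eqref{fmomenteq} gives an integrable envelope for $uf$.

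Next, let $\{u_n\}$ be a maximizing sequence of dominated functions with $u_n(x_0)=0$. The family $\{u_n\}$ is equicontinuous on each compact set $K \subseteq M$: the inequality $|u_n(x)-u_n(y)| \le \max(\cc(x,y),\cc(y,x))$ holds for all dominated functions, and by assumptions (I)-(III) the cost $\cc$ is continuous on $M\times M$ and vanishes on the diagonal, so for each $\varepsilon>0$ there is a neighborhood of the diagonal in $K\times K$ on which $\max(\cc(x,y),\cc(y,x))<\varepsilon$. Combined with the uniform pointwise bound derived above, the Arzel\`a--Ascoli theorem applied along a compact exhaustion of $M$, together with a diagonal extraction over a countable dense subset of $M$, yields a subsequence (still denoted $u_n$) converging locally uniformly to a function $u^*:M\to\RR$.

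The limit $u^*$ is dominated, since for any piecewise-$C^1$ curve $\gamma:[a,b]\to M$,
\begin{equation*}
u^*(\gamma(b))-u^*(\gamma(a)) \;=\; \lim_{n\to\infty}\bigl[u_n(\gamma(b))-u_n(\gamma(a))\bigr] \;\le\; \int_a^b L(\dot\gamma(t))\,dt.
\end{equation*}
Finally, the dominated convergence theorem, with dominating function $(|\cc(x_0,\cdot)|+|\cc(\cdot,x_0)|)\,|f|$, gives $\int u^*f\,d\mu = \lim_n \int u_n f\,d\mu = \mathrm{K}[f]$, so $u^*$ realizes the supremum. The main technical point is the equicontinuity step: one must verify that continuity of $\cc$ and its vanishing on the diagonal (which rely on supercriticality and assumptions (II)-(III) to exclude degenerations) are sufficient to apply Arzel\`a--Ascoli without any metric structure on $M$; everything else is routine.
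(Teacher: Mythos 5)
Your argument is correct and is essentially the paper's proof: normalize so $u(x_0)=0$ to get the pointwise bound $|u|\le|\cc(x_0,\cdot)|+|\cc(\cdot,x_0)|$, use $|u(x)-u(y)|\le\max(|\cc(x,y)|,|\cc(y,x)|)$ together with continuity of $\cc$ and its vanishing on the diagonal for equicontinuity (this is the paper's Lemma \ref{1liplemma}), apply Arzel\`a--Ascoli with a diagonal extraction, and pass to the limit by dominated convergence via the moment hypothesis \eqref{fmomenteq}. The only cosmetic difference is that you explicitly verify the locally uniform limit is dominated, which the paper leaves implicit.
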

\begin{lemma}\label{1liplemma}
    The family of dominated functions is equicontinuous on every compact subset of $M$.
\end{lemma}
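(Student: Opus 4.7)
The plan is to reduce equicontinuity to a continuity property of the cost function $\cc$ near the diagonal. The key input is that under assumptions (I)--(III), the cost $\cc: M\times M \to \RR$ is continuous and satisfies $\cc(x,x) = 0$ for every $x \in M$, as noted in Section \ref{assumpsec}. Combined with the defining inequality $u(y) - u(x) \le \cc(x,y)$ (and its swap $u(x)-u(y)\le \cc(y,x)$) that holds for every dominated function $u$ and every pair $x,y \in M$, we see that $|u(x) - u(y)|$ is majorized by a quantity that does not depend on $u$ at all, so a uniform modulus of continuity for the family should come directly from a uniform modulus of continuity of $\cc$ along the diagonal.

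Concretely, let $A \subseteq M$ be compact and fix an auxiliary Riemannian metric $g$ on $M$. Given $\varepsilon > 0$, I would first show that there exists $\delta > 0$ such that
\[
x,y \in A, \quad d_g(x,y) < \delta \quad \implies \quad \max\{\cc(x,y),\,\cc(y,x)\} < \varepsilon.
\]
This uniform control near the diagonal is immediate from compactness of $A\times A$ together with joint continuity of $\cc$: if no such $\delta$ existed, one could extract sequences $x_n,y_n \in A$ with $d_g(x_n,y_n) \to 0$ but, say, $\cc(x_n,y_n)\ge \varepsilon$; by compactness of $A$ both sequences would subsequentially converge to a common limit $z \in A$, forcing $\cc(z,z) \ge \varepsilon$ in contradiction with $\cc(z,z)=0$.

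With this $\delta$ in hand, the proof concludes in one line: for any dominated function $u$ and any $x,y \in A$ with $d_g(x,y) < \delta$, domination gives
\[
|u(x) - u(y)| \le \max\{\cc(x,y),\,\cc(y,x)\} < \varepsilon,
\]
which is exactly equicontinuity of the entire family on $A$. There is no substantive obstacle here; the only point that requires a small argument is the uniform near-diagonal vanishing of $\cc$, and as sketched above this is a routine compactness extraction from the two facts already established in the excerpt, namely continuity of $\cc$ on $M\times M$ and the identity $\cc(x,x)=0$.
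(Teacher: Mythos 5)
Your argument is correct and is essentially the same as the paper's: both reduce equicontinuity to uniform near-diagonal smallness of $\cc$, obtained from compactness, continuity of $\cc$, and $\cc(x,x)=0$, and then conclude from the domination inequality $|u(x)-u(y)|\le\max\{\cc(x,y),\cc(y,x)\}$. The paper phrases this by building an explicit modulus of continuity $w(r)=\max\{|\cc(x,y)|:x,y\in A,\ d_g(x,y)\le r\}$, whereas you use an $\varepsilon$--$\delta$ formulation, but the content is identical.
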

\begin{proof}
    Let $A \subseteq M$ be a compact subset and let $g$ be a Riemannian metric on $M$. The function $\cc$ is continuous on $A\times A$ hence uniformly continuous, and it vanishes on the diagonal. Thus the function
    $$w(r) : = \max\{|\cc(x,y)| \, \mid \, x,y \in A, \, \, d_g(x,y) \le r\}$$
    is increasing, tends to $0$ as $r \searrow 0$, and vanishes at $0$. Here $d_g$ is the distance function induced by $g$. By definition, every dominated function satisfies
    $$|u(x) - u(y)| \le \max\{|\cc(x,y)|,|\cc(y,x)|\}| \le w(d_g(x,y)).$$
Hence the class of dominated functions admits the function $w$ as a common modulus of continuity on the set $A$, and is therefore equicontinuous on $A$     .
\end{proof}
\begin{proof}[Proof of Lemma \ref{MKlemma}]
   By Lemma \ref{1liplemma}, the collection of dominated functions is equicontinuous on compact subsets of $M$. Since $\int fd\mu = 0$, there is no loss of generality in minimizing only over dominated functions which vanish at some fixed $x_0 \in M$; this is a closed set in the uniform topology, all of whose members are pointwise smaller in absolute value than the function $|\cc(\cdot,x_0)|+|\cc(x_0,\cdot)|$. By the Arz{\'e}la-Ascoli theorem, a maximizing sequence in the optimization problem \eqref{MKeq} will have a subsequence converging uniformly on compact sets to a dominated function $u$ which, by \eqref{fmomenteq} and the dominated convergence theorem, will be a maximizer.
\end{proof}

The construction of the optimal coupling from the Kantorovich potential was carried out in the Riemannian setting in \cite{EG,CFM,FeMc} and in the Lagrangian setting in \cite{BB,Fig,FF}; we will describe it in Section \ref{interpsec}.

\subsection{Mass balance}\label{mbsec}

Let $f : M \to \RR$ be a function satisfying the assumptions of Lemma \ref{MKlemma} and let $u$ be a Kantorovich potential for $f$. The following properties of $u$ can be found in the Riemannian case in \cite{EG,FeMc,Kl}.
The proof in the Lagrangian case requires only slight modifications, and we include it here mainly for the sake of completeness. 

\begin{lemma}[Mass balance on upper transport sets, c.f. \text{\cite[Lemma 5.1]{EG} and  \cite[Lemma 4.4]{Kl}}]\label{dmblemma}
    Let $A \subseteq M$ be an upper transport set. Then
    $$\int_Afd\mu \ge 0.$$
\end{lemma}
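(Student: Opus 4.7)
The plan is to test the Kantorovich optimality of $u$ against the perturbation
$$u_\eps(x) := \inf_{z \in M}\bigl[u(z) + \cc(z,x) + \eps\,\mathbf{1}_A(z)\bigr] \qquad (\eps > 0),$$
a kind of $\cc$-convex envelope of $u + \eps\,\mathbf{1}_A$. The triangle inequality of Lemma \ref{triangleineqlemma} ensures $u_\eps$ is dominated, while taking $z = x$ combined with the domination of $u$ yields $u \le u_\eps \le u + \eps\,\mathbf{1}_A$. In particular $u_\eps \equiv u$ outside $A$ and $0 \le u_\eps - u \le \eps$ on $A$. Since $u$ is a Kantorovich potential for $f$, i.e.\ a maximizer of $\int v f\, d\mu$ over all dominated $v$, one obtains the free inequality
$$\int_A (u_\eps - u)\, f\, d\mu = \int_M (u_\eps - u)\, f\, d\mu \le 0.$$

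The crux is to show that $u_\eps \equiv u + \eps$ on $A$ up to a set of $\mu$-measure tending to zero with $\eps$. Unwinding the definition, $u_\eps(x) < u(x) + \eps$ for $x \in A$ precisely when there is some $z \in M \setminus A$ whose calibration slack $\cc(z,x) - [u(x) - u(z)]$ is strictly less than $\eps$. In the limiting case of vanishing slack, one has $u(x) - u(z) = \cc(z,x)$, so by Lemma \ref{transportraycalibratedlemma} the points $x$ and $z$ lie on a common nondegenerate transport ray with $z$ preceding $x$. The defining property $A = A^+$ of the upper transport set then forces $z$ to be the initial point of the transport ray through $x$; by Lemma \ref{endslemma} such initial points form a $\mu$-null set. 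A compactness argument---leveraging continuity of $\cc$ and $u$ together with the boundedness of transport rays through compact sets (Lemma \ref{diamlemma})---accordingly shows that
$$B_\eps := \bigl\{x \in A : u_\eps(x) < u(x) + \eps\bigr\}$$
decreases to a $\mu$-null set as $\eps \searrow 0$.

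Decomposing
$$\int_A (u_\eps - u)\, f\, d\mu = \eps \int_{A \setminus B_\eps} f\, d\mu + \int_{B_\eps} (u_\eps - u)\, f\, d\mu,$$
dividing the master inequality by $\eps$, and passing to the limit $\eps \searrow 0$ yields the desired bound $\int_A f\, d\mu \le 0$: the remainder term is bounded in absolute value by $\int_{B_\eps}|f|\, d\mu$, which vanishes by dominated convergence. The principal technical obstacle lies in the control of $\mu(B_\eps)$, where the upper-transport-set structure of $A$ combines with the negligibility of initial points from Lemma \ref{endslemma} to do the essential work.
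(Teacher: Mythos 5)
The proposed perturbation has the wrong sign, and this is fatal rather than cosmetic. You set $u_\eps(x) = \inf_z\bigl[u(z) + \cc(z,x) + \eps\,\mathbf{1}_A(z)\bigr]$, so the penalty $+\eps\,\mathbf{1}_A$ \emph{repels} the minimizing $z$ from $A$. But $u$ is dominated, so the slack $u(z)+\cc(z,x)-u(x)$ is nonnegative everywhere and vanishes at any $z$ lying before $x$ on its transport ray; whenever such a $z$ can be taken (or approached) in $M\setminus A$, one gets $u_\eps(x)=u(x)$ exactly. A concrete counterexample: take $M=\RR$, $L=(dt^2+1)/2$ (so $\cc(z,x)=|z-x|$), $\mu$ Lebesgue, $u(x)=x$, and $A=(0,\infty)$. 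Then $A$ is an upper transport set, and for every $x>0$ and every $z\le 0$ one has $u(z)+\cc(z,x)=z+(x-z)=x=u(x)$, so $u_\eps\equiv u$ on $A$ and your set $B_\eps$ equals all of $A$ for every $\eps>0$. The master inequality $\int_A(u_\eps-u)f\,d\mu\le 0$ degenerates to $0\le 0$, and the crucial claim that $B_\eps$ decreases to a $\mu$-null set is simply false.

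The heuristic you offer for why $B_\eps$ should shrink is also incorrect on its own terms. The property $A=A^+$ says only that $A$ is closed under moving \emph{forward} along transport rays; it places no constraint on how far the ray extends backward outside $A$, so the limiting witness $z$ has no reason to be the initial point of the ray (in the example above the ray is all of $\RR$ and has no initial point). And even if every such $z$ were an initial point, Lemma~\ref{endslemma} controls the set of $z$'s, not the set of $x\in B_\eps$; nothing is said about the measure of the preimage.

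The paper's proof uses the \emph{opposite} perturbation $u_\delta(x)=\inf_y\bigl[u(y)+\cc(y,x)-\delta\chi_A(y)\bigr]$ (Lemma~\ref{udeltalemma}), which attracts the infimum into $A$: one gets $u_\delta\le u$, with $v_\delta:=(u-u_\delta)/\delta$ exactly $1$ on $A$, eventually $0$ off $A^+$, and in $[0,1]$ in between. Two further ingredients, both absent from your proposal, are then essential. First, one must start from a \emph{compact} set $A$ so that the nearly optimal $y$'s, which now lie in $A$, have a convergent subsequence; this produces an error term $\int_{A^+\setminus A}|f|\,d\mu$. Second, for a general upper transport set one exhausts $A$ by compacts $A_j$ and uses the inclusion $A_j^+\setminus A_j\subseteq A\setminus A_j$ --- this is precisely where $A=A^+$ does the work --- to drive the error to zero.

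Finally, a word of caution: since $u$ maximizes $\int uf\,d\mu$ and $u_\delta\le u$, the paper's chain of inequalities actually yields $\int_A f\,d\mu\ge 0$, not $\le 0$; the example above, with $f=f_1-f_0$, $f_0$ concentrated near $0$ and $f_1$ near $1$, gives $\int_{(0,\infty)}f\,d\mu>0$. So the inequality you are trying to establish appears to carry a sign typo in the statement, and would in any case not be reachable by this family of $\cc$-convex envelopes.
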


\begin{lemma}[\text{cf. \cite[Lemma 4.3]{Kl}}]\label{udeltalemma}
    Let $A\subseteq M$ be a compact set. For $\delta > 0$ define $u_\delta : M \to \RR$ by
    \begin{equation}\label{udeltadef}
        u_\delta(x) : = \inf_{y \in M}\left[u(y) + \cc(y,x) - \delta\cdot\chi_A(y)\right].
    \end{equation}
    Then 
    \begin{enumerate}
        \item $u_\delta$ is dominated.
        \item $0 \le u -u_\delta \le \delta$.
        \item Set
        \begin{equation}\label{vdeltadef}
            v_\delta(x) : = \frac{u(x) - u_\delta(x)}{\delta}, \qquad x \in M.
        \end{equation}
        There exists a function $v : M \to [0,1]$ such that 
        \begin{equation}\label{veq}
            \lim_{\delta\searrow0}v_\delta(x) = 
        \begin{cases}
            0 & x \in M\setminus A^+\\
            v(x) & A^+\setminus A\\
            1 & x \in A. 
        \end{cases}
    \end{equation}
    \end{enumerate}
\end{lemma}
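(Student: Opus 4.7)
Parts (1) and (2) should follow by direct manipulation of the definition \eqref{udeltadef}. For (1), the triangle inequality $\cc(y, x') \le \cc(y, x) + \cc(x, x')$ gives, for every $y \in M$,
$$u(y) + \cc(y, x') - \delta \chi_A(y) \le \bigl[u(y) + \cc(y, x) - \delta \chi_A(y)\bigr] + \cc(x, x'),$$
so taking the infimum over $y$ yields $u_\delta(x') - u_\delta(x) \le \cc(x, x')$, i.e.\ $u_\delta$ is dominated. For (2), the upper bound $u_\delta(x) \le u(x) - \delta\chi_A(x) \le u(x)$ comes from plugging $y = x$ and using $\cc(x,x) = 0$ (a consequence of assumptions (I)-(III)), while the lower bound $u_\delta(x) \ge u(x) - \delta$ follows from combining the dominance inequality $u(y) + \cc(y, x) \ge u(x)$ with $-\delta\chi_A(y) \ge -\delta$.

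\textbf{Analysis of $v_\delta$.} Writing out the supremum dual to the infimum in \eqref{udeltadef}, and using definition \eqref{vdeltadef}, one obtains
$$v_\delta(x) = \sup_{y \in M}\Bigl[\chi_A(y) - \tfrac{1}{\delta}h(y, x)\Bigr], \qquad h(y, x) := u(y) + \cc(y, x) - u(x) \ge 0.$$
Introduce
$$\psi(x) := \inf_{y \in A} h(y, x) \ge 0;$$
by compactness of $A$ and continuity of $u$ and $\cc$, this infimum is attained. Splitting the outer supremum by whether $y \in A$ or not, the contribution from $y \notin A$ is nonpositive (since $h \ge 0$ and $\chi_A(y) = 0$ there), while the contribution from $y \in A$ equals $1 - \psi(x)/\delta$. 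For $x \in A$, plugging $y = x$ gives $v_\delta(x) = 1$; for $x \notin A$, the expression reduces to
$$v_\delta(x) = \max\bigl(0, \, 1 - \psi(x)/\delta\bigr).$$
From this closed form, the limit as $\delta \searrow 0$ is $0$ when $\psi(x) > 0$ and equals $1$ (in fact for every $\delta > 0$) when $\psi(x) = 0$.

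\textbf{Key step and main obstacle.} The heart of the argument is the identification of the zero set of $\psi$: I will show that, for $x \notin A$, $\psi(x) = 0$ if and only if $x \in A^+$. If $\psi(x) = 0$ is attained at some $y_0 \in A$ (necessarily $y_0 \ne x$), then $u(x) - u(y_0) = \cc(y_0, x)$, and a standard argument inserting the dominance bound at every intermediate time on a minimizing extremal from $y_0$ to $x$ (which exists by assumption (II)) shows that this extremal is in fact a calibrated curve; extending it to a maximal calibrated curve through $y_0$ then exhibits $x$ as a point on the upper transport ray initiated at $y_0 \in A$, so $x \in A^+$. Conversely, if $x \in A^+ \setminus A$, then $x = \gamma(t)$ for a maximal calibrated curve $\gamma$ with $\gamma(a) \in A$ and $t > a$, and Proposition \ref{dominatedprop}(c) yields $u(x) - u(\gamma(a)) = \cc(\gamma(a), x)$, forcing $\psi(x) = 0$. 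Combining the two directions, $v_\delta \to 1$ on $A^+\setminus A$ and $v_\delta \to 0$ on $M\setminus(A\cup A^+)$, so one may simply take $v \equiv 1$ on $A^+\setminus A$. The main obstacle is exactly this dictionary between the analytic condition $\psi(x) = 0$ and the geometric description of $A^+$ in terms of calibrated curves; the characterization of calibrated curves via $\cc$ supplied by Proposition \ref{dominatedprop} is the essential tool.
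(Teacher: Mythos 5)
Your proof is correct, and parts (1) and (2) reproduce the paper's argument in a slightly cleaner form (you take the infimum over $y$ directly rather than working with an $\eps$-optimal $y'$). For part (3), the underlying ideas are the same as the paper's — nonnegativity of $h(y,x) = u(y) + \cc(y,x) - u(x)$ by dominance, compactness of $A$, and passing from the equality $h(y_0,x)=0$ with $y_0 \in A$ to membership in $A^+$ via the transport-ray/calibration machinery — but your presentation is sharper in one respect worth noting. The paper does not obtain a closed form for $v_\delta$; instead it observes that $v_\delta$ is monotone in $\delta$ (hence the limit exists and lies in $[0,1]$), treats $x \in A$ by direct substitution, and handles $x \notin A^+$ by a sequential compactness argument, leaving $v$ on $A^+\setminus A$ unidentified. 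By introducing the attained infimum $\psi(x) = \inf_{y\in A} h(y,x)$ and deriving the explicit formula $v_\delta(x) = \max\bigl(0,\,1 - \psi(x)/\delta\bigr)$ for $x \notin A$, you both bypass the monotonicity step and read off the strictly stronger conclusion that $v \equiv 1$ on $A^+\setminus A$ (so in fact $\lim v_\delta = \chi_{A\cup A^+}$). You do need, as you use, that for $x \in A^+\setminus A$ one has $\psi(x)=0$: this follows because $x=\gamma(t)$ with $\gamma(a)\in A$, $t>a$, and calibration of $\gamma$ forces $h(\gamma(a),x)=0$. One small point of care in the converse direction: from $u(x)-u(y_0)=\cc(y_0,x)$ with $y_0\ne x$ you must argue that $y_0$ precedes $x$ on the (unique, by Lemma \ref{transportraycalibratedlemma}) maximal calibrated curve through them; this is forced by supercriticality, since otherwise both $u(x)-u(y_0)=\cc(y_0,x)$ and $u(y_0)-u(x)=\cc(x,y_0)$ would hold, giving $\cc(y_0,x)+\cc(x,y_0)=0$. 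You gesture at this but it deserves to be made explicit.
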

\begin{proof}
    $ $\newline
    \begin{enumerate}
        \item Let $x,x' \in M$, let $\eps > 0$ and let $y' \in M$ satisfy $$u_\delta(x') \ge u(y') + \cc(y',x') - \delta\cdot\chi_A(y') - \eps.$$
        Then
        \begin{align*}
            u_\delta(x) - u_\delta(x') & \le \inf_{y \in M}\left[u(y) + \cc(y,x) - \delta\cdot\chi_A(y)\right] - u(y') - \cc(y',x') + \delta\cdot\chi_A(y') + \eps\\
            & \le u(y') + \cc(y',x) - \delta\cdot\chi_A(y') - u(y') - \cc(y',x') + \delta\cdot\chi_A(y') + \eps\\
            & \le \cc(x',x) + \eps
        \end{align*}
        using the triangle inequality. Since $\eps$ is arbitrary, this proves that $u_\delta$ is dominated.
        \item By setting $y = x$ in \eqref{udeltadef} we see that $u_\delta(x) \le u(x) -\delta\cdot\chi_A(x) \le u(x)$. Since $u$ is dominated, $u(y) + \cc(y,x) \ge u(x)$ for every $y \in M$, whence $u_\delta(x) \ge u(x) - \delta$.
        \item Rewriting $v_\delta$ as
        \begin{equation}\label{vdeltaeq}v_\delta(x) = \sup_{y \in M}\left(\frac{u(x) - u(y) - \cc(y,x)}{\delta} + \chi_A(y)\right)\end{equation}
        and observing that since $u$ is dominated, the first summand in \eqref{vdeltaeq} is nonpositive, we see that $v_\delta(x)$ is increasing in $\delta$ for all $x \in M$, whence the limit exists, and lies in the interval $[0,1]$ by the previous claim. If $x \in A$ then by setting $y = x$ in \eqref{vdeltaeq} we see that $v_\delta(x) = 1$ for all $\delta > 0$.

        \medskip
        In order to finish the proof of the lemma, we show that if $x \notin A^+$ then $v_\delta = 0$ for all sufficiently small $\delta$. Suppose that $v_{\delta_j} > 0$ for a sequence $\delta_j\searrow 0$. Then there exists a sequence $y_j \in M$ such that 
        $$\frac{u(x) - u(y_j) - \cc(y_j,x)}{\delta_j} + \chi_A(y_j) > 0.$$
        Since the first summand in nonpositive, necessarily $y_j \in A$, and
        $$u(x) - u(y_j) - \cc(y_j,x) > -\delta_j.$$
        Since $A$ is compact, we may pass to a subsequence and assume that $y_j \to y \in A$ whence
        $$u(x) - u(y) - \cc(y,x) \ge 0.$$
        Since $u$ is dominated, equality holds, whence $x$ and $y$ lie on a common transport ray. In fact, $x$ lies on an upper transport ray whose initial point is $y$; since $y \in A$, we conclude that $x \in A^+$.\qedhere
    \end{enumerate}
\end{proof}
\begin{proof}[Proof of Lemma \ref{dmblemma}]
    First suppose that $A$ is any compact set and define $u_\delta$ by \eqref{udeltadef}. By Lemma \ref{udeltalemma}, the function $u_\delta$ is dominated and $0 \le v_\delta \le 1$, where $v_\delta$ is defined in \eqref{vdeltadef}.
   
    \medskip
    Since $u$ is a maximizer in \eqref{MKeq} and $u_\delta$ is dominated,
    $$\int_Mv_\delta f \, d\mu = \frac1\delta\int_M(
        u - u_\delta)\cdot f \, d\mu \ge 0.$$
    Let $v$ be as in \eqref{veq}. Then by the dominated convergence theorem,
    \begin{align}\label{mbcalc1}
        0 \le \int_A\lim_\delta v_\delta \cdot fd\mu = \int_{A^+\setminus A}vf d\mu + \int_Afd\mu \le \int_{A^+\setminus A}|f|d\mu + \int_Afd\mu.
    \end{align}
    Now assume that $A$ is an upper transport set. By regularity of the measure $\mu$, there exists a sequence of compact sets $A_j\subseteq A$ such that
    $$\int_{A\setminus A_j}|f|d\mu \to 0.$$
    Since $A$ is an upper transport set containing $A_j$,
    $$A\setminus A_j \supseteq A_j^+\setminus A_j \qquad \text{ for all } j \ge 1.$$
    It now follows from \eqref{mbcalc1} that
    $$0 \le \int_{A\setminus A_j}|f|d\mu + \int_{A_j}fd\mu \le 2\int_{A\setminus A_j}|f|d\mu + \int_Afd\mu \to \int_Afd\mu.$$
    This finishes the proof of the lemma.
\end{proof}

\begin{corollary}\label{mbcor}
    Let $R \subseteq M$ be a transport set. Then 
    $$\int_Rfd\mu = 0.$$
\end{corollary}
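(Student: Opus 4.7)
The plan is to deduce Corollary \ref{mbcor} by applying Lemma \ref{dmblemma} to both $R$ and its complement $M\setminus R$, and then invoking the hypothesis $\int_M f\,d\mu=0$ to collapse a pair of one-sided inequalities into an equality.

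First I would verify that $M\setminus R$ is itself a (Borel) transport set. The saturation condition $\hat R=R$ says that every transport ray meeting $R$ is entirely contained in $R$. Dually, any transport ray meeting $M\setminus R$ must be disjoint from $R$---otherwise, meeting $R$, it would lie inside $R$ and hence not meet $M\setminus R$ at all---so such a ray lies in $M\setminus R$. Borel measurability is obviously preserved under complementation, so $\widehat{M\setminus R}=M\setminus R$ is also a transport set.

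Second, I would argue that both of these transport sets fall within the scope of Lemma \ref{dmblemma}. One can either note that a transport set $A$ is already an upper transport set modulo a null set---every $x\in A$ that is not an initial point of a non-constant calibrated curve (a set of exceptions that is null by Lemma \ref{endslemma}) lies on the upper transport ray issued from the initial point of its transport ray, and by saturation that initial point belongs to $A$---or, more efficiently, inspect the proof of Lemma \ref{dmblemma} itself. The only structural fact about an upper transport set $A$ used there is the inclusion $A_j^+\subseteq A$ for compact $A_j\subseteq A$; and this holds for \emph{any} saturated set $A$, since if $z\in A_j^+$ then $z$ lies on an upper transport ray whose initial point $y\in A_j\subseteq A$ generates a full transport ray contained in $A=\hat A$, whence $z\in A$. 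The rest of the proof of Lemma \ref{dmblemma} is then unchanged.

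Once this is in place, applying Lemma \ref{dmblemma} to both of the transport sets $R$ and $M\setminus R$ gives
\[
\int_R f\,d\mu \le 0 \qquad\text{and}\qquad \int_{M\setminus R} f\,d\mu \le 0.
\]
Summing and using $\int_M f\,d\mu=0$ forces each of these two non-positive quantities to vanish, which yields the claim. The only delicate step is the verification that Lemma \ref{dmblemma} genuinely extends from upper transport sets to arbitrary (saturated) transport sets; beyond that the proof is essentially formal.
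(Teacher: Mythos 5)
Your argument is correct, and it takes a genuinely different route from the paper. Both proofs obtain $\int_R f\,d\mu\le 0$ by applying Lemma~\ref{dmblemma}, but they differ in how they get the reverse inequality. The paper passes to the time-reversed Lagrangian $\tilde L(v)=L(-v)$, observes that $-u$ is a Kantorovich potential for $-f$ with respect to $\tilde L$, notes that transport sets for $u$ and $-u$ coincide because condition~\eqref{trcond} is symmetric, and reapplies Lemma~\ref{dmblemma} to $(-u,-f)$. You instead exploit the complementary transport set: $M\setminus R$ is again saturated (if a ray met both $R$ and $M\setminus R$ it would be contained in $\hat R=R$, contradiction), so Lemma~\ref{dmblemma} applies to it as well, giving $\int_{M\setminus R}f\,d\mu\le 0$; summing with the mass-balance hypothesis $\int_Mf\,d\mu=0$ forces both integrals to vanish. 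Your observation that the proof of Lemma~\ref{dmblemma} uses nothing about $A$ being an upper transport set beyond the inclusion $A_j^+\subseteq A$ for compact $A_j\subseteq A$, and that this inclusion already holds for any saturated set, is precisely right; alternatively, one can delete the null set of initial points (Lemma~\ref{endslemma}), exactly as the paper does for $R$ itself. Your complement argument is the more elementary of the two; the paper's time-reversal observation, while slightly heavier here, is the more reusable tool, since it trades information about $L$ and $u$ for information about $\tilde L$ and $-u$ in a way that applies beyond mass balance.
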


\begin{proof}
    Let $R_{\mathrm{init}}$ be the set of initial points of transport rays contained in $R$. By Lemma \ref{endslemma}, this is a set of measure zero. Since the set $R\setminus R_{\mathrm{init}}$ is an upper transport set, it thus follows from Lemma \ref{dmblemma} that
    $$\int_Rfd\mu \ge 0.$$
    Consider the Lagrangian $$\tilde L(v) : = L(-v), \qquad v \in TM.$$
    It is easy to see that a function $\tilde u$ is $\tilde L$-dominated if and only if $-\tilde u$ is dominated. Therefore the function $-u$ is a Kantorovich potential for the function $-f$ with respect to the Lagrangian $\tilde L$. Transport sets for $u$ and for $-u$ with respect to the Lagrangians $L$ and $\tilde L$, respectively coincide, since the condition \eqref{trcond} is symmetric. Applying Lemma \ref{dmblemma} to $-u$ and $-f$ then gives the reverse inequality.
\end{proof}

\subsection{Needle decomposition}

The goal of this section is to prove Theorem \ref{needlethm}. In order to restate the theorem in a more detailed form, let us introduce the notion of a needle. 
\begin{definition}\normalfont\label{needledef}~
    \begin{itemize}
        \item \underline{Needles}: A \emph{needle} is a Borel measure on $M$ of the form
        $$\gamma_*m$$
        where $\gamma:I \to M$ is a $C^2$ curve defined on a open interval $I\subseteq \RR$, and $m$ is an absolutely continuous measure on $I$ with a positive, $C^2$ density $\rho :I \to (0,\infty)$ with respect to the Lebesgue measure. 
        \item \underline{U}pp\underline{er ends}: An \emph{upper end} of the needle is a measure of the form
        $$\gamma_*\left(m\vert_{[t,\infty)}\right)$$
        for some $t \in \RR$. 
        \item \underline{Curvature-dimension}: Let $N \in (-\infty,\infty]\setminus\{1\}$ and ${K} \in \RR$. The needle will be called a  \emph{$\CD({K},N)$-needle} if the function $$\psi : = -\log\rho$$ satisfies the differential inequality
        \begin{equation}\label{CDneedleeq}-\ddot\psi + \frac{\dot\psi^2}{N-1} +  {K} \le 0.\end{equation}
        Equivalently, the needle is a $\CD({K},N)$-needle if the pair $(m,l)$ satisfies $\CD_{}({K},N)$ in the sense of Definition \ref{CDdef}, where $l$ is the Lagrangian $l = (dt^2+1)/2$.
        \item \underline{Jacobi needles}: The needle will be called a \emph{$(\mu,L)$-Jacobi needle} if for every $t \in I$ there exists a $C^3$ solution $u$ to the Hamilton-Jacobi equation $H(du) = 0$ defined in a neighborhood $U \ni \gamma(t)$, such that
        \begin{equation}\label{hamiltonianneedleeq}
            \dot\gamma = {\nabla} u\qquad \text{ and } \qquad \dot\psi = -\bL  u \qquad \text{ on } \gamma^{-1}(U).
        \end{equation}
    \end{itemize}
\end{definition}
Recall that we say that the pair $(\mu,L)$ satisfies $\CD_{}({K},N)$ if every local solution $u$ to the Hamilton-Jacobi equation $H(du) = 0$ satisfies
\begin{equation}\label{CDBocheq}(d\bL u)(\nabla u) + \frac{(\bL u)^2}{N-1} + {K} \le 0.\end{equation}
We will always take $u$ to be at least $C^3$, which is possible by Lemma \ref{characlemma}.
\begin{lemma}\label{CDlemma}
    Let ${K} \in \RR$ and $N \in (-\infty,\infty]\setminus[1,n)$. The following conditions are equivalent:
    \begin{enumerate}[$(i)$]
        \item The pair $(\mu,L)$ satisfies $\CD_{}({K},N)$.
        \item Every $(\mu,L)$-Jacobi needle is a $\CD({K},N)$-needle.
    \end{enumerate}
\end{lemma}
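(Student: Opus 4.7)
The plan is to notice that both conditions in the lemma collapse to the same pointwise inequality once one differentiates along an integral curve of $\nabla u$. The bridge is: if $\gamma$ is such an integral curve and $\psi$ is the density exponent of a Jacobi needle witnessed by $u$, then by definition $\dot\gamma = \nabla u$ and $\dot\psi = -\bL u \circ \gamma$, so the chain rule gives $\ddot\psi = -(d\bL u)(\nabla u)\circ\gamma$. Therefore the left-hand side of the needle inequality \eqref{CDneedleeq} at the point $\gamma(t)$ equals the left-hand side of the Bochner inequality \eqref{CDBocheq} at $\gamma(t)$. Both implications become a matter of choosing the right object to evaluate at.

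For (i)$\Rightarrow$(ii), I will fix a $(\mu,L)$-Jacobi needle $\gamma_* m$ with $m = e^{-\psi}\,dt$ on an interval $I$, pick an arbitrary $t_0 \in I$, and use the local $C^3$ Hamilton-Jacobi solution $u$ provided by Definition \ref{needledef} in a neighborhood of $\gamma(t_0)$. The computation above converts the hypothesis \eqref{CDBocheq} for $u$ at $\gamma(t_0)$ into \eqref{CDneedleeq} at $t_0$. Since $t_0$ is arbitrary, the needle is a $\CD(K,N)$-needle.

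For the reverse implication (ii)$\Rightarrow$(i), given an arbitrary local $C^3$ solution $u$ to the Hamilton-Jacobi equation on an open set $U$ and an arbitrary point $x \in U$, I will build a $(\mu,L)$-Jacobi needle through $x$ tailored to $u$. Let $\gamma:I\to U$ be a maximal integral curve of $\nabla u$ with $\gamma(0)=x$, and set
\[
\psi(t) := -\int_0^t (\bL u)(\gamma(s))\,ds.
\]
Since $u$ is $C^3$, the vector field $\nabla u = \cL du$ is $C^2$, hence $\gamma$ is $C^2$ and $\bL u = \div_\mu \nabla u$ is $C^1$, so $\rho := e^{-\psi}$ is a positive $C^2$ density on $I$. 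By construction $\gamma_*(\rho\,dt)$ is a Jacobi needle, with $u$ itself serving as the witness of \eqref{hamiltonianneedleeq} at every point of $\gamma$. Assumption (ii) then gives \eqref{CDneedleeq} at $t=0$, and the same chain-rule identity converts this back into \eqref{CDBocheq} at $x$. Since $u$ and $x$ were arbitrary, condition (i) holds. The freedom to pick the Hamilton-Jacobi solution $u$ with prescribed value of $\nabla u$ at a point, which would be needed to see that this covers every situation demanded by the $\CD(K,N)$ condition, is supplied by Lemma \ref{characlemma}.

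I do not anticipate any real obstacle: the only content is the chain-rule identification, and the regularity bookkeeping (that $\psi$ is $C^2$ whenever $u$ is $C^3$) is immediate. The constraint $N\in(-\infty,\infty]\setminus[1,n)$ plays no role here beyond ensuring that the symbols in \eqref{CDBocheq} are meaningful, exactly as they are in \eqref{CDneedleeq}.
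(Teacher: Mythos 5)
Your proof is correct and follows the same route as the paper: the chain rule $\ddot\psi = -(d\bL u)(\nabla u)\circ\gamma$ identifies the needle inequality \eqref{CDneedleeq} with the Bochner-type inequality \eqref{CDBocheq} along any integral curve of $\nabla u$, and each direction amounts to choosing the right $u$ or the right needle and applying this identity. The one superfluous step is the closing appeal to Lemma \ref{characlemma}: in direction (ii)$\Rightarrow$(i) the local solution $u$ is already arbitrary, so you get the Bochner inequality for every $u$ directly without prescribing $\nabla u$ at a point---that freedom is needed for Corollary \ref{equivalencecor} (equivalence with $\Ric_{\mu,N}\ge K$), not for this lemma.
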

\begin{proof}
    This is an immediate consequence of Definitions \ref{CDdef} and \ref{needledef}. If the pair $(\mu,L)$ satisfies $\CD(K,N)$ and $\gamma_*m$ is a $(\mu,L)$-Jacobi needle with the density of $m$ given by $\rho = e^{-\psi}$, then \eqref{CDneedleeq} follows from \eqref{hamiltonianneedleeq} and \eqref{CDBocheq}. Conversely, if every $(\mu,L)$-Jacobi needle is a $\CD({K},N)$-needle, and $u$ is a local solution of the Hamilton-Jacobi equation, then every measure of the form $\gamma_*m$, where $\gamma$ is an integral curve of $\nabla u$ and the measure $m$ has density $\rho = e^{-\psi}$ for a function $\psi$ satisfying $\dot\psi = -\bL\circ\gamma$, is a $(\mu,L)$-Jacobi needle, so \eqref{CDneedleeq} implies \eqref{CDBocheq}.
\end{proof}

We can now state the main result of this section which, together with Lemma \ref{CDlemma}, implies Theorem \ref{needlethm}. 
 
\begin{theorem}[Needle decomposition]\label{needlethm2}
    Let $u : M \to \RR$ be a dominated function. There exists a collection $\{\mu_\alpha\}_{\alpha \in \sA}$ of Borel measures on $M$ and a measure $\nu$ on the set $\sA$ such that the following hold:
    \begin{enumerate}[(i)]
        \item The collection $\sA$ is a disjoint union
        $$\sA = \sN\sqcup\sD,$$
        where for each $\alpha \in \sD$ the measure $\mu_\alpha$ is a Dirac measure at a point $x_\alpha \in M$, and for each $\alpha \in \sN$ the measure $\mu_\alpha$ is a $(\mu,L)$-Jacobi needle:
        $$\mu_\alpha = (\gamma_\alpha)_*m_\alpha,$$
        where 
        $$\gamma_\alpha : I_\alpha \to M$$
        is a calibrated curve of $u$ and $m_\alpha$ is a measure on $I_\alpha = (a_\alpha,b_\alpha)$. For $\mu$-a.e. strain point $x$ of the function $u$, there exists $\alpha \in \sN$, depending measurably on $x$, such that the curve $\gamma_\alpha$ passes through $x$.
        \item The measure $\mu$ disintegrates as
        $$\mu = \int_{\sA}\mu_\alpha\,d\nu(\alpha)$$
        in the following sense: for every Borel function $h : M \to \RR$, the function $\alpha\mapsto \int_Mhd\mu_\alpha$ is $\nu$-measurable, and
        \begin{equation}\label{disinteq2}\int_Mh\,d\mu = \int_{\sA}\left(\int_Mhd\mu_\alpha\right)d\nu(\alpha).\end{equation}
    \end{enumerate}
    Suppose further that $u$ is a Kantorovich potential for some function $f$ satisfying the hypotheses of Lemma \ref{MKlemma}. Then 
    \begin{itemize}
        \item[(iii)] For $\nu$-almost every $\alpha \in \sA$,
            \begin{equation}\label{MBeq2}
                \int_Mfd\mu_\alpha = 0,
            \end{equation}
            and for $\nu$-almost every $\alpha \in \sN$,
            \begin{equation}\label{detailedmbeq2}
                \int_Mfd\mu_{\alpha,t} \ge 0
            \end{equation}
            for every upper end $\mu_{\alpha,t} = (\gamma_\alpha)_*\left(m_\alpha\vert_{[t,\infty)}\right)$ of the needle $\mu_\alpha$.
        \item[(iv)] $\mu$-almost every $x$ in the support of $f$ is a strain point of $u$.
    \end{itemize}
\end{theorem}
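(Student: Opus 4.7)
The plan is to assemble the theorem from the ray-cluster machinery developed in Section \ref{domsec}, following the blueprint of \cite{Kl}. First, apply Theorem \ref{regularitythm} to obtain the $C^{1,1}$ extensions $u_k$ of $u$ agreeing with $u$ on $\cS_k$, and Lemma \ref{coverlemma} to partition the saturation $\hat\cS$ of the strain set into a countable disjoint union of ray clusters $R^1, R^2, \ldots$ up to a $\mu$-null set. Apply Proposition \ref{flattenprop} to each $R^j$, producing a parametrization domain $B^j = \{(y,t) : y \in B_0^j,\, a_y^j < t < b_y^j\}$, a locally Lipschitz injection $F^j : B^j \to R^j$, and a positive Jacobian $J^j$ which is $C^1$ in $t$ along each fiber and satisfies $\dot J^j/J^j = (\bL u_k)\circ F^j$ for all sufficiently large $k$. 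Set $\sN := \bigsqcup_j B_0^j$ and, for $\alpha = (j,y) \in \sN$, define $\gamma_\alpha(t) := F^j(y,t)$, $I_\alpha := (a_y^j, b_y^j)$, and $m_\alpha := J^j(y,\cdot)\,dt$, yielding the candidate needles $\mu_\alpha = (\gamma_\alpha)_*m_\alpha$. Points of $M \setminus \hat\cS$ — which, outside the endpoint null set (Lemma \ref{endslemma}), are degenerate transport rays — index the Dirac part $\sD$. Take $\nu$ to be Lebesgue measure on each $B_0^j$ plus the restriction of $\mu$ to $M \setminus \hat\cS$; the disintegration identity \eqref{disinteq2} then becomes \eqref{COVeq} summed over $j$, augmented by the trivial identity on the Dirac part, and $\nu$-measurability of $\alpha \mapsto \int_M h\,d\mu_\alpha$ follows by Fubini.

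The next task is to verify that each $\mu_\alpha$ with $\alpha \in \sN$ is a $(\mu,L)$-Jacobi needle in the sense of Definition \ref{needledef}. For a fixed $t_0 \in I_\alpha$, use Lemma \ref{characlemma} to construct a $C^3$ local solution $v$ of $H(dv) = 0$ near $\gamma_\alpha(t_0)$ such that $dv$ and the tangent space to $\Graph(dv)$ at $\gamma_\alpha(t_0)$ coincide, respectively, with $du_k$ and with the tangent space to $\Graph(du_k)$ at that point — the latter exists and is a Lagrangian subspace by Lemma \ref{Gktlemma2} together with second-order differentiability of $u_k$ on the seed of the cluster. Since $v$ solves Hamilton--Jacobi, $dv$ is propagated along $\gamma_\alpha$ by the Hamiltonian flow $\Phi^H$, and so is $du_k$ by Proposition \ref{dominatedprop}(d); hence $dv \equiv du_k$ along $\gamma_\alpha$ in a neighborhood of $t_0$. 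The analogous propagation for the Lagrangian tangent planes under $d\Phi^H$ yields $Tdv \equiv Tdu_k$ along the curve, so that $\bL v \equiv \bL u_k$ along $\gamma_\alpha$ near $t_0$. Setting $\psi_\alpha := -\log J^j(y,\cdot)$ and using Proposition \ref{flattenprop}(iii) gives $\dot\psi_\alpha = -(\bL v)\circ\gamma_\alpha$, while $\dot\gamma_\alpha = \nabla u = \nabla v$ by Proposition \ref{dominatedprop}(d), establishing \eqref{hamiltonianneedleeq} and completing claims (i) and (ii).

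For (iii), assume $u$ is a Kantorovich potential for $f$. Corollary \ref{mbcor} applied to each transport set $\gamma_\alpha(I_\alpha) \subseteq \hat\cS$ gives $\int_{\gamma_\alpha(I_\alpha)} f\,d\mu = 0$; translating via \eqref{COVeq} and using Fubini over $\nu$ yields \eqref{MBeq2} for $\nu$-a.e. $\alpha$. For the refined inequality \eqref{detailedmbeq2} on upper ends, apply Lemma \ref{dmblemma} to the Borel upper transport set $\{F^j(y,s) : s \ge t\}$ inside each ray cluster; a Fubini argument over the joint variable $(y,t)$ upgrades this to ``for $\nu$-a.e. $\alpha$, for every $t \in I_\alpha$''. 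Finally (iv) holds because every Borel subset of $M \setminus \hat\cS$ (modulo the endpoint null set) is simultaneously an upper and a lower transport set, since each degenerate transport ray is a singleton; applying Lemma \ref{dmblemma} to $L$ and also to the reversed Lagrangian $\tilde L(v) := L(-v)$, as in the proof of Corollary \ref{mbcor}, forces $\int_A f\,d\mu = 0$ for every Borel $A \subseteq M \setminus \hat\cS$, so $f = 0$ almost everywhere off $\hat\cS$.

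The main obstacle I anticipate is the compatibility check at the end of the second paragraph: ensuring that the $C^3$ Hamilton--Jacobi solution $v$ constructed by the method of characteristics from the merely $C^{1,1}$ data of $u_k$ satisfies not just $dv = du_k$ along $\gamma_\alpha$ but $\bL v = \bL u_k$, so that the ray-cluster Jacobian $J^j$ and the needle density $\psi_\alpha$ agree with the ``local Laplacian'' of the HJ solution. This reduces to the statement that the Hamiltonian flow preserves the Lagrangian tangent plane encoding the second-order data at a strain point, which is the content of Lemma \ref{characlemma} combined with Lemma \ref{Gktlemma2}.
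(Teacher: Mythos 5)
Your skeleton matches the paper's closely: cover $\hat\cS$ by ray clusters via Lemma \ref{coverlemma}, parametrize via Proposition \ref{flattenprop}, define $\mu_\alpha$ from the Jacobian and $\nu$ from Lebesgue on the seeds plus $\mu|_{M\setminus\hat\cS}$, verify the Jacobi-needle property by constructing a $C^3$ Hamilton--Jacobi solution with matching second-order data at $\gamma_\alpha(t_0)$ (Lemma \ref{characlemma}) and propagating the Lagrangian tangent plane along the ray by $d\Phi_t^H$, and derive mass balance from Lemma \ref{dmblemma} and Corollary \ref{mbcor}. The first two paragraphs are essentially the paper's argument.

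The genuine gap is in the third paragraph, where the mass-balance logic runs backwards. You apply Corollary \ref{mbcor} to the single transport ray $\gamma_\alpha(I_\alpha)$, concluding $\int_{\gamma_\alpha(I_\alpha)} f\,d\mu = 0$. But a single ray is a $\mu$-null set (for $n\ge 2$), so this equality is vacuous and says nothing about the quantity you actually need, $\int_M f\,d\mu_\alpha$, which is an integral against the one-dimensional measure $m_\alpha$, not $\mu$. Integrating a vacuously zero function of $\alpha$ over $\nu$ then cannot yield \eqref{MBeq2}. The correct order is the reverse: fix an arbitrary $\nu$-measurable $\sA'\subseteq\sA$, apply Corollary \ref{mbcor} to the transport set $A=\bigcup_{\alpha\in\sA'}\alpha$, getting $\int_A f\,d\mu=0$, then use the disintegration \eqref{disinteq2} to rewrite this as $\int_{\sA'}\bigl(\int_M f\,d\mu_\alpha\bigr)d\nu(\alpha)=0$; since $\sA'$ is arbitrary, the integrand vanishes $\nu$-a.e. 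The same defect recurs for \eqref{detailedmbeq2}: $\{F^j(y,s):s\ge t\}$ with $(y,t)$ fixed is a single upper transport ray, again $\mu$-null, so Lemma \ref{dmblemma} applied to it gives nothing. The working argument is by contradiction: if a positive-measure set $Y\subseteq B_0^j$ admits a measurably chosen $t_y$ violating the inequality, then $R^+=\{F^j(y,t): y\in Y,\ t>t_y\}$ is a positive-measure upper transport set, and Lemma \ref{dmblemma} plus the change of variables \eqref{COVeq} applied to $R^+$ yield the contradiction. Finally, your route to (iv) leans on Borel subsets of $M\setminus\hat\cS$ being upper transport sets, which the paper's definition of upper saturation (built from non-constant calibrated curves) does not directly give; it is cleaner to observe that (iii) already forces $f(x_\alpha)=\int_M f\,d\mu_\alpha=0$ for $\nu$-a.e.\ $\alpha\in\sD$, and $\nu|_{\sD}$ is a copy of $\mu|_{M\setminus\hat\cS}$.
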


\begin{proof}
The idea of the proof is as follows. The needles in the decomposition will be supported on transport rays of the function $u$. Each such transport ray is contained in some ray cluster, and the density of the corresponding needle will be determined by the Jacobian of the parametrization constructed in Proposition \ref{flattenprop}. Points outside the strain set will be assigned Dirac measures. In case $u$ is a Kantorovich potential, The mass balance conditions \eqref{MBeq2} and \eqref{detailedmbeq2} will follow from the results of Section \ref{mbsec}. We now proceed with the proof.

\medskip
By Lemma \ref{coverlemma}, the saturation $\hat \cS$ of the strain set $\cS$ is covered, up to a set of measure zero, by a countable collection $\{R^j\}_{j=1}^\infty$ of disjoint ray clusters. By Proposition \ref{flattenprop}, we can parametrize each ray cluster $R^j$ by a locally-Lipschitz, one-to-one map $F^j : B^j \to R^j$ where $B^j\subseteq \RR^{n-1}\times\RR$ takes the form
$$
    B^j = \{(y,t) \in \RR^{n-1}\times \RR \, \mid \, y \in B_0^j, \quad t \in (a_y,b_y)\}.
$$
We can also assume that $F^j$ is onto, since replacing $R^j$ by $F^j(B^j)$ only changes it by a null set, see \eqref{nulleq1}.  Moreover, every point in the complement
$$D : = M \setminus\hat \cS$$ constitutes a degenerate ray cluster. Thus, if we denote by $\sA$ the collection of transport rays which are either degenerate, or are contained in one of the ray clusters $\{R^j\}$, then almost every point in $M$ lies in some transport ray $\alpha \in \sA$:

\begin{equation}\label{fullmeasureeq}
    \mu\left(M\setminus\bigcup_{\alpha \in \sA}\alpha\right) =  \mu\left(M\setminus\left[\bigcup_{j=1}^\infty R^j \cup D\right]\right) = 0.
\end{equation}
In particular, $\mu$-almost every $x \in \cS$ lies on  $\gamma_\alpha$ for some $\alpha \in \sN$.

\medskip
For each transport ray $\alpha \in \sA$ we now define a measure $\mu_\alpha$ supported on $\alpha$. If $\alpha$ is degenerate, i.e. $\alpha = \{x_\alpha\}$ for some $x_\alpha \in M$, then we define $\mu_\alpha$ to be a Dirac measure at the point $x_\alpha$: 
\begin{equation}\label{deltaeq}\mu_\alpha = \delta_{x_\alpha}.\end{equation}
Suppose now that $\alpha$ is nondegenerate, in which case it is contained in some ray cluster $R^j$. There exists a unique $y\in B_0^j$ such that the relative interior of the transport ray $\alpha$ is the image of the calibrated curve
$$\gamma_\alpha(t) := F^j(y,t), \qquad t \in I_\alpha : = (a_{y},b_{y}).$$
We define the measure $\mu_\alpha$ to be
$$\mu_\alpha : = (\gamma_\alpha)_*m_\alpha,$$
where 
$m_\alpha$ is the measure on the interval $I_\alpha$ whose density is
\begin{equation}\label{rhoalphadef}
    \rho_\alpha(t):=\textstyle\det_\mu dF^j(y,t), \qquad t \in I_\alpha,
\end{equation}
with the Jacobian $\det_\mu dF^j$ defined as in Proposition \ref{flattenprop}. Thus, for every Borel measurable function $h : M \to \RR$,
\begin{equation}\label{mualphaeq}\int_Mh\,d\mu_\alpha : = \int_{a_y}^{b_y}h(F^j(y,t))\cdot \textstyle\det_\mu dF^j(y,t)\,dt.\end{equation}

Let $\sN$ and $\sD$ denote the nondegenerate and degenerate transport rays in $\sA$, respectively. For each $j \ge 1$ define a map 
$$\alpha_j : B_0^j \to \sN$$ assigning to each $y \in B_0^j$ the transport ray $\alpha_j(y) = F^j(y,\cdot) \in \sN$. Moreover, define
$$\alpha_0 : {D} \to \sD$$ by assigning to each $x \in {D}$ the degenerate transport ray $\alpha_0(x) : = \{x\} \in \sD$. In other words, $\alpha_0(x_\alpha) = \alpha$ for all $\alpha \in \sD$. The measure $\nu$ is then defined by
$$\nu := \sum_{j=1}^\infty(\alpha_j)_*\left(\Vol_{n-1}\vert_{B_0^j}\right) + (\alpha_0)_*\left(\mu\vert_{{D}}\right),$$
where $\Vol_{n-1}$ denotes the Lebesgue measure on $\RR^{n-1}$. In more detail: the measure $\nu$ is defined on the $\sigma$-algebra of sets whose preimage under $\alpha_j$ is Borel measurable for every $j \ge 0$ (in $\RR^{n-1}$ if $j \ge 1$, or in $M$ if $j=0$); and for every $\nu$-measurable function $\phi:\sA\to \RR$ we set
\begin{equation}\label{nueq}\int_{\sA}\phi\, d\nu : = \sum_{j=1}^\infty\int_{B_0^j}(\phi\circ \alpha_j)\,d\Vol_{n-1} + \int_{{D}}(\phi\circ\alpha_0)\,d\mu.\end{equation}

Note that, since each $B_0^j$ is bounded, the measures $\Vol_{n-1}\vert_{B_0^j}$ are finite. Note also that by construction the maps $\alpha_j$ are measurable. By \eqref{fullmeasureeq},  \eqref{deltaeq},  \eqref{mualphaeq}, \eqref{nueq} and the change of variables formula \eqref{COVeq}, for every Borel function $h : M \to \RR$,
\begin{align*}
    \int_Mhd\mu & = 
    \sum_{j=1}^\infty\int_{R^j}h \, d\mu + \int_Dh\,d\mu
    \\ & = \sum_{j=1}^\infty\int_{B_0^j}\int_{a_y}^{b_y}h(F^j(y,t)) \cdot {\textstyle\det_\mu dF^j(y,t)}\,dt\,dy + \int_{{D}}h(x)\,d\mu(x)\\
    & = \sum_{j=1}^\infty\int_{B_0^j}\left(\int_Mhd\mu_{\alpha_j(y)}\right)dy + \int_{{D}}\left(\int_Mhd\mu_{\alpha_0(x)}\right)d\mu(x)\\
    & = \int_{\sA}\left(\int_Mhd\mu_\alpha\right)d\nu(\alpha),
\end{align*}
which proves \eqref{disinteq2}.

\begin{lemma}
    For $\nu$-almost every $\alpha \in \sN$, the measure $\mu_\alpha$ is a $(\mu,L)$-Jacobi needle.
\end{lemma}

\begin{proof}
    By \eqref{rhoalphadef} and Proposition \ref{flattenprop}, for almost every $\alpha \in \sN$, if we write
    $$\rho_\alpha = e^{-\psi_\alpha}$$
    then
    \begin{equation}\label{needlecond2}
        \dot\psi_\alpha = -(\bL u_k)\circ\gamma_\alpha \qquad \text{ and }\qquad \dot\gamma_\alpha = {\nabla}u_k \qquad \text{on $I_{\alpha,k}$}
    \end{equation}
    for all $k$ sufficiently large. Recall that $I_\alpha = \bigcup_{k}I_{\alpha,k}$. It thus remains to show that for every $t_0 \in I_{\alpha,k}$, the function $u_k$ can be replaced by a locally defined $C^3$ solution $\tilde u$ to the Hamilton-Jacobi equation in such a way that \eqref{needlecond2} still holds on $\gamma_\alpha^{-1}(U)$ for some neighborhood $U \ni \gamma_\alpha(t_0)$. By Lemma \ref{characlemma}, there exists a local $C^3$ solution $\tilde u : U \to \RR$ to the Hamilton-Jacobi equation with the second-order data \eqref{needlecond2} at the point $$x_0:=\gamma_\alpha(t_0) \in U.$$
    The graph of $\tilde u$ is a $C^2$ Lagrangian submanifold $\tilde{S} \subseteq S^*M$ passing through the covector
    $$p_0 : = du_k\vert_{x_0}$$  and satisfying
	$$T_{p_0}{S} = T_{p_0}\tilde{S}$$
    where ${S}\subseteq S^*M$ is the graph of the one-form $du_k$. Recall that for every  $x \in \cS_{k-1}\cap U$,
    $$du_k\vert_{\gamma_x(t)} = \Phi_{t}^Hdu_k\vert_x, \qquad t \in (-2^{-k},2^{-k}).$$
    Since $x_0$ is a Lebesgue density point of $\cS_{k-1}$, and $du_k$ is differentiable at $\gamma_\alpha(t)$ for all $t \in (-2^{-k},2^{-k})$, it follows that
		$$T_{p_t}{S} = (d\Phi_t)\left(T_{p_0}{S}\right), \qquad t \in (-2^{-k},2^{-k}), \qquad \text{where} \qquad p_t : = \Phi_t^H(p_0).$$
	Since $\tilde u$ solves the Hamilton-Jacobi equation, the integral curves of ${\nabla}\tilde u$ are extremals. Thus, applying the same analysis to $d \tilde u$, we conclude that $p_t \in \tilde {S}$ for all $t \in (-2^{-k},2^{-k})$, and
		$$T_{p_t}\tilde{S} = (d\Phi_t)(T_{p_0}\tilde{S}) = (d\Phi_t)(T_{p_0}{S}) = T_{p_t}{S} \qquad t \in (-2^{-k},2^{-k}),$$
	that is, the graphs of $d\tilde u$ and $du_k$ are tangent at $p_t$. This implies that 
    $$(\bL \tilde u)(\gamma_{\alpha}(t)) = (\bL u_k)(\gamma_{\alpha}(t)) \qquad \text{ for all } \qquad t \in (-2^{-k},2^{-k})$$
    and completes the proof of the lemma.
\end{proof}

\begin{remark*}\normalfont
    Observe that, in particular, we have improved the smoothness of the density $\rho_\alpha$ of the needle $\mu_\alpha$ from $C^1$ to $C^2$.
\end{remark*}

We have established clauses (i) and (ii) of Theorem \ref{needlethm2}. Assume now that $u$ is a Kantorovich potential for the function $f$. Let $\sA' \subseteq \sA$ be a $\nu$-measurable set. Then, being a Borel measurable union of transport rays, the set $A = \bigcup_{\alpha \in \sA'}\alpha$ is a transport set. Lemma \ref{dmblemma} and Corollary \ref{mbcor} together with the disintegration formula \eqref{disinteq2} imply that
$$\int_{\sA'}\left(\int_Mfd\mu_\alpha\right)d\nu(\alpha) = \int_Afd\mu = 0.$$
Since $\sA'$ is an arbitrary $\nu$-measurable subset of $\sA$, this proves \eqref{MBeq2}. 

\medskip
By the definition of the measures $\mu_\alpha$ and $\nu$, in order to prove \eqref{detailedmbeq2}, it suffices to show that for every $j \ge 1$, almost every $y \in B_0^j$ satisfies
$$\int_t^{b_y}f(F(y,t))\textstyle\det_\mu dF(y,t)dt \ge 0 \qquad \text{for all $t \in (a_y,b_y)$}.$$
Suppose that some set $Y \subseteq B_0^j$ violates this condition, i.e., for every $y \in Y$ there exists $t_y\in(a_y,b_y)$ such that 
\begin{equation}\label{massbalanceviolationeq}\int_{t_y}^{b_y}f(F(y,t))\textstyle\det_\mu dF(y,t)dt < 0.\end{equation}
We can take $t_y$ to depend measurably on $y$, and then the set 
$$R^+ : = \{F(y,t) \, \mid \,  y \in Y, \, t \in (t_y,b_y)\}$$
is an upper transport set, since each set $\{F(y,t) \, \mid \, t \in (t_y,b_y)\}$ is an upper transport ray. Lemma \eqref{dmblemma} and the change of variables formula \eqref{COVeq} imply  that
$$\int_Y\int_{t_y}^{b_y}f(F(y,t)){\textstyle\det_\mu dF(y,t)}dt= \int_{R^+}fd\mu\ge 0.$$
It follows from \eqref{massbalanceviolationeq} that the set $Y$ has measure zero. This proves \eqref{detailedmbeq2}.

\medskip
It remains to prove (iv). Since $\mu(\hat \cS\setminus \cS) = 0$ by Lemma \ref{endslemma}, it suffices to prove that $f =  0$ almost everywhere on $M\setminus \hat \cS$. But every $x \in M \setminus\hat \cS$ takes the form $x = x_\alpha$ for some degenerate transport ray $\alpha \in \sD$, and then by the mass-balance condition \eqref{MBeq2}, 
$$0 = \int_Mfd\mu_\alpha = f(x_\alpha).$$
The proof of Theorem \ref{needlethm2} is complete.
\end{proof}

\subsection{Displacement interpolations}\label{interpsec}

Displacement interpolations are cost-minimizing paths in the space of probability measures. When the cost is defined using a fixed time interval, the displacement interpolation is unique \cite[Chapter 7]{Vil}. The cost we use here, in contrast, allows for a free time interval, and the resulting theory extends optimal transport theory with distance cost (rather that squared distance, or distance to the power $q$ for $q>1$), in which displacement interpolations are not unique. This non-uniqueness is, however, of a simple kind - it stems from the fact that the one-dimensional transport problem on each needle may admit multiple solutions. Once a certain monotonicity is imposed on the solution, the displacement interpolation becomes unique. We single out the displacement interpolation satisfying this monotonicity property, as was done in \cite{FeMc}. In the next section, in analogy with the metric theory, we will prove that the $\CD(K,N)$ condition is characterized by convexity properties of the entropy functional $\rS_N$ along this interpolation.

\medskip
Recall that 
$$SM\times_0[0,\infty) : = \faktor{SM\times[0,\infty)}{\sim}$$
where $(v,0)\sim(v',0)$ whenever $\pi(v) = \pi(v')$. For $\lambda \in [0,1]$, define 
$$\Exp_\lambda:SM\times_0[0,\infty) \to M, \qquad \Exp_\lambda(v,\ell) = \pi\left(\Phi_{\lambda \ell}^Lv\right),$$
where $\Phi_t^L$ is the Euler-Lagrange flow. Thus $\Exp_\lambda(v,\ell) = \gamma(\lambda \ell)$, where $\gamma:[0,\ell]\to M$ is a zero-energy extremal satisfying $\dot\gamma(0) = v$. Note that $\Exp_\lambda$ respects the equivalence relation; in fact,
$$\Exp_\lambda(v,0) = \Exp_0(v,\ell) = \pi(v), \qquad v \in SM,\,\,\ell\ge 0, \,\, \lambda\in[0,1].$$
If the Euler-Lagrange flow is not complete then we take $\Exp_\lambda$ to be defined on a maximal open subset of $SM\times_0[0,\infty)$.

\begin{definition}[Transport plans and displacement interpolations]\normalfont
    A \emph{transport plan} is a finite Borel probability measure on $SM\times_0[0,\infty)$. A family $\{\mu_\lambda\}_{0 \le \lambda \le 1}$ of Borel probability measures on $M$ will be called a \emph{displacement interpolation} between $\mu_0$ and $\mu_1$ if there exists a transport plan $\Pi$ such that
    \begin{equation}\label{mulambdadef}\mu_\lambda = (\Exp_\lambda)_*\Pi, \qquad \lambda \in [0,1],\end{equation}
    and if for every $0 \le \lambda \le \lambda' \le 1$, the measure
    \begin{equation}\label{kappadef2}\kappa_{\lambda,\lambda'}:= (\Exp_\lambda\times\Exp_{\lambda'})_*\Pi\end{equation}
    on $M\times M$ is an optimal coupling between $\mu_\lambda$ and $\mu_{\lambda'}$.
\end{definition}

Thus a transport plan is a random choice of initial velocity $v$ and time interval $\ell$ of a minimizing extremal $\gamma$, and a family $\{\mu_\lambda\}_{\lambda \in [0,1]}$ is a displacement interpolation if the law of the random point $\gamma(\lambda\ell)$ is the measure $\mu_\lambda$ and the resulting coupling between $\mu_\lambda$ and $\mu_{\lambda'}$ is optimal for all $0 \le \lambda \le \lambda' \le 1$.

\medskip
The following lemma, regarding uniqueness of a monotone optimal coupling between measures on the real line, is well-known, see \cite{Mc}, \cite[Section 3]{Amb}, \cite[Theorem 28]{FeMc} or \cite[Chapter 2]{Santa}. 
\begin{lemma}[Monotone displacement interpolations on the real line]\label{1Dintlemma}
    Let $m_0,m_1$ be absolutely continuous measures on the real line satisfying
    \begin{equation}\label{1Dmbeq}m_0(\RR) = m_1(\RR)<\infty.\end{equation}
    There is a unique coupling $\kappa$ between $m_0$ and $m_1$ which is monotone in the sense that
    $$(t_0' - t_0)(t_1' - t_1) \ge 0 \qquad \text{for $\kappa\otimes\kappa$-almost every $((t_0,t_1),(t_0',t_1')) \in (\RR\times\RR)\times (\RR\times\RR)$.}$$
    It is given by 
    $$\kappa = (\mathrm{Id}\times\rT)_*m_0$$
    where
    $$\rT:\supp m_0\to\supp  m_1$$
    is an increasing map satisfying
    \begin{equation}\label{m0tinftyeq}
        m_0([t,\infty)) = m_1([\rT(t),\infty)) \qquad \text{ for all } \quad  t \in \supp m_0.
    \end{equation}
    If we define 
    \begin{equation}\label{Tlambdadef}\rT_\lambda(t) : = (1-\lambda)t + \lambda\rT(t), \qquad t \in \supp m_0, \,\, \lambda \in [0,1],\end{equation}
    then the family 
    $$\left\{m_\lambda = (\rT_\lambda)_*m_0\right\}_{\lambda \in [0,1]}$$
    is a displacement interpolation between $m_0$ and $m_1$ with respect to the Euclidean squared distance cost, and $m_\lambda$ is absolutely continuous for all $0 \le \lambda \le 1$. 

    \medskip
    If, in addition, the measures $m_0,m_1$ satisfy
    $$m_0([t,\infty)) \le m_1([t,\infty)) \qquad \text{ for all $t \in \RR$,}$$
    Then the coupling $\kappa$ also satisfies
    $$t_0 \le t_1 \qquad \text{for $\kappa$-almost every $(t_0,t_1) \in \RR\times \RR$.}$$
\end{lemma}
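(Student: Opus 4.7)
The plan is to construct the map $\rT$ explicitly via the classical quantile/monotone rearrangement formula in one dimension, then verify each assertion of the lemma in sequence.

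First I would introduce the cumulative distribution functions $F_i(t) := m_i((-\infty,t])$ and observe two consequences of the hypotheses \eqref{1Dmbeq}: absolute continuity of $m_0$ (and, by equal-mass matching, effectively of $m_1$) makes $F_0$ continuous; and the tail inequality, combined with equal total mass, translates to $F_1(t) \le F_0(t)$ for every $t \in \RR$, which is precisely what will force $\rT(t) \ge t$. I would then define, for $t \in \supp m_0$,
$$\rT(t) := \inf\{s \in \RR \,:\, F_1(s) \ge F_0(t)\},$$
i.e.\ the right-continuous generalized inverse of $F_1$ applied to $F_0(t)$. Monotonicity of $\rT$ is immediate, $F_1 \le F_0$ gives $\rT(t)\ge t$, and continuity of $F_0$ gives $F_1(\rT(t)) = F_0(t)$ on $\supp m_0$, which rewritten in terms of tails is exactly \eqref{m0tinftyeq}. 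A standard pushforward computation yields $\rT_* m_0 = m_1$, so $\kappa := (\mathrm{Id}\times \rT)_* m_0$ is a coupling concentrated on $\{t\le t'\}$.

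For uniqueness, I would invoke the standard one-dimensional optimal-transport argument (e.g.\ as in \cite{Mc, Santa}): any coupling whose support is monotone (non-crossing) between one-dimensional absolutely continuous marginals must concentrate on a monotone graph, and the marginal constraints pin this graph down to coincide with $\rT$ up to an $m_0$-null set. The interpretation of ``monotone'' here is the delicate point: a bare ``$t\le t'$ a.e.'' condition only confines $\kappa$ to the upper triangle; combined with the marginal conditions and absolute continuity of $m_0$, the standard comonotonicity argument (comparing $\kappa(\{s\ge t\})$ and $\kappa(\{s'\ge \rT(t)\})$ and pushing to equality on both sides using the marginals) will force support in the increasing graph of $\rT$.

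For the displacement interpolation claim I would set $\rT_\lambda$ as in \eqref{Tlambdadef} and note two facts. First, since both $t\mapsto t$ and $\rT$ are non-decreasing and $\rT(t)\ge t$, the map $\rT_\lambda$ is strictly increasing on $\supp m_0$ for $\lambda\in[0,1)$ with
$$\rT_\lambda(t)-\rT_\lambda(s) \;\ge\; (1-\lambda)(t-s), \qquad s<t,$$
so its inverse on the image is $(1-\lambda)^{-1}$-Lipschitz; this forces $m_\lambda := (\rT_\lambda)_*m_0$ to be absolutely continuous, since $\rT_\lambda$ pulls Lebesgue-null sets to Lebesgue-null sets. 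Second, the joint plan $(\rT_\lambda,\rT_{\lambda'})_*m_0$ is again supported on an increasing graph and hence optimal for the Euclidean squared-distance cost by the standard 1D characterization of optimality via comonotonicity, proving that $\{m_\lambda\}$ is a displacement interpolation.

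The step I expect to be the most delicate is the uniqueness argument, specifically the rigorous passage from the hypothesis ``$t\le t'$ $\kappa$-a.e.'' (together with the marginals and the tail inequality) to the conclusion that $\kappa$ is concentrated on the graph of a non-decreasing function. The remaining verifications are routine applications of the one-dimensional optimal-transport calculus.
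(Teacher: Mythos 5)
The paper offers no proof of Lemma~\ref{1Dintlemma}: it simply cites the standard one-dimensional optimal transport literature, so there is no internal argument to compare against. Your construction of $\rT$ via the quantile transform is the standard route, and your verifications of \eqref{m0tinftyeq}, of $\rT_*m_0 = m_1$, of the absolute continuity of $m_\lambda = (\rT_\lambda)_*m_0$ via the $(1-\lambda)^{-1}$-Lipschitz inverse of $\rT_\lambda$, and of optimality of $(\rT_\lambda,\rT_{\lambda'})_*m_0$ for the quadratic cost, are all sound and in line with the cited references.

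The genuine gap is in the uniqueness step, precisely where you flagged the ``delicate point.'' The characterizing condition ``$t \le t'$ for $\kappa$-a.e.\ $(t,t')$'' is in fact \emph{not} strong enough to single out a coupling, and no comonotonicity argument will close this, because the statement read literally is false. Take $m_0$ to be Lebesgue measure on $[0,2]$ and $m_1$ to be Lebesgue measure on $[1,3]$, so \eqref{1Dmbeq} holds. The map $\rT(t)=t+1$ pushes $m_0$ to $m_1$ and dominates the identity; but so does the piecewise translation $\rS$ equal to $t+3/2$ on $[0,1/2]$, to $t+1/2$ on $(1/2,1]$, and to $t+1$ on $(1,2]$, and $(\mathrm{Id}\times\rS)_*m_0 \ne (\mathrm{Id}\times\rT)_*m_0$ even though both plans are concentrated in $\{t\le t'\}$. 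What the references you cite actually prove, and what the paper evidently intends, is that there is a unique coupling supported on a \emph{monotone set} (no crossings: $(t_1-t_2)(t_1'-t_2')\ge 0$ on the support, equivalently the unique optimal plan for any strictly convex cost); the inclusion of the support in $\{t\le t'\}$ is then a \emph{consequence} of the tail condition in \eqref{1Dmbeq}, not a substitute defining property. Your proposed computation comparing $\kappa(\{s\ge t\})$ with $\kappa(\{s'\ge\rT(t)\})$ and ``pushing to equality'' cannot succeed as stated, because, as the example above shows, the quantity $\kappa(\{s<t,\ s'\ge\rT(t)\})$ is not forced to vanish by the hypotheses. The fix is to replace the defining property by monotonicity of the support (or optimality for the quadratic cost) and then derive the upper-triangle inclusion from \eqref{1Dmbeq}, not the other way around.
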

\begin{definition}\normalfont
    The coupling $\kappa$ will be called \emph{the monotone coupling} between $m_0$ and $m_1$, and the family $m_\lambda$ will be called \emph{the monotone displacement interpolation} between $m_0$ and $m_1$.
\end{definition}

\begin{remark*}\normalfont
    In Lemma \ref{1Dintlemma} we allow the measures $m_i$ to have mass different from $1$, since in Proposition \ref{interprop} below we can only guarantee that the measures $m_{\alpha,i}$ on the needles have the same (finite) mass, but not that they are probability measures. All the definitions and basic facts about one-dimensional optimal transport extend to this setting in an obvious way.
\end{remark*}

\begin{proposition}[Distinguished displacement interpolations]\label{interprop}
    Let $\mu_0=f_0\mu,\mu_1=f_1\mu \in \cP_1(L)$, let $u$ be a Kantorovich potential for the function $f = f_1-f_0$, and let $\mu = \int_{\sA}\mu_\alpha d\nu(\alpha)$ be the corresponding disintegration of measure provided by Theorem \ref{needlethm2}. There exists a displacement interpolation $\{\mu_\lambda\}_{0 \le \lambda \le 1}$ between $\mu_0$ and $\mu_1$ with the following properties:
    \begin{itemize}
        \item The measure $\mu_\lambda$ is absolutely continuous for all $\lambda \in [0,1]$:
        $$\mu_\lambda = f_\lambda\mu.$$
        \item For every $\alpha \in \sN$, the family 
        $$m_{\alpha,\lambda}:=(f_\lambda\circ\gamma_\alpha)m_\alpha$$
        is the monotone displacement interpolation between the measures $m_{\alpha,0}$ and $m_{\alpha,1}$ on the interval $I_\alpha$.
        \item For every $\alpha \in \sD$, the function $\lambda\mapsto\int_Mf_\lambda \,d\mu_\alpha = f_\lambda(x_\alpha)$ is constant.
    \end{itemize}
\end{proposition}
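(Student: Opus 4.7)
The plan is to build the interpolation needle-by-needle by applying the one-dimensional Lemma~\ref{1Dintlemma} to each needle produced by Theorem~\ref{needlethm2}, then to glue the pieces into a single transport plan on $SM\times_0[0,\infty)$ via the disintegration.

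For each nondegenerate needle $\alpha\in\sN$, set $m_{\alpha,i}:=(f_i\circ\gamma_\alpha)\,m_\alpha$ for $i=0,1$; applying \eqref{disinteq2} to test functions of the form $\chi_E f_i$ identifies $(\gamma_\alpha)_*m_{\alpha,i}$ as the piece of $\mu_i$ carried by $\alpha$. The mass-balance clauses \eqref{MBeq2}--\eqref{detailedmbeq2} applied to $f=f_1-f_0$ deliver both the equality of total masses $m_{\alpha,0}(I_\alpha)=m_{\alpha,1}(I_\alpha)$ and the tail comparison that is precisely the hypothesis of Lemma~\ref{1Dintlemma}, which then supplies a monotone map $\rT_\alpha$ between the supports, the family $\rT_{\alpha,\lambda}(t)=(1-\lambda)t+\lambda\rT_\alpha(t)$, and the absolutely continuous monotone interpolants $m_{\alpha,\lambda}:=(\rT_{\alpha,\lambda})_*m_{\alpha,0}$ on $I_\alpha$.

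Assemble these into a Borel probability measure on $SM\times_0[0,\infty)$ by
\begin{equation*}
    \Pi := \int_\sN(\Psi_\alpha)_*m_{\alpha,0}\,d\nu(\alpha)+\int_\sD f_0(x_\alpha)\,\delta_{(v_{x_\alpha},0)}\,d\nu(\alpha),
\end{equation*}
where $\Psi_\alpha(t):=(\dot\gamma_\alpha(t),\rT_\alpha(t)-t)$ and $v_{x_\alpha}$ is any element of $S_{x_\alpha}M$ (immaterial because $\ell=0$ collapses the fiber under $\sim$). Since $\gamma_\alpha$ is an integral curve of the Euler--Lagrange flow on $SM$ by Proposition~\ref{dominatedprop}(d), one has $\Exp_\lambda\circ\Psi_\alpha(t)=\gamma_\alpha(\rT_{\alpha,\lambda}(t))$, whence
\begin{equation*}
    \mu_\lambda := (\Exp_\lambda)_*\Pi = \int_\sN(\gamma_\alpha)_*m_{\alpha,\lambda}\,d\nu(\alpha)+\int_\sD f_0(x_\alpha)\,\delta_{x_\alpha}\,d\nu(\alpha).
\end{equation*}
At the endpoints this recovers $\mu_0$ and $\mu_1$ via the disintegration; for intermediate $\lambda$, absolute continuity of $m_{\alpha,\lambda}$ with respect to $m_\alpha$ (inherited from the monotonicity of $\rT_{\alpha,\lambda}$ and the continuous positive density of $m_\alpha$ provided by Proposition~\ref{flattenprop}) together with the disintegration gives $\mu_\lambda=f_\lambda\mu$ and $m_{\alpha,\lambda}=(f_\lambda\circ\gamma_\alpha)m_\alpha$, establishing the second bullet. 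The third bullet follows automatically: \eqref{MBeq2} on a degenerate ray forces $f_0(x_\alpha)=f_1(x_\alpha)$, and the construction assigns this common value to $f_\lambda(x_\alpha)$ for every $\lambda$.

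It remains to verify optimality of $\kappa_{\lambda,\lambda'}:=(\Exp_\lambda\times\Exp_{\lambda'})_*\Pi$ for $0\le\lambda\le\lambda'\le1$. For $\kappa_{\lambda,\lambda'}$-a.e.\ $(x,x')$, either $x=x'=x_\alpha$ for some $\alpha\in\sD$, or $(x,x')=(\gamma_\alpha(\rT_{\alpha,\lambda}(t)),\gamma_\alpha(\rT_{\alpha,\lambda'}(t)))$ for some $\alpha\in\sN$ and some $t$; in the latter case, the monotonicity of $\rT_{\alpha,\cdot}(t)$ in $\lambda$ orders $x$ and $x'$ along the calibrated curve $\gamma_\alpha$, and Proposition~\ref{dominatedprop}(c) yields $u(x')-u(x)=\cc(x,x')$. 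Theorem~\ref{KDthm} then certifies optimality of $\kappa_{\lambda,\lambda'}$. The chief technical obstacle is the measurable selection of the maps $\alpha\mapsto\rT_\alpha$ that guarantees $\Pi$ is a genuine Borel measure on $SM\times_0[0,\infty)$; this rests on the Borel-regular parametrization of ray clusters in Proposition~\ref{flattenprop} together with a standard Lusin-type argument exhausting $\sN$ by pieces on which $\rT_\alpha$ depends measurably on $\alpha$, after which the remaining verifications reduce to routine push-forward and disintegration bookkeeping.
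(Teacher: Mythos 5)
Your proposal is correct and follows the paper's proof essentially line for line: use mass balance \eqref{MBeq2}--\eqref{detailedmbeq2} to verify hypothesis \eqref{1Dmbeq} of Lemma~\ref{1Dintlemma} on each needle, push the resulting monotone couplings forward via $t\mapsto(\dot\gamma_\alpha(t),\rT_\alpha(t)-t)$ to assemble the transport plan $\Pi$, identify $(\Exp_\lambda)_*\Pi$ with $f_\lambda\mu$ through the disintegration, and certify optimality by combining calibration along $\gamma_\alpha$ with Kantorovich duality. One small improvement over the paper's \eqref{Pidef}: your explicit weight $f_0(x_\alpha)$ on the degenerate contribution makes the mass bookkeeping transparent, whereas the paper's convention $\kappa_\alpha:=\delta_{(0,0)}$ for $\alpha\in\sD$ only balances the disintegration identity if one implicitly reads in the same factor.
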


\begin{remark*}\normalfont
    Since $\mu_0,\mu_1 \in \cP_1(L)$, the function $f = f_1 - f_0$ satisfies the moment condition \eqref{fmomenteq}, so a Kantorovich potential $u$ indeed exists.
\end{remark*}

\begin{proof}
    We use the notation of Theorem \ref{needlethm2}. For every $\alpha \in \sN$, the curve $\gamma_\alpha$ is calibrated, i.e.
    \begin{equation}\label{gammaalphacalibeq}u(\gamma_\alpha(t')) - u(\gamma_\alpha(t)) = \cc(\gamma_\alpha(t),\gamma_\alpha(t')), \qquad a_\alpha < t \le t' < b_\alpha.\end{equation}
    We also have mass balance: for every $\alpha \in \sN$,
    $$\int_t^{b_\alpha}(f_1\circ\gamma_\alpha- f_0\circ\gamma_\alpha)dm_\alpha \le 0, \qquad t \in (a_\alpha,b_\alpha),$$
    with equality when $t = b_\alpha$, and for every $\alpha \in \sD$,
    \begin{equation}\label{sDmbeq}f_0(x_\alpha) = f_1(x_\alpha).\end{equation}
    For every $\alpha \in \sN$, let
    $$m_{\alpha,0}=(f_0\circ\gamma_\alpha)m_\alpha \qquad \text{ and } \qquad m_{\alpha,1}=(f_1\circ\gamma_\alpha)m_\alpha.$$
    The mass balance condition implies that hypothesis \eqref{1Dmbeq} in Lemma \ref{1Dintlemma} is satisfied. Let  $$\kappa_\alpha = (\mathrm{Id}\times\rT_\alpha)_*m_{\alpha,0} \qquad \text{ and } \qquad m_{\lambda,\alpha} : = (\rT_{\alpha,\lambda})_*m_{\alpha,0} \qquad \lambda \in [0,1]$$
    be the monotone coupling and the monotone displacement interpolation between the measures $m_{0,\alpha}$ and $m_{1,\alpha}$, respectively. The measure $m_{\alpha,\lambda}$ is absolutely continuous for all $0 \le \lambda \le 1$, i.e. there exists a Borel function $f_{\alpha,\lambda}:I_\alpha \to \RR$ such that
    $$m_{\alpha,\lambda} = f_{\alpha,\lambda}m_\alpha.$$
    For $\alpha \in \sD$, we trivially define the map $\rT_{\alpha,\lambda}:\{0\}\to\{0\}$ by $\rT_{\alpha,\lambda}(0) := 0$ for all $0 \le \lambda \le 1$ and the coupling $\kappa_\alpha$ by $\kappa_\alpha := \delta_{(0,0)}$.

    \medskip
    Define a transport plan $\Pi$ by
    $$\Pi : = \int_{\sA}\Pi_\alpha\,d\mu(\alpha),$$
    where $\Pi_\alpha$ is the pushforward of $\kappa_\alpha$ via the map $(t,t')\mapsto(\dot\gamma_\alpha(t),t'-t) \in SM\times_0[0,\infty)$. That is, 
    for every measurable function $\phi$ on $SM\times_0[0,\infty)$,
    \begin{equation}\label{Pidef}\int_{SM\times_0[0,\infty)}\phi(v,\ell) \,d\Pi(v,\ell) : = \int_{\sA}\left(\int_{I_\alpha\times I_\alpha}\phi(\dot\gamma_\alpha(t),t'-t)\,d\kappa_\alpha(t,t')\right) \,d\nu(\alpha)\end{equation}
    (with the understanding that $\dot\gamma_\alpha = 0$ if $\alpha \in \sD$). Define measures $\mu_\lambda$ by \eqref{mulambdadef} and couplings $\kappa_{\lambda,\lambda'}$ by \eqref{kappadef2}:
    $$\mu_\lambda = (\Exp_\lambda)_*\Pi, \qquad  \text{ and } \qquad \kappa_{\lambda,\lambda'}:= (\Exp_\lambda\times\Exp_{\lambda'})_*\Pi, \qquad 0 \le \lambda \le \lambda' \le 1.$$
    By clauses (iv) and (i) in Theorem \ref{needlethm2}, almost every $x \in M$ is contained in the image of a unique transport ray $\gamma_\alpha$, and $\alpha$ depends measurably on $x$. Thus there exist $\mu$-a.e.-defined  measurable functions $f_\lambda : M \to \RR$ such that
    \begin{equation}\label{flambdadef}f_\lambda(\gamma_\alpha(t)) = f_{\alpha,\lambda}(t), \qquad \alpha \in \sN, \,\, t \in I_\alpha, \, \, \lambda \in [0,1].\end{equation}
    If $\alpha \in \sD$ then we set $f_\lambda(x_\alpha) = f_0(x_\alpha)$ for all $0 \le \lambda\le 1$, which is consistent with \eqref{sDmbeq}.

    \medskip
    Since each $\gamma_\alpha$ is an extremal, for every $\alpha \in \sN$ and every $a_\alpha < t \le t' < b_\alpha$,
    \begin{equation}\label{Explambdadotgammaalphaeq}\Exp_\lambda(\dot\gamma_\alpha(t),t'-t) = \gamma_\alpha(t + \lambda(t'-t)), \qquad \lambda \in [0,1].\end{equation}
    Thus, for every Borel set $A \subseteq M$,
    \begin{align*}
        \mu_\lambda(A)  = \Pi(\Exp_\lambda^{-1}(A))& = 
        \int_{SM\times_0[0,\infty)}\chi_A(\Exp_\lambda(v,\ell))\,d\Pi(v,\ell)\\
        & = \int_{\sA}\left(\int_{I_\alpha\times I_\alpha}\chi_A(\gamma_\alpha(t + \lambda(t'-t)))\,d\kappa_\alpha(t,t')\right)\,d\nu(\alpha)\\
        & = \int_{\sA}\left(\int_{I_\alpha}\chi_A(\gamma_\alpha(t + \lambda(\rT_{\alpha}(t)-t)))\,dm_{\alpha,0}(t)\right)\,d\nu(\alpha)\\
        & = \int_{\sA}\left(\int_{I_\alpha}\chi_A(\gamma_\alpha(\rT_{\alpha,\lambda}(t)))\,dm_{\alpha,0}(t)\right)\,d\nu(\alpha)\\
        & = \int_{\sA}\left(\int_{I_\alpha}\chi_A(\gamma_\alpha(t))\,dm_{\alpha,\lambda}(t)\right)\,d\nu(\alpha)\\
        & = \int_{\sA}\left(\int_{I_\alpha}\chi_A(\gamma_\alpha(t))f_{\alpha,\lambda}(t)\,dm_\alpha(t)\right)\,d\nu(\alpha)\\
        & = \int_{\sA}\left(\int_{I_\alpha}\chi_A(\gamma_\alpha(t))f_\lambda(\gamma_\alpha(t))\,dm_\alpha(t)\right)\,d\nu(\alpha)\\
        & = \int_{\sA}\left(\int_Af_\lambda d\mu_\alpha\right)\,d\nu(\alpha)\\
        & = \int_Af_\lambda \, d\mu.
    \end{align*}
    Here $\chi_A$ is the indicator set of $A$. In particular, $\mu_\lambda$ is absolutely continuous for all $0 \le \lambda\le 1$, and the measures $\mu_0$ and $\mu_1$ are indeed $f_0\mu$ and $f_1\mu$ respectively.
    
    \medskip
    It remains to prove that for every $0 \le \lambda \le \lambda' \le 1$, the coupling $\kappa_{\lambda,\lambda'}$ between $\mu_\lambda$ and $\mu_{\lambda'}$ is optimal. By Theorem \ref{KDthm}, it suffices to prove that 
    $$u(x_{\lambda'}) - u(x_{\lambda}) = \cc(x_{\lambda},x_{\lambda'}) \qquad \text{ for $\kappa_{\lambda,\lambda'}$-a.e. $(x_{\lambda},x_{\lambda'}) \in M\times M$}.$$
    By \eqref{Explambdadotgammaalphaeq} and the definitions of $\kappa_{\lambda,\lambda'}$ and $\Pi$, this is equivalent to
    $$u(\gamma_\alpha(t + \lambda(t'-t))) - u(\gamma_\alpha(t + \lambda'(t'-t))) = \cc(\gamma_\alpha(t+\lambda(t'-t)),\gamma_\alpha(t+\lambda'(t'-t)))$$
    for $\kappa_\alpha$-a.e. $(t,t') \in I_\alpha\times I_\alpha$, for $\nu$-a.e. $\alpha \in \sN$. But this is true by \eqref{gammaalphacalibeq} and the fact that $t' \ge t$ for $\kappa_\alpha$-a.e.$(t,t') \in I_\alpha\times I_\alpha$.
\end{proof}

\subsection{Displacement convexity of entropy}\label{convsec}

In this section we prove the equivalence (ii)$\iff$(iii) in Theorem \ref{mainthm}. Recall that for $\mu_0=f_0\mu \in \cP_1(L)$ we set
$$\rS_N[\mu_0|\mu] = -\int_Mf_0^{-1/N}d\mu_0 \qquad \text{ and } \qquad \rS_\infty[\mu_0|\mu] = \int_M\log f_0\,d\mu_0.$$

\begin{definition}[Distortion coefficients, see \cite{CMS,Sturm2,Vil}]\label{distcoeffdef}\normalfont
    For ${K} \in \RR, N > 1$, $\ell \ge 0$ and $t \in [0,1]$, define the \emph{distortion coefficients} $$\tau^{{K},N}_t,\sigma^{{K},N}_t,\beta^{{K},N}_t:[0,\infty)\to [0,\infty)$$ by:
    \begin{align*}
    \sigma_t^{{K},N}(\ell) &: =
    \frac{\sin\left(t\ell\sqrt{\frac{{K}}{N - 1}}\right)}{\sin\left(\ell\sqrt{\frac{{K}}{N - 1}}\right)}\\\noalign{\vskip15pt}
    \tau_t^{{K},N}(\ell) : = t^{1/N}\cdot\sigma_t^{{K},N}(\ell)^{1-1/N}&\qquad \text{ and } \qquad
    \beta_t^{{K},N}(\ell) : = 
        t^{1-N}\cdot\sigma_t^{{K},N}(\ell)^{N - 1}
    \end{align*}
    with the following conventions: 
    \begin{itemize}
        \item If ${K} > 0$ and $\ell\sqrt{{K}/(N - 1)} \ge \pi$ then $\sigma_t^{{K},N}(\ell) = \infty$ for all $t\in[0,1]$.
        \item If ${K} = 0$ then we set $\sigma_t^{0,N} \equiv \tau_t^{0,N} \equiv t$ and $\beta_t^{0,N} \equiv 1$.
        \item If ${K} < 0$ we understand $\sin(\sqrt{-1}\cdot r) = \sqrt{-1}\cdot \sinh(r)$ (note that the sign ambiguity is cancelled out).
        \item If $N = \infty$ then we set $\sigma_t^{{K},\infty}\equiv \tau_t^{{K},\infty} \equiv 1$ and $\beta_t^{{K},\infty}(\ell) = \exp(\frac{{K}}{6}(1-t^2)\ell^2)$.
        \item $\sigma_t^{{K},N}(0) = \tau_t^{{K},N}(0) = t$ and $\beta_t^{{K},N}(0) = 1$ for every ${K},t$ and $N$.
    \end{itemize}
    We also view $\tau^{{K},N}_t$ as a function on $SM\times_0[0,\infty)$ by setting 
    $$\tau^{{K},N}_t(v,\ell) = \tau^{{K},N}_t(\ell), \qquad (v,\ell) \in SM\times_0[0,\infty).$$
\end{definition}
\begin{theorem}[Displacement convexity]\label{DCthm}
    Let $K \in \RR$ and let $N \in [n,\infty]$. The following conditions are equivalent:
    \begin{enumerate}[(i)]
        \item The pair $(\mu,L)$ satisfies $\CD(K,N)$ in the sense of Definition \ref{CDdef}.
        \item For every $\mu_0 = f_0\mu,\mu_1=f_1\mu \in \cP_1(L)$ there exists a displacement interpolation $\mu_\lambda = (\Exp_\lambda)_*\Pi$ between $\mu_0$ and $\mu_1$ such that for every $\lambda \in [0,1]$,
        \begin{equation*} \rS_{N}[\mu_\lambda|\mu] \le 
        \begin{cases}    
        \int\left[\tau_{1-\lambda}^{{K},{N}}\cdot \left(-f_0^{-1/{N}}\circ\Exp_0\right) + \tau_{\lambda}^{{K},{N}}\cdot \left(-f_1^{-1/{N}}\circ\Exp_1\right)\right]d\Pi & {N} < \infty\\\noalign{\vskip9pt}
        (1-\lambda)\cdot\rS_\infty[\mu_0|\mu] + \lambda\cdot\rS_\infty[\mu_1|\mu] - \frac{K}{2}\cdot\lambda\cdot(1-\lambda)\cdot \int\ell^2 d\Pi & {N} =\infty
        \end{cases}
        \end{equation*}
        for all $\lambda \in [0,1]$. Here $\ell : SM\times_0[0,\infty)\to [0,\infty)$ denotes the  second variable.
    \end{enumerate}
\end{theorem}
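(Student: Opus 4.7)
The plan is to prove the two implications separately via a reduction to one dimension. For (i) $\Rightarrow$ (ii), given $\mu_0 = f_0 \mu$ and $\mu_1 = f_1 \mu$ in $\cP_1(L)$, apply Lemma \ref{MKlemma} with $f = f_1 - f_0$ to produce a Kantorovich potential $u$, then apply Theorem \ref{needlethm2} to obtain a needle decomposition $\mu = \int_\sA \mu_\alpha \, d\nu(\alpha)$ and Proposition \ref{interprop} to obtain the distinguished displacement interpolation $\{\mu_\lambda\}$ whose restriction to each nondegenerate needle $\mu_\alpha = (\gamma_\alpha)_* m_\alpha$ is the monotone one-dimensional interpolation of Lemma \ref{1Dintlemma}. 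By Lemma \ref{CDlemma}, $\nu$-almost every $\mu_\alpha$ is a $\CD(K,N)$-needle; that is, $m_\alpha$ on $I_\alpha$ satisfies the one-dimensional $\CD(K,N)$ condition of Example \ref{needleexample}. On each such needle, invoke the classical one-dimensional displacement convexity inequality for $\rS_N$ along monotone transport --- which is exactly the theorem restricted to the one-dimensional model --- to bound $\rS_N[m_{\alpha,\lambda}|m_\alpha]$ by an integral against $\kappa_\alpha$ involving $\tau_{1-\lambda}^{K,N}$ and $\tau_\lambda^{K,N}$. Degenerate needles contribute nothing since $f_\lambda$ is constant there. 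Integration against $\nu$, together with the disintegration formula \eqref{disinteq2} and the definition \eqref{Pidef} of $\Pi$, assembles the global inequality.

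For (ii) $\Rightarrow$ (i), the plan is localization around a single zero-energy extremal. Fix $x_0 \in M$ and $v \in S_{x_0}M$; via Lemma \ref{characlemma} choose a local $C^3$ solution $u$ of the Hamilton--Jacobi equation with $\nabla u|_{x_0} = v$ and arbitrary prescribed second-order data. Let $W$ be a small smooth hypersurface through $x_0$ transverse to $\nabla u$, fix small $\ell > 0$, and construct test densities $f_0^\eps$ supported in an $\eps$-tubular neighborhood of $W$; set $\mu_0^\eps = f_0^\eps \mu$ and $\mu_1^\eps = (\phi_\ell)_* \mu_0^\eps$, where $\phi_s = \pi \circ \Phi_s^L \circ \nabla u$ is the gradient flow of $u$. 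By Lemma \ref{HJlemma}, the flow lines of $\nabla u$ are minimizing zero-energy extremals on which $u$ is calibrated, so $u$ serves as a Kantorovich potential for $f_1^\eps - f_0^\eps$ and the distinguished displacement interpolation of Proposition \ref{interprop} is precisely $\mu_\lambda^\eps = (\phi_{\lambda\ell})_* \mu_0^\eps$. The density evolves as $f_\lambda^\eps \circ \phi_{\lambda\ell} = f_0^\eps / J_{\lambda\ell}$, where the Jacobian $J_s$ of $\phi_s$ with respect to $\mu$ satisfies the ODE $\dot J_s / J_s = (\bL u) \circ \phi_s$ along each flow line. Sending $\eps \to 0$ concentrates the transport onto the single flow line through $x_0$, reducing the hypothesized inequality to a pointwise one-dimensional convexity statement for $J_s^{1/N}$ along that flow line; differentiating twice in $\ell$ at $\ell = 0$ using the Taylor expansion of $\tau_t^{K,N}(\ell)$ and the ODE for $J_s$ produces the Bochner-type inequality $(d\bL u)(\nabla u) + (\bL u)^2/(N-1) + K \le 0$ at $x_0$, which, since $x_0$, $v$ and the second-order data of $u$ are arbitrary, establishes (i) by the definition in Corollary \ref{equivalencecor}.

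The main obstacle is the second direction. One must verify that the flow pushforward really agrees with the distinguished displacement interpolation (so that the hypothesis applies to the specific $\Pi$ produced by Proposition \ref{interprop}), justify the $\eps \to 0$ limit with adequate uniform control of the densities and the Jacobian near the single flow line, and handle the $N = \infty$ case via the $\rS_\infty$ formulation directly. The first direction is essentially bookkeeping: the one-dimensional case is Example \ref{needleexample}, and the lift to the manifold is achieved by the disintegration in Theorem \ref{needlethm2} combined with the monotone coupling of Lemma \ref{1Dintlemma}.
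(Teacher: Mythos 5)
Your direction (i)$\implies$(ii) matches the paper's argument: Kantorovich potential via Lemma~\ref{MKlemma}, needle decomposition via Theorem~\ref{needlethm2}, distinguished displacement interpolation via Proposition~\ref{interprop}, reduction on each needle to the one-dimensional inequality (which in the paper is Lemma~\ref{DCthm1D}), and reassembly by the disintegration formula. That is exactly the paper's route.

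Your direction (ii)$\implies$(i) takes a genuinely different path. The paper proves the contrapositive: assuming the Bochner inequality fails at a point, it constructs disjoint coordinate slabs whose displacement interpolation is \emph{unique}, then invokes the converse direction (iii)$\implies$(i) of Lemma~\ref{DCthm1D} to produce a pair of intervals violating one-dimensional displacement convexity, and integrates over a transverse parameter to violate (ii). You instead argue directly: concentrate $\mu_0^\eps$ near a hypersurface, push forward along the flow of $\nabla u$, let $\eps\to 0$ to get a pointwise Jacobian inequality along the flow line, then Taylor-expand the distortion coefficients in $\ell$ to extract the Bochner inequality. This is a legitimate strategy in spirit (it is the infinitesimal form of the Brunn--Minkowski argument), but it amounts to re-deriving by hand the converse direction of Lemma~\ref{DCthm1D}, which the paper simply cites. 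The paper's contrapositive route avoids both limit procedures ($\eps\to 0$ and the second-order Taylor expansion) and is therefore substantially more economical.

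One gap in your (ii)$\implies$(i) sketch deserves correction rather than merely being listed as an ``obstacle.'' You frame the issue as showing that ``the flow pushforward really agrees with the distinguished displacement interpolation.'' That is not the point: condition (ii) asserts only that \emph{some} transport plan $\Pi$ satisfies the inequality, so even if your flow pushforward coincides with the distinguished interpolation, (ii) gives you no control over it unless the displacement interpolation (and its representing transport plan) is \emph{unique}. The paper secures this by making the one-dimensional supports disjoint intervals, whence uniqueness follows from cyclic monotonicity. In your setup you would need $\eps<\ell$ (so the slab and its image are disjoint along each ray) and an explicit appeal to the same uniqueness argument; without it the implication does not close. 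Two further technical points the paper handles that you should not elide: $u$ must be extended from a local HJ solution to a globally defined dominated function before it can serve as a Kantorovich potential (the paper gives an explicit formula for the extension); and the $\eps\to 0$ limit requires normalizing the entropy terms by $\mu(W_\eps)^{1/N}$ so that both sides of the inequality remain finite.
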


\medskip
The one-dimensional Euclidean version of Theorem \ref{DCthm} is well known \cite{Sturm2,Vil,CGS}. Here and in the sequel, for a measure $m$ and a measurable set $A$ of positive measure, the measure $m\vert_A$ denotes the normalized probability measure $m\vert_A := m(A)^{-1}m(A\cap(\cdot))$.

% We use the fact that the monotone displacement interpolation is optimal with respect to squared distance (see \cite[Theorem 3.1]{Amb} or \cite[Lemma 4.1]{CM}).
\begin{lemma}[Displacement convexity on the real line]\label{DCthm1D}
    Let $m$ be a measure  on an interval $I\subseteq \RR$ with a smooth density and let $K \in \RR$ and $N \in (1,\infty]$. The following are equivalent:
    \begin{enumerate}[(i)]
        \item The pair $(m,l)$ satisfies $\CD_{}({K},N)$, where $l$ is the Euclidean Lagrangian $l = (dt^2+1)/2$.
        \item For every pair $m_0=f_0m,m_1=f_1m$ of absolutely continuous measures on $I$ of equal mass, if $\kappa$ and $\{m_\lambda\}_{\lambda \in [0,1]}$ are the monotone coupling and the monotone displacement interpolation between $m_0$ and $m_1$, respectively, then 
        \begin{align*}
        \hspace{-\leftmargin}
        \rS_{N}[m_\lambda|m] \le \begin{cases}-\int\left[\tau_{1-\lambda}^{{K},{N}}(|t'-t|)\cdot f_0(t)^{-1/{N}} + \tau_{\lambda}^{{K},{N}}(|t'-t|)\cdot f_1(t')^{-1/{N}}\right]d\kappa(t,t')& {N} < \infty,\\\noalign{\vskip9pt}
        (1-\lambda)\cdot \rS_\infty[m_0|m] + \lambda\cdot\rS_\infty[m_1|m] - \frac{K\lambda(1-\lambda)}{2}\cdot  \int(t'-t)^2 d\kappa(t,t')& {N} = \infty
        \end{cases}
        \end{align*}
    for every $\lambda \in [0,1]$.
    \item The conclusion of statement (ii) holds when $m_0 = m\vert_{I_0}$ and $m_1 = m \vert_{I_1}$, where $I_0,I_1\subseteq I$ are intervals satisfying $\sup I_0 < \inf I_1$.
    \end{enumerate} 
\end{lemma}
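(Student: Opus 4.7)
The plan is to prove the three implications. The implication (ii)$\Rightarrow$(iii) is immediate, since (iii) is the restriction of (ii) to densities of the form $\chi_{I_j}/m(I_j)$ (rescaled for equal masses). Write $m = e^{-\psi}\,dt$; by Example~\ref{needleexample}, condition (i) is equivalent to the ODE inequality $\ddot\psi - \dot\psi^2/(N-1)\ge K$ for $N<\infty$ (and $\ddot\psi\ge K$ for $N=\infty$). A direct calculation and Sturm comparison show that for $N<\infty$ this is equivalent to the Jacobi concavity
$$g((1-\lambda)t_0+\lambda t_1)\ge \sigma_{1-\lambda}^{K,N}(\ell)\,g(t_0)+\sigma_{\lambda}^{K,N}(\ell)\,g(t_1),\qquad g:=e^{-\psi/(N-1)},\ \ell:=t_1-t_0,$$
holding for every $t_0\le t_1$ in $I$ and every $\lambda\in[0,1]$. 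This is the workhorse inequality I would rely on throughout.

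For (i)$\Rightarrow$(ii) with $N<\infty$, let $T$ be the monotone transport map from $m_0$ to $m_1$ and set $T_\lambda(s):=(1-\lambda)s+\lambda T(s)$, $\ell(s):=T(s)-s$, $h:=e^{-\psi}$. The change-of-variables formula for $m_\lambda=(T_\lambda)_*m_0$ gives $f_\lambda(T_\lambda(s))^{-1/N}\cdot f_0(s)^{1/N}h(s)^{1/N}=T_\lambda'(s)^{1/N}h(T_\lambda(s))^{1/N}$. I would then apply H\"older's inequality with exponents $p=N$ and $q=N/(N-1)$ to the vectors $\bigl((1-\lambda)^{1/N},(\lambda T'(s))^{1/N}\bigr)$ and $\bigl(\sigma_{1-\lambda}^{K,N}(\ell)^{(N-1)/N}h(s)^{1/N},\sigma_{\lambda}^{K,N}(\ell)^{(N-1)/N}h(T(s))^{1/N}\bigr)$, invoking Jacobi concavity to dominate the second H\"older factor by $h(T_\lambda(s))^{1/N}$. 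This yields the pointwise bound
$$\tau_{1-\lambda}^{K,N}(\ell)\,h(s)^{1/N}+\tau_{\lambda}^{K,N}(\ell)\,T'(s)^{1/N}h(T(s))^{1/N}\le T_\lambda'(s)^{1/N}\,h(T_\lambda(s))^{1/N}.$$
Dividing by $f_0(s)^{1/N}h(s)^{1/N}$ (which converts the second term on the left into $\tau_\lambda^{K,N}(\ell) f_1(T(s))^{-1/N}$) and integrating against $m_0$, using $(\mathrm{Id}\times T)_*m_0=\kappa$, gives the required entropy inequality. For $N=\infty$, I would instead combine the identity $\rS_\infty[m_\lambda|m]=\int[\log f_0(s)+\psi(T_\lambda(s))-\psi(s)-\log T_\lambda'(s)]\,dm_0(s)$ with the $K$-convexity of $\psi$ (giving $\psi(T_\lambda(s))-\psi(s)\le\lambda(\psi(T(s))-\psi(s))-\frac{K}{2}\lambda(1-\lambda)\ell(s)^2$) and the concavity of $\log$ applied to $T_\lambda'(s)=(1-\lambda)+\lambda T'(s)$; matching to the analogous formula for $\rS_\infty[m_1|m]$ produces the linear bound.

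For (iii)$\Rightarrow$(i), the plan is a localization argument. Given $t^*\in I$, a small $\ell>0$, and $\lambda\in(0,1)$, set $I_0=[t^*,t^*+\epsilon]$ and $I_1=[t^*+\ell-\eta(\epsilon),\,t^*+\ell+\epsilon-\eta(\epsilon)]$, where $\eta(\epsilon)\to 0$ is chosen so that $m(I_0)=m(I_1)$ (possible for small $\epsilon$ by continuity of $h$). The monotone transport between $m\vert_{I_0}$ and $m\vert_{I_1}$ is then approximately translation by $\ell$, the interpolation concentrates near $[t^*+\lambda\ell,t^*+\lambda\ell+\epsilon]$, and dividing the inequality in (iii) by the common mass $m(I_0)$ and passing to the limit $\epsilon\searrow 0$ (using the Lebesgue differentiation theorem to replace averaged values of $h$ by pointwise values) reduces (iii) to the Jacobi concavity inequality for $h^{1/(N-1)}$ at $t^*$ and $t^*+\ell$. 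As $t^*$, $\ell$, and $\lambda$ are arbitrary, this recovers (i). The $N=\infty$ case is analogous, yielding $\ddot\psi\ge K$ in the limit.

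The main obstacle I foresee is the uniform control required in the limiting step of (iii)$\Rightarrow$(i): one must handle the fact that the densities of $m_\lambda$ concentrate on an $\epsilon$-interval while the $\tau$-coefficients stay $O(1)$, and that the error in approximating the monotone transport by a pure translation contributes only subleading terms. These technicalities are standard in one-dimensional optimal transport and are worked out in detail in \cite{Sturm2,Vil,CGS}.
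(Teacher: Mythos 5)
Your handling of (i)$\implies$(ii) matches the paper's in substance: same Jacobi concavity lemma (stated for $g=e^{-\psi/(N-1)}$, equivalent to the paper's \eqref{rhooneminuslambdateq} for $\rho=g^{N-1}$), same H\"older inequality (the paper packages it as the generalized-mean inequality \eqref{pmeaninequality}), same change of variables; your dividing-by-$f_0^{1/N}h^{1/N}$ to get a pointwise inequality before integrating is a cleaner bookkeeping but not a different idea. The $N=\infty$ argument via $K$-convexity of $\psi$ and concavity of $\log$ is also what the paper does, just without the $\beta^{K,\infty}$ notation. And (ii)$\implies$(iii) is trivial, as you say.

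The reverse direction (iii)$\implies$(i) has a genuine gap. The crucial degree of freedom in the localization is the ratio $r=\epsilon_1/\epsilon_0$ of the interval lengths, and the correct choice is the one that forces the H\"older step used in (i)$\implies$(ii) to degenerate to an equality in the limit (this is precisely what the paper's sketch says: \emph{``chosen so that equality holds in \eqref{pmeaninequality}''}). Passing to the limit $\epsilon\searrow 0$ in the entropy inequality with $I_0=[t^*,t^*+\epsilon_0]$, $I_1=[t',t'+\epsilon_1]$ gives
$$\bigl((1-\lambda)+\lambda r\bigr)^{1/N}h(t_\lambda)^{1/N} \;\ge\; \tau_{1-\lambda}^{K,N}(t'-t^*)\,h(t^*)^{1/N}\;+\;\tau_{\lambda}^{K,N}(t'-t^*)\,r^{1/N}h(t')^{1/N},$$
and one recovers Jacobi concavity only after optimizing the right-hand side over $r$, which by H\"older happens exactly at $\frac{\lambda r}{1-\lambda}=\frac{\sigma_\lambda^{K,N}h(t')^{1/(N-1)}}{\sigma_{1-\lambda}^{K,N}h(t^*)^{1/(N-1)}}$ (for $N<\infty$; $r=1$ for $N=\infty$). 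Your proposal instead chooses $r$ by mass-matching $m(I_0)=m(I_1)$, which forces $T'(s)\to h(t^*)/h(t')$ in the limit. This ratio is neither the H\"older-equality ratio nor $1$, so the limiting inequality you obtain is strictly weaker than Jacobi concavity and does not yield $(i)$; for $N=\infty$ one can check directly that your choice leaves an extra nonnegative term $\log\bigl[(1-\lambda)+\lambda e^{\psi(t')-\psi(t^*)}\bigr]-\lambda(\psi(t')-\psi(t^*))$ on the right that at second order contributes $+\tfrac{1}{2}\lambda(1-\lambda)\dot\psi^2(t'-t^*)^2$, so you only recover $\ddot\psi\ge K-\dot\psi^2$ rather than $\ddot\psi\ge K$. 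Separately, your construction as written is not realizable: with both $I_0$ and $I_1$ of length $\epsilon$, a shift $\eta(\epsilon)\to 0$ cannot produce $m(I_0)=m(I_1)$ when $h(t^*)\neq h(t^*+\ell)$, since the leading terms $\epsilon\,h(t^*)$ and $\epsilon\,h(t^*+\ell-\eta)$ differ for small $\eta$; one must allow $\epsilon_1/\epsilon_0\neq 1$. The fix is to drop the mass-matching normalization (the statement already normalizes $m\vert_{I_j}$ to probability), allow different lengths, and tune the ratio $\epsilon_1/\epsilon_0$ to the H\"older-equality value for the given $\lambda$, $t^*$, $t'$.
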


\begin{proof}
    We give a sketch of the proof; details can be found in \cite{Sturm2,Vil,Bor} and \cite[Theorem 3.17]{DJ}. Let $\rT$ and $\rT_\lambda$ be defined by \eqref{m0tinftyeq} and \eqref{Tlambdadef}.
    Differentiation gives
    \begin{align}
    \begin{split}\label{Tprimeq}
        \rho(t)f_0(t) & = \rho(\rT(t))f_1(\rT(t))\rT'(t) \quad \text{ and }\\
          \rT_\lambda'(t) & = (1-\lambda) + \lambda\rT'(t) = \frac{\rho(t)f_0(t)}{\rho(\rT_\lambda(t))f_\lambda(\rT_\lambda(t))}
    \end{split}
    \end{align} 
    for all $t \in \supp f_0$ (the function $\rT$ is increasing hence differentiable outside a countable set). 
    
    \medskip
    Suppose that the pair $(m,l)$ satisfies $\CD(K,N)$. Then the density $\rho = e^{-\psi}$ of the measure $m$ satisfies
    $$\ddot \psi - \frac{\dot \psi^2}{N-1} \ge {K},$$
    see Example \ref{needleexample}. Integrating this inequality (see e.g. \cite[Chapter 14]{Vil}) gives
    \begin{equation}\label{rhooneminuslambdateq}
		\rho((1-\lambda)\, t + \lambda\, t') \ge \bM_{\frac{1}{N-1}}\left(\beta_{1-\lambda}^{{K},N}(t'-t)\cdot\rho(t),\beta_\lambda^{{K},N}(t' - t) \cdot\rho(t');\lambda\right)
	\end{equation}
    for all $t \le t' \in I$, where $\bM$ is the generalized mean defined in Section \ref{BMsec}. In particular,
	\begin{equation}\label{rhoineq}
		\rho(\rT_\lambda(t)) \ge \bM_{\frac{1}{N-1}}\left(\beta_{1-\lambda}^{{K},N}(\rT(t)-t)\cdot\rho(t),\beta_\lambda^{{K},N}(\rT(t)-t) \cdot\rho(\rT(t));\lambda\right)
	\end{equation}
    for all $t\in I$ (note that $t \le \rT(t)$ by Lemma \ref{1Dintlemma}).  H{\"o}lder's inequality implies that for every $q,q_1,q_2 \in [0,\infty]$ satisfying $1/q_1+1/q_2 =1/q$ and every $a_1,a_2,b_1,b_2 \ge 0$,
	\begin{equation}\label{pmeaninequality}\bM_{q_1}(a_1,b_1;\lambda)\cdot \bM_{q_2}(a_2,b_2;\lambda) \ge \bM_{q}(a_1a_2,b_1b_2;\lambda)\end{equation}
	Combining \eqref{Tprimeq}, \eqref{rhoineq}  and \eqref{pmeaninequality} with $q_1 = 1$ and $q_2 = \frac{1}{N-1}$ we obtain
	\begin{align}\label{rhoTlambdaeq}
    \begin{split}
		\rho(\rT_\lambda(t))\rT_\lambda'(t) & \ge \bM_{\frac1N}\left(\beta_{1-\lambda}^{K,N}(\rT(t)-t)\rho(t),\beta_\lambda^{K,N}(\rT(t)-t)\rho(\rT(t))\rT'(t);\lambda\right)\\
        & = \bM_{\frac1N}\left(\beta_{1-\lambda}^{K,N}(\rT(t)-t)\rho(t),\beta_\lambda^{K,N}(\rT(t)-t)\cdot\frac{\rho(t)f_0(t)}{f_1(\rT(t))};\lambda\right).
    \end{split}
	\end{align}
    Suppose first that $N < \infty$. Using the definitions of $\tau,\beta,\rS_N$ and $\kappa$, the condition in (ii) can be written as
    \begin{equation}\label{DCineq1}
        \begin{split}
        \int_If_\lambda(t)^{1-1/N}\rho(t)dt & \ge (1-\lambda)\int_If_0(t)^{1-1/N}\beta_{1-\lambda}^{K,N}(\rT(t)-t)^{1/N}\rho(t)dt \\
        &\qquad + \lambda\int_If_1(t)^{1-1/N}\beta_\lambda^{K,N}(t-\rT^{-1}(t))^{1/N}\rho(t)dt.
        \end{split}
    \end{equation}
    In order to prove \eqref{DCineq1} we begin by making a change of variables via the map $\rT_\lambda$:
    \begin{align*}
    \begin{split}
        \int_If_\lambda(t)^{1-1/N}\rho(t)dt & = \int_If_\lambda(\rT_\lambda(t))^{1-1/N}\rho(\rT_\lambda(t))\rT_\lambda'(t)dt\\
        & = \int_I\rho(t)^{1-1/N}f_0(t)^{1-1/N}\rho(\rT_\lambda(t))^{1/N}\rT_\lambda'(t)^{1/N}dt,
    \end{split}
    \end{align*}
    where in the second passage we used \eqref{Tprimeq}. 
    Applying \eqref{rhoTlambdaeq} and then using the definition of $\bM_{\frac1N}$, we get
    \begin{align*}
        & \int_If_\lambda(t)^{1-1/N}\rho(t)dt \\
        \qquad \qquad & \ge \int_I\rho(t)^{1-1/N}f_0(t)^{1-1/N}\bM_{\frac1N}\left(\beta_{1-\lambda}^{K,N}(\rT(t)-t)\rho(t),\beta_\lambda^{K,N}(\rT(t)-t)\cdot\frac{\rho(t)f_0(t)}{f_1(\rT(t))};\lambda\right)^{1/N}dt\\
        & = (1-\lambda)\int_I\rho(t)f_0(t)^{1-1/N}\beta_{1-\lambda}^{K,N}(\rT(t)-t)^{1/N}dt\\
        & \qquad + \lambda\int_I\rho(t)f_0(t)f_1(t)^{-1/N}\beta_\lambda^{K,N}(\rT(t)-t)^{1/N}dt\\
        & = (1-\lambda)\int_I\rho(t)f_0(t)^{1-1/N}\beta_{1-\lambda}^{K,N}(\rT(t)-t)^{1/N}dt\\
        & \qquad + \lambda\int_I\rho(\rT(t))f_1(\rT(t))^{1-1/N}\rT'(t)\beta_\lambda^{K,N}(\rT(t)-t)^{1/N}dt,
    \end{align*}
    where in the last passage we used \eqref{Tprimeq} again. By changing variables via the map $\rT$ in the second integral we obtain \eqref{DCineq1}. 
    
    \medskip
    If $N = \infty$ then \eqref{rhoTlambdaeq} reads
    $$\rho(\rT_\lambda(t))\rT_\lambda'(t)\ge \left(\beta_{1-\lambda}^{K,\infty}(\rT(t) - t)\rho(t)\right)^{1-\lambda}\left(\beta_\lambda^{K,\infty}(\rT(t) - t)\frac{\rho(t)f_0(t)}{f_1(\rT(t))}\right)^{\lambda}, \qquad t \in \supp f_0.$$
    Recalling that $\beta_t^{K,\infty}(\ell) = \exp\left(\frac{K}{6}(1-t^2)\ell^2\right)$, we get
    $$\rho(\rT_\lambda(t))\rT_\lambda'(t)\ge \rho(t)e^{\frac{K}{2}\lambda(1-\lambda)(\rT(t)-t)^2}\left(\frac{f_0(t)}{f_1(\rT(t))}\right)^{\lambda}, \qquad t \in \supp f_0,$$
    whence by \eqref{Tprimeq}
    $$\log\left(\frac{f_0(t)}{f_\lambda(\rT_\lambda(t))}\right) \ge \frac{K}{2}\lambda(1-\lambda)(\rT(t)-t)^2+\lambda\log\left(\frac{f_0(t)}{f_1(\rT(t))}\right).$$
    Upon rearrangement this becomes
    $$\log(f_\lambda(\rT_\lambda(t))) \le (1-\lambda)\log(f_0(t))  + \lambda\log(f_1(\rT(t))) - \frac{K}{2}\lambda(1-\lambda)(\rT(t)-t)^2.$$
    Thus
    \begin{align*}
        \int_If_\lambda(t)\log(f_\lambda(t))\rho(t)dt & = \int_If_\lambda(\rT_\lambda(t))\log(f_\lambda(\rT_\lambda(t)))\rho(\rT_\lambda(t))\rT_\lambda'(t)dt\\
        & = \int_If_0(t)\rho(t)\log(f_\lambda(\rT_\lambda(t)))dt\\
        & \le \int_If_0(t)\rho(t)\Big((1-\lambda)\log(f_0(t))  + \lambda\log(f_1(\rT(t))) \\
        & \qquad - \frac{K}{2}\lambda(1-\lambda)(\rT(t)-t)^2\Big)dt
    \end{align*}
    which, by the definition of $\rS_\infty$ and $\kappa$, amounts to the desired inequality.

    \medskip
    The implication (ii)$\implies$(iii) is trivial. The implication (iii)$\implies$(i) can be found in e.g. \cite[Proposition 3.4]{CGS}; see also \cite[Theorem 3.17]{DJ} for a similar argument. The idea is to assume by contradiction that inequality \eqref{rhooneminuslambdateq} does not hold for some $t<t'$, and take the intervals $I_0$ and $I_1$ to be $t+\ell_0$ and $t' + \ell_1$, respectively, where $\ell_i$ are small positive numbers, chosen so that equality holds in \eqref{pmeaninequality}.
\end{proof}
\begin{proof}[Proof of Theorem \ref{DCthm}]
    Let $N \in [n,\infty)$ and assume that $\Ric_{\mu,N}\ge K$. Let $$\mu_i=f_i\mu \in \cP_1(L), \qquad i=0,1.$$
    Choose a Kantorovich potential $u$ for the function $f_1-f_0$, let $\mu = \int_{\sA}\mu_\alpha\,d\nu(\alpha)$ be the disintegration of measure provided by Theorem \ref{needlethm2} and let $\mu_\lambda = f_\lambda\mu = (\Exp_\lambda)_*\Pi$ be the displacement interpolation provided by Proposition \ref{interprop}. 

    \medskip
    In order to unify below the notation for the classes $\sN$ and $\sD$, for $\alpha \in \sD$ we set
    $$I_\alpha = \{0\}, \qquad \gamma_\alpha:\{0\}\to M, \qquad \gamma_\alpha(0) = x_\alpha,$$
    as well as
    $$m_\alpha : = \delta_0 \qquad \text{ and } \qquad m_{\alpha,\lambda} = f_\lambda(x_\alpha)m_\alpha = f_\lambda(x_\alpha)\delta_0,$$
    (where $\delta_0$ is a Dirac probability measure at zero) and 
    \begin{equation}\label{SNdeltaeq}\rS_N[m_{\alpha,\lambda}|m_\alpha] =
    \begin{cases}
        -f_\lambda(x_\alpha)^{1-1/N} & N < \infty\\
        f_\lambda(x_\alpha) \log f_\lambda(x_\alpha) & N = \infty
    \end{cases}    
    \end{equation}
    for all $0 \le \lambda \le 1$. By \eqref{disinteq2}, \eqref{flambdadef}, \eqref{SNdeltaeq} and the definition of $\rS_{N}$,
    \begin{align}\label{HNmulambdaeq}
        \rS_{N}[\mu_\lambda|\mu] & = \int_{\sA}\rS_{N}[m_{\alpha,\lambda}|m_\alpha]\,d\nu(\alpha), \qquad \lambda \in [0,1].
    \end{align}

    \medskip
    For every $\alpha \in \sN$ the family $m_{\alpha,\lambda}$ is a displacement interpolation between $m_{0,\alpha}$ and $m_{1,\alpha}$, and the pair $(m_\alpha,(dt^2+1)/2)$ satisfies $\CD(K,N)$ since the measure $\mu_\alpha$ is a $\CD(K,N)$-needle by Lemma \ref{CDlemma}. Hence, by Lemma \ref{DCthm1D} and by \eqref{Explambdadotgammaalphaeq},
    \begin{align*}
        \rS_{N}[m_{\alpha,\lambda}|m_\alpha] \le &  -\int_{I_\alpha\times I_\alpha}\left[\tau_{1-\lambda}^{{K},{N}}(t'-t)\cdot f_0(\gamma_\alpha(t))^{-1/{N}} + \tau_{\lambda}^{{K},{N}}(t'-t)\cdot f_1(\gamma_\alpha(t'))^{-1/{N}}\right]d\kappa_\alpha(t,t')\\
        = &  -\int_{I_\alpha\times I_\alpha}\left[\tau_{1-\lambda}^{{K},{N}}(t'-t)\cdot f_0(\Exp_0(\dot\gamma_\alpha(t),t'-t))^{-1/{N}} \right.\\ & \left. \qqquad + \tau_{\lambda}^{{K},{N}}(t'-t)\cdot f_1(\Exp_1(\dot\gamma_\alpha(t),t'-t))^{-1/{N}}\right]d\kappa_\alpha(t,t').
    \end{align*}
    Note that we have used the fact that $\kappa_\alpha$-almost surely  $t'\ge t$.
    The same inequality holds trivially when $\alpha \in \sD$, since $\tau_{1-\lambda}^{{K},{N}}(0) = 1-\lambda$ and $\tau_{\lambda}^{{K},{N}}(0) = \lambda$, and $f_\lambda(x_\alpha) = f_0(x_\alpha)$ for all $\lambda \in [0,1]$ while $\kappa_\alpha = \delta_{(0,0)}$, so both sides equal $f_0(x_\alpha)^{1-1/{N}}$.

    \medskip
    Integrating with respect to $\alpha$ and using \eqref{Pidef} we see that
    \begin{align*}
        \int_{\sA}\rS_{N}[m_{\alpha,\lambda}|m_\alpha]\,d\nu(\alpha) \le & -\int_{SM\times_0[0,\infty)}\left[\tau_{1-\lambda}^{{K},{N}}(\ell)\cdot f_0(\Exp_0(v,\ell))^{-1/{N}}\right. \\ & \qqquad + \left.\tau_{\lambda}^{{K},{N}}(\ell)\cdot f_1(\Exp_1(v,\ell))^{-1/{N}}\right]\,d\Pi(v,\ell),
    \end{align*}
    which combines with \eqref{HNmulambdaeq} to give the desired inequality when $N < \infty$. The proof for $N = \infty$ is similar.

    \medskip
    Assume now that $\mu$ \emph{does not} satisfy $\CD(K,N)$. Then there exists an open set $U\subseteq M$ and a $C^3$ solution $u:U\to\RR$ to the Hamilton Jacobi equation such that 
    \begin{equation}\label{contradictioneq}(d\bL u)(\nabla u) + \frac{(\bL u)^2}{N-1} + K > 0\end{equation}
    at some $x_0 \in U$. 
    Without loss of generality, the set $U$ is the domain of a $C^3$ coordinate chart
    $$(x^1,\dots,x^n) \qquad -\eps<x^i<\eps$$
    for some $\eps > 0$, such that in this coordinate chart $x_0 = 0$ and
    $$\nabla u = \partial_{x^n}.$$
    Since $u$ is $C^3$, we may also assume that \eqref{contradictioneq} holds on all of $U$. Extend $u$ to a dominated function on all of $M$, for instance by the formula
    $$u(z) : = \sup_{y\in M}[\inf_{x\in U}u(x) + \cc(x,y) - \cc(z,y)], \qquad z \in M.$$
    Since $H(du) = 0$ on the set $U$, it is contained in the strain set of the extended $u$, and in particular
    \begin{equation}\label{uUstraineq}u(\gamma_{\hat x}(t')) - u(\gamma_{\hat x}(t)) = \cc(\gamma_{\hat x}(t),\gamma_{\hat x}(t')), \qquad \hat x\in(-\eps,\eps)^{n-1}, \quad -\eps<t\le t'<\eps,\end{equation}
    where
    $$\gamma_{\hat x}(t) := (\hat x,t).$$
    Write the density $\omega$ in coordinates as
	\begin{equation*}
        \omega = J\,dx^1\wedge\dots\wedge dx^n.
    \end{equation*}
	Then
	\begin{align*}
		(\partial_{x^n} J)\,dx^1\wedge\dots\wedge dx^n & = \sL_{\partial_{x^n}}(J\,dx^1\wedge\dots\wedge dx^n)\\
		& = \sL_{\partial_{x^n}}\omega\\
		& = (\div_\mu \partial_{x^n}) \cdot \omega\\
		& = (\div_\mu \nabla u) \,\omega\\
		& = \bL u \cdot J\, dx^1\wedge\dots\wedge dx^n,
	\end{align*}
	whence
		$$\bL u = \partial_{x^n}\log J.$$
	Inequality \eqref{contradictioneq} thus becomes
	\begin{equation*}
		\partial^2_{x^n}\log J + \frac{(\partial_{x^n}\log J)^2}{N-1} + {K} > 0.
	\end{equation*}
    In other words, for every $\hat x = (x^1,\dots,x^{n-1}) \in (-\eps,\eps)^{n-1}$, the measure $m_{\hat x}$ on the interval $(-\eps,\eps)$ given by
    $$dm_{\hat x} = J(\hat x,t)dt$$
    does \emph{not} satisfy $\CD(K,N)$ with respect to the Euclidean cost (see Example \ref{needleexample}). By Lemma \ref{DCthm1D}, for every $\hat x \in (-\eps,\eps)^{n-1}$ there exist $$-\eps<t_0<s_0<t_1<s_1<\eps$$ such that, if $\kappa_{\hat x}$ and $\{m_{\hat x,\lambda} = f_{\hat x,\lambda}m_{\hat x}\}_{\lambda\in[0,1]}$ are the monotone coupling and the monotone displacement interpolation between the measures $m_{\hat x}\vert_{[t_0,s_0]}$ and $m_{\hat x}\vert_{[s_1,t_1]}$, then
        $$
        \rS_N[m_{\hat x,\lambda}|m_{\hat x}] > \begin{cases}-\int\left[\tau_{1-\lambda}^{{K},N}(|t'-t|)\cdot f_{\hat x,0}(t)^{-1/N} + \tau_{\lambda}^{{K},N}(|t'-t|)\cdot f_{\hat x,1}(t')^{-1/N}\right]d\kappa_{\hat x}(t,t')& N < \infty,\\\noalign{\vskip9pt}
        (1-\lambda)\cdot \rS_\infty[m_{\hat x,0}|m_{\hat x}] + \lambda\cdot\rS_\infty[m_{\hat x,1}|m_{\hat x}] - \frac{K\lambda(1-\lambda)}{2}\cdot  \int(t'-t)^2 d\kappa_{\hat x}(t,t')& N = \infty.
        \end{cases}
        $$
        Define a measure $\mu_\lambda$ on $(-\eps,\eps)^n$ by 
        $$\mu_\lambda : = \int\limits_{(-\eps,\eps)^{n-1}}m_{\hat x,\lambda}d\hat x, \qquad \lambda \in [0,1],$$
        where the integration is with respect to the Lebesgue measure on $(-\eps,\eps)^{n-1}$. We can view $\mu_\lambda$ as a measure on $M$ by identifying $(-\eps,\eps)^n$ with the set $U$. 
        As in the proof of Proposition \ref{interprop}, if $\Pi$ is the transport plan satisfying
        $$\int_{SM\times_0[0,\infty)}\phi(v,\ell) \,d\Pi(v,\ell) : = \int_{(-\eps,\eps)^{n-1}}\left(\int_{(-\eps,\eps)^2}\phi(\dot\gamma_{\hat x}(t),t'-t)\,d\kappa_{\hat x}(t,t')\right) \,d\hat x$$
        for every Borel function $\phi : SM\times_0[0,\infty)\to \RR$, then since each $\gamma_{\hat x}$ is a minimizing extremal,
        $$\mu_\lambda = (\Exp_\lambda)_*\Pi ,\qquad \lambda \in [0,1].$$

        \medskip
        It follows from \eqref{uUstraineq} and Theorem \ref{KDthm} that $\mu_\lambda$ is a displacement interpolation between $\mu_0$ and $\mu_1$. In fact, it is the \emph{only} displacement interpolation between $\mu_0$ and $\mu_1$, since there is a unique solution to the Monge problem on the real line when the supports of both measures are intervals (this follows from cyclic monotonicity, see e.g. \cite[Section 3]{Amb}). It also follows from the definitions that
        $$\rS_N[\mu_\lambda|\mu] = \int\limits_{(-\eps,\eps)^{n-1}}\rS_N[m_{\hat x,\lambda}|m_{\hat x}]\,d\hat x,\qquad \lambda \in[0,1].$$
        Thus, by integrating our lower bound on $\rS_N[m_{\hat x,\lambda}|m_{\hat x}]$ over $\hat x$ we conclude that condition (ii) does not hold.
\end{proof}

\section{Consequences of the curvature-dimension condition}\label{appsec}

\subsection{The Brunn-Minkowski inequality}\label{BMsec}

For a detailed survey of the classical Brunn-Minkowski inequality see \cite{Gard}; the results we present here are modeled after the distorted Brunn-Minkowski and Borell-Brascamp-Lieb inequalities for Riemannian manifolds and metric measure spaces \cite{CMS,Sturm2,Vil,CM17,Oh09}. Brunn-Minkowski inequalities for sub-Riemannian manifolds were proved in \cite{BR,BMR,BKS}, and for Lorentzian manifolds in \cite{CM24}.

\medskip
Let $A_0,A_1\subseteq M$ and let $0 < \lambda < 1$. Let $A_\lambda$ denote the set of $\lambda$-midpoints of minimizing extremals joining $A_0$ and $A_1$:
$$A_\lambda : = \{\gamma(\lambda \ell) \, \mid \, \gamma:[0,\ell] \to M \, \, \text{ is a minimizing extremal, $\quad \gamma(0) \in A_0, \, \,\gamma(\ell) \in A_1$}\}.$$

\medskip
For $a,b \ge 0$, $0 < \lambda < 1$ and $q \in [-\infty,\infty]$ define the \emph{generalized mean} $\bM$ by:
$$
	\bM_q(a,b ; \lambda) = 
	\left \{ 
		\begin{array}{cc}
			\begin{array}{cc}
				\big( (1-\lambda) a^q +  \lambda  b^q \big)^{1/q} & q \in \RR\setminus\{0\}\\
				a^{1-\lambda} b^{\lambda} & q = 0 \\
				\max \{ a,b \} & q = +\infty\\
				\min \{ a,b \} & q = -\infty\\
			\end{array} & \qquad ab > 0 \\
		\quad & \quad \\
		0 &  \qquad ab = 0.
		\end{array} \right.
$$

Let us restate Theorem \ref{BMthm0}, which is the main result of this section:
\begin{theorem}\label{BMthm}
    Suppose that the pair $(\mu,L)$ satisfies $\CD_{}({K},N)$ for some ${K} \in \RR$ and some $N \in [n,\infty]$. Then for every pair $A_0,A_1\subseteq M$ of Borel sets and every $0 < \lambda < 1$,
        \begin{equation}\label{BMeq}\mu(A_\lambda) \ge \bM_{\frac{1}{N}}\left(\beta_{1-\lambda}^{{K},N}(A_0,A_1)\cdot\mu(A_0),\beta_{\lambda}^{{K},N}(A_0,A_1)\cdot\mu(A_1);\lambda\right),\end{equation}
        where
        $$\beta_t^{{K},N}(A_0,A_1) := \inf\left\{\beta_t^{{K},N}(\ell)\,\, \Big\vert \, \, \exists \text{ a minimizing extremal $\gamma:[0,\ell] \to M$ joining $A_0$ to $A_1$}\right\}.$$
\end{theorem}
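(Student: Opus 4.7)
The approach is to extract the Brunn--Minkowski inequality from Theorem \ref{mainthm}(iii) by running the displacement-convexity bound between the normalized uniform measures on $A_0$ and $A_1$, and then squeezing $\mu(A_\lambda)$ between the two sides via Jensen's inequality.

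First I would reduce to the nontrivial range $0 < \mu(A_0),\mu(A_1) < \infty$, disposing of the degenerate cases via the conventions in the definition of $\bM$ and inner regularity of $\mu$. Setting $f_i := \mu(A_i)^{-1}\chi_{A_i}$ and $\mu_i := f_i\mu$, I would apply Theorem \ref{mainthm}(iii) --- concretely through the canonical interpolation in Proposition \ref{interprop} --- to obtain $\mu_\lambda = f_\lambda\mu = (\Exp_\lambda)_*\Pi$ realizing the displacement-convexity bound. Since $(\Exp_0\times\Exp_1)_*\Pi$ is an optimal coupling between $\mu_0$ and $\mu_1$, it is concentrated on $A_0\times A_1$, and since the one-dimensional transports on individual needles move mass along calibrated (hence minimizing) extremals, for $\Pi$--a.e.\ $(v,\ell)$ the curve $t\mapsto\pi(\Phi^L_tv)$ is a minimizing extremal joining $A_0$ to $A_1$. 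In particular $\Exp_\lambda(v,\ell) \in A_\lambda$ on $\supp\Pi$, so the Borel set $\{f_\lambda > 0\}$ is contained in $A_\lambda$ up to a $\mu$--null set.

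With this concentration in hand the two sides of the displacement-convexity inequality simplify transparently. On the right, $f_0^{-1/N}\circ\Exp_0 \equiv \mu(A_0)^{1/N}$ and $f_1^{-1/N}\circ\Exp_1 \equiv \mu(A_1)^{1/N}$ hold $\Pi$--almost surely, and the algebraic identity $\tau_t^{K,N}(\ell) = t\cdot\beta_t^{K,N}(\ell)^{1/N}$ (immediate from the definitions) combined with the pointwise bound $\beta_t^{K,N}(\ell) \ge \beta_t^{K,N}(A_0,A_1)$ for $\Pi$--a.e.\ $(v,\ell)$ turns the right-hand side into $-\bM_{1/N}(\beta_{1-\lambda}^{K,N}(A_0,A_1)\mu(A_0),\beta_\lambda^{K,N}(A_0,A_1)\mu(A_1);\lambda)^{1/N}$. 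On the left, Jensen's inequality applied to the concave map $x\mapsto x^{1-1/N}$ on the probability space $(A_\lambda, \mu(A_\lambda)^{-1}\mu|_{A_\lambda})$ gives $-\rS_N[\mu_\lambda|\mu] = \int f_\lambda^{1-1/N}\,d\mu \le \mu(A_\lambda)^{1/N}$. Rearranging the two bounds produces the Brunn--Minkowski inequality for $N < \infty$, and the case $N = \infty$ is parallel: use $x\log x$ in Jensen, and the elementary identity $(1-\lambda)\log\beta_{1-\lambda}^{K,\infty}(\ell) + \lambda\log\beta_\lambda^{K,\infty}(\ell) = \tfrac{K\lambda(1-\lambda)}{2}\ell^2$ to match the quadratic term $\int\ell^2\,d\Pi$ on the right-hand side with the distortion coefficients.

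The main subtlety I anticipate is measurability: the set $A_\lambda$ is only analytic in general, so $\mu(A_\lambda)$ must be read as outer measure. This is harmless, as the argument actually bounds $\mu(\{f_\lambda > 0\})$, a Borel subset of $A_\lambda$ modulo a null set, which suffices as a lower bound on $\mu(A_\lambda)$.
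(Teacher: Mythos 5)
Your proof is correct, but it takes a genuinely different route from the paper's. The paper derives Theorem \ref{BMthm} as a corollary of the Borell--Brascamp--Lieb inequality (Theorem \ref{BBLthm}), whose proof works directly with the needle decomposition, the detailed mass-balance condition \eqref{detailedmbeq}, and a one-dimensional directed BBL lemma (Lemma \ref{dirBBLlemma}). You instead invoke displacement convexity of entropy (Theorem \ref{mainthm}(iii) via the interpolation of Proposition \ref{interprop}) for $\mu_i = \mu(A_i)^{-1}\mu|_{A_i}$, observe that $\mu_\lambda$ is concentrated on $A_\lambda$ because the transport plan rides along minimizing extremals with endpoints in $A_0$ and $A_1$, and then squeeze $\mu(A_\lambda)$ between the two sides using Jensen for the concave map $x\mapsto x^{1-1/N}$ (respectively the convex map $x\mapsto x\log x$ when $N=\infty$). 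The algebraic bridges you need are exactly right: $\tau_t^{K,N}(\ell) = t\cdot\beta_t^{K,N}(\ell)^{1/N}$ and, for $N=\infty$, $(1-\lambda)\log\beta_{1-\lambda}^{K,\infty}(\ell)+\lambda\log\beta_\lambda^{K,\infty}(\ell)=\tfrac{K\lambda(1-\lambda)}{2}\ell^{2}$, both of which check out from Definition \ref{distcoeffdef}, and the pointwise bound $\beta_t^{K,N}(\ell)\ge\beta_t^{K,N}(A_0,A_1)$ for $\Pi$-a.e.\ $(v,\ell)$ is what makes the infimum in the statement appear. Both proofs ultimately rest on the needle machinery, but your route is shorter and is the standard entropy-convexity argument in the Lott--Sturm--Villani tradition; the paper's route through BBL is longer but yields the more general functional inequality (Theorem \ref{BBLthm}) as a by-product, which the paper uses elsewhere. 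Your remark on measurability of $A_\lambda$ is on point and applies equally to the paper's own proof, where $f_\lambda = \chi_{A_\lambda}$ is fed into Theorem \ref{BBLthm}.
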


For the definition of the distortion coefficients $\beta^{K,N}_t$ see Definition \ref{distcoeffdef}. In the case $K=0$ and $N=\infty$, the distortion coefficients are identically $1$ and inequality \eqref{BMeq} takes the familiar form \eqref{BMsimpleeq}.

\begin{remark}\normalfont
    When the sets $A_0,A_1$ are Borel, the set $A_\lambda$ is Lebesgue. Indeed, it is well known that images of Borel sets under continuous maps are Lebesgue, and the set $A_\lambda$ is the image of the Borel set $(\Exp_0)^{-1}(A_0)\cap (\Exp_1)^{-1}(A_1) \subseteq SM \times_0 [0,\infty)$ under the continuous map $\Exp_\lambda$.
\end{remark}

\medskip
Theorem \ref{BMthm} can be obtained from Theorem \ref{DCthm} by taking $\mu_i = \mu\vert_{A_i}$. We will prove a more general, functional version of Theorem \ref{BMthm}, due in the Riemannian case to Cordero-Erausquin, McCann and Schmuckenschläger \cite{CMS}:

\begin{restatable}[The Borell-Brascamp-Lieb inequality]{theorem}{bbl}\label{BBLthm}
	Suppose that the pair $(\mu,L)$ satisfies $\CD_{}({K},N)$ for some ${K} \in \RR$ and some $N \in [n,\infty]$. Let $q \in [-1/N,\infty]$, let $0 < \lambda <1$ and let $f_0,f_\lambda,f_1$ be nonnegative $\mu$-integrable functions. Suppose that for every minimizing extremal $\gamma : [0,\ell] \to M$,
	\begin{equation}\label{BBLhypo}
		f_\lambda(\gamma(\lambda \ell)) \ge \bM_q\left(\frac{f_0(\gamma(0))}{\beta_{1-\lambda}^{{K},N}(\ell)},\frac{f_1(\gamma(\ell))}{\beta_\lambda^{{K},N}(\ell)};\lambda\right).
	\end{equation}
	Then
	\begin{equation}\label{mainBBL}
		\int_Mf_\lambda d\mu \ge \bM_{q'}\left(\int_Mf_0d\mu,\int_Mf_1d\mu;\lambda\right),
	\end{equation}
	where
	$$q' = \frac{q}{1 + qN}$$
	and we interpret $q' = -\infty$ if $q = -1/N$, $q' = 1/N$ if $q = \infty$, and $q' = 0$ if $N = \infty$.
\end{restatable}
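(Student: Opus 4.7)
The plan is to use the needle decomposition of Theorem \ref{needlethm} to reduce the inequality to a one-dimensional Borell-Brascamp-Lieb inequality on each $\CD(K,N)$-needle, in close analogy with Klartag's proof in the Riemannian setting. If $I_0 := \int_M f_0\, d\mu = 0$ or $I_1 := \int_M f_1\, d\mu = 0$, then by our convention $\bM_{q'}(I_0, I_1; \lambda) = 0$ and the inequality is trivial, so assume $I_0, I_1 > 0$. After a standard approximation by compactly supported $f_0, f_1$ (needed to ensure the $\cc$-moment condition in the hypothesis of Theorem \ref{needlethm}), apply that theorem to the integrable function
$$ f := \frac{f_1}{I_1} - \frac{f_0}{I_0}, \qquad \int_M f\,d\mu = 0. $$
The resulting disintegration $\mu = \int_\sA \mu_\alpha\,d\nu(\alpha)$ into Dirac masses and $\CD(K,N)$-Jacobi needles comes equipped with the mass balance \eqref{MBeq}, which translates into the crucial identity
$$ \frac{J_0^\alpha}{I_0} = \frac{J_1^\alpha}{I_1}, \qquad J_i^\alpha := \int_M f_i \, d\mu_\alpha, $$
for $\nu$-almost every $\alpha \in \sA$.

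On each nondegenerate needle $\mu_\alpha = (\gamma_\alpha)_* m_\alpha$, the calibrated curve $\gamma_\alpha$ is minimizing on every subinterval of $I_\alpha$, so pulling back \eqref{BBLhypo} along $\gamma_\alpha$ yields the one-dimensional BBL hypothesis for the functions $f_i \circ \gamma_\alpha$ with respect to the one-dimensional $\CD(K,N)$-measure $m_\alpha$. The classical one-dimensional Borell-Brascamp-Lieb inequality for $\CD(K,N)$-densities on an interval then gives
$$ \int_M f_\lambda \, d\mu_\alpha \;\ge\; \bM_{q'}\!\left(J_0^\alpha, J_1^\alpha;\, \lambda\right) \;=\; \frac{J_0^\alpha}{I_0}\cdot \bM_{q'}(I_0, I_1; \lambda), $$
where the equality uses the mass balance identity together with the homogeneity $\bM_{q'}(ca, cb; \lambda) = c\,\bM_{q'}(a, b; \lambda)$. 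For a degenerate needle $\mu_\alpha = \delta_{x_\alpha}$, part (iv) of Theorem \ref{needlethm2} places $x_\alpha$ outside $\supp f$, so $f_0(x_\alpha)/I_0 = f_1(x_\alpha)/I_1$; combining this with \eqref{BBLhypo} in the limit $\ell \to 0$ (where $\beta_t^{K,N}(0) = 1$) and the monotonicity $\bM_q \ge \bM_{q'}$, valid since $q' \le q$ throughout $q \ge -1/N$, yields the same lower bound.

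Integrating over $\nu$ and using $\int_\sA J_0^\alpha\,d\nu(\alpha) = I_0$, we conclude
$$ \int_M f_\lambda\,d\mu = \int_\sA \int_M f_\lambda \, d\mu_\alpha\, d\nu(\alpha) \;\ge\; \frac{\bM_{q'}(I_0, I_1; \lambda)}{I_0}\cdot \int_\sA J_0^\alpha\, d\nu(\alpha) \;=\; \bM_{q'}(I_0, I_1; \lambda), $$
as required.

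The main obstacle lies in the one-dimensional $\CD(K,N)$-BBL inequality used on each needle. This is classical, but still requires genuine work: the argument parallels the integration of \eqref{rhooneminuslambdateq} carried out in the proof of Lemma \ref{DCthm1D}, now applied to an arbitrary $f_\lambda$ dominating the midpoint hypothesis rather than to the specific density of a displacement interpolation. Once this one-dimensional step is in hand, the higher-dimensional inequality follows formally from the disintegration and the scaling property of $\bM_{q'}$; the remaining technical caveat, namely the $\cc$-moment hypothesis of Theorem \ref{needlethm}, is handled by a routine approximation of $f_0, f_1$ by compactly supported functions together with monotone convergence.
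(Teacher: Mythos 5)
Your reduction to a one-dimensional inequality on each needle hides a real gap. Pulling back \eqref{BBLhypo} along $\gamma_\alpha$ yields the one-dimensional midpoint hypothesis only for \emph{ordered} pairs $a_\alpha < t_0 \le t_1 < b_\alpha$: the restriction $\gamma_\alpha\vert_{[t_0,t_1]}$ is a minimizing extremal, but since the Lagrangian cost $\cc$ is in general not symmetric, the reversed curve need not minimize, and \eqref{BBLhypo} says nothing when the roles of $f_0$ and $f_1$ are swapped. The ``classical one-dimensional Borell--Brascamp--Lieb inequality'' you appeal to requires the hypothesis for all pairs $(t_0,t_1)$, and with only the one-sided hypothesis the conclusion can fail: if $f_0\circ\gamma_\alpha$ happens to be supported to the right of $f_1\circ\gamma_\alpha$ on $I_\alpha$, then every pair $t_0\le t_1$ with $f_0(\gamma_\alpha(t_0))f_1(\gamma_\alpha(t_1))>0$ is absent, the hypothesis on $f_\lambda\circ\gamma_\alpha$ is vacuous, and no lower bound on $\int f_\lambda\,d\mu_\alpha$ can follow.

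The paper deals with this by using the \emph{oriented} one-dimensional Borell--Brascamp--Lieb inequality, Lemma~\ref{dirBBLlemma}, which accepts the one-sided hypothesis at the price of requiring the stochastic-domination condition
$$\frac{\int_t^{b_\alpha}(f_0\circ\gamma_\alpha)\,dm_\alpha}{\int_{a_\alpha}^{b_\alpha}(f_0\circ\gamma_\alpha)\,dm_\alpha}\;\le\;\frac{\int_t^{b_\alpha}(f_1\circ\gamma_\alpha)\,dm_\alpha}{\int_{a_\alpha}^{b_\alpha}(f_1\circ\gamma_\alpha)\,dm_\alpha}\qquad\text{for all }t\in I_\alpha,$$
and this is supplied precisely by the \emph{detailed} mass balance \eqref{detailedmbeq}, which you never invoke --- you use only \eqref{MBeq}. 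Your choice of $f = f_1/I_1 - f_0/I_0$ also has the wrong sign for this purpose: with that sign, \eqref{detailedmbeq} gives the reversed domination. The paper's choice $f = f_0/I_0 - f_1/I_1$ yields \eqref{detailedmbeqBM}, matching the hypothesis of Lemma~\ref{dirBBLlemma}. With these two corrections made, the rest of your argument --- needle decomposition via Theorem~\ref{needlethm}, the needlewise inequality, homogeneity of $\bM_{q'}$, and integration over $\nu$ --- coincides with the paper's proof.
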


Theorem \ref{BMthm} follows from Theorem \ref{BBLthm} by taking 
$$f_0 = \beta_{1-\lambda}^{K,N}(A_0,A_1)\cdot\chi_{A_0}, \qquad f_1 = \beta_{\lambda}^{K,N}(A_0,A_1)\cdot\chi_{A_1} \qquad \text{ and } \qquad f_\lambda = \chi_{A_\lambda}$$ 
and setting $q = \infty$.

\begin{proof}[Proof of Theorem \ref{BBLthm}]
    By inner regularity of the measure $\mu$, we may assume that $f_0,f_1$ are compactly supported. We may also assume that $\int f_0d\mu,\int f_1d\mu > 0$, since otherwise the right hand side of \eqref{mainBBL} is zero by definition. Set
\begin{equation}\label{fdef}
	f  : = \frac{f_1}{\int f_1d\mu} - \frac{f_0}{\int f_0d\mu}.
\end{equation}
Let $\mu = \int_{\sA}\mu_\alpha\,d\nu(\alpha)$ be the disintegration of measure provided by Theorem \ref{needlethm}.
By \eqref{fdef} and the mass-balance properties \eqref{MBeq} and \eqref{detailedmbeq},
    \begin{equation}\label{MBeqBM}
        \frac{\int_M f_0d\mu_\alpha}{\int_M f_0d\mu} = \frac{\int_M f_1d\mu_\alpha}{\int_M f_1d\mu}, \qquad \text{ for $\nu$-a.e $\alpha \in \sA$}
    \end{equation}
and
    \begin{equation}\label{detailedmbeqBM}
        \frac{\int_Mf_0d\mu_{\alpha,t}}{\int_Mf_0d\mu_\alpha} \le \frac{\int_Mf_1d\mu_{\alpha,t}}{\int_Mf_1d\mu_\alpha} \qquad \text{ for $\nu$-a.e. $\alpha \in \sN$ and $t \in \RR$,}
    \end{equation}
where
    $$\mu_{\alpha,t} := (\gamma_\alpha)_*(m_\alpha\vert_{[t,\infty)}), \qquad t \in \RR.$$

We will prove below that inequality \eqref{mainBBL} holds for each needle $\mu_\alpha$:

\begin{lemma}\label{needleBMlemma}
	For $\nu$-almost every $\alpha \in \sA$, 
	\begin{equation}\label{needleBMeq}
		\int_M f_\lambda d\mu_\alpha \ge \bM_{q'}\left(\int_M f_0 d\mu_\alpha,\int_M f_1d\mu_\alpha;\lambda\right).
	\end{equation}
\end{lemma}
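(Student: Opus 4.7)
The plan is to reduce \eqref{needleBMeq} on each needle to a one-dimensional Borell-Brascamp-Lieb inequality. The degenerate case $\alpha \in \sD$ (where $\mu_\alpha = \delta_{x_\alpha}$) is immediate: applying \eqref{BBLhypo} to the constant curve at $x_\alpha$, for which $\ell = 0$ and $\beta^{K,N}_t(0) = 1$, gives $f_\lambda(x_\alpha) \ge \bM_q(f_0(x_\alpha),f_1(x_\alpha);\lambda)$, and then monotonicity of the generalized means (since $q' \le q$ in the admissible range $q \ge -1/N$) yields $\bM_q \ge \bM_{q'}$, which is \eqref{needleBMeq} after integrating against $\delta_{x_\alpha}$.

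For the main case $\alpha \in \sN$, by Theorem \ref{needlethm} and Lemma \ref{CDlemma}, the one-dimensional measure $m_\alpha$ on $I_\alpha \subseteq \RR$ satisfies $\CD(K,N)$ with respect to the Euclidean Lagrangian $l = (dt^2+1)/2$. Since $\gamma_\alpha$ is calibrated, its restriction to any compact subinterval $[t_0,t_1] \subseteq I_\alpha$ is a minimizing extremal of parameter length $t_1 - t_0$, so hypothesis \eqref{BBLhypo} applied to this restriction yields
$$
(f_\lambda \circ \gamma_\alpha)((1-\lambda) t_0 + \lambda t_1) \ge \bM_q\!\left(\frac{(f_0\circ\gamma_\alpha)(t_0)}{\beta_{1-\lambda}^{K,N}(t_1-t_0)},\frac{(f_1\circ\gamma_\alpha)(t_1)}{\beta_{\lambda}^{K,N}(t_1-t_0)};\lambda\right)
$$
for all $t_0 < t_1 \in I_\alpha$. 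Since $\mu_\alpha = (\gamma_\alpha)_* m_\alpha$, inequality \eqref{needleBMeq} is equivalent, after pushing forward, to a one-dimensional Borell-Brascamp-Lieb inequality for the measure $m_\alpha$ and test functions $\tilde f_i := f_i \circ \gamma_\alpha$ satisfying this pointwise hypothesis.

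To establish this one-dimensional statement, I would follow the scheme of the proof of Lemma \ref{DCthm1D}. Assume $\int \tilde f_0\, dm_\alpha,\int \tilde f_1\, dm_\alpha > 0$ (otherwise \eqref{needleBMeq} is trivial), and let $\rT:I_\alpha\to I_\alpha$ be the monotone transport between the normalized densities of $\tilde f_0 m_\alpha$ and $\tilde f_1 m_\alpha$ given by Lemma \ref{1Dintlemma}, with $\rT_\lambda(t) = (1-\lambda)t + \lambda \rT(t)$. Writing $\rho_\alpha = e^{-\psi_\alpha}$ for the density of $m_\alpha$, the $\CD(K,N)$ condition gives the distortion estimate \eqref{rhooneminuslambdateq}. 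Combining that estimate with the pointwise hypothesis at $(t_0,t_1)=(t,\rT(t))$ via the H\"older-type mean inequality \eqref{pmeaninequality}, and changing variables via $\rT_\lambda$ using the Jacobian identity \eqref{Tprimeq}, one obtains a pointwise lower bound
$$
\tilde f_\lambda(\rT_\lambda(t))\,\rho_\alpha(\rT_\lambda(t))\,\rT_\lambda'(t) \ge \bM_{q''}\!\left((1-\lambda)\,\tilde f_0(t)\rho_\alpha(t) \cdot \text{(stuff)},\ \lambda\,\tilde f_1(\rT(t))\rho_\alpha(\rT(t))\rT'(t) \cdot \text{(stuff)};\lambda\right)
$$
with appropriate exponent $q''$, and integrating in $t$ together with Minkowski's integral inequality (which is exactly the general mean inequality that upgrades $\bM_{q''}$ of integrals to $\bM_{q'}$) yields the desired conclusion.

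The main obstacle will be the bookkeeping in the one-dimensional BBL: carefully combining the $\bM_q$ hypothesis on $\tilde f_\lambda$, the $\bM_{1/(N-1)}$ distortion of $\rho_\alpha$, and the Jacobian factor $\rT_\lambda'$, using the repeated application of the product inequality $\bM_{q_1}(a_1,b_1;\lambda)\cdot\bM_{q_2}(a_2,b_2;\lambda)\ge \bM_{q_1q_2/(q_1+q_2)}(a_1a_2,b_1b_2;\lambda)$ to recover precisely the exponent $q'=q/(1+qN)$ and then integrating against $dt$ with Minkowski's inequality to pass from pointwise to integrated means. This is a purely one-dimensional computation, essentially the classical Borell-Brascamp-Lieb argument adapted to the curved setting as in \cite{CMS} and \cite[Section 18]{OhBook}, and can alternatively be invoked directly from those references; the Lagrangian geometry enters only through the reduction provided by Theorem \ref{needlethm}, which has already done all the heavy lifting.
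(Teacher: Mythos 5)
Your overall reduction is the same as the paper's: handle the Dirac case via $\beta^{K,N}_t(0)=1$ together with $\bM_q \ge \bM_{q'}$ (H\"older), and on each nondegenerate needle push forward by $\gamma_\alpha$, invoke \eqref{BBLhypo} on the subsegments of the calibrated curve, use the $\CD(K,N)$ distortion bound \eqref{rhooneminuslambdateq} for $\rho_\alpha$, and combine via \eqref{pmeaninequality}. Where you differ is in the final one-dimensional step: the paper packages this as the \emph{oriented} one-dimensional Borell--Brascamp--Lieb inequality (Lemma \ref{dirBBLlemma}, quoted from \cite{AK}), whereas you propose to re-derive it from scratch by a monotone transport argument. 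That route is also sound and amounts to re-proving Lemma \ref{dirBBLlemma}; the closing step, though, is not really Minkowski's integral inequality but rather the homogeneity of $\bM_{q'}$ combined with the transport identity $h_0(t) = \bigl(\int h_0 / \int h_1\bigr)\cdot h_1(\rT(t))\rT'(t)$, which turns the integrand into a constant multiple of $h_0$.

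The genuine gap is that you do not justify why the monotone transport exists and is forward-moving. The pointwise hypothesis you extract from \eqref{BBLhypo}, namely \eqref{flambdaneedleineq}, holds only for $t_0 \le t_1$: the restriction $\gamma_\alpha\vert_{[t_0,t_1]}$ is a minimizing extremal only with this orientation, because $L$ is not assumed reversible and the reversed segment is in general not an extremal. Your transport-map argument therefore requires $\rT(t)\ge t$ on $\supp \tilde f_0$, equivalently the stochastic dominance hypothesis \eqref{1Dmbeq} for the normalized measures $\tilde f_0 m_\alpha$ and $\tilde f_1 m_\alpha$. You invoke Lemma \ref{1Dintlemma} as though this were automatic, but it is precisely here that the \emph{detailed} mass balance \eqref{detailedmbeq} from Theorem \ref{needlethm}(iii) must be used: applied (as in the paper) to $f = f_0/\int f_0\,d\mu - f_1/\int f_1\,d\mu$, it yields \eqref{detailedmbeqBM} and hence \eqref{stocdomeq}. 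Without this the transport could send mass backward, where no bound on $f_\lambda$ is available, and the argument collapses. For the same reason, your remark that the one-dimensional step ``can alternatively be invoked directly from \cite{CMS} and \cite[Section 18]{OhBook}'' is not quite right: those references use the two-sided hypothesis, and the asymmetry of the Lagrangian cost is exactly why the oriented version with the stochastic dominance condition is needed here.
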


By \eqref{disinteq}, \eqref{MBeqBM}, \eqref{needleBMeq} and the homogeneity of $\bM$,
\begin{align*}
	\int f_\lambda d\mu & = \int_{\sA}\left(\int f_\lambda d\mu_\alpha\right)d\nu(\alpha)\\
	& \ge \int_{\sA}\bM_{q'}\left(\int f_0d\mu_\alpha,\int f_1d\mu_\alpha;\lambda\right)d\nu(\alpha)\\
	& = \int_{\sA}\left(\int f_0 d\mu_\alpha\,\cdot \, \bM_{q'}\left(1,\frac{\int f_1d\mu_\alpha}{\int f_0 d\mu_\alpha};\lambda\right)\right)d\nu(\alpha)\\
	& = \int_{\sA}\left(\int f_0 d\mu_\alpha\, \cdot \, \bM_{q'}\left(1,\frac{\int f_1d\mu}{\int f_0 d\mu};\lambda\right)\right)d\nu(\alpha)\\
	& = \int f_0d\mu \, \cdot \, \bM_{q'}\left(1,\frac{\int f_1d\mu}{\int f_0 d\mu};\lambda\right)\\
	& = \bM_{q'}\left(\int f_0d\mu,\int f_1d\mu;\lambda\right).
\end{align*}
This concludes the proof of the Theorem \ref{BBLthm} given Lemma \ref{needleBMlemma}.
\end{proof}

\begin{proof}[Proof of Lemma \ref{needleBMlemma}]
	Suppose first that $\alpha \in \sD$, i.e. $\mu_\alpha$ is a Dirac measure at some point $x_\alpha \in M$. Taking the geodesic $\gamma$ in \eqref{BBLhypo} to be a constant curve $\gamma \equiv x_\alpha$ and recalling that $\beta_t^{K,N}(0) = 1$, we obtain
	\begin{align*}
		\int f_\lambda d\mu_\alpha = f_\lambda(x_\alpha) \ge \bM_q\left(f_0(x_\alpha),f_1(x_\alpha);\lambda\right) & \ge \bM_{q'}\left(f_0(x_\alpha),f_1(x_\alpha);\lambda\right) \\ & = \bM_{q'}\left(\int f_0d\mu_\alpha,\int f_1d\mu_\alpha;\lambda\right)
	\end{align*}
	where in the second passage we used $q' \le q$ and H{\"o}lder's inequality.

	\medskip
	Now assume that $\alpha \in \sN$. Then $\mu_\alpha$ is a $\CD(K,N)$-needle, i.e. it takes the form
	$$\mu_\alpha = (\gamma_\alpha)_*m_\alpha,$$
	where $\gamma_\alpha:(a_\alpha,b_\alpha)\to M$ is a minimizing extremal and $m_\alpha$ is an absolutely continuous measure on $(a_\alpha,b_\alpha)$ satisfying $\CD({K},N)$ with respect to the Euclidean metric on $\RR$.

	\medskip
	Let $a_\alpha < t_0 \le t_1 < b_\alpha$. Applying \eqref{BBLhypo} to the minimizing extremal $\gamma_\alpha\vert_{[t_0,t_1]}$ we see that
	\begin{equation}\label{flambdaneedleineq}	
		f_\lambda(\gamma_\alpha((1-\lambda)t_0 + \lambda t_1)) \ge \bM_q\left(\frac{f_0(\gamma_\alpha(t_0))}{\beta^{{K},N}_{1-\lambda}(t_1 -t_0)},\frac{f_1(\gamma_\alpha(t_1))}{\beta^{{K},N}_\lambda(t_1 -t_0)};\lambda\right)
	\end{equation}
	Since $\mu_\alpha$ is a $\CD({K},N)$-needle, the density $\rho_\alpha = e^{-\psi_\alpha}$ of the measure $m_\alpha$ with respect to the Lebesgue measure satisfies
	$$
		\ddot\psi_\alpha \ge {K} + \frac{\dot\psi_\alpha^2}{N - 1} \qquad \text{ on } \quad [t_0,t_1].
	$$
	as in \eqref{rhooneminuslambdateq}, this implies
	\begin{equation}\label{rhoalphaneedleineq}
		\rho_\alpha((1-\lambda)\, t_0 + \lambda\, t_1) \ge \bM_{\frac{1}{N-1}}\left(\beta_{1-\lambda}^{{K},N}(t_1-t_0)\cdot\rho_\alpha(t_0),\beta_\lambda^{{K},N}(t_1 - t_0) \cdot\rho_\alpha(t_1);\lambda\right)
	\end{equation}
	for all $a_\alpha < t_0 \le t_1 < b_\alpha$. Set
	$$h_{\alpha,i} : = \rho_\alpha \cdot (f_i\circ\gamma_\alpha), \qquad i = 0,\lambda,1.$$
	By \eqref{flambdaneedleineq}, \eqref{rhoalphaneedleineq} and \eqref{pmeaninequality},
	\begin{equation}\label{hlambdaneedleeq1}
		h_{\alpha,\lambda}((1-\lambda)\,t_0 + \lambda t_1) \ge \bM_{\frac{1}{N-1 + 1/q}}\left(h_{\alpha,0}(t_0),h_{\alpha,1}(t_1);\lambda\right), \qquad a_\alpha < t_0 \le t_1 < b_\alpha.
	\end{equation}
	By \eqref{MBeqBM},
	$$\frac{\int_{a_\alpha}^{b_\alpha}h_{\alpha,0}(t)dt}{\int f_0d\mu} = \frac{\int_{a_\alpha}^{b_\alpha}{f_0}(\gamma_\alpha(t))\rho_\alpha(t)dt}{\int f_0d\mu} =  \frac{\int_{a_\alpha}^{b_\alpha}{f_1}(\gamma_\alpha(t))\rho_\alpha(t)dt}{\int f_1d\mu} = \frac{\int_{a_\alpha}^{b_\alpha}h_{\alpha,1}(t)dt}{\int f_1d\mu}$$
	and similarly, by \eqref{detailedmbeqBM}, for every $t \in (a_\alpha,b_\alpha)$,
	\begin{equation}\label{stocdomeq}
		\frac{\int_t^{b_\alpha}h_{\alpha,0}(t)dt}{\int_{a_\alpha}^{b_\alpha}h_{\alpha,0}(t)dt} \le \frac{\int_{t}^{b_\alpha}h_{\alpha,1}(t)dt}{\int_{a_\alpha}^{b_\alpha}h_{\alpha,1}(t)dt}
	\end{equation}
	We now apply the following oriented version of the one-dimensional Borell-Brascamp-Lieb inequality, proved in \cite{AK}:
	\begin{lemma}[\text{\cite[Lemma 3.8]{AK}}]\label{dirBBLlemma}
		Let $q \in [-1,\infty]$. Let $h_0,h_1$ be non-negative, integrable functions on an interval $(a,b) \subseteq \RR$ satisfying
		$$
				\frac{\int_t^bh_0}{\int_a^bh_0} \le \frac{\int_t^bh_1}{\int_a^bh_1} \qquad \text{ for every } t \in (a,b).
		$$
		Let $h_\lambda$ be a non-negative integrable function satisfying
		$$
			h_\lambda((1-\lambda)t_0 + \lambda t_1) \ge \bM_q(h_0(t_0),h_1(t_1);\lambda)
		$$
		for every $a < t_0 \le t_1 < b$. Then
		$$
			\int_a^bh_\lambda \ge \bM_{\frac{q}{q+1}}\left(\int_a^bh_0,\int_a^bh_1 ; \lambda\right).
		$$
	\end{lemma}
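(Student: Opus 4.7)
The plan is to use a monotone transport argument. First I would normalize by setting $a_i := \int_a^b h_i$ for $i = 0, 1$, which we may assume are positive (otherwise the conclusion is trivial). Define the increasing transport $T : \supp h_0 \to \supp h_1$ by the relation
\[ \frac{1}{a_0}\int_t^b h_0(s)\,ds \;=\; \frac{1}{a_1}\int_{T(t)}^b h_1(s)\,ds, \]
so that $T$ pushes $h_0/a_0$ forward to $h_1/a_1$. The stochastic-domination hypothesis forces $T(t) \ge t$ for all $t \in \supp h_0$. Let $S_\lambda(t) := (1-\lambda)t + \lambda T(t)$; this map is also increasing, with $S_\lambda'(t) = (1-\lambda) + \lambda T'(t)$ almost everywhere.

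Next, by the change of variables formula for the strictly increasing $S_\lambda$, and since $S_\lambda(\supp h_0) \subseteq [a,b]$,
\[ \int_a^b h_\lambda \;\ge\; \int_{\supp h_0} h_\lambda(S_\lambda(t))\,S_\lambda'(t)\,dt. \]
The ordering $t \le T(t)$ lets us apply the hypothesis of the lemma with $(t_0,t_1) = (t,T(t))$, yielding $h_\lambda(S_\lambda(t)) \ge \bM_q(h_0(t),\,h_1(T(t));\lambda)$. Combining with $S_\lambda'(t) = \bM_1(1, T'(t);\lambda)$ via the H{\"o}lder-type inequality \eqref{pmeaninequality} for generalized means (applied with exponents $q$ and $1$, which compound to $q' = q/(q+1)$),
\[ \bM_q\bigl(h_0(t),\,h_1(T(t));\lambda\bigr)\,S_\lambda'(t) \;\ge\; \bM_{q'}\bigl(h_0(t),\,h_1(T(t))\,T'(t);\lambda\bigr). \]

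The key simplification will then come from the transport identity $h_1(T(t))\,T'(t) = (a_1/a_0)\,h_0(t)$, obtained by differentiating the defining equation for $T$. By homogeneity of $\bM_{q'}$, this collapses the integrand above into $h_0(t)\cdot \bM_{q'}(1,\,a_1/a_0;\lambda)$, and integration over $\supp h_0$ produces $a_0\cdot \bM_{q'}(1,\,a_1/a_0;\lambda) = \bM_{q'}(a_0,\,a_1;\lambda)$, which is the desired bound. Notably, the argument bypasses any integrated Jensen-type inequality: the transport identity makes the integrand a scalar multiple of $h_0$, so the final integration is trivial.

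The main technical obstacle will be handling the edge cases: when $q \in \{-1, 0, \infty\}$ the H{\"o}lder inequality \eqref{pmeaninequality} must be read in its appropriate limiting form (harmonic, geometric, or supremum mean), and when $h_0$ or $h_1$ vanishes on part of $(a,b)$ the transport $T$ is defined only on the positivity set of $h_0$. Both are resolved by the standard conventions for $\bM_q$ and by restricting all integrals to the relevant supports. Differentiability of $T$ is free (it is monotone), so the change-of-variables step is legitimate without further regularization.
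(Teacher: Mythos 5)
Your argument is the standard monotone-transport proof of the directed one-dimensional Borell-Brascamp-Lieb inequality, and it is correct in outline; the paper itself does not reprove the lemma but cites \cite{AK}, where the approach is essentially the same. Two points in your sketch need more care than you allow. First, the means inequality \eqref{pmeaninequality} is stated in the paper only for exponents in $[0,\infty]$, while you apply it with $q_1 = q$ possibly in $[-1,0)$; the inequality does extend to this range (the relevant condition is $1/q + 1 \ge 0$), but the citation as written does not cover it, and for $q\in\{-1,0,\infty\}$ one must also be careful with the paper's convention that $\bM_q(a,b;\lambda)=0$ whenever $ab=0$, which makes $S_\lambda'(t)\ge \bM_1(1,T'(t);\lambda)$ rather than an equality when $T'(t)=0$. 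Second, the claim that the Jacobian identity $h_1(T(t))\,T'(t) = (a_1/a_0)\,h_0(t)$ comes for free ``without further regularization'' is too quick: it differentiates the composition $G\circ T$, where $G(s)=\frac{1}{a_1}\int_s^b h_1$ is absolutely continuous but $T$ is merely monotone, and the chain rule for such a composition is not automatic. It does hold a.e.\ on $\{h_0>0\}$: one first observes that $T'>0$ a.e.\ there (otherwise $F=G\circ T$ would have zero derivative on a positive-measure subset of $\{h_0>0\}$, contradicting $F'=-h_0/a_0<0$ and the absolute continuity of $G$), and then that the pushforward property keeps $T(t)$ away from the Lebesgue-null set where $G$ fails to be differentiable, after which the chain rule applies. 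A line of justification along these lines, or a smoothing argument, belongs in the proof; with it the argument is complete.
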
 
	It follows from \eqref{hlambdaneedleeq1}, \eqref{stocdomeq} and Lemma \ref{dirBBLlemma} that 
	$$\int_{a_\alpha}^{b_\alpha}h_{\alpha,\lambda}(t)dt \ge \bM_{\frac{{q}}{1+qN}}\left(\int_{a_\alpha}^{b_\alpha}h_{\alpha,0}(t)dt,\int_{a_\alpha}^{b_\alpha}h_{\alpha,1}(t)dt;\lambda\right)$$
	which, by the definitions of $h_{\alpha,0},h_{\alpha,1},h_{\alpha,\lambda}$ and $q'$ implies \eqref{needleBMeq}.
\end{proof}

\subsection{The Bonnet-Myers theorem}

The following standard lemma is proved exactly as in the Riemannian case, see \cite[Lemma 5.2]{CE}.

\begin{lemma}\label{uniquelyminimizinglemma}
	Let $\gamma:[0,T] \to M$ be a minimizing extremal. For every $t \in (0,T)$, the extremal $\gamma\vert_{[0,t]}$ is uniquely minimizing.
\end{lemma}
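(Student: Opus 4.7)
The plan is to argue by contradiction, using Tonelli's theorem to show that any putative second minimizing extremal must glue smoothly with $\gamma|_{[t,T]}$ and then invoke uniqueness of solutions to the Euler--Lagrange equation.

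First, suppose by contradiction that $\gamma|_{[0,t]}$ is not uniquely minimizing, and let $\tilde\gamma:[0,t]\to M$ be a distinct minimizing extremal joining $\gamma(0)$ to $\gamma(t)$. Form the concatenation $\sigma:[0,T]\to M$ by
\[\sigma(s) = \begin{cases}\tilde\gamma(s) & s\in[0,t]\\ \gamma(s) & s\in[t,T],\end{cases}\]
which is piecewise $C^1$. Since $\tilde\gamma$ and $\gamma|_{[0,t]}$ have equal action (both achieve $\cc(\gamma(0),\gamma(t))$), the total action of $\sigma$ equals that of $\gamma|_{[0,T]}$. As $\gamma|_{[0,T]}$ minimizes the action among piecewise-$C^1$ curves joining its endpoints, so does $\sigma$; that is, $\sigma$ is itself a Tonelli minimizer.

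Next, I would apply Tonelli's theorem to conclude that $\sigma$ is a $C^2$ extremal on all of $[0,T]$; in particular $\sigma$ solves the Euler--Lagrange equation at the potential corner $s=t$, so $\dot\sigma$ is continuous there. Hence $\dot{\tilde\gamma}(t) = \dot\sigma(t^-) = \dot\sigma(t^+) = \dot\gamma(t)$. Since $\gamma$ is a zero-energy extremal (being a minimizing extremal) and $SM\cap\bz=\varnothing$, the Euler--Lagrange flow is the flow of the $C^1$ vector field $\Sigma$ on $TM\setminus\bz$; in particular solutions are uniquely determined by their initial velocity. Applied to $\sigma$ and $\gamma$, which share the initial velocity $\dot\gamma(t)$ at time $t$, this yields $\sigma\equiv\gamma$ on $[0,T]$, hence $\tilde\gamma = \gamma|_{[0,t]}$, contradicting the choice of $\tilde\gamma$.

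There is no real obstacle here; the only points requiring care are (a) that Tonelli's theorem is applicable to the piecewise-$C^1$ minimizer $\sigma$, not merely to $C^1$ competitors, and (b) that the Euler--Lagrange flow enjoys uniqueness at the velocity $\dot\gamma(t)$. Both are standard in the Tonelli setting and have already been used in the excerpt (see, e.g., the proof of Lemma \ref{localconvlemma}, where the same style of argument ruled out distinct minimizing extremals converging to a single limit).
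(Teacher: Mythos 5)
Your argument has a genuine gap: you assume without justification that the competing minimizing extremal $\tilde\gamma$ joining $\gamma(0)$ to $\gamma(t)$ is parametrized on the same time interval $[0,t]$ as $\gamma|_{[0,t]}$. In this Lagrangian setting the cost $\cc$ is a free-time infimum, so a second minimizing extremal could a priori be defined on $[0,\tilde T]$ with $\tilde T \ne t$. In that case your concatenation $\sigma$ lives on $[0,\tilde T+T-t]$; the computation that $\sigma$ is a Tonelli minimizer, the application of Tonelli's theorem, and the resulting corner condition $\dot{\tilde\gamma}(\tilde T)=\dot\gamma(t)$ all go through, but the ensuing ODE uniqueness only gives $\tilde\gamma(\tilde T-s)=\gamma(t-s)$ for $s\in[0,\min(\tilde T,t)]$, which does not yield $\tilde\gamma=\gamma|_{[0,t]}$ unless $\tilde T=t$. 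You must supply that $\tilde T=t$: e.g., if $\tilde T<t$ then the coincidence gives $\gamma(0)=\tilde\gamma(0)=\gamma(t-\tilde T)$, so $\gamma|_{[0,t-\tilde T]}$ is a nontrivial closed curve; by supercriticality (Definition \ref{SCdef}) it has strictly positive action, and deleting it produces a curve from $\gamma(0)$ to $\gamma(t)$ with action strictly below $\cc(\gamma(0),\gamma(t))$, a contradiction (and symmetrically if $\tilde T>t$, using $\tilde\gamma$). This is not a purely notational slip, since the uniqueness you are proving subsumes the assertion that all minimizers from $\gamma(0)$ to $\gamma(t)$ share the same time-length, so assuming it is mildly circular.

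With that step added, your approach is correct but genuinely different from the paper's. You show the concatenation $\sigma$ is itself a Tonelli minimizer, invoke Tonelli's theorem to eliminate the corner, and then conclude by ODE uniqueness for the Euler--Lagrange flow. The paper instead picks a point $\tilde\gamma(\tilde t)$ off of $\gamma$, observes that the concatenation of $\tilde\gamma|_{[\tilde t,\tilde T]}$ with $\gamma|_{[t,T]}$ cannot be minimizing, so the minimizing extremal $\bar\gamma$ between its endpoints has strictly smaller action, and finally concatenates $\tilde\gamma|_{[0,\tilde t]}$ with $\bar\gamma$ to beat the action of $\gamma$. The paper's route works only with actions and never compares parametrizations, which is why it never meets the time-length issue; your route is perhaps more transparent conceptually but needs the extra supercriticality step to recover it.
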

\begin{proof}
    Let $t \in (0,T)$, let $\tilde\gamma : [0,\tilde T] \to M$ be a minimizing extremal joining $\gamma(0)$ to $\gamma(t)$ and assume by contradiction that $\tilde\gamma \ne \gamma\vert_{[0,t]}$. Pick $\tilde t \in (0,\tilde T)$ such that $\tilde\gamma(\tilde t)$ does not lie on $\gamma$,  and let $\bar \gamma$ be a minimizing extremal joining $\tilde\gamma(\tilde t)$ to $\gamma(T)$. Since minimizing extremals are differentiable, the concatenation of $\tilde \gamma\vert_{[\tilde t,\tilde T]}$ with $\gamma\vert_{[ t,T]}$ is not minimizing, i.e.
	$$\int_{\bar\gamma}L < \int_{\tilde\gamma\vert_{[\tilde t,\tilde T]}}L + \int_{\gamma\vert_{[t,T]}}L,$$
	whence
	$$\int_{\tilde\gamma\vert_{[0,\tilde t]}}L + \int_{\bar\gamma}L < \int_{\tilde\gamma\vert_{[0,t]}}L + \int_{\gamma\vert_{[t,T]}}L = \int_{\gamma\vert_{[0,t]}}L + \int_{\gamma\vert_{[t,T]}}L = \int_\gamma L.$$
	Thus, by concatenating $\bar\gamma$ to $\tilde\gamma\vert_{[0,\tilde t]}$ we obtain a curve joining $\gamma(0)$ to $\gamma(T)$ with action strictly smaller than that of $\gamma$, contradicting the assumption that $\gamma$ is minimizing.
\end{proof}

The next lemma is also standard, see e.g. \cite[Corollary 2.6]{Sturm2} or \cite[Theorem 14.12]{Vil}. We state it without proof.

\begin{lemma}\label{1DBonnetlemma}
    Let ${K} \ge 0$ and $1 < N < \infty$ and let $m$ be a measure on $\RR$ with a smooth density which is supported on an interval $I\subseteq \RR$ and satisfies $\CD(K,N)$ with respect to the Euclidean metric. 
    \begin{enumerate}
        \item If ${K} > 0$ then the length of $I$ is at most $\pi\cdot\sqrt{(N-1)/{K}}$.
        \item If ${K} = 0$ and $I = \RR$ then the density of $m$ is constant.
    \end{enumerate}
\end{lemma}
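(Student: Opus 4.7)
The plan is to reduce the differential inequality in the $\CD(K,N)$ condition to a second-order linear inequality via the standard substitution $u := \rho^{1/(N-1)}$, and then apply classical Sturm comparison arguments.

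First, I would set $\rho = e^{-\psi}$ and translate the $\CD(K,N)$ hypothesis (as recorded in Example \ref{needleexample}) into the inequality $\ddot\psi - \dot\psi^2/(N-1) \ge K$ on the interior of $I$. A direct computation shows that if $u := e^{-\psi/(N-1)} = \rho^{1/(N-1)}$, then
\begin{equation*}
\frac{\ddot u}{u} = -\frac{1}{N-1}\left(\ddot\psi - \frac{\dot\psi^2}{N-1}\right) \le -\frac{K}{N-1},
\end{equation*}
so that on the interior of $I$ the positive smooth function $u$ satisfies
\begin{equation*}
\ddot u + \frac{K}{N-1}\,u \le 0.
\end{equation*}
Both conclusions of the lemma will follow by comparing $u$ to solutions of the associated linear equation.

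For part 1, assume $K>0$ and set $\omega := \sqrt{K/(N-1)}$; suppose for contradiction that $|I| > \pi/\omega$. Choose a closed subinterval $[c,d]\subseteq I$ of length exactly $\pi/\omega$ in the interior of $I$, and compare $u$ with the test function $v(t) := \sin(\omega(t-c))$, which satisfies $\ddot v + \omega^2 v = 0$ and vanishes at $c$ and $d$. Integration by parts yields
\begin{equation*}
\int_c^d\bigl(\ddot u\,v - u\,\ddot v\bigr)\,dt = \bigl[\dot u\,v - u\,\dot v\bigr]_c^d = \omega\bigl(u(c)+u(d)\bigr) > 0,
\end{equation*}
whereas the inequalities $\ddot u \le -\omega^2 u$ and $v\ge 0$ on $[c,d]$ force the same integrand to be pointwise nonpositive, a contradiction. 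Hence $|I| \le \pi\sqrt{(N-1)/K}$.

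For part 2, with $K=0$ and $I=\RR$, the inequality reduces to $\ddot u \le 0$, so $u$ is a nonnegative concave function on all of $\RR$. The main (and only) observation is that such a function must be constant: if $u(a) \ne u(b)$ for some $a<b$, concavity along a chord extended beyond the larger value forces $u(t) \to -\infty$ as $t\to+\infty$ or $t\to-\infty$, violating $u\ge 0$. Therefore $u$, and hence $\rho = u^{N-1}$, is constant on $\RR$. The main obstacle in writing the argument carefully is simply book-keeping at the endpoints of $I$ in part 1 (ensuring the comparison interval sits strictly inside $I$ so that $u>0$ on its closure), but this is routine since $\rho$ is smooth and positive on the interior of its support.
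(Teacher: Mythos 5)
Your proof is correct, and it follows the standard Sturm-comparison route: the substitution $u = \rho^{1/(N-1)}$ turning the Riccati-type $\CD(K,N)$ inequality into $\ddot u + \tfrac{K}{N-1}u \le 0$ is exactly the device used in the references the paper cites (Sturm, Cor.~2.6; Villani, Thm.~14.12), after which the Wronskian/integration-by-parts comparison for part~1 and concavity-plus-boundedness for part~2 are the classical arguments. The paper itself omits the proof, so your argument validly fills that gap; the only nit is a slip of wording in part~2 (the secant line tends to $-\infty$ in the direction past the \emph{smaller} of the two values, not the larger), which does not affect the stated conclusion.
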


Recall that the diameter $\diam(A)$ of a set $A\subseteq M$ is the supremum over all $\ell > 0$ such that there exists a minimizing extremal $\gamma:[0,\ell] \to M$ whose endpoints lie in $A$.

\begin{theorem}[Bonnet-Myers \text{\cite[Theorem 1.31]{CE}}, Sturm \text{\cite[Corollary 2.6]{Sturm2}}, cf. Agrachev \& Gamkrelidze \text{\cite[Section 4]{AG}}]\label{Bonnetthm}
    Suppose that the pair $(\mu,L)$ satisfies $\CD_{}({K},N)$ for some ${K} > 0$ and $1 < N < \infty$. Then $\diam(M) \le \pi\cdot\sqrt{(N-1)/{K}}$. If, in addition, the Euler-Lagrange flow is complete, then $M$ is compact.
\end{theorem}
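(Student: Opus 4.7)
Suppose for contradiction that $\diam(M) > L_* := \pi\sqrt{(N-1)/K}$, so there exists a minimizing extremal $\gamma:[0,T]\to M$ with $T > L_*$. The strategy is to extract from the needle decomposition of Theorem \ref{needlethm2} a nondegenerate $(\mu,L)$-Jacobi needle whose parameter interval has length exceeding $L_*$, and then contradict the one-dimensional bound in Lemma \ref{1DBonnetlemma}(1).

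\medskip
\emph{Constructing a calibrated curve.} Set $u(x) := -\cc(x,\gamma(T))$. The triangle inequality (Lemma \ref{triangleineqlemma}) gives $u(x') - u(x) = \cc(x,\gamma(T)) - \cc(x',\gamma(T)) \le \cc(x,x')$, so $u$ is dominated. Since $\gamma$ is a minimizing extremal, the equality case of Lemma \ref{triangleineqlemma} applied to $\gamma(s),\gamma(t),\gamma(T)$ yields $\cc(\gamma(s),\gamma(T)) = \cc(\gamma(s),\gamma(t)) + \cc(\gamma(t),\gamma(T))$ for $0 \le s \le t \le T$. Hence $u(\gamma(t)) - u(\gamma(s)) = \cc(\gamma(s),\gamma(t)) = \int_s^t L(\dot\gamma)\,dr$, so $\gamma$ is calibrated for $u$. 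In particular, $\gamma((0,T)) \subseteq \cS$, and by Proposition \ref{dominatedprop}(e) the maximal calibrated curve $\gamma_{\gamma(t)}$ through any $\gamma(t)$, $t\in(0,T)$, extends $\gamma$; its parameter interval therefore satisfies $|I_{\gamma(t)}| \ge T > L_*$.

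\medskip
\emph{Extracting a long needle.} Apply Theorem \ref{needlethm2} to $u$, yielding a disintegration $\mu = \int_{\sA}\mu_\alpha\,d\nu(\alpha)$ in which every $\alpha \in \sN$ gives a $(\mu,L)$-Jacobi needle $\mu_\alpha = (\gamma_\alpha)_*m_\alpha$ supported on a parameter interval $I_\alpha$. Fix $t_0 \in (0,T)$ and apply Lemma \ref{localclusterlemma} at $\gamma(t_0)$: a neighborhood of $\gamma(t_0)$ in $\cS$ is covered, up to a $\mu$-null set, by countably many ray clusters. Inspecting that construction together with Lusin's theorem and Lemma \ref{endcontlemma}, we may arrange one such ray cluster $R$ with a seed $R_0$ parametrized by $B_0 \subseteq \RR^{n-1}$ on which the endpoint functions $a_y, b_y$ are continuous, and so that $\gamma(t_0)$ is a limit of seed points $F_0(y_j)$ with $y_j \in B_0$. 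Since $|I_{\gamma(t_0)}| \ge T > L_*$, continuity of $a_y, b_y$ on $B_0$ forces $b_y - a_y > L_*$ on a subset of $B_0$ of positive $(n{-}1)$-dimensional Lebesgue measure. The corresponding needles $\mu_\alpha$, indexed via the map $\alpha_j$ built in the proof of Theorem \ref{needlethm2}, thus form a set of positive $\nu$-measure with $|I_\alpha| > L_*$.

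\medskip
\emph{Contradiction and compactness.} By Lemma \ref{CDlemma}, each such $(\mu,L)$-Jacobi needle is a $\CD(K,N)$-needle, so the one-dimensional measure $m_\alpha$ satisfies $\CD(K,N)$ with respect to the Euclidean Lagrangian on $I_\alpha$. Lemma \ref{1DBonnetlemma}(1) then gives $|I_\alpha| \le L_*$, contradicting the previous paragraph. Hence $\diam(M) \le L_*$. If the Euler-Lagrange flow is complete, then $M$, being a closed subset of itself of finite diameter, is contained in a compact subset by Lemma \ref{diamlemma}, and is therefore compact. The main obstacle is the second step: passing from the single long calibrated curve $\gamma$ (a $\mu$-null set) to a positive-$\nu$-measure family of long needles. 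This is where continuity of $a_y,b_y$ within the seed of a ray cluster, combined with a Lusin-type construction to ensure the seed accumulates at $\gamma(t_0)$, is essential.
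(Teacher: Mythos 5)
Your overall strategy — extract from the needle decomposition a one-dimensional $\CD(K,N)$ needle on an interval longer than $\pi\sqrt{(N-1)/K}$, then contradict Lemma \ref{1DBonnetlemma}(1) — is the right one, and the first paragraph is correct: $u := -\cc(\cdot,\gamma(T))$ is dominated, $\gamma$ is calibrated for it, and $|I_{\gamma(t_0)}| \ge T$ for all $t_0\in(0,T)$. The gap is in the second paragraph. Lusin's theorem discards a measure-zero set before delivering continuity of $a_y,b_y$, and the curve $\gamma$ itself has measure zero: there is no guarantee $\gamma(t_0)$ lies in any seed $R_0$, let alone that it is a Lebesgue density point of $B_0 = F_0^{-1}(R_0)$. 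Even if $\gamma(t_0)$ were a limit of seed points $F_0(y_j)$, continuity of $a_y,b_y$ on $B_0$ constrains only the values at the $y_j$, not at $\gamma(t_0)$. The available semicontinuity goes the wrong way for you: that $\cS_k$ is closed (Lemma \ref{Skclosedlemma}) means the transport ray through a limit point is at least as long as the $\limsup$ of lengths of nearby rays — it does not forbid the ray through $\gamma(t_0)$ from being much longer than every nearby one. So you cannot conclude $b_y - a_y > \pi\sqrt{(N-1)/K}$ on a positive-measure subset of $B_0$.

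The paper avoids this by invoking the mass-balance clause of Theorem \ref{needlethm}, which requires taking $u$ to be a Kantorovich potential. It sets $f = f_1 - f_0$ with $f_0,f_1 \ge 0$ supported in small neighborhoods of $\gamma(0)$ and $\gamma(T)$, each of integral $1$; the mass-balance identity $\int f\,d\mu_\alpha = 0$ forces a $\nu$-positive-measure set of needles to meet both neighborhoods, and since $\gamma$ is uniquely minimizing (Lemma \ref{uniquelyminimizinglemma}, after shrinking $T$ slightly), such needles are supported on intervals of length close to $T$, contradicting Lemma \ref{1DBonnetlemma}(1). Your choice of $u$ could in fact be salvaged without mass balance, but by a different mechanism than you propose: every transport ray of $-\cc(\cdot,\gamma(T))$ contains $\gamma(T)$ (the equality case of Lemma \ref{triangleineqlemma} places $\gamma(T)$ on any transport ray of this $u$), and for $x$ in a small ball around $\gamma(t_0)$ the minimizing extremal from $x$ to $\gamma(T)$ converges to $\gamma|_{[t_0,T]}$ by unique minimization, so its parameter length is close to $T-t_0 > \pi\sqrt{(N-1)/K}$ for $t_0$ small. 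Since $\mu$-a.e.\ $x$ is a strain point of this $u$ (initial points have measure zero by Lemma \ref{endslemma}, and here every point lies on a transport ray toward $\gamma(T)$), this yields a positive-$\mu$-measure set of long transport rays and hence, via the disintegration, a positive-$\nu$-measure set of long needles.
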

\begin{proof}
    Assume by contradiction that there exists a minimizing extremal $\gamma$ which is defined on an interval of length $ \ell > \pi\cdot\sqrt{(N-1)/{K}}$. By Lemma \ref{uniquelyminimizinglemma}, we may assume that $\gamma$ is the uniquely minimizing, for otherwise we can replace $\ell$ by $\ell-\eps$ for some small $\eps$. Let $f = f_1 - f_0$, where $f_0$ (resp. $f_1$) is a nonnegative  function defined on a small neighborhood of $\gamma(0)$ (resp. $\gamma(\ell)$) whose total integral is $1$, and apply Theorem \ref{needlethm}. By the mass balance condition, there must exist a $\CD({K},N)$-needle $\mu_\alpha$ which is supported on an interval of length $ > \pi\cdot\sqrt{(N-1)/{K}}$, but this is not possible by Lemma \ref{1DBonnetlemma}. If the Euler-Lagrange flow is complete, then $M$ is compact by Lemma \ref{diamlemma}.
\end{proof}

\subsection{The Bishop-Gromov inequality}

The Bishop-Gromov inequality (see e.g. \cite[Chapter 7]{Pet}) compares the volume growth of a Riemannian manifold with Ricci curvature bounded from below to the volume growth of a model space of constant curvature. The generalization of the Bishop-Gromov inequality to metric measure spaces was proved in Sturm \cite{Sturm2} using the Brunn-Minkowski inequality. 

\medskip
For $x_0 \in M$ and $r>0$ denote by $B_r(x_0)$ the \emph{forward ball of radius $r$ centered at $x_0$}:
$$B_r(x_0) : = \{\gamma(t) \, \mid \, \gamma \text{ is a minimizing extremal with $\gamma(0) = x_0$,} \qquad 0 \le t < r\}$$

\begin{theorem}[The Bishop-Gromov inequality]\label{BGthm}
    Let ${K} \in \RR$ and $1 < N < \infty$. Suppose that the pair $(\mu,L)$ satisfies $\CD_{}({K},N)$. Let $x_0 \in M$ and define
    $$V(r) : = \mu(B_r(x_0)), \qquad r > 0.$$
    Then
    $$\frac{V(r)}{V(R)} \ge \frac{V_{K,N}(r)}{V_{K,N}(R)},$$
    where
    $$V_{K,N}(r) : = \left(\int_0^r\sin\left(t\sqrt{K/(N-1)}\right)dt\right)^{N-1}.$$
\end{theorem}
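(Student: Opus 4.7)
The plan is to apply the needle decomposition (Theorem \ref{needlethm2}) to the dominated function
$$u(x) := \cc(x_0, x),$$
reducing Bishop-Gromov on $M$ to its one-dimensional counterpart along each needle. The function $u$ is dominated: by the triangle inequality of Lemma \ref{triangleineqlemma}, $u(y) - u(x) = \cc(x_0, y) - \cc(x_0, x) \le \cc(x, y)$. Applying clauses (i)--(ii) of Theorem \ref{needlethm2} yields a disintegration $\mu = \int_{\sA}\mu_\alpha \, d\nu(\alpha)$ into $(\mu, L)$-Jacobi needles $\mu_\alpha = (\gamma_\alpha)_* m_\alpha$, together with a $\mu$-null collection of Dirac masses corresponding to points in $M \setminus \hat\cS$ (including $x_0$ itself).

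\medskip
The key geometric step is to identify each transport ray of $u$ with a segment of a minimizing extremal emanating from $x_0$. If $x, y$ lie on a common transport ray with $u(y) > u(x)$, then $\cc(x_0, y) = \cc(x_0, x) + \cc(x, y)$, so the equality case of Lemma \ref{triangleineqlemma} places $x_0, x, y$ on a single minimizing extremal. Maximality of the transport ray together with supercriticality then forces the ray to extend backward all the way to $x_0$, so after shifting each needle's parameter one may write $\gamma_\alpha : (0, b_\alpha) \to M$ with $\gamma_\alpha(t) \to x_0$ as $t \to 0^+$. Since the needle parameter is the Euler-Lagrange flow time (by the construction of $F^j$ in Proposition \ref{flattenprop}), this coincides with the extremal-time parameter used to define $B_r(x_0)$, yielding
$$\mu_\alpha(B_r(x_0)) = m_\alpha\bigl((0, \min(r, b_\alpha))\bigr), \qquad V(r) = \int_{\sA} m_\alpha\bigl((0, \min(r, b_\alpha))\bigr)\, d\nu(\alpha).$$

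\medskip
By Lemma \ref{CDlemma} and the assumption $\Ric_{\mu, N} \ge K$, each $\mu_\alpha$ is a $\CD(K, N)$-needle, so its density $\rho_\alpha = e^{-\psi_\alpha}$ satisfies $\ddot\psi_\alpha - \dot\psi_\alpha^2/(N-1) \ge K$. Setting $y_\alpha := \rho_\alpha^{1/(N-1)}$ transforms this into the Jacobi-type inequality $\ddot y_\alpha + \frac{K}{N-1} y_\alpha \le 0$, from which a standard one-dimensional comparison argument gives the Bishop-Gromov monotonicity
$$\frac{m_\alpha\bigl((0, r)\bigr)}{V_{K,N}(r)} \ge \frac{m_\alpha\bigl((0, R)\bigr)}{V_{K,N}(R)}, \qquad 0 < r \le R,$$
for each $\alpha$, with no additional assumption on the boundary behavior of $\rho_\alpha$; the case $R > b_\alpha$ is handled by $m_\alpha((0, R)) = m_\alpha((0, b_\alpha))$ together with the monotonicity of $V_{K,N}$. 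Integrating against $\nu$ gives $V(r) \, V_{K,N}(R) \ge V(R) \, V_{K,N}(r)$, which is the theorem.

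\medskip
The main obstacle lies in the geometric identification step: justifying that every needle produced by Theorem \ref{needlethm2} can, after a parameter shift, be realized as a minimizing extremal issuing from $x_0$ parametrized by Euler-Lagrange time. This in turn reduces to verifying (i) that every transport ray of $\cc(x_0, \cdot)$ extends backward to $x_0$ in the limit, which uses maximality of transport rays together with supercriticality to rule out ``truncated'' rays, and (ii) that the needle parametrization coming from Proposition \ref{flattenprop} is indeed by Euler-Lagrange time, which is already built into that construction. The one-dimensional comparison itself proceeds by noting that $y_\alpha$ is dominated by a suitable Jacobi solution, which forces $r \rho_\alpha(r) \le N \cdot m_\alpha((0,r)) \cdot V_{K,N}'(r)/V_{K,N}(r)$ on each needle.
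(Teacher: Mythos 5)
Your proof takes exactly the same route as the paper's: apply the needle decomposition to the Kantorovich potential $u = \cc(x_0,\cdot)$, identify the needles with minimizing extremals issuing from $x_0$, invoke the one-dimensional Bishop--Gromov monotonicity for each $\CD(K,N)$-needle (the paper simply cites \cite[Theorem 2.3]{Sturm2} here), and integrate over the needle parameter. The only blemish is in your closing aside: the differential form of the one-dimensional monotonicity is $\rho_\alpha(r) \le m_\alpha\bigl((0,r)\bigr)\,V_{K,N}'(r)/V_{K,N}(r)$, without the spurious factors of $r$ and $N$.
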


\begin{proof}
    Let $x_0 \in M$ and let 
    $$u(x) : = \cc(x_0,x), \qquad x \in M.$$
    Then $u$ is dominated, and we may apply Theorem \ref{needlethm2}; let $\mu = \int_\sA\mu_\alpha\,d\nu(\alpha)$ be the resulting disintegration of measure (in fact, in this case the disintegration of measure is just a Lagrangian analogue of polar normal coordinates). The transport rays of $u$ are precisely minimizing extremals starting at $x_0$. Hence for every $\alpha \in \sN$,
    $$\gamma_\alpha^{-1}(B_r(x_0)) = (a_\alpha,\min\{a_\alpha + r,b_\alpha\}),$$
    whence 
    $$\mu_\alpha(B_r(x_0)) = m_\alpha((a_\alpha,a_\alpha+r)).$$
    For $\nu$-almost every $\alpha \in \sN$ the measure $\mu_\alpha$ is a $\CD({K},N)$-needle, so the measure $m_\alpha$ satisfies the $\CD({K},N)$ condition with respect to the Euclidean metric on the interval $(a_\alpha,b_\alpha)$. Hence the measure $m_\alpha$ satisfies the corresponding Bishop-Gromov inequality, i.e.
    \begin{align*}
        \frac{m_\alpha((a_\alpha,a_\alpha+r))}{m_\alpha((a_\alpha,a_\alpha+R))} \ge \frac{V_{K,N}(r)}{V_{K,N}(R)}
    \end{align*}
    for every $0<r<R$, see \cite[Theorem 2.3]{Sturm2}. It follows that, for every $0 < r < R$,
    \begin{align*}
        V(r) & = \int_{\sA}\mu_\alpha(B_r(x_0))d\nu(\alpha)\\
        & = \int_{\sA}m_\alpha((a_\alpha,a_\alpha+r))d\nu(\alpha)\\
        & \ge \int_{\sA}\frac{V_{K,N}(r)}{V_{K,N}(R)}\cdot m_\alpha((a_\alpha,a_\alpha+R))d\nu(\alpha)\\
        & = \frac{V_{K,N}(r)}{V_{K,N}(R)}\cdot\int_{\sA}\mu_\alpha(B_R(x_0))d\nu(\alpha)\\
        & = \frac{V_{K,N}(r)}{V_{K,N}(R)}\cdot V(R),
    \end{align*}
    and the theorem is proved.   
\end{proof}

\subsection{The isoperimetric inequality}

Suppose that $\mu$ is a probability measure:
$$\mu(M) = 1.$$
For $A \subseteq M$ and $\eps > 0$, let $[A]_\eps$ denote the \emph{$\eps$-neighborhood} of the set $A$:
\begin{align*}
    [A]_\eps  : &= \{\gamma(t) \, \mid \, \gamma \text{ is a minimizing extremal with $\gamma(0) \in A$,} \qquad 0 \le t < \eps\}\\
    & = \bigcup_{a \in A}B_\eps(a),
\end{align*}

and let $\mu^+(A)$ denote the \emph{upper minkowski content} of $A$:

$$\mu^+(A) : = \liminf_{\eps\searrow 0}\frac{\mu([A]_\eps) - \mu(A)}{\eps}.$$

The \emph{isoperimetric profile} of the pair $(L,\mu)$ is defined by

$$I_{L,\mu}(s) : = \inf\left\{\mu^+(A) \, \vert \, A \subseteq M, \,\,\mu(A) = s\right\}, \qquad s \in [0,1].$$

For every ${K} \in \RR$, $N\in (-\infty,\infty]\setminus\{1\}$ and $D > 0$, we define a \emph{one-sided model isoperimetric profile} $I_{{K},N,D}^+:[0,1] \to [0,\infty]$ by
$$I_{{K},N,D}^+(s) : = \inf\left\{m^+(A) \quad \bigg\vert \quad m \in \mathcal{N}({K},N,D),\quad \begin{array}{c}
    A \subseteq \RR, \,\,m(A) = s, \\  m\vert_{(a,\infty)}(A) \le s \quad\forall a \in \RR\end{array}\right\},$$
where $\mathcal{N}({K},N,D)$ is the set of all $\CD({K},N)$-needles on $\RR$ with support contained in $[0,D]$ and total mass $1$ (i.e., absolutely-continuous probability measure on $[0,D]$ with a $C^2$ density satisfying $\CD(K,N)$ with respect to the Euclidean metric), and
$$m^+(A) : = \liminf_{\eps\searrow 0}\frac{m(A + [0,\eps)) - m(A)}{\eps}.$$
Note that the neighborhood of $A$ in the definition of $m^+(A)$ is one-sided.  The condition $m\vert_{(a,\infty)}(A) \le m(A)$ for all $a \in \RR$ in the definition of $I^+_{K,N,D}$ is therefore needed in order to exclude trivial minimizers of the form $A = [a_0,\infty)$ for some $a_0 \in (0,D)$. For a detailed description of the standard (i.e. two-sided) model isoperimetric profiles, see \cite{Mil}.

\medskip
The proof of the following theorem follows the proof of \cite[Proposition 5.4]{Kl}.

\begin{theorem}\label{isoperimthm}
    Suppose that the pair $(\mu,L)$ satisfies $\CD_{}({K},N)$ for some ${K} \ge 0$ and some $N \in (-\infty,\infty]\setminus[1,n)$, and that $\diam(M) = D < \infty$. Then
    $$I_{L,\mu} \ge I_{{K},N,D}^+.$$
\end{theorem}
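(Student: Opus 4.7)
The plan is to reduce the inequality, via Theorem \ref{needlethm}, to the one-dimensional model problem on each needle. Fix a Borel set $A \subseteq M$ with $\mu(A) = s$; the cases $s \in \{0,1\}$ are trivial since $I^+_{K,N,D}$ vanishes there, so assume $0 < s < 1$. Set $f := \chi_A - s$, which is bounded with $\int f\,d\mu = 0$; since $\diam M = D < \infty$ the cost $\cc$ is bounded on $M\times M$, so the moment hypothesis of Theorem \ref{needlethm} is automatic. Applying the theorem yields a disintegration $\mu = \int_\sA \mu_\alpha\,d\nu(\alpha)$ with each $\mu_\alpha$ either Dirac or a $\CD(K,N)$-needle $(\gamma_\alpha)_*m_\alpha$, each satisfying the mass balance $\int f\,d\mu_\alpha = 0$ and, for $\alpha \in \sN$, the detailed mass balance \eqref{detailedmbeq}. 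Because $0 < s < 1$ forces $\supp f = M$, part (iv) of Theorem \ref{needlethm2} implies that $\sD$ is $\nu$-null, so only the non-degenerate fibers contribute.

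For each $\alpha \in \sN$, set $\tilde A_\alpha := \gamma_\alpha^{-1}(A) \subseteq I_\alpha$ and $\tilde m_\alpha := m_\alpha / m_\alpha(I_\alpha)$. Since $\gamma_\alpha$ is a minimizing extremal with endpoints in $M$, the interval $I_\alpha$ has length at most $D$; after a translation, $\tilde m_\alpha$ is a $\CD(K,N)$-needle on $\RR$ supported in $[0,D]$, hence $\tilde m_\alpha \in \mathcal{N}(K,N,D)$. The mass balance rewrites as $\tilde m_\alpha(\tilde A_\alpha) = s$, and the detailed mass balance as
$$\tilde m_\alpha\vert_{(a,\infty)}(\tilde A_\alpha) \le s \qquad \text{for every } a \in \RR,$$
so $\tilde A_\alpha$ is admissible for $I^+_{K,N,D}(s)$. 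Therefore
$$m_\alpha^+(\tilde A_\alpha) \ge I^+_{K,N,D}(s)\cdot \mu_\alpha(M).$$

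To recombine, I would use the geometric inclusion that for $t \in \tilde A_\alpha$ and $\eps' \in [0,\eps)$ with $t+\eps' \in I_\alpha$, the arc $\gamma_\alpha\vert_{[t,t+\eps']}$ is a minimizing extremal of length $<\eps$ starting in $A$, so $\gamma_\alpha(t+\eps') \in B_\eps(\gamma_\alpha(t)) \subseteq [A]_\eps$. Minimizing extremals are injective (supercriticality rules out closed ones), so extending $m_\alpha$ by zero outside $I_\alpha$ gives, for every $\eps > 0$,
$$\mu_\alpha([A]_\eps) - \mu_\alpha(A) \ge m_\alpha(\tilde A_\alpha + [0,\eps)) - m_\alpha(\tilde A_\alpha).$$
Integrating in $\nu$, dividing by $\eps$, and applying Fatou's lemma to the nonnegative integrand yields
$$\mu^+(A) \ge \int_\sA m_\alpha^+(\tilde A_\alpha)\,d\nu(\alpha) \ge I^+_{K,N,D}(s) \int_\sA \mu_\alpha(M)\,d\nu(\alpha) = I^+_{K,N,D}(s),$$
where the last equality uses $\int_\sA \mu_\alpha(M)\,d\nu(\alpha) = \mu(M) = 1$. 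Since $A$ was arbitrary, $I_{L,\mu}(s) \ge I^+_{K,N,D}(s)$.

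The main obstacle is preserving the correct orientation throughout: the forward definition of $[A]_\eps$ via $B_\eps$ is intrinsically one-sided (the Lagrangian is generally non-reversible), so it must be matched with a \emph{one-sided} needle content. This is precisely why the model profile is $I^+_{K,N,D}$, defined by the asymmetric constraint $m\vert_{(a,\infty)}(A) \le s$; the detailed mass balance from Theorem \ref{needlethm}(iii) supplies exactly this one-sided condition, and without it the argument would only yield the weaker two-sided profile.
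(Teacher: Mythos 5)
Your proposal is correct and follows essentially the same route as the paper's proof: needle decomposition applied to $f=\chi_A - s$, reduction to the one-dimensional model via the normalized needle measures, admissibility of $\gamma_\alpha^{-1}(A)$ for $I^+_{K,N,D}$ from the detailed mass balance, the geometric inclusion $\gamma_\alpha^{-1}([A]_\eps)\supseteq\gamma_\alpha^{-1}(A)+[0,\eps)$, and Fatou's lemma. The only cosmetic difference is that you dispose of the degenerate fibers $\sD$ via part (iv) of Theorem \ref{needlethm2} (observing $\supp f = M$ when $0<s<1$), whereas the paper simply notes that the integrand vanishes for Dirac measures — both work.
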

\begin{proof}
    Let $A \subseteq M$, write $s = \mu(A)$, define a function $f : M \to \RR$ by
    $$f : = s - \chi_A,$$
    and let $\mu = \int_\sA\mu_\alpha\,d\nu(\alpha)$ be the disintegration of measure provided by Theorem \ref{needlethm}. Then for $\nu$-almost every $\alpha \in \sA$,
    \begin{equation}\label{isopermbeq1}\mu_\alpha(A) = s\cdot\mu_\alpha(M),\end{equation}
    and for $\nu$-almost every $\alpha\in\sN$,
    \begin{equation}\label{isopermbeq2}\mu_\alpha^+(A) \le s\cdot\mu_{\alpha,t}(M).\end{equation}
    for every upper end $\mu_{\alpha,t}$ of the needle $\mu_\alpha$.

    \medskip
    By the disintegration formula \eqref{disinteq}, for every $\eps > 0$,
    \begin{align*}
       \mu([A]_\eps) - \mu(A) & = \int_{\sA}[\mu_\alpha([A]_\eps) - \mu_\alpha(A)]d\nu(\alpha)\\
        & = \int_{\sA}\left[m_\alpha(\gamma_\alpha^{-1}([A]_\eps)) - m_\alpha(\gamma_\alpha^{-1}(A))\right]d\nu(\alpha)\\
        & \ge \int_{\sA}\left[m_\alpha(\gamma_\alpha^{-1}(A) + [0,\eps)) - m_\alpha(\gamma_\alpha^{-1}(A))\right]d\nu(\alpha)\\
        & = \int_{\sA}m_\alpha(\RR)\cdot\left(\frac{m_\alpha(\gamma_\alpha^{-1}(A) + [0,\eps))}{m_\alpha(\RR)} - \frac{m_\alpha(\gamma_\alpha^{-1}(A))}{m_\alpha(\RR)}\right)d\nu(\alpha),
    \end{align*}
    where the inequality holds true since, by the definition of $[A]_\eps$ and the fact that each $\gamma_\alpha$ is a minimizing extremal,
    $$\gamma_\alpha^{-1}([A]_\eps) \supseteq \gamma_\alpha^{-1}(A) + [0,\eps) \qquad \text{for $\nu$-a.e. $\alpha \in \sA$}.$$
    If $\alpha$ is a Dirac measure then the integrand vanishes. If $\alpha\in\sN$, then by \eqref{isopermbeq1} and \eqref{isopermbeq2}, the set $\gamma_\alpha^{-1}(A)$ satisfies the conditions in the definition of $I_{{K},N,D}^+$ with respect to the probability measure $m_\alpha/m_\alpha(\RR)$ on $\RR$, which is a $\CD({K},N)$-needle with total measure $1$. Thus by Fatou's Lemma and by \eqref{isopermbeq1}:
    \begin{align*}
        \liminf_{\eps\searrow 0}\frac{\mu([A]_\eps) - \mu(A)}{\eps} & \ge \int_{\sA}m_\alpha(\RR)\cdot I_{{K},N,D}^+\left(\frac{m_\alpha(\gamma_\alpha^{-1}(A))}{m_\alpha(\RR)}\right)d\nu(\alpha) \\
        & = \int_{\sA}\mu_\alpha(M)\cdot I_{{K},N,D}^+\left(\frac{\mu_\alpha(A)}{\mu_\alpha(M)}\right)d\nu(\alpha)\\
        & = I_{{K},N,D}^+(s)\cdot\int_{\sA}\mu_\alpha(M)\,d\nu(\alpha)\\
        & = I_{{K},N,D}^+(s)\cdot\mu(M)\\
        & = I_{{K},N,D}^+(s).
    \end{align*}
    Taking infimum over $A$ gives the desired result.
\end{proof}

\subsection{Functional inequalities}

In this section we prove analogues of the Poincaré and log-Sobolev inequalities under the curvature-dimension condition. Recall that $S_xM = SM\cap T_xM = \{v \in T_xM \, \mid \, E(v) = 0\}$ for all $x \in M$. Define
$$E^*:T^*M \to [0,\infty), \qquad E^*(p) : = \max_{S_xM}|p|, \qquad p\in T^*_xM, \, x \in M.$$

\begin{theorem}[Poincar\'e inequality, cf. \cite{PW,LY,ZY,Kl}]\label{pointhm}
    Suppose that the pair $(\mu,L)$ satisfies $\CD(0,\infty)$ and that $\diam(M) = D < \infty$. Then for every $f \in L^1(\mu)\cap L^2(\mu) \cap C^1$ such that $\int fd\mu = 0$,
    \begin{equation}\label{peq}\int_Mf^2d\mu \le \frac{D^2}{\pi^2}\int_ME^*(df)^2\,d\mu.\end{equation}
\end{theorem}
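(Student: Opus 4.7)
The strategy is to localize via Theorem~\ref{needlethm}, reducing the inequality to the classical one-dimensional Payne--Weinberger inequality for log-concave densities on an interval of length at most $D$.

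First I would apply Theorem~\ref{needlethm} to the function $f$, which has zero integral and satisfies the cost-moment hypothesis (the cost is bounded on $M\times M$ by a multiple of $D$, using that minimizing extremals have $\dot\gamma\in SM$ and have length at most $\diam(M)=D$). This produces a disintegration $\mu=\int_{\sA}\mu_\alpha\,d\nu(\alpha)$ where each $\mu_\alpha$ is either a Dirac mass $\delta_{x_\alpha}$, or a $\CD(0,\infty)$-needle $\mu_\alpha=(\gamma_\alpha)_*m_\alpha$ with $\gamma_\alpha:I_\alpha\to M$ a calibrated (hence minimizing) extremal on an open interval $I_\alpha=(a_\alpha,b_\alpha)$ of length at most $D$, and $\int_M f\,d\mu_\alpha=0$ for $\nu$-a.e.\ $\alpha$. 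For Dirac needles the mass-balance condition \eqref{MBeq} forces $f(x_\alpha)=0$, so the desired inequality $\int f^2 d\mu_\alpha \le (D^2/\pi^2)\int E^*(df)^2d\mu_\alpha$ holds trivially.

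For each nondegenerate needle the density $\rho_\alpha=e^{-\psi_\alpha}$ of $m_\alpha$ is log-concave, since the $\CD(0,\infty)$-needle condition \eqref{CDneedleeq} with $K=0$, $N=\infty$ reads $\ddot\psi_\alpha\ge 0$. I would invoke the one-dimensional Payne--Weinberger inequality for log-concave probability measures supported on an interval of length $\le D$ (see \cite[Theorem~1.3]{PW} for the uniform case and e.g.\ \cite[Proposition~5.4]{Kl} for the log-concave extension): for every $C^1$ function $g$ on $I_\alpha$ with $\int g\,dm_\alpha=0$,
\begin{equation*}
  \int_{I_\alpha}g^2\,dm_\alpha\le\frac{D^2}{\pi^2}\int_{I_\alpha}(g')^2\,dm_\alpha.
\end{equation*}
Applying this to $g=f\circ\gamma_\alpha$, which has zero $m_\alpha$-mean by mass balance, reduces matters to bounding $(f\circ\gamma_\alpha)'$ pointwise. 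Here the key observation is that, by Proposition~\ref{dominatedprop}, $\dot\gamma_\alpha(t)\in SM$ for every $t\in I_\alpha$, so by the very definition of $E^*$,
\begin{equation*}
  |(f\circ\gamma_\alpha)'(t)|=|df|_{\gamma_\alpha(t)}(\dot\gamma_\alpha(t))|\le E^*(df|_{\gamma_\alpha(t)}).
\end{equation*}
Combining the last two displays and pushing forward via $\gamma_\alpha$ yields $\int f^2 d\mu_\alpha\le(D^2/\pi^2)\int E^*(df)^2 d\mu_\alpha$.

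Finally, integrating over $\alpha\in\sA$ against $\nu$ and using the disintegration identity \eqref{disinteq} yields \eqref{peq}. The main obstacle is really just the one-dimensional Payne--Weinberger inequality for log-concave measures, which I would import as a black box; the remaining work---verification of the energy normalization $\dot\gamma_\alpha\in SM$, the length bound $b_\alpha-a_\alpha\le D$, and the cost-integrability of $f$---all follow directly from the assumptions and from properties already established in the paper.
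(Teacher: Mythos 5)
Your argument is essentially the paper's: apply the needle decomposition to $f$, use the one-dimensional log-concave Poincar\'e inequality (Payne--Weinberger and its log-concave extension) on each nondegenerate needle, bound $|(f\circ\gamma_\alpha)'|\le E^*(df)$ via $\dot\gamma_\alpha\in SM$, dispatch Dirac needles by mass balance, and integrate via the disintegration formula \eqref{disinteq}. All of these steps are correct and match the paper's proof.

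The one place where you and the paper genuinely diverge is the verification of the cost-moment hypothesis of Theorem~\ref{needlethm}. You assert that $\cc$ is bounded on $M\times M$ by a multiple of $D$ because minimizing extremals have $\dot\gamma\in SM$ and domain length at most $D$; but that only gives $|\cc(x,x')|\le D\cdot\sup_{SM}|L|$, and nothing in the hypotheses of Theorem~\ref{pointhm} forces $\sup_{SM}|L|<\infty$. (It does hold when $M$ is compact, e.g.\ when the Euler--Lagrange flow is complete so that Lemma~\ref{diamlemma} applies, but completeness is not assumed.) The paper sidesteps this by first reducing, via a truncation argument, to compactly supported $f$, for which the cost-moment condition is automatic since $\cc$ is continuous. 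You should either include that reduction, or add the hypothesis that $L$ is bounded on $SM$; as written the appeal to Theorem~\ref{needlethm} is not justified.
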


\begin{proof}
    By a truncation argument we may assume that $f$ is compactly-supported. Applying Theorem \ref{needlethm} to the function $f$, we obtain a disintegration of $\mu$ such that for $\nu$-almost every $\alpha \in \sA$,
    \begin{equation*}
        \int_Mfd\mu_\alpha = 0.
    \end{equation*}
    For $\nu$-a.e. $\alpha \in \sN$, the measure $\mu_\alpha$ is a $\CD(0,\infty)$, i.e. the density of $m_\alpha$ with respect to the Lebesgue measure on $I_\alpha$ is log-concave (see Example \ref{needleexample}). The one-dimensional Poincar\'e inequality in \cite{PW} (see also \cite[Section 4.5.2]{BGL}) then implies that for $\nu$-almost every $\alpha \in \sA$, writing $f_\alpha : = f\circ\gamma_\alpha$,
    \begin{align*}
        \int_Mf^2d\mu_\alpha& = \int_{a_\alpha}^{b_\alpha}f_\alpha^2\,dm_\alpha \\
        & \le \frac{(b_\alpha - a_\alpha)^2}{\pi^2}\int_{a_\alpha}^{b_\alpha}\dot f_\alpha^2\,dm_\alpha\\
        & \le \frac{D^2}{\pi^2}\int_{a_\alpha}^{b_\alpha}\dot f_\alpha^2\,dm_\alpha,
    \end{align*}
    where the last inequality holds true by the definition of diameter, since $\gamma_\alpha$ is a minimizing extremal. Since $\dot f_\alpha= df(\dot\gamma_\alpha)$ and  $\dot\gamma_\alpha \in SM$, we conclude that
    $$\int_Mf^2d\mu_\alpha  \le \frac{D^2}{\pi^2}\int_ME^*(df)^2d\mu_\alpha \qquad \text{ for $\nu$-a.e. $\alpha \in \sA$}$$
    (the inequality holds trivially for $\alpha \in \sD$, since the mass balance condition \eqref{MBeq2} implies that $\int_Mf^2d\mu_\alpha = f(x_\alpha)^2 = (\int_Mfd\mu_\alpha)^2 = 0$). Integrating with respect to $\alpha$ and using the disintegration formula \eqref{disinteq} we arrive at \eqref{peq}.
\end{proof}

\begin{theorem}[Log-Sobolev inequality, cf. \text{\cite[Proposition 5.7.1]{BGL}, \cite{Oh17}}]\label{lsthm}
    Suppose that $\mu$ is a probability measure and that the pair $(\mu,L)$ satisfies $\CD({K},\infty)$ for some ${K} > 0$.  Let $f \in (L^2\log L^2)(\mu)\cap L^2(\mu)\cap C^1$ satisfy $\int f^2d\mu = 1$. Then
    $$\int_Mf^2\log f^2 \, d\mu \le \frac{2}{{K}}\int_ME^*(df)^2\,d\mu.$$
\end{theorem}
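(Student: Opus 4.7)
The strategy mirrors the proof of Theorem \ref{pointhm}: reduce via the needle decomposition to a one-dimensional log-Sobolev inequality on each needle. After a truncation/approximation that I will address below, I would apply Theorem \ref{needlethm} to the mean-zero function $g := f^2 - 1$. This yields a disintegration $\mu = \int_\sA \mu_\alpha\,d\nu(\alpha)$ in which the mass balance \eqref{MBeq} gives, for $\nu$-a.e.\ $\alpha \in \sA$,
\[
    \int_M f^2\,d\mu_\alpha = \mu_\alpha(M).
\]
For each non-degenerate $\alpha \in \sN$, the needle $\mu_\alpha = (\gamma_\alpha)_* m_\alpha$ is a $\CD(K,\infty)$-needle, so by Definition \ref{needledef} the density $e^{-\psi_\alpha}$ of $m_\alpha$ satisfies $\ddot\psi_\alpha \ge K$. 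Consequently, the normalized probability measure $\hat m_\alpha := m_\alpha/\mu_\alpha(M)$ is a $K$-log-concave probability measure on the interval $I_\alpha \subseteq \RR$.

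Setting $f_\alpha := f \circ \gamma_\alpha$, the mass balance becomes $\int f_\alpha^2\,d\hat m_\alpha = 1$, and the classical one-dimensional Bakry--\'Emery log-Sobolev inequality for $K$-log-concave probability measures (see e.g.\ \cite[Proposition 5.7.1]{BGL}) gives
\[
    \int f_\alpha^2 \log f_\alpha^2 \,d\hat m_\alpha \le \frac{2}{K}\int \dot f_\alpha^2\,d\hat m_\alpha.
\]
Multiplying by $\mu_\alpha(M)$ and using that $\dot f_\alpha(t) = df\vert_{\gamma_\alpha(t)}(\dot\gamma_\alpha(t))$ with $\dot\gamma_\alpha(t) \in SM$, so $\dot f_\alpha(t)^2 \le E^*(df\vert_{\gamma_\alpha(t)})^2$ by the definition of $E^*$, I obtain
\[
    \int_M f^2 \log f^2\,d\mu_\alpha \le \frac{2}{K}\int_M E^*(df)^2\,d\mu_\alpha.
\]
For degenerate $\alpha \in \sD$ the mass balance forces $f(x_\alpha)^2 = 1$, so $f^2 \log f^2$ vanishes at $x_\alpha$ and the inequality is trivial. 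Integrating over $\alpha$ via the disintegration formula \eqref{disinteq} yields the desired inequality.

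\textbf{Main obstacle.} The chief technicality is to verify the hypothesis of Theorem \ref{needlethm}, namely that $\int (|\cc(x_0,\cdot)| + |\cc(\cdot,x_0)|)\,|f^2-1|\,d\mu < \infty$. Since $\CD(K,\infty)$ with $K > 0$ does not ensure $\diam(M) < \infty$, this moment condition is not automatic. I would first prove the inequality for $f$ bounded with $f^2 - 1$ compactly supported, where the hypothesis is trivially satisfied, and then extend to the general case by monotone approximation: take $f_n := \max(-n, \min(f,n)) \cdot \chi_n$ for suitable cutoffs $\chi_n$ converging to $1$, normalize so that $\int f_n^2 d\mu = 1$, and pass to the limit using the assumed finiteness of $\int f^2 \log f^2\,d\mu$ and $\int E^*(df)^2\,d\mu$ together with dominated/monotone convergence. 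A cleaner alternative is to first establish the log-Sobolev inequality on each needle (which, as above, requires no global integrability), and then derive concentration/exponential-moment bounds for $\mu$ by the Herbst argument, thereby legitimizing the direct application to unbounded $f$.
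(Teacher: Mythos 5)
Your proof follows essentially the same approach as the paper: apply Theorem \ref{needlethm} to $f^2-1$, use the one-dimensional Bakry--\'Emery log-Sobolev inequality on each needle (with $\dot f_\alpha^2 \le E^*(df)^2$ coming from $\dot\gamma_\alpha \in SM$), handle degenerate needles via mass balance, and integrate. You are in fact slightly more careful than the paper on two minor points -- normalizing $\hat m_\alpha = m_\alpha/\mu_\alpha(M)$ before invoking the 1D inequality (the paper abbreviates this as ``$\int f^2 d\mu_\alpha = 1$''), and spelling out the truncation needed to justify the moment hypothesis of Theorem \ref{needlethm} -- but the argument is the same.
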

\begin{proof}
    Again, we may assume that $f$ is compactly-supported. Apply Theorem \ref{needlethm} to the function
    $$f^2 - 1$$
    to obtain a disintegration of $\mu$ such that for $\nu$-almost every $\alpha \in \sA$ we have
    $$\int_Mf^2d\mu_\alpha = 1,$$
    and such that $\nu$-almost every $\alpha \in \sN$ is a $\CD({K},\infty)$-needle. It then follows from \eqref{disinteq} and the 1-dimensional log-Sobolev inequality (see e.g. \cite[Section 5.7]{BGL}) that
    \begin{align*}
        \int_{a_\alpha}^{b_\alpha}f(\gamma_\alpha(t))^2\log f(\gamma_\alpha(t))^2\,dm_\alpha(t) & \le \frac{2}{{K}}\int_{a_\alpha}^{b_\alpha}((f\circ\gamma_\alpha)'(t))^2\,dm_\alpha(t)\\
        & \le \frac{2}{K}\int_{a_\alpha}^{b_\alpha}(E^*(df)(\gamma_\alpha(t)))^2\,dm_\alpha(t),
    \end{align*}
    using in the second passage the fact that $E(\dot\gamma_\alpha)\equiv 0$. Thus, for $\nu$-almost every $\alpha \in \sA$,
    $$\int_Mf^2\log f^2 \, d\mu_\alpha \le \frac{2}{{K}}\int_ME^*(df)^2\,d\mu_\alpha,$$
    where again the inequality holds trivially for $\alpha \in \sD$, since the mass balance condition \eqref{MBeq2} implies that $f(x_\alpha)^2 = 1$ so the left hand side vanishes. We complete the proof by integrating over $\alpha$ and using the disintegration formula \eqref{disinteq}.
\end{proof}

\section{Examples}\label{examplesec}
\subsection{Classical Lagrangians}\label{magsec}

In this section we consider the case where the Lagrangian takes the form:
$$L(v) = \frac{|v|_g^2}{2} + U(x) - \eta(v), \qquad v \in T_xM, \, x \in M,$$
where $g$ is a Riemannian metric, $U$ is a smooth, positive function on $M$, and $\eta$ is a one-form. Note that our sign convention is the opposite of that used in classical mechanics. We leave the verification of some of the facts below as an exercise to the reader.
The Hamiltonian associated to $L$ is 
$$H(p) = \frac{|p + \eta|^2_g}{2} - U(x), \qquad p \in T^*_xM, \, x \in M,$$
The Legendre transform is
$$\cL p = p^\sharp + \eta^\sharp, \qquad p \in T^*M,$$
where $\sharp:T^*M \to TM$ is the musical isomorphism, and the energy is
\begin{equation}\label{magEeq}E(v) = \frac{|v|_g^2}{2} - U(x), \qquad v \in T_xM, \, x \in M.\end{equation}
The operators $\nabla$ and $\bL$ are given by
\begin{equation*}
    \nabla u = \nabla^gu + \eta^\sharp \qquad \text{ and } \qquad \bL u = \Delta_gu - \delta\eta,
\end{equation*}
where $\nabla^g$ and $\Delta_g$ are the Riemannian gradient and Laplacian, respectively, and $\delta$ is the codifferential. The Euler-Lagrange equation is
\begin{equation}\label{magELeq}\nabla^g_{\dot\gamma}\dot\gamma = \rY\dot\gamma + \nabla^gU,\end{equation}
where $\nabla^g$ is the Levi-Civita connection and $\rY = (d\eta)^\sharp$ is the $(1,1)$-tensor defined by the relation
$$\Omega : = d\eta = \left<\rY\,\cdot \, , \, \cdot \, \right>_g.$$

\medskip
We now derive an explicit formula for the weighted Ricci curvature of the Lagrangian $L$. First, let us identify some of the objects introduced in Sections \ref{spraysec} and \ref{Lagrangianconnectionsec}. Let $\Gamma_{ij}^k$ denote the Christoffel symbols of the metric $g$. Then, by \eqref{magELeq}, the coefficients of the Euler-Lagrange semispray are 
\begin{equation}\label{Sigmamageq}\Sigma^k = -\Gamma_{ij}^kv^iv^j + {\rY^{k}}_{i}v^i + (\nabla^gU)^k= \Sigma_g^k + {\rY^{k}}_{i}v^i + (\nabla^gU)^k, \qquad 1 \le k \le n ,\end{equation}
where $\Sigma_g$ is the geodesic spray of $g$, and the connection coefficients are
$$\Gamma_i^j = \Gamma_{ik}^jv^k - \frac12{\rY^j}_i, \qquad 1 \le i,j \le n.$$
The vector field $\Lambda$ is given by
\begin{equation}\label{maglabmdaeq}\Lambda^j = \frac12{\rY^j}_iv^i + (\nabla^gU)^j, \qquad 1 \le j \le n\end{equation}
and the vector fields $E_i$ are given by
\begin{equation}\label{Eimageq}E_i = \partial_{x^i} - \left(\Gamma_{ik}^jv^k - \frac12{\rY^j}_i\right)\partial_{v^j} = E_i^g + \frac12{\rY^j}_i\partial_{v^i},\end{equation}
where $E_i^g$ are the corresponding vector fields for the Lagrangian $g$. Finally, observe that $L_{v^iv^j} = g_{ij}$, i.e. the coefficients $g_{ij}$ of the metric $g$ coincide with the coefficients $g_{ij} = L_{v^iv^j}$ defined in Section \ref{Lagrangianconnectionsec}.

\begin{lemma}\label{mRiclemma}
    The Ricci curvature of the Lagrangian $L$ is given by
    $$\Ric = \Ric_g - \div_g\rY + \frac14|\rY|_g^2 - \Delta_gU,$$
    where the one-form $\div_g\rY = \tr(\nabla^g\rY)$ is viewed as a function on $TM$, and the functions $|\rY|_g,\Delta_g U$ on $M$ are viewed as functions on $TM$ which are constant on each fiber.
\end{lemma}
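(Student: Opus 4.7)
The formula is a direct coordinate computation from the definition $\Ric = \eps^i([\Sigma, E_i])$ in \eqref{Riccicoordeq}, using the explicit expressions \eqref{Sigmamageq} and \eqml{Eimageq} for $\Sigma$ and the horizontal frame. I would decompose $\Sigma = \Sigma_g + W$ with $W = W^k\partial_{v^k}$, $W^k = \rY_j^k v^j + (\nabla^g U)^k$, and correspondingly write $E_i = E_i^g + \tfrac{1}{2}\rY_i^j\partial_{v^j}$, $\Gamma_i^k = \Gamma_{g,i}^k - \tfrac{1}{2}\rY_i^k$ (with $\Gamma_{g,i}^k = \Gamma_{ij}^k v^j$), and $\eps^i = \eps_g^i - \tfrac{1}{2}\rY_j^i\,dx^j$. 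A basic observation, useful throughout, is that $\tr\rY = g^{ij}(\partial_i\eta_j - \partial_j\eta_i) = 0$ by symmetry of $g$, so in particular $\Gamma_i^i = \Gamma_{g,i}^i$ is purely Riemannian.

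Next, I would expand $[\Sigma,E_i]$ by bilinearity into four brackets (the Riemannian bracket plus three correction brackets involving $W$ and $\tfrac{1}{2}\rY_i^j\partial_{v^j}$), apply $\eps^i$ in its split form, and sum over $i$. Four types of contribution appear. The pure Riemannian trace $\eps_g^i([\Sigma_g, E_i^g])$ reduces, by the Riemannian case of the remark following \eqref{Riccicoordeq}, to $\Ric_g(v,v)$. The cross-terms linear in $\rY$ collect coordinate derivatives $\partial_{x^j}\rY_i^k$ together with Christoffel symbols; using torsion-freeness and metric compatibility of $\nabla^g$, these reassemble into the tensorial divergence $-\tr(\nabla^g\rY) = -\div_g\rY$. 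The pieces quadratic in $\rY$ simplify via the antisymmetry identity $\rY_i^j\rY_j^i = g^{jk}g^{il}\Omega_{ik}\Omega_{jl} = -|\rY|_g^2$, and a count of the factors of $\tfrac{1}{2}$ appearing in $\Gamma_i^k$, $W^k$, and $\eps^i$ yields the coefficient $\tfrac{1}{4}$. The pieces involving $U$ reduce to $-\partial_{x^i}(\nabla^g U)^i - \Gamma_{g,ij}^i(\nabla^g U)^j = -\div_g\nabla^g U = -\Delta_g U$.

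The main obstacle will be the bookkeeping: a number of terms appear that look non-tensorial in local coordinates (derivatives of $\rY^k_i$, of $\Gamma_{ij}^k$, of $(\nabla^g U)^i$), and one must track how they combine into the covariant objects $\div_g\rY$ and $\Delta_g U$, and how various factors of $\tfrac{1}{2}$ conspire to produce exactly $\tfrac{1}{4}|\rY|_g^2$ with the correct sign. Two useful sanity checks along the way are that the formula must specialize to the Riemannian Ricci when $\eta = 0$ and $U$ is constant, and that it must be independent of the choice of canonical coordinates; since the definition of $\Ric$ is manifestly intrinsic, any spurious non-tensorial remainder in the computation must cancel.
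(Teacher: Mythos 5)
Your proposal is correct and follows essentially the same route as the paper's proof: both expand $\Ric = \eps^i([\Sigma,E_i])$ using the decompositions $\Sigma = \Sigma_g + (\rY_i^k v^i + (\nabla^g U)^k)\partial_{v^k}$, $E_i = E_i^g + \tfrac12\rY_i^j\partial_{v^j}$, $\eps^i = \eps_g^i - \tfrac12\rY_k^i\,dx^k$, apply bilinearity of the bracket, and regroup the resulting terms into the Riemannian Ricci, the divergence of $\rY$ (using the skew-symmetry identity $\rY_i^j\rY_j^i = -|\rY|_g^2$), and the Laplacian of $U$. The one thing worth flagging is that, as you anticipate, the bookkeeping of the $\tfrac12$-factors and the collapse of the non-tensorial Christoffel pieces into $\div_g\rY$ and $\Delta_g U$ really does need to be written out term by term to be convincing — the paper carries out exactly that computation.
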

\begin{proof}
    In the following computation, a comma followed by an index $i$ denotes differentiation with respect to $x^i$. Objects with $g$ superscripts or subscripts are taken with respect to the Riemannian Lagrangian $(g+1)/2$ instead of $L$. We denote $U^j : = (\nabla^gU)^j$. By \eqref{Sigmamageq} and \eqref{Eimageq},
    \begin{align*}
        [E_i,\Sigma] = & [E_i^g,\Sigma_g] + \left[\frac12{\rY^j}_i\partial_{v^j},\Sigma_g\right] + \left[E_i^g,({\rY^j}_kv^k + U^j)\partial_{v^j}\right] + \left[\frac12{\rY^j}_i\partial_{v^j},({\rY^j}_kv^k + U^j)\partial_{v^j}\right] \\
        = & [E_i^g,\Sigma_g] +\frac12{\rY^j}_i\partial_{x^j} - {\rY^k}_i\Gamma_{k\ell}^jv^\ell\partial_{v^j}- \frac12v^k{\rY^j}_{i,k}\partial_{v^j} \\
        & + ({\rY^j}_{k,i}v^k+{U^j}_{,i})\partial_{v^j}- \Gamma_{ik}^\ell v^k{\rY^j}_\ell\partial_{v^j} + ({\rY^j}_kv^k + U^j)\Gamma_{ij}^\ell\partial_{v^\ell} + \frac12{\rY^k}_i{\rY^j}_k\partial_{v^j}\\
        = & [E_i^g,\Sigma_g] + \frac12{\rY^j}_i\partial_{x^j} \\
        & + \Big(-{\rY^k}_i\Gamma_{k\ell}^jv^\ell - \frac12 v^k{\rY^j}_{i,k} + v^k{\rY^j}_{k,i} - v^k\Gamma_{ik}^\ell{\rY^j}_\ell + v^k{\rY^\ell}_k\Gamma_{i\ell}^j + \frac12{\rY^k}_i{\rY^j}_k +{U^j}_{,i} + U^k\Gamma_{ik}^j\Big)\partial_{v^j}.
    \end{align*}
    Recall that $\eps^i = dv^i + \Gamma_k^idx^k= \eps_g^i -\frac12 {\rY^i}_kdx^k$. Thus by \eqref{Riccicoordeq},
    \begin{align*}
        -\Ric = \eps^i([E_i,\Sigma]) 
        = & \eps^i_g([E_i,\Sigma]) - \frac12{\rY^i}_kdx^k([E_i,\Sigma])
        \\ = & \eps^i_g([E_i,\Sigma]) - \frac12{\rY^i}_kdx^k\left([E_i^g,\Sigma^g] + \frac12{\rY^j}_i\partial_{x^j}\right)
        \\= & \eps^i_g([E_i^g,\Sigma_g]) + \frac12{\rY^k}_i\Gamma_{k\ell}^iv^\ell + \frac12{\rY^j}_i\Gamma_{jk}^iv^k - \frac14{\rY^i}_k{\rY^k}_i\\
        & - {\rY^k}_i\Gamma_{k\ell}^iv^\ell - \frac12 v^k{\rY^i}_{i,k} + v^k{\rY^i}_{k,i} - v^k\Gamma_{ik}^\ell{\rY^i}_\ell + v^k{\rY^\ell}_k\Gamma_{i\ell}^i + \frac12{\rY^k}_i{\rY^i}_k\\
        & +{U^i}_{,i} + U^k\Gamma_{ik}^i\\
        = &  -\Ric_g + v^k\left({\rY^i}_{k,i} - \Gamma_{ik}^\ell{\rY^i}_\ell + \Gamma_{i\ell}^i{\rY^\ell}_k\right) +  \frac14{\rY^k}_i{\rY^i}_k +{U^i}_{,i} + U^k\Gamma_{ik}^i\\
        = & -\Ric_g + (\div\rY)_kv^k - \frac14|\rY|_g^2 + \Delta_gU,
    \end{align*}
    where the identity ${\rY^k}_i{\rY^i}_k = -|\rY|_g^2$ holds since $\rY$ is skew-symmetric.
\end{proof}
Combining Lemma \ref{mRiclemma}, identity \eqref{maglabmdaeq} and Definition \ref{weightedriccidef} with $\psi \equiv 0$, we get:
\begin{corollary}\label{weightedmagcor}
    The weighted Ricci curvature of the Lagrangian $L$ with respect to the Riemannian volume measure is given by
    $$\Ric_{\Vol_g,N} = \Ric_g - \div_g\rY + \frac14|\rY|_g^2 - \Delta_gU + 2\Lambda_\perp^2 + \left(1-\frac{1}{N-n}\right)\Lambda_\parallel^2$$
    where
    $$\Lambda_\parallel(v) = \frac{\left<v,\nabla^gU\right>}{|v|^2}, \qquad \text{ and } \qquad \Lambda_\perp^2(v) = \frac{\left|\frac12\rY v + \nabla^gU - |v|^{-2}\left<v,\nabla^gU\right>v\right|^2}{|v|^2}, \qquad v \in TM\setminus\bz.$$
\end{corollary}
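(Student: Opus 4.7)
The plan is to unwind Definition \ref{weightedriccidef} and plug in the data already collected. The first task is to identify the function $\psi:TM\setminus\bz\to\RR$ defined by $\pi^*\omega = e^{-\psi}\sqrt{\det g_{\cdot\cdot}}\,|dx^1\wedge\cdots\wedge dx^n|$, where $g_{ij}=L_{v^iv^j}$. For the Lagrangian $L(v)=\tfrac12|v|_g^2+U-\eta(v)$, the fiber Hessian is exactly $L_{v^iv^j}=g_{ij}$, so the matrix entering the definition of $\psi$ coincides with the Riemannian metric. Since $\mu=\Vol_g$ has local density $\sqrt{\det g_{ij}(x)}\,|dx^1\wedge\cdots\wedge dx^n|$, comparing the two expressions yields $\psi\equiv 0$. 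Consequently $\Sigma\psi=0$, $\Sigma^2\psi=0$, and therefore $\Ric_\mu=\Ric$ and $(\Sigma\psi-\Lambda_\parallel)^2=\Lambda_\parallel^2$.

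Inserting these simplifications into Definition \ref{weightedriccidef} gives
\begin{equation*}
\Ric_{\Vol_g,N}=\Ric-\frac{\Lambda_\parallel^2}{N-n}+2\Lambda_\perp^2+\Lambda_\parallel^2
=\Ric+2\Lambda_\perp^2+\Bigl(1-\tfrac{1}{N-n}\Bigr)\Lambda_\parallel^2.
\end{equation*}
Substituting the formula $\Ric=\Ric_g-\div_g\rY+\tfrac14|\rY|_g^2-\Delta_gU$ from Lemma~\ref{mRiclemma} yields the first part of the asserted identity. So the bulk of the argument is just invoking what has already been done, with a short verification that the density $\psi$ vanishes.

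It then remains to compute $\Lambda_\parallel$ and $\Lambda_\perp^2$ from \eqref{Lambdacomponentsdef} and \eqref{maglabmdaeq}. Substituting $\Lambda^j=\tfrac12\rY^j_iv^i+(\nabla^gU)^j$ into $\Lambda_\parallel=g_{ij}v^i\Lambda^j/g_{ij}v^iv^j$ and using $\Omega=\langle\rY\cdot,\cdot\rangle_g$ with $\Omega$ a $2$-form gives $g_{ij}v^i\rY^j_kv^k=\Omega(v,v)=0$, so only the potential term contributes and $\Lambda_\parallel=\langle v,\nabla^gU\rangle/|v|^2$. The expression for $\Lambda_\perp^2$ is then just $|v|^{-2}\bigl|\tfrac12\rY v+\nabla^gU-\Lambda_\parallel\,v\bigr|_g^2$, which is precisely the formula in the statement.

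The only genuinely delicate point is the assertion $\psi\equiv 0$ — it is essentially a sanity check that the two natural densities (the Riemannian one, and the fiberwise Hessian one built into $\pi^*\omega$) agree when the Lagrangian is of the stated form, independently of the magnetic term $\eta$ and the potential $U$ (neither of which affects $L_{v^iv^j}$). Once that is in place, the rest is a direct substitution.
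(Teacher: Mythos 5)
Your proposal is correct and follows the same route the paper takes: the paper's one-line proof simply says ``Combining Lemma \ref{mRiclemma}, identity \eqref{maglabmdaeq} and Definition \ref{weightedriccidef} with $\psi \equiv 0$, we get [the statement],'' and you have supplied exactly that chain, adding the worthwhile verification that $\psi\equiv 0$ because $L_{v^iv^j}=g_{ij}$ is $v$-independent and matches the density of $\Vol_g$, and that the $\rY$-term drops out of $\Lambda_\parallel$ by antisymmetry of $\Omega=d\eta$.
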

\begin{corollary}\label{Kahlercor}
    Suppose that $M$ is a K{\"a}hler manifold of complex dimension $d = n/2$, and that
    $$U \equiv \frac12 \qquad \text{ and } \qquad \rY = c\cdot \mathbf{J},$$
    where $\mathbf{J}$ is the complex structure and $c\in\RR$ is a constant. Then
    $$\Ric_{\Vol_g,n} = \Ric_g + c^2\cdot\frac{d+1}{2}.$$
\end{corollary}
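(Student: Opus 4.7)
The plan is to apply the explicit formula of Corollary \ref{weightedmagcor} and simplify each term using the Kähler hypothesis together with the fact that $SM = \{|v|_g^2 = 1\}$ under the normalization $U \equiv 1/2$. Indeed, from \eqref{magEeq} we have $E(v) = \tfrac12(|v|_g^2 - 1)$, so on the indicatrix $SM$ every tangent vector is a $g$-unit vector, which is exactly what we need in order to evaluate the homogeneous-of-degree-zero quantities $\Lambda_\parallel$ and $\Lambda_\perp^2$.

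First I would deal with the potential. Since $U \equiv 1/2$ is constant, $\nabla^g U \equiv 0$ and $\Delta_g U \equiv 0$. By the formulas in Corollary \ref{weightedmagcor}, this immediately gives $\Lambda_\parallel \equiv 0$ on $SM$ and reduces $\Lambda_\perp^2$ to $|\tfrac12 \rY v|_g^2/|v|_g^2$. Because $\Sigma\psi - \Lambda_\parallel = 0 - 0 = 0$, the supplementary definition of $\Ric_{\mu,n}$ in Definition \ref{weightedriccidef} is applicable, and we obtain
\begin{equation*}
\Ric_{\Vol_g,n} \;=\; \Ric_g \;-\; \div_g \rY \;+\; \tfrac14 |\rY|_g^2 \;+\; 2\,\Lambda_\perp^2
\end{equation*}
on $SM$.

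Next I would use the three consequences of the Kähler hypothesis $\rY = c\,\mathbf{J}$. Parallelism of the complex structure, $\nabla^g \mathbf{J} \equiv 0$, gives $\div_g \rY = c\,\tr(\nabla^g \mathbf{J}) = 0$. Since $\mathbf{J}^2 = -I$ and $\mathbf{J}$ is $g$-skew-adjoint, we have $\rY_i^k \rY_k^i = c^2\, \tr(\mathbf{J}^2) = -c^2 n = -2dc^2$, so the convention $|\rY|_g^2 = -\rY_i^k \rY_k^i$ recorded in the proof of Lemma \ref{mRiclemma} gives $|\rY|_g^2 = 2dc^2$ and hence $\tfrac14 |\rY|_g^2 = \tfrac{dc^2}{2}$. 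Finally, $g$-orthogonality of $\mathbf{J}$ yields $|\mathbf{J}v|_g = |v|_g$, so on $SM$,
\begin{equation*}
\Lambda_\perp^2(v) \;=\; \frac{|\tfrac{c}{2}\mathbf{J} v|_g^2}{|v|_g^2} \;=\; \frac{c^2}{4}.
\end{equation*}

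Assembling the pieces,
\begin{equation*}
\Ric_{\Vol_g,n} \;=\; \Ric_g \;+\; \tfrac{dc^2}{2} \;+\; 2\cdot \tfrac{c^2}{4} \;=\; \Ric_g \;+\; c^2\cdot \tfrac{d+1}{2}
\end{equation*}
on $SM$, as claimed. There is no real obstacle here; the only point that requires some care is checking that one is in the regime where $\Ric_{\mu,n}$ is defined (i.e.\ verifying $\Sigma\psi - \Lambda_\parallel = 0$ so that the $\tfrac{1}{N-n}$ term causes no trouble), and this is immediate once we observe that the potential is constant.
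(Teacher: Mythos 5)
Your proof is correct and takes essentially the same approach as the paper: apply Corollary \ref{weightedmagcor}, observe that $\mathbf{J}$ is a parallel isometry (killing $\div_g\rY$ and fixing $|\rY|_g^2$ and $\Lambda_\perp^2$), and note $\Lambda_\parallel \equiv 0$ so that the $N=n$ branch of Definition \ref{weightedriccidef} applies. The paper's proof is a two-sentence remark that omits the arithmetic; you have simply written out the computation in full, and it checks out.
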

\begin{proof} The complex structure  $\mathbf{J}$ is a parallel isometry and $\Lambda_\parallel \equiv 0$. Thus the result follows from Corollary \ref{weightedmagcor}.\end{proof}

\medskip
The following lemma is easy to verify; see e.g. \cite[Lemma 2.3]{BP}.
\begin{lemma}
    The cost associated with $L$ is given by
    $$\cc(x,y) = \inf_\gamma\left(\int_\gamma\sqrt{2U}\cdot |d\gamma| - \int_\gamma\eta\right),$$
    where the infimum is taken over piecewise-$C^1$ curves joining $x$ to $y$. A curve attaining the infimum satisfies $|\dot\gamma| = \sqrt{2U}$. In particular, if $U \equiv 1/2$ then
    $$\cc(x,y) = \inf_\gamma\left(\Len_g[\gamma] - \int_\gamma\eta\right),$$
    where $\Len_g[\gamma]$ denotes length with respect to the metric $g$, and the infimum is attained by a unit-speed curve.
\end{lemma}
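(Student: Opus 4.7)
The plan is to use the pointwise arithmetic-geometric mean inequality together with the fact that the cost $\cc$ is defined by minimizing over curves with no restriction on their domain, so one is free to reparametrize.

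First I would observe that for every $v \in T_xM$ and every $x \in M$, the AM-GM inequality gives
\begin{equation*}
	\frac{|v|_g^2}{2} + U(x) \ge \sqrt{2U(x)}\cdot |v|_g,
\end{equation*}
with equality if and only if $|v|_g^2 = 2U(x)$, i.e.\ if and only if $E(v) = 0$ by \eqref{magEeq}. Substituting this into the definition of $L$, I get
\begin{equation*}
	L(v) \ge \sqrt{2U(x)}\cdot |v|_g - \eta(v), \qquad v \in T_xM,
\end{equation*}
again with equality iff $E(v) = 0$. Integrating along any piecewise-$C^1$ curve $\gamma:[a,b]\to M$ joining $x$ to $y$ yields
\begin{equation*}
	\int_a^bL(\dot\gamma(t))dt \ge \int_\gamma\sqrt{2U}\cdot|d\gamma| - \int_\gamma\eta,
\end{equation*}
so that, taking the infimum on the left, $\cc(x,y)$ is at least as large as the right-hand side of the claimed formula.

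For the reverse inequality I would exploit the fact that the right-hand side functional $\gamma \mapsto \int_\gamma\sqrt{2U}\,|d\gamma| - \int_\gamma\eta$ is invariant under orientation-preserving reparametrization. Given any piecewise-$C^1$ curve $\gamma$ joining $x$ to $y$, I would reparametrize it to a curve $\tilde\gamma$ satisfying $|\dot{\tilde\gamma}|_g \equiv \sqrt{2U(\tilde\gamma)}$; since $U > 0$ by assumption, this reparametrization is well-defined (constant subarcs can be ignored since they contribute zero to both sides, provided one allows piecewise-$C^1$ rather than $C^1$, which is the case). Along $\tilde\gamma$ one has $E(\dot{\tilde\gamma})\equiv 0$, so the AM-GM inequality above is an equality everywhere, and
\begin{equation*}
	\int L(\dot{\tilde\gamma}(t))dt = \int_{\tilde\gamma}\sqrt{2U}\cdot |d\tilde\gamma| - \int_{\tilde\gamma}\eta = \int_\gamma\sqrt{2U}\cdot|d\gamma| - \int_\gamma\eta.
\end{equation*}
Taking the infimum over $\gamma$ completes the proof of the formula and shows that any minimizer in the geometric formulation, after the above reparametrization, is a minimizer in the Lagrangian formulation with $|\dot\gamma|_g = \sqrt{2U}$. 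The case $U \equiv 1/2$ is then immediate.

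The only mildly delicate point is the reparametrization step: one must verify that the geometric functional is indeed invariant under piecewise-$C^1$ reparametrizations and that every $\gamma$ can be reparametrized to satisfy $|\dot\gamma|_g = \sqrt{2U}$ with the resulting curve being piecewise-$C^1$. Both facts follow from the standard arc-length reparametrization argument applied with the weight $\sqrt{2U}$, which is smooth and strictly positive on $M$. No further technicalities arise.
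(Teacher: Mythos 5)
Your argument is correct and is the standard Maupertuis-principle computation that the paper appeals to by simply citing \cite[Lemma 2.3]{BP} rather than writing out a proof. One small imprecision: a constant subarc contributes \emph{strictly positively} to the Lagrangian action (since $L>0$ on the zero section by supercriticality), not zero, so the phrase ``contribute zero to both sides'' is not literally true; the argument nonetheless goes through because such a subarc only increases $\int L$ in the forward inequality, and in the reverse direction you delete it before reparametrizing, which leaves the geometric functional unchanged. It would also be worth spelling out that the assertion about minimizers is exactly the equality case of your AM--GM step: if $\gamma$ realizes $\cc(x,y)$, then the pointwise inequality $L(\dot\gamma)\ge\sqrt{2U}\,|\dot\gamma|_g-\eta(\dot\gamma)$ must be an equality almost everywhere, forcing $E(\dot\gamma)\equiv 0$, i.e.\ $|\dot\gamma|_g=\sqrt{2U}$.
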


\subsubsection*{Horocycles in complex hyperbolic space}
An noteworthy special case is the Lagrangian on the hyperbolic plane whose Euler-Lagrange flow on the unit tangent bundle is the horocycle flow. Let $\HH$ denote the hyperbolic plane, and let $L$ be the Lagrangian 
\begin{equation}\label{horoLagrangianeq}L = \frac{g+1}{2} - \eta,\end{equation}
where $g$ is the hyperbolic metric and $\eta$ is any primitive of the hyperbolic area form; for instance, in the upper half plane model one can take 
$$L = \frac{dx^2 + dy^2}{2y^2} + \frac12 - \frac{dx}{y}.$$
A Brunn-Minkowski inequality for this Lagrangian was obtained in \cite{AK}. We will now give a proof of Theorem \ref{horoBMthm}, which generalizes this result to \emph{complex hyperbolic space} of arbitrary dimension.

\medskip
Let $\mathbb{C}\HH^d$ denote the unique simply connected K{\"a}hler  manifold of complex dimension $d$ and constant holomorphic sectional curvature $-1$. When $d=1$ then $\mathbb{C}\HH^1 = \HH$ is the hyperbolic plane. Facts about the complex hyperbolic geometry can be found in \cite{Gol}. We define an \emph{oriented horocycle} to be a solution to the differential equation
\begin{equation}\label{horoeq}
	\nabla_{\dot\gamma}\dot\gamma = \mathbf{J}\dot\gamma, \qquad |\dot\gamma|\equiv 1,
\end{equation}
where $\mathbf{J}$ is the complex structure on $\mathbb{C}\HH^d$. Equivalently, an oriented horocycle is a unit-speed curve of geodesic curvature $1$ contained in a totally geodesic copy of the hyperbolic plane (a \emph{complex geodesic}) and oriented according to the induced complex structure.

\medskip
Let $\eta$ be any one-form satisfying $d\eta = \Omega$, where $\Omega$ is the K{\"a}hler form of $\mathbb{C}\HH^d$ (an explicit choice is written below). Since $\Omega^\sharp = \mathbf{J}$, equations \eqref{magEeq} and \eqref{magELeq} with $U\equiv 1/2$ imply that zero-energy extremals are solutions to \eqref{horoeq}. Thus every minimizing extremal is an oriented horocycle. The Riemannian Ricci curvature of $\mathbb{C}\HH^d$ is $-\frac{d+1}{2}\cdot g$ (see e.g. \cite[Theorem 7.5]{KN}). Hence by Corollary \ref{Kahlercor}, the weighted Ricci curvature of the Lagrangian $L$ with respect to the hyperbolic volume measure vanishes identically on the unit tangent bundle:
	$$\Ric_{\mu,n} \equiv 0 \qquad \text{ on } S\mathbb{C}\HH^d,$$ 
where $n=2d$ is the real dimension of $\HH^d$.

\medskip
Let us verify conditions (I)-(III) from Section \ref{assumpsec}. Clearly $L$ is Tonelli and $0$ is a regular value of the energy. Every pair of points in $\mathbb{C}\HH^d$ lies on a unique common complex geodesic, which contains a unique oriented horocycle joining them. Thus, in order to verify condition (II), it suffices to show that every oriented horocycle is uniquely minimizing. Note that this notion is independent of the choice of $\eta$. It is convenient to work in the Siegel model, which is the domain
$$\left\{z \in \bC^d \, \mid \, z_d + \bar z_d - r^2 > 0 \right\},$$
where $r : =\sum_{j=1}^{d-1}|z_j|^2$, endowed with the metric
\begin{align*}
    g = \frac{4}{(z_d + \bar z_d - r^2)^2}&\left(dz_d - \sum_{j=1}^{d-1}\bar z_jdz_j\right)\left(d\bar z_d - \sum_{j=1}^{d-1}z_jd\bar z_j\right) \\&+ \frac{4}{z_d + \bar z_d - r^2}\sum_{j=1}^{d-1}dz_jd\bar z_j.
\end{align*}
The one form 
$$\eta = i\cdot\frac{dz_d - d\bar z_d + \sum_{j=1}^{d-1}z_jd\bar z_j - \bar z_jdz_j}{z_d + \bar z_d - r^2}.$$
is a primitive of the K{\"a}hler form $\Omega = -4i\partial\bar\partial\log(z_d + \bar z_d - r^2)$. With these expressions for $g$ and $\eta$, it is easy to verify that $|\eta|_g = 1$, and that this norm is achieved at the unit vector field
$$V := \frac{z_d + \bar z_d - r^2}{2i}\left(\partial_{z_d} - \partial_{\bar z_d}\right).$$
Thus, for every piecewise-$C^1$ curve $\gamma$ on $\mathbb{C}\HH^d$, 
\begin{equation*}\Len[\gamma] - \int_\gamma\eta \ge 0,\end{equation*}
with equality if and only if $\gamma$ is an integral curve of $V$. In particular, closed curves have positive action, so $L$ is supercritical. The integral curves of $V$ lying on the complex geodesic $r = 0$ are oriented horocycles since, when restricted to this complex geodesic, the vector field $V$ becomes $x\partial_y$ in real coordinates on the right half-plane. It follows by symmetry that every oriented horocycle is uniquely minimizing. Condition (III) holds because the length of an oriented horocycle joining two points is a smooth function of the distance between them. Theorem \ref{horoBMthm} now follows from Theorem \ref{BMthm0}.

\subsubsection*{Contact magnetic geodesics on odd-dimensional spheres}

Let 
$$S^{2d+1} : = \left\{z \in \mathbb{C}^{d+1}\, \mid \, |z| = 1\right\}$$
denote the unit sphere in $\mathbb{C}^{n+1}$ with the round metric
$$g(v,w) = \Re\left<v,w\right>_{\mathbb{C}^{d+1}}, \qquad v,w \in T_zS^{2d+1}, \quad z \in S^{2d+1}.$$
It carries a natural contact structure, namely, the one-form
$$\eta\vert_z := \Re\left<iz,\cdot\right>_{\mathbb{C}^{d+1}} = \frac{i}{2}\sum_{j=1}^{n+1}z^jd\bar z^j - \bar z^jdz^j, \qquad z \in S^{2d+1}.$$
The \emph{Reeb vector field} is given by
$${R}\vert_z := \eta^\sharp\vert_z = i\sum_{j=1}^{n+1}z^j\partial_{z^j}-\bar z^j\partial_{\bar z^j}, \qquad z \in S^{2d+1}.$$
The two-form $\Omega = d\eta$ is given by
$$\Omega = i\sum_{j=1}^{d+1}dz^j\wedge d\bar z^j,$$
which is twice the standard symplectic form on $\mathbb{C}^{d+1}$, and so the $(1,1)$-tensor $\rY = \Omega^\sharp$ is
$$\rY = 2i\pi_{\mathrm{ker}\eta},$$
where $\pi_{\mathrm{ker}\eta}$ is the orthogonal projection onto the kernel of $\eta$. Equivalently,
$$\rY v = 2i\cdot v - 2i\cdot\eta(v)\cdot R, \qquad v \in T_zS^{2d+1}, \quad z \in S^{2d+1}.$$

\medskip
The extremals of the Lagrangian 
\begin{equation}\label{contactLagrangianeq}L_s = \frac{g + 1}{2} - s\cdot \eta\end{equation}
for $s>0$ were studied in detail in a recent paper \cite{ABM}, see also \cite{MN} (note that the one-form used in \cite{ABM} is $\alpha = \eta/2$, and subsequently, in the formulae below $s$ becomes $s/2$). For each $s > 0$, they include all integral curves of the Reeb vector field ${R}$, which are the fibers of the \emph{Hopf fibration} 
$$\pi_{\mathrm{Hopf}}:S^{2d+1}\to \mathbb{C}\mathbf{P}^d \qquad (z_1,\dots,z_d) \mapsto [z_1:\dots:z_d].$$
In addition, each geodesic of the sub-Riemannian manifold $(S^{2d+1},g,\mathrm{ker}\eta)$ is an extremal of $L_s$ for \emph{some} value of $s$.

\begin{definition}[Contact magnetic geodesics]\normalfont\label{contactmagdef}
    Minimizing extremals of the Lagrangian $L_s$ will be called \emph{contact magnetic geodesics of strength $s$}.
\end{definition}

\begin{remark*}\normalfont
    Due to different normalization, strength $s$ here corresponds to strength $s/2$ in \cite{ABM}.
\end{remark*}

\begin{remark*}\normalfont
    The above construction can be carried out on more general \emph{Sasakian} manifolds, see \cite{CFG,DIMN,MN}.
\end{remark*}

% \medskip
% The general form of a zero-energy (i.e. unit-speed) extremal of $L_s$ is (we refer to \cite{ABM} for a clear and detailed derivation):
% \begin{equation}\label{gammaw0w1eq}\gamma_{w_0,w_1}(t) = e^{ist}\left(e^{-iCt}w_0 + e^{iCt}w_1\right), \qquad t \in \RR,\end{equation}
% where $w_0,w_1 \in \mathbb{C}^{n+1}$ are arbitrary parameters satisfying
% $$\left<w_0,w_1\right>_{\mathbb{C}^{d+1}} = 0, \qquad |w_0|^2 + |w_1|^2 = 1,$$
% and
% $$C : = |e^{i\psi} - s| = \sqrt{1 - s^2\sin^2\tau} - s\cos\tau \qquad \text{ where } \qquad \tau := \arccos\left(|w_1|^2 - |w_0|^2\right).$$
% Here $\psi\in[0,\pi]$ is the angle between $\dot\gamma$ and ${R}$, which is a constant of motion (see \cite[Lemma 2.2]{ABM}). Thus $\gamma_{w_0,w_1}$ is contained in the \emph{Clifford torus}
% $$\mathbb{T}_{w_0,w_1} = \{\lambda_0w_0 + \lambda_1w_1 \, \mid \, \lambda_0,\lambda_1 \in S^1\subseteq\mathbb{C}\}.$$

% \medskip
% Fix $0<s<1$. Given $z_0,z_1 \in S^{2d+1}$ with $\left<z_0,z_1\right>\ne 0$, an extremal $\gamma$ of $L_s$ satisfying $\gamma(-T/2) = z_0$ and $\gamma(T/2) = z_1$ corresponds to the following choice of parameters (see \cite[Section 4]{ABM}):
% $$w_0 : = \frac{1}{2i\sin(CT)}\left(e^{i(s+C)T/2}z_0 - e^{-i(s+C)T/2}z_1\right)$$
% and
% $$w_1 : = -\frac{1}{2i\sin(CT)}\left(e^{i(s-C)T/2}z_0 - e^{-i(s-C)T/2}z_1\right),$$
% where $T\in(0,\infty)$ and $C\in[1-s,1+s]$ satisfy 
% \begin{align*}
%     e^{isT}\left<z_0,z_1\right> & = \cos(CT) - i\left(\frac{1-s^2-C^2}{2Cs}\right)  \sin(CT)\\
%     & = \cos(CT) - i(\cos\tau)\sin(CT).
% \end{align*}

\medskip
From the expression for $\rY$ one readily verifies that
$$|\rY|_g^2 = 8d$$
and that
$$|\rY v|^2 = 4|v|^2 - 4\eta(v)^2, \quad v \in TS^{2n+1}.$$
Moreover, the formula $(\nabla_V(i\pi_{\ker\eta}))W = g(V,W)\cdot{R} - \eta(W)V$ for every pair of vector fields $V,W$ (this is the statement that $S^{2d+1}$ is Sasakian, see e.g. \cite{Bl}) shows that
$$\div\rY = -(4d)\eta.$$
The Ricci curvature of $S^{2n+1}$ is 
$$\Ric_g = 2d\cdot g.$$
Plugging the above formulae into Corollary \ref{weightedmagcor}, we get:
\begin{proposition}
    Let $0 \le s < 1$. The weighted Ricci curvature of the Lagrangian $L_s$ with respect to the Riemannian volume measure is given by
    $$\Ric_{\Vol_g,n}(v) = 2d\cdot|v|^2 + 4sd\cdot\eta(v) + 2s^2\left(d + 1 - \frac{\eta(v)^2}{|v|^2}\right), \qquad v \in TM\setminus \bz,$$
    where
    $$n : = 2d+1.$$
    Hence
    $$\Ric_{\Vol_g,n} = 2d(s^2+2s\eta+1) + 2s^2(1-\eta^2) \qquad \text{ on $S(S^{2d+1}) = \{v \in TS^{2d+1} \, \mid \, |v|_g = 1\}$}.$$
    Since $|\eta(v)| \le 1$ for all $v \in S(S^{2d+1})$, with equality when $v = \pm{R}$, it follows that the pair $(\Vol_g,L_s)$ satisfies $\CD(K,n)$ if and only if
    $$2d(s-1)^2 \ge K.$$
    In particular, the pair $(\Vol_g,L_s)$ satisfies $\CD(0,n)$ for all $0 \le s < 1$.
\end{proposition}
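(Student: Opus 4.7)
The proposition is essentially an application of Corollary \ref{weightedmagcor} to the data at hand, combined with a pointwise minimization over the indicatrix. The plan is to carry out the substitutions, compute the two quantities $\Lambda_\parallel$ and $\Lambda_\perp^2$, and then assemble the final expression.

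First I would match the Lagrangian $L_s = (g+1)/2 - s\eta$ with the classical framework of Section \ref{magsec}: the potential is $U \equiv 1/2$ (so $\nabla^g U = 0$ and $\Delta_g U = 0$) and the magnetic one-form is $s\eta$, so the tensor of Corollary \ref{weightedmagcor} is $s\rY$ where $\rY = 2i\pi_{\ker\eta}$. The identities already established in the excerpt just above the proposition, namely
\[
\Ric_g = 2d\cdot g, \qquad |\rY|_g^2 = 8d, \qquad \div_g\rY = -4d\,\eta, \qquad |\rY v|^2 = 4|v|^2 - 4\eta(v)^2,
\]
scale linearly (resp.\ quadratically) in $s$ and feed directly into the formula of Corollary \ref{weightedmagcor}.

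Next, since $\nabla^g U = 0$, the formulas for $\Lambda_\parallel$ and $\Lambda_\perp^2$ in Corollary \ref{weightedmagcor} simplify to
\[
\Lambda_\parallel(v) = 0, \qquad \Lambda_\perp^2(v) = \frac{\left|\tfrac12 \, s\rY v\right|^2}{|v|^2} = \frac{s^2\bigl(|v|^2 - \eta(v)^2\bigr)}{|v|^2}.
\]
Because $\Sigma\psi = 0 = \Lambda_\parallel$, the exceptional clause in Definition \ref{weightedriccidef} allows us to take $N = n$, so the problematic term $\Lambda_\parallel^2/(N-n)$ disappears. Plugging everything into Corollary \ref{weightedmagcor} yields
\[
\Ric_{\Vol_g,n}(v) = 2d|v|^2 + 4sd\,\eta(v) + 2ds^2 + 2s^2\!\left(1 - \frac{\eta(v)^2}{|v|^2}\right),
\]
which matches the claimed expression after grouping the last two terms.

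Finally, restricting to $S(S^{2d+1}) = \{|v|_g = 1\}$ gives the displayed formula $f(\eta(v)) := 2d(1 + 2s\eta(v) + s^2) + 2s^2(1 - \eta(v)^2)$, and sharpness of the $\CD(K,n)$ threshold reduces to minimizing $f$ over $\eta(v) \in [-1,1]$. Differentiating, $f'(\eta) = 4s(d - s\eta)$; for $0 \le s < 1 \le d$ we have $d - s\eta > 0$ on $[-1,1]$, so $f$ is (weakly) increasing and attains its minimum at $\eta(v) = -1$, giving $f(-1) = 2d(1-s)^2$. The minimum is realized at $v = -R$, so the pair satisfies $\CD(K,n)$ if and only if $K \le 2d(s-1)^2$, with the case $K=0$ following for all admissible $s$. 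The only mild subtlety is keeping straight the bookkeeping of factors of $s$ and invoking the $N=n$ clause of Definition \ref{weightedriccidef}; once those are in place, the proof is a direct substitution.
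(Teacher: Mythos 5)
Your proposal is correct and follows essentially the same route as the paper, which presents the proposition as a direct substitution of the $S^{2d+1}$ identities ($\Ric_g=2d\,g$, $|\rY|_g^2=8d$, $\div_g\rY=-4d\,\eta$, $|\rY v|^2=4|v|^2-4\eta(v)^2$) into Corollary \ref{weightedmagcor}, with $U\equiv 1/2$, $\psi\equiv 0$, and the $\rY$-tensor scaled by $s$. Your observation that $\Lambda_\parallel=0$ and $\Sigma\psi=0$ activate the exceptional $N=n$ clause of Definition \ref{weightedriccidef} is exactly the required justification, and your explicit minimization of $f(\eta)=2d(1+2s\eta+s^2)+2s^2(1-\eta^2)$ via $f'(\eta)=4s(d-s\eta)>0$ on $[-1,1]$ makes precise the brief remark in the paper that $|\eta(v)|\le 1$ with equality at $v=\pm R$.
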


The Lagrangian $L_s$ is supercritical if and only if $|s|<1$, see \cite[Section 2]{ABM} and Example \ref{Lagrangiansexample}. In this case, conditions (II) and (III) are satisfied by supercriticality and compactness of $S^{2d+1}$ (a systematic treatment of condition (II) for all values of $s$ is the main objective of \cite{ABM}). Thus Theorem \ref{contactthm} follows from Theorem \ref{BMthm0} with $N = n$ and $K = 0$.

\begin{remark*}\normalfont
    Let us comment on the case $s=1$. A pair of points $z_0,z_1 \in S^{2d+1}$ can be joined by an extremal of $L_1$ if and only if $\left<z_0,z_1\right>\ne 0$, see \cite{ABM}. With a bit of extra work, one can show that for every $z_0,z_1 \in S^{2d+1}$ there exists a sequence of extremals of $L_1$ joining $z_0$ to $z_1$ whose action tends to $0$. Since the Lagrangian $L_1$ is non-negative, and vanishes only on the Reeb vector field, this implies that unless $z_0$ and $z_1$ lie on a common Reeb trajectory, there exists no \emph{minimizing} extremal of $L_1$ joining $z_1$ to $z_0$. In other words, in the sense of Definition \ref{contactmagdef}, contact magnetic geodesics of strength $1$ do not exist, except between points lying on the same Reeb trajectory.
\end{remark*}

\subsubsection*{Mechanical Lagrangians}
Consider the case $\eta \equiv 0$, i.e.
\begin{equation}\label{elecLageq}L(v) = \frac{|v|_g^2}{2} + U(x), \qquad v \in T_xM, \, \, x\in M.\end{equation}
By Corollary \ref{weightedmagcor}, for every $N \in (-\infty,\infty]\setminus[1,n)$ and every $v \in TM\setminus\bz$, the weighted Ricci curvature of the Lagrangian $L$ with respect to the Riemannian measure at the vector $v$ is given by
\begin{align*}
    \Ric_{\Vol_g,N}(v) & = \Ric_g(v) - \Delta_gU + 2\cdot\frac{\left|\nabla^gU - \left<v,v\right>^{-1}\left<\nabla^gU,v\right>v\right|^2}{|v|^2} + \left(1 - \frac{1}{N-n}\right)\cdot\frac{\left<v,\nabla^gU\right>^2}{|v|^4}\\
    & = \Ric_g(v) - \Delta_gU + 2\cdot\frac{|\nabla^gU|^2}{|v|^2} - \left(1 + \frac{1}{N-n}\right)\cdot \frac{\left<v,\nabla^gU\right>^2}{|v|^4}.
\end{align*}
Recall that
$$SM = \left\{v \in TM \, \mid \, |v| = \sqrt{2U}\right\}.$$
Hence, if $v \in SM$ then
$$\Ric_{\Vol_g,N}(v) = \Ric_g(v) - \frac{\Delta_gU}{2U}\cdot |v|^2 + \frac{|\nabla^gU|^2}{2U^2}\cdot|v|^2 -\left(1+\frac{1}{N-n}\right)\cdot\frac{\left<v,\nabla^gU\right>^2}{4U^2}.$$
Since the right hand side is now fiberwise $2$-homogeneous, this proves:
\begin{proposition}
    For every $K \in \RR$ and every $N \in (-\infty,\infty]\setminus[1,n)$, the pair $(\Vol_g,L)$ satisfies $\CD(K,N)$ if and only if
    $$\Ric_g \ge \left(\frac{K +\Delta_gU}{2U} - \frac{|\nabla^gU|^2}{2U^2}\right)g + \left(1+\frac{1}{N-n}\right)\frac{(dU)^2}{4U^2}$$
    as quadratic forms. In particular, the pair $(\Vol_g,L)$ satisfies $\CD(0,\infty)$ if and only if 
    \begin{equation}\label{mechanicalCD0inftyeq}\left(\Delta_gV\right)\,g - \left(dV\right)^2 + \Ric_g \ge 0, \qquad \text{ where } \qquad e^{-2V} := U.\end{equation}
\end{proposition}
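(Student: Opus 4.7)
The plan is to reduce everything to the computation that already precedes the statement, together with a single substitution for the $\CD(0,\infty)$ assertion. By Definition \ref{CDdef} and Corollary \ref{equivalencecor}, the pair $(\Vol_g,L)$ satisfies $\CD(K,N)$ precisely when $\Ric_{\Vol_g,N} \ge K$ holds pointwise on $SM$. So I would first invoke Corollary \ref{weightedmagcor} with $\eta \equiv 0$, which specializes to
$$\Ric_{\Vol_g,N}(v) = \Ric_g(v) - \Delta_g U + 2\,\frac{|\nabla^g U|^2}{|v|^2} - \Big(1 + \tfrac{1}{N-n}\Big)\,\frac{\langle v,\nabla^g U\rangle^2}{|v|^4}, \qquad v \in TM\setminus \bz,$$
since $\Lambda_\perp^2(v) = |\nabla^g U - |v|^{-2}\langle v,\nabla^g U\rangle v|^2/|v|^2 = |\nabla^g U|^2/|v|^2 - \langle v,\nabla^g U\rangle^2/|v|^4$.

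The key observation is that on $SM$ the constraint $|v|_g^2 = 2U$ holds, so after multiplying and dividing by appropriate powers of $2U$ every term on the right becomes fiberwise $2$-homogeneous in $v$. Explicitly, on $SM$ the inequality $\Ric_{\Vol_g,N}(v) \ge K$ rearranges to
$$\Ric_g(v) \ge \Big(\tfrac{K+\Delta_g U}{2U} - \tfrac{|\nabla^g U|^2}{2U^2}\Big)|v|^2 + \Big(1+\tfrac{1}{N-n}\Big)\tfrac{\langle v,\nabla^g U\rangle^2}{4U^2},$$
and since both sides are quadratic forms in $v$, the inequality on $SM$ (where $v$ ranges over a full sphere at each basepoint) is equivalent to the stated quadratic-form inequality over all of $TM$. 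That finishes the first part.

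For the second assertion, I would simply substitute $U = e^{-2V}$ into the general inequality with $K=0$ and $N=\infty$, so that the coefficient $(1+\tfrac{1}{N-n})$ collapses to $1$. Differentiating gives $\nabla^g U = -2U \nabla^g V$, hence $\tfrac{|\nabla^g U|^2}{2U^2} = 2|\nabla^g V|^2$ and $\tfrac{(dU)^2}{4U^2} = (dV)^2$, while $\Delta_g U = -2U\Delta_g V + 4U|\nabla^g V|^2$ yields $\tfrac{\Delta_g U}{2U} = -\Delta_g V + 2|\nabla^g V|^2$. The two $2|\nabla^g V|^2$ contributions cancel, and the inequality becomes
$$\Ric_g \ge -(\Delta_g V)\,g + (dV)^2,$$
which is \eqref{mechanicalCD0inftyeq} after rearrangement.

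There is really no serious obstacle here — the bulk of the work is already carried out in the paragraph preceding the statement. The only mild point to verify carefully is the homogeneity argument that allows one to pass from the pointwise inequality on the energy level $SM$ (a hypersurface cut out by $|v|_g^2 = 2U(x)$) to a quadratic-form inequality at every basepoint, and the bookkeeping of the $e^{-2V}$ substitution in the second part.
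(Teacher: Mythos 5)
Your proposal is correct and follows essentially the same route as the paper: specialize Corollary \ref{weightedmagcor} to $\eta\equiv 0$, expand $\Lambda_\perp^2$, restrict to the energy level $|v|_g^2 = 2U$, and note that the resulting inequality is $2$-homogeneous in $v$ so that it upgrades from the sphere $S_xM$ to a quadratic-form inequality on all of $T_xM$; the $\CD(0,\infty)$ case then follows from the substitution $U=e^{-2V}$. The algebra in your $\Lambda_\perp^2$ expansion and the $e^{-2V}$ bookkeeping both check out.
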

\begin{remark*}\normalfont
    Zero-energy extremals of the Lagrangian \eqref{elecLageq} coincide, up to reparametrization, with unit-speed geodesics of the Riemannian metric $(2U)^{-1}\cdot g$. Because of the difference in parametrization, the Ricci curvature of $L$ is not equal to the Ricci curvature of this conformal metric.
\end{remark*}
\begin{remark}\label{shrmk}\normalfont
    Suppose that $\Ric_g \ge 0$. Then, since $$(dV)^2 \le |dV|^2\cdot g = \frac{|dU|^2}{4U^2}\cdot g \qquad \text{ and } \qquad \Delta_gV = -\frac{\Delta_gU}{2U} + \frac{|dU|^2}{2U^2},$$ inequality \eqref{mechanicalCD0inftyeq} holds when $U$ is superharmonic, i.e. when $$\Delta_gU \le 0.$$
\end{remark}
\begin{example}[Classical many-body Lagrangian]\normalfont
    Let $d\ge 3$ and $k \ge 2$, let
    $$M : = \left\{x = (x_1,\dots,x_k) \in (\RR^d)^k \quad \mid \quad x_i \ne x_j \, \, \forall \, i\ne j\right\},$$
    and consider the following \emph{$k$-body Lagrangian} :
    $$L = \frac12\sum_{i=1}^km_i\sum_{\ell=1}^d(dx_i^\ell)^2 + \frac12 + \sum_{i=1}^k\sum_{j=i+1}^k\frac{Gm_im_j}{|x_i-x_j|},$$
    where $G$ and $m_i$ are positive constants representing the gravitational constant and the masses of the bodies, and $x_i = (x_i^1,\dots,x_i^d)$ is the position of the $i$-th body. Thus in this case
    $$U = \frac12 + \sum_{i=1}^k\sum_{j=i+1}^k\frac{Gm_im_j}{|x_i-x_j|},$$
    whence
    $$dU = \sum_{i=1}^k \sum_{\ell=1}^d \sum_{j\ne i} \frac{Gm_i m_j (x_j^\ell - x_i^\ell)}{|x_i - x_j|^3} \, dx_i^\ell$$
    and
    $$\Delta_gU = (3-d)\cdot G \cdot \sum_{i=1}^k\sum_{j=i+1}^k\frac{m_i + m_j}{|x_i-x_j|^3} \le 0,$$
    where $g$ is the (flat) metric
    $$g = \sum_{i=1}^km_i\sum_{\ell=1}^d(dx_i^\ell)^2.$$
    By Remark \ref{shrmk}, the pair $(\Vol,L)$ satisfies $\CD(0,\infty)$, where $\Vol$ is the Lebesgue measure on $(\RR^d)^k$ (which differs by a constant from the measure $\Vol_g$). Note that the energy level $SM$ is characterized by the relation
    $$\frac12\sum_{i=1}^km_i\sum_{\ell=1}^d(dx_i^\ell)^2 = \sum_{i=1}^k\sum_{j=i+1}^k\frac{Gm_im_j}{|x_i-x_j|} + \frac12.$$
    \begin{remark*}\normalfont
        The Lagrangian $L$ does not satisfy the global assumptions (II) and (III) from Section \ref{assumpsec} on the full manifold $M$.
    \end{remark*}
\end{example}

\subsection{Isotropic Lagrangians}

Let $(M,g)$ be a Riemannian manifold. Let 
$$l:M\times [0,\infty) \to (0,\infty)$$
 be a smooth function which is strictly convex and superlinear in the second variable with 
 $$\qquad l'(x,0) = 0 \qquad  x \in M.$$
 Here and below, for functions on $M\times[0,\infty)$, prime $(')$ will denote differentiation with respect to the second variable, while gradients $(\nabla^g)$ and differentials $(d)$ are taken with resepct to the first variable. Let $L$ be the Tonelli Lagrangian
    \begin{equation}\label{isoLeq}L(v) = l(x,|v|_g), \qquad v \in T_xM, \, x \in M.\end{equation}
The corresponding Hamiltonian is given by
    $$H(p) = h(x,|p|_g), \qquad p \in T_x^*M, \, x \in M,$$
where
    $$h(x,\cdot) = l(x,\cdot)^*$$
is the Legendre conjugate of $l(x,\cdot)$, i.e 
    \begin{equation}\label{hprimelprimeeq}h(0) = -l(0) \qquad \text{ and } \qquad h'(x,l'(x,t)) = t, \qquad x \in M, \,\, t \ge 0.\end{equation}
Set 
    $$\alpha(x,t) = \frac{l'(x,t)}{t} \qquad \text{ and } \qquad \beta(x,t) = \frac{h'(x,t)}{t}, \qquad x \in M, \,\, t \ge 0.$$
    Note that $\alpha,\beta > 0$. The Legendre transform is then
    $$\cL p = \beta(x,|p|_g)\cdot p^\sharp \qquad p \in T_x^*M, \, x \in M.$$
In particular, the gradient of a function $u$ is
    $$\nabla u\vert_x = \beta\left(x,\big|du\vert_x\big|_g\right)\cdot\nabla^gu\vert_x, \qquad x \in M.$$
The Euler-Lagrange equation  is
\begin{equation}\label{sphELeq}
    \nabla^g_{\dot\gamma}\left(\alpha(\gamma,|\dot\gamma|_g)\cdot\dot\gamma\right) = \nabla^gl.
\end{equation}
For instance, if $l(x,t) = t^2/2 + U(x)$ for some function $U:M \to \RR$, then we arrive at the mechanical Lagrangian from the previous section, and since $\alpha\equiv 1$ and $\nabla^gl = \nabla^gU$ we recover \eqref{magELeq}.

\medskip
Define functions $w,b : M \to (0,\infty)$ by
    $$h(x,w(x))  = 0 \qquad \text{ and } \qquad b(x) : = \beta(x,w(x)), \qquad x \in M.$$
Our assumptions on $l$ imply that $h(0) < 0$ and that $h$ is strictly increasing and tends to infinity, so $w$ is well-defined. The Hamilton-Jacobi equation is therefore
    \begin{equation}\label{sphHJ}
        |du|_g = w.
    \end{equation}
Let 
    $$\mu = e^{-\psi}\Vol_g$$
be a measure on $M$ with a smooth density and, as before, set $\bL := \div_\mu(\nabla \,\cdot \,).$ Let $u$ be a locally-defined solution to the Hamilton-Jacobi equation \eqref{sphHJ}. Then
    \begin{equation}\label{Sphgrad}
        \nabla u = b\cdot\nabla^gu
    \end{equation}
    and
    \begin{equation*}
    \begin{split}
        \bL u &= \div_\mu(b\cdot\nabla^gu)\\
        & = b\cdot\left(\Delta_gu - \left<\nabla^g\psi,\nabla^gu\right>\right)+\left<\nabla^g b,\nabla^gu\right>\\
        & = b\cdot \Delta_gu + \left<\nabla^g\vphi,\nabla u\right>,
    \end{split}
    \end{equation*}
where in the last passage we set $$\vphi = \log b - \psi$$ and used \eqref{Sphgrad}. Thus
    \begin{align*}
        (d\bL u)(\nabla u) = & \left<\nabla^gb,\nabla u\right>\cdot\Delta_gu + b^2\left<\nabla^gu,\nabla^g\Delta_gu\right> + \Hess_g\vphi(\nabla u) + \left<\nabla^g\vphi,\nabla^g_{\nabla u}\nabla u \right>.
    \end{align*}
By \eqref{sphHJ} and the Riemannian Bochner formula,
    $$\left<\nabla^gu,\nabla^g\Delta_{g}u\right> = \Delta_g(w^2/2) - |\nabla^2_gu|^2 - \Ric_{g}(\nabla^gu).$$
Using \eqref{Sphgrad} again we get
\begin{align}\label{dBlnablaueq}
\begin{split}
    (d\bL u)(\nabla u) = & \left<\nabla^g  b,\nabla u\right>\cdot\Delta_gu + b^2\Delta_g(w^2/2) - b^2|\nabla^2_gu|^2 \\
    & - \Ric_g(\nabla u) + \Hess_g\vphi(\nabla u) + \left<\nabla^g\vphi,\nabla^g_{\nabla u}\nabla u\right>.
\end{split}
\end{align}
By \eqref{sphHJ} and \eqref{Sphgrad}, 
\begin{equation}\label{nablaunormeq}
    |\nabla u|_g = b\cdot|\nabla^gu|_g = b\cdot|du|_g = b\cdot w.
\end{equation}
Since $u$ satisfies the Hamilton-Jacobi equation, the integral curves of $\nabla u$ solve \eqref{sphELeq}, so
\begin{equation}\label{nablagleq}\nabla^gl = \nabla^g_{\nabla u}(a\nabla u) = da(\nabla u)\cdot\nabla u + a\cdot\nabla^g_{\nabla u}\nabla u,\end{equation}
where
$$a(x) : = \alpha(x,b(x)w(x)), \qquad x \in M.$$
It follows from \eqref{hprimelprimeeq} and the definitions of $a$ and $b$ that
\begin{equation}\label{abeq}ab\equiv 1.\end{equation}
Substituting \eqref{nablagleq} into \eqref{dBlnablaueq} and using \eqref{abeq} we get
\begin{align}\label{dBlnablaueq2}
\begin{split}
    (d\bL u)(\nabla u) = & \left<\nabla^g b,\nabla u\right>\cdot\Delta_gu + b^2\Delta_g(w^2/2) - b^2|\nabla^2_gu|^2 - \Ric_g(\nabla u)\\
    &  + \Hess_g\vphi(\nabla u) + b\left<\nabla^g\vphi,\nabla^gl\right> + \left<\nabla^g\vphi,(d\log b)(\nabla u)\cdot \nabla u\right>.
\end{split}
\end{align}
By \eqref{Sphgrad}, \eqref{nablagleq} and \eqref{abeq},
\begin{align*}
    (\nabla^2_gu)(\nabla u) = \nabla^g_{\nabla u}\nabla^gu & = \nabla^g_{\nabla u}(b^{-1}\nabla u)= \nabla^g_{\nabla u}(a\nabla u) = \nabla^gl.
\end{align*}
As in Section \ref{weightedsec}, we separate the self-adjoint operator $\nabla^2_gu$ into its orthogonal and tangential components with respect to $\nabla u$ and the apply the Cauchy-Schwarz inequality:
\begin{align*}
    |\nabla^2_gu|^2 & \ge \frac{2|\nabla^gl|^2}{|\nabla u|^2} - \frac{\left<\nabla^gl,\nabla u\right>^2}{|\nabla u|^4} + \frac{(\Delta_gu - \left<\nabla^gl,\nabla u\right>/|\nabla u|^2)^2}{n-1}\\
    & = \frac{2|\nabla^gl|^2}{b^2w^2}  - \frac{\left<\nabla^gl,\nabla u\right>^2}{b^4w^4} + \frac{(\Delta_gu - \left<\nabla^gl,\nabla u\right>/b^2w^2)^2}{n-1},
\end{align*}
where in the second passage we used \eqref{nablaunormeq}. Equality holds if and only if the restriction of $\nabla^2_gu$ to the orthogonal complement of $\nabla  u$ is scalar. We use this last inequality to bound the first and third terms on the right hand side of \eqref{dBlnablaueq2}:
\begin{align*}
    & \left<\nabla^gb,\nabla u\right>\Delta_gu -b^2|\nabla^2_gu|^2 \\
    \le&   \left<\nabla^gb,\nabla u\right>\Delta_gu - \frac{b^2\left(\Delta_gu - \left<\nabla^gl,\nabla u\right>/b^2w^2\right)^2}{n-1} -\frac{2|\nabla^gl|^2}{w^2}  + \frac{\left<\nabla^gl,\nabla u\right>^2}{b^2w^4}\\
    = &  -\frac{b^2\Big(\Delta_gu - \left<\nabla^gl,\nabla u\right>/b^2w^2- (n-1)\left<\nabla^gb,\nabla u\right>/2b^2\Big)^2}{n-1}\\
    &  + \frac{n-1}{4b^2}\cdot\left<\nabla^gb,\nabla u\right>^2 + \frac{\left<\nabla^gb,\nabla u\right>\left<\nabla^gl,\nabla u\right>}{b^2w^2} -\frac{2|\nabla^gl|^2}{w^2}  +\frac{\left<\nabla^gl,\nabla u\right>^2}{b^2w^4}\\
    \le &  \,\frac{n-1}{4}\cdot\left<\nabla^g\log b,\nabla u\right>^2 + \frac{\left<\nabla^gb,\nabla u\right>\left<\nabla^gl,\nabla u\right>}{b^2w^2} -\frac{2|\nabla^gl|^2}{w^2}  +\frac{\left<\nabla^gl,\nabla u\right>^2}{b^2w^4}.\\
\end{align*}
Plugging this estimate into \eqref{dBlnablaueq2} and recalling that $\vphi = \log b - \psi$ and $|\nabla u| = bw$, we obtain
\begin{equation*}
    (d\bL u)(\nabla u) + A\cdot |\nabla u|^2 + Q(\nabla u) \le 0,
 \end{equation*}
where $A : M \to \RR$ is the function
$$A : = -w^{-2}\Delta_g(w^2/2) - b^{-2}w^{-2}\left<\nabla^gb,\nabla^gl\right> + b^{-1}w^{-2}\left<\nabla^g\psi,\nabla^gl\right> + 2b^{-2}w^{-4}|\nabla^gl|^2,$$
and $Q:TM \to \RR$ is the quadratic form
$$Q : = \Ric_g + \Hess_g(\psi-\log b) + d\log b \cdot d\psi - \frac{n+3}{4}(d\log b)^2 - \frac{db\cdot dl}{b^2w^2} - \frac{(dl)^2}{b^2w^4}.$$
Recall that the differentials of $b$ and $l$ are with respect to the manifold variable. Equality holds if and only if
$$\nabla^2_gu = \frac{(\nabla u)^\flat \otimes\nabla^g l + (\nabla^gl)^\flat \otimes \nabla u}{\left<\nabla u,\nabla u\right>} - \frac{\left<\nabla^gl,\nabla u \right>}{\left<\nabla u,\nabla u\right>}\cdot\frac{(\nabla u)^\flat\otimes\nabla u}{\left<\nabla u,\nabla u\right>} + c_0\cdot\pi_{(\nabla u)^\perp},$$
where $\pi_{(\nabla u)^\perp}$ denotes orthogonal projection onto the orthogonal complement of $\nabla u$, and
$$c_0 : = \frac{\left<\nabla^gb,\nabla u\right>}{2b^2}.$$
Thus we have proven:
\begin{proposition}\label{isoprop}
    The pair $(e^{-\psi}\Vol_g,L)$ satisfies $\CD(K,\infty)$, where $L$ is the Lagrangian given in \eqref{isoLeq}, if and only if
    $$ Q + A\cdot g\ge b^{-2}w^{-2}K\cdot g.$$
\end{proposition}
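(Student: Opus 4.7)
The plan is to read the proposition directly off the Bochner-type bound derived in the paragraphs immediately preceding its statement, namely
$$(d\bL u)(\nabla u) + A\cdot|\nabla u|^2_g + Q(\nabla u,\nabla u) \le 0$$
for every local solution $u$ to $H(du)=0$, together with the accompanying characterization of the equality case in terms of a specific form of $\nabla^2_g u|_x$. Recall that, by Corollary \ref{equivalencecor} with $N=\infty$, the pair $(e^{-\psi}\Vol_g,L)$ satisfies $\CD(K,\infty)$ if and only if every such $u$ obeys $(d\bL u)(\nabla u)+K\le 0$; so the task is purely to convert between this scalar inequality and the quadratic form inequality $Q+Ag\ge Kb^{-2}w^{-2}g$.

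For the forward direction I will evaluate the hypothesis $Q+Ag\ge Kb^{-2}w^{-2}g$ at $\nabla u$ and use \eqref{nablaunormeq} in the form $|\nabla u|^2_g=b^2w^2$ to obtain
$A|\nabla u|^2_g+Q(\nabla u,\nabla u)\ge Kb^{-2}w^{-2}\cdot b^2w^2=K;$
combined with the displayed Bochner bound, this immediately gives $(d\bL u)(\nabla u)+K\le 0$.

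For the converse I will fix $x\in M$ and a vector $v\in T_xM$ with $|v|_g=b(x)w(x)$, and invoke Lemma \ref{characlemma} to produce a local $C^3$ solution $u$ of the Hamilton-Jacobi equation with $\nabla u|_x=v$ and with $\nabla^2_g u|_x$ equal to the equality-case Hessian
$$\frac{(\nabla u)^\flat\otimes\nabla^g l+(\nabla^g l)^\flat\otimes\nabla u}{\langle\nabla u,\nabla u\rangle}-\frac{\langle\nabla^g l,\nabla u\rangle}{\langle\nabla u,\nabla u\rangle^2}(\nabla u)^\flat\otimes\nabla u+\frac{\langle\nabla^g b,\nabla u\rangle}{2b^2}\pi_{(\nabla u)^\perp}$$
recorded in the excerpt. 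To see this Hessian is admissible second-order data, one verifies by inspection (using $(\nabla u)^\flat(\nabla u)=|\nabla u|^2_g$ and $\pi_{(\nabla u)^\perp}(\nabla u)=0$) that $(\nabla^2_g u)(\nabla u)=\nabla^g l$, which is precisely the first-order constraint obtained by differentiating $H(du)\equiv 0$ once; this in turn corresponds, under the graph construction, to a Lagrangian subspace of $T_{du|_x}T^*M$ lying in $\ker dH$, as required by Lemma \ref{characlemma}. For this $u$ one has $(d\bL u)(\nabla u)|_x=-A(x)|v|^2_g-Q(v,v)$; the $\CD(K,\infty)$ hypothesis then gives $Q(v,v)+A(x)|v|^2_g\ge K$, and rewriting $K=Kb^{-2}w^{-2}g(v,v)$ yields $(Q+Ag)(v,v)\ge Kb^{-2}w^{-2}g(v,v)$ on the sphere $|v|_g=bw$. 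Since both sides are $2$-homogeneous in $v$, the inequality extends to all of $T_xM$.

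The only technical point worth flagging is the construction of the HJ solution realizing equality: this is exactly the content of Lemma \ref{characlemma}, and the dimension count matches because the equality-case Hessian is specified modulo the single first-order constraint $(\nabla^2_g u)(\nabla u)=\nabla^g l$, while Lemma \ref{characlemma} parametrizes local HJ solutions at $p=du|_x$ by Lagrangian subspaces of $T_pT^*M$ contained in $\ker dH$, which are exactly the symmetric Hessians satisfying that constraint. Beyond this, the argument is bookkeeping on the inequality already derived before the proposition, and needs no further ingredients.
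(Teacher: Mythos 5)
Your proof is correct and takes essentially the same route the paper leaves implicit. The paper states Proposition \ref{isoprop} immediately after deriving the pointwise inequality $(d\bL u)(\nabla u)+A|\nabla u|_g^2+Q(\nabla u)\le 0$ together with its equality case, introducing it with ``Thus we have proven:''; your two-step bookkeeping—evaluating the quadratic-form hypothesis at $\nabla u$ with $|\nabla u|_g^2=b^2w^2$ for the ``if'' direction, and invoking Lemma~\ref{characlemma} to realize the equality-case Hessian for the ``only if'' direction—is exactly what is needed to close both implications, and it mirrors the mechanism already used in Lemma~\ref{equalitylemma}. The verification that the equality-case Hessian is admissible data (it satisfies $(\nabla^2_g u)(\nabla u)=\nabla^g l$, the first-order consequence of $H(du)\equiv 0$, and hence corresponds to a Lagrangian subspace of $\ker dH$) and the $2$-homogeneity extension off the sphere $\{|v|_g=bw\}$ are both correct.
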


\subsubsection*{Homogeneous Lagrangians with variable exponent}
Consider the case where
$$l(x,t) = \frac{t^{q(x)} + q(x) - 2 + c}{q(x)}, \qquad x \in M, \,\, t \ge 0$$
for some smooth function $q:M \to (1,\infty)$ and some constant $c \ge 1$. Then
$$h(x,t) = \frac{t^{q^*(x)} - c}{q^*(x)},$$
where $q^*(x)$ is the H{\"o}lder conjugate of $q(x)$ for every $x \in M$. One easily verifies that
$$\alpha(x,t) = t^{q(x) - 2}, \quad \beta(x,t) = t^{q^*(x) - 2}, \quad w(x) = c^{\frac{1}{q^*(x)}} \quad \text{ and } \quad b(x) = c^{\frac{2}{q(x)}-1}$$
for all $x \in M$ and $t \ge 0$. The Hamilton-Jacobi equation is therefore
$$|du|_g^{q^*} = c.$$
The energy is
$$E(v) = \frac{|v|_g^{q(x)}-c}{q^*(x)}, \qquad v \in T_xM, \, x\in M.$$
Thus 
$$SM = \left\{v \in TM \, \mid \, |v|_g = c^{1/q}\right\}.$$

\medskip
A (tedious, but straightforward) computation shows that
$$A = (\log c)\Delta_gr + 2\left(\log^2c - 6(\log c)(1-1/c) + 4(1-1/c)^2\right)|dr|^2$$
and 
$$Q = \Ric_g - 2(\log c)\Hess r - \left((n+2)\log^2c + 4(1-1/c)^2\right)(dr)^2,$$
where
$$r : = 1/q.$$
Thus we get from Proposition \ref{isoprop}:
\begin{proposition}
    The pair $(\Vol_g,L)$, where $L$ is the Lagrangian
    $$L(v) = \frac{|v|_g^{q(x)} + q(x) -2 +c}{q(x)}, \qquad v \in T_xM\,\, x\in M,$$
    satisfies $\CD(K,\infty)$ if and only if
    \begin{align*}
        & \Ric_g - 2(\log c)\Hess r - \left((n+2)\log^2c + 4(1-1/c)^2\right)(dr)^2 \ge\\ & \qquad \qquad  \left(c^{-2r}K - (\log c)\Delta_gr - 2\left(\log^2c - 6(\log c)(1-1/c) + 4(1-1/c)^2\right)|dr|^2\right)\cdot g.
    \end{align*}
\end{proposition}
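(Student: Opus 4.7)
The plan is to apply Proposition \ref{isoprop} with weight $\psi \equiv 0$ (since $\mu = \Vol_g$) to the specific Lagrangian
$$L(v) = \frac{|v|_g^{q(x)} + q(x) - 2 + c}{q(x)}, \qquad v \in T_xM, \,\, x \in M.$$
The entire argument then reduces to computing the quantities $A$, $Q$ and $b^{-2}w^{-2}$ that appear in Proposition \ref{isoprop}, expressing everything in terms of $r := 1/q$.

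First I would extract the basic data. From the definitions of the Legendre conjugate and the functions $\alpha,\beta,w,b$ introduced in Section~7.2, a direct calculation gives
$$\alpha(x,t) = t^{q(x)-2}, \quad \beta(x,t) = t^{q^*(x)-2}, \quad w(x) = c^{1-r(x)}, \quad b(x) = c^{2r(x)-1},$$
so $bw = c^{r}$ (the common speed on $SM$) and $b^{-2}w^{-2} = c^{-2r}$, which already gives the $c^{-2r}K$ factor on the right-hand side of the claimed inequality. I would then compute
$$d\log b = 2(\log c)\,dr, \qquad d\log w = -(\log c)\,dr,$$
and, using that $\nabla^g l$ in the formulas of Proposition \ref{isoprop} denotes the gradient of $l(x,t)$ in $x$ with $t$ held fixed and then evaluated on the zero-energy level $t = bw = c^{r}$, that
$$dl = \bigl(2(c-1) - c\log c\bigr)\,dr.$$
The key identity behind this last formula is that $t^{q} = c$ when $t = c^r$, so the manifold derivative of $l(x,c^r)$ with respect to $r$ simplifies considerably.

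Next I would substitute into the expression
$$A = -w^{-2}\Delta_g(w^2/2) - b^{-2}w^{-2}\langle\nabla^g b,\nabla^g l\rangle + 2 b^{-2}w^{-4}|\nabla^g l|^2$$
(with $\psi = 0$, the $\langle\nabla^g\psi,\nabla^g l\rangle$ term drops out). Using $\Delta_g(w^2/2) = -(\log c)\,w^2\Delta_g r + 2(\log c)^2\,w^2|dr|^2$, collecting the three contributions and using the identity $b^{-2}w^{-4} = c^{-2}$, I expect to obtain exactly
$$A = (\log c)\,\Delta_g r + 2\bigl(\log^2 c - 6(\log c)(1-1/c) + 4(1-1/c)^2\bigr)|dr|^2.$$
A parallel computation for
$$Q = \Ric_g - \Hess_g(\log b) - \tfrac{n+3}{4}(d\log b)^2 - \frac{db\cdot dl}{b^2w^2} - \frac{(dl)^2}{b^2w^4}$$
should produce
$$Q = \Ric_g - 2(\log c)\Hess r - \bigl((n+2)\log^2 c + 4(1-1/c)^2\bigr)(dr)^2,$$
with all the mixed $(\log c)(1-1/c)(dr)^2$ cross terms cancelling between $-db\cdot dl/(b^2w^2)$ and $-(dl)^2/(b^2w^4)$, and three separate $\log^2 c$ contributions combining to give the final coefficient $-(n+2)\log^2 c$. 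Plugging into the inequality $Q + A\cdot g \ge b^{-2}w^{-2}K\cdot g = c^{-2r}K\cdot g$ from Proposition \ref{isoprop} yields the stated equivalence.

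The main obstacle is bookkeeping rather than conceptual: one has to be careful about the interpretation of $\nabla^g l$ (a partial gradient at fixed $t$, then evaluated at $t = bw$), track the various powers of $c$ (several of which telescope nicely because $rq = 1$), and verify that the ``off-diagonal'' terms proportional to $(\log c)(1-1/c)$ combine correctly. Once these bookkeeping issues are handled, the proof reduces to a one-page verification.
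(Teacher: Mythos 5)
Your proposal is correct and follows the same route as the paper, which simply applies Proposition \ref{isoprop} with $\psi\equiv 0$ and reports the resulting $A$ and $Q$ as the outcome of a "tedious but straightforward computation." All the intermediate identities you commit to — $w=c^{1-r}$, $b=c^{2r-1}$, $bw=c^r$, $b^{-2}w^{-2}=c^{-2r}$, $b^{-2}w^{-4}=c^{-2}$, $dl=(2(c-1)-c\log c)\,dr$, and $\Delta_g(w^2/2)=-(\log c)w^2\Delta_g r+2(\log c)^2 w^2|dr|^2$ — check out, and plugging them into the $A$ and $Q$ formulas indeed reproduces the stated coefficients.
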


\begin{remark*}\normalfont
    Suppose that $M$ is compact and let $H$ be the Hamiltonian associated to $L$. The class of functions $u : M \to \RR$ for which $\int_MH(du) < \infty$ is known as the \emph{variable exponent Sobolev space} $W^{1,q(\cdot)}$, see \cite{DHHR}. 
\end{remark*}

\subsubsection*{Convex functions of Finsler metrics}

Let $(M,F)$ be a Finsler manifold and let $\Phi:[0,\infty)\to(0,\infty)$ be a smooth, strictly convex, superlinear function satisfying $\Phi'(0) = 0$. Let $L$ be the Lagrangian
$$L(v) = \Phi(F(v)), \qquad v \in TM.$$
The corresponding Hamiltonian is
$$H(p) = \Phi^*(F^*(p)), \qquad p \in T^*M,$$
where $\Phi^*$ is the Legendre conjugate of $\Phi$ and $F^*$ is the dual norm to $F$. From the perspective of curvature-dimension theory, such Lagrangians were considered in \cite{Oh14}, where some of the facts mentioned below can be found. The Legendre transform is
\begin{equation}\label{Finslerlegendreeq}\cL p = \frac{(\Phi')^{-1}(F^*(p))}{F^*(p)}\cdot\cL_F p,\end{equation}
where $\cL_F$ is the Legendre transform associated to the Finsler metric $F$. The energy is given by

$$E(v) = \Phi^*(\Phi'(F(v))), \qquad v \in TM.$$

Thus minimizing extremals have constant speed
$$s : = (\Phi')^{-1}(\Phi^*)^{-1}(0).$$ 
Their action will therefore be proportional to their length; hence, minimizing extremals are Finsler geodesics with speed $s$.

\medskip
Let us parse the $\CD(K,\infty)$ condition for the Lagrangian $L$. A function $u$ is a solution to the Hamilton-Jacobi equation precisely when 
$$F^*(du) = w : = (\Phi^*)^{-1}(0).$$
If we set 
$$b :=  \frac{(\Phi')^{-1}(w)}{w} = \frac{s}{w}> 0,$$
then by \eqref{Finslerlegendreeq},
$$\nabla u = b\cdot \nabla^Fu \qquad \text{ and } \qquad \bL u = b\cdot \bL^Fu,$$
where the superscripts $F$ indicate that the corresponding operators with respect to the Finsler metric $F$. Thus
\begin{align*}
    (d\bL u)(\nabla u) = b^2\cdot (d\bL^Fu)(\nabla^Fu).
\end{align*}
Since $SM = \{v \in TM \, \mid \, F(v) = s = bw\}$, it follows that 
\begin{equation}\label{FinslerRiceq}\Ric_{\mu,\infty} \ge K \quad \text{ on $SM$} \qquad \iff \qquad \Ric^F_{\mu,\infty} \ge s^{-2}\cdot K \cdot F^2.\end{equation}

\medskip 
The following proposition, sometimes called a \emph{Phi-entropy inequality}, goes back to Bakry and {\'E}mery; see \cite{Ch}, \cite{BG} or \cite[Proposition 7.6.1]{BGL}. 

\begin{proposition}[Phi-entropy inequalities on the real line]\label{Phiprop1D}
    Let $m$ be a probability measure on an interval $I\subseteq \RR$ with a smooth density $e^{-\psi}$ satisfying $\ddot\psi \ge K > 0$. Then for every positive function $f \in L^1(m)\cap C^1$ such that $\int|\Phi(f)|dm < \infty$,
    $$\int_I\Phi(f)dm - \Phi\left(\int_Ifdm\right) \le \frac{1}{2K}\int_I\Phi''(f)\cdot |f'|^2dm.$$
\end{proposition}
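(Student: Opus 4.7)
The plan is to follow the classical semigroup interpolation method of Bakry and \'Emery. Let $\mathcal{L}g := \ddot g - \dot\psi \cdot \dot g$ be the diffusion generator on $I$, for which $m$ is reversible, and let $P_t = e^{t\mathcal{L}}$ denote the associated semigroup. Since $\ddot\psi \ge K > 0$, a spectral-gap/Brascamp--Lieb argument gives ergodicity: $P_tf \to \bar f := \int f\,dm$ as $t\to\infty$. Consider the interpolating functional
\begin{equation*}
\Lambda(t) := \int_I \Phi(P_tf)\,dm, \qquad t \ge 0,
\end{equation*}
so that $\Lambda(0) = \int\Phi(f)\,dm$ and $\Lambda(\infty) = \Phi(\bar f)$. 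The quantity to estimate is $\Lambda(0)-\Lambda(\infty) = -\int_0^\infty \Lambda'(t)\,dt$.

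Integration by parts using the reversibility of $m$ gives
\begin{equation*}
\Lambda'(t) = \int \Phi'(P_tf)\cdot\mathcal{L}P_tf\,dm = -\int \Phi''(P_tf)\cdot \bigl|(P_tf)'\bigr|^2\, dm \le 0,
\end{equation*}
and in particular $-\Lambda'(0) = \int \Phi''(f)|f'|^2 dm$, which is $2K$ times the right-hand side of the claimed inequality. It therefore suffices to show the exponential decay $-\Lambda'(t) \le e^{-2Kt}\cdot(-\Lambda'(0))$ for all $t \ge 0$, since integrating over $[0,\infty)$ then yields $\Lambda(0)-\Lambda(\infty) \le -\Lambda'(0)/(2K)$, as desired.

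The exponential decay is equivalent to the differential inequality $\Lambda''(t) + 2K\Lambda'(t) \ge 0$. Writing $g = P_tf$ and using the commutation $(\mathcal{L}g)' = \mathcal{L}\dot g - \ddot\psi \cdot \dot g$, a direct $\Gamma_2$-type computation (two successive integrations by parts) yields
\begin{equation*}
\Lambda''(t) = \int \Phi^{(4)}(g)(\dot g)^4 dm + 4\int \Phi'''(g)(\dot g)^2 \ddot g \, dm + 2\int \Phi''(g)(\ddot g)^2 dm + 2\int \ddot\psi\cdot\Phi''(g)(\dot g)^2 dm.
\end{equation*}
The last (curvature) term is bounded below by $2K\int\Phi''(g)(\dot g)^2dm = -2K\Lambda'(t)$ thanks to $\ddot\psi \ge K$. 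The remaining three terms form a quadratic form in $(\dot g)^2$ and $\ddot g$, with matrix $\bigl(\begin{smallmatrix}\Phi^{(4)}&2\Phi'''\\2\Phi'''&2\Phi''\end{smallmatrix}\bigr)$ evaluated at $g$; it is pointwise nonnegative precisely when $2(\Phi''')^2 \le \Phi''\Phi^{(4)}$, the admissibility condition of \cite{BGL,Ch} (equivalently, concavity of $1/\Phi''$).

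The main obstacle is precisely this admissibility step: without the pointwise inequality $2(\Phi''')^2 \le \Phi''\Phi^{(4)}$ the quadratic form need not be nonnegative, and the differential inequality can fail in general. In practice I would either include admissibility as an additional hypothesis on $\Phi$ (matching the usage in the references cited alongside the proposition) or, in view of the specific $\Phi$'s relevant to the application, verify it directly. The only further technical points are routine and can be handled by standard approximation: justifying differentiation under the integral sign and the interchange of limits at $t=\infty$ by truncating $f$ to $f_n := \min(f,n)+1/n$ and applying monotone/dominated convergence, and invoking ergodicity of $P_t$ (guaranteed by $K>0$) for the boundary identification $\Lambda(\infty)=\Phi(\bar f)$.
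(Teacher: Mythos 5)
Your semigroup interpolation is exactly the route taken by the sources \cite{Ch,BG,BGL} that the paper cites; the paper gives no independent argument for Proposition \ref{Phiprop1D}. Your expressions for $\Lambda'$ and $\Lambda''$ are correct, and you have correctly located the delicate step: nonnegativity of the non-curvature part of $\Lambda''$ requires the admissibility condition $2(\Phi''')^2 \le \Phi''\,\Phi^{(4)}$, i.e.\ concavity of $1/\Phi''$, which each of \cite{Ch,BG,BGL} assumes. This hypothesis is absent from the proposition as stated, and it is not implied by the standing assumptions on $\Phi$ in the surrounding subsection (smooth, strictly convex, superlinear, $\Phi'(0)=0$): for instance $\Phi(t) = \cosh t$ satisfies them all, yet $1/\Phi''(t) = 1/\cosh t$ is not concave on $[0,\infty)$. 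So the gap you identify is an omission in the paper's statement (inherited by Theorem \ref{Phientthm}), not a defect in your argument. It is harmless downstream: the only application is Beckner's inequality with $\Phi(t)=(t^{2/q}+2/q-1)/(2/q)$ and $q\in[1,2)$, for which $1/\Phi''$ is proportional to $t^{2-2/q}$ with exponent in $(0,1]$ and hence concave on $(0,\infty)$. With admissibility added as a hypothesis, your proof is complete and correct.
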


In fact, the above result holds for any Markov semigroup satisfying $\CD(K,N)$ in the sense of Bakry and {\'E}mery, and in particular for weighted Riemannian manifolds. Since the Finslerian Laplacian is nonlinear, the following theorem does not fit directly into the Bakry-{\'E}mery framework. For proofs of functional inequalities on Finsler manifolds using semigroup techniques, see \cite[Chapter 16]{OhBook}. A Finsler manifold is called \emph{reversible} if $F(v) = F(-v)$ for every $v \in TM$.

\begin{theorem}[Phi-entropy inequalities for reversible Finsler manifolds]\label{Phientthm}
    Let $(M,F)$ be a geodesically convex, reversible Finsler manifold and let $\mu$ be a measure on $M$ with a smooth density. Suppose that $\Ric_{\mu,\infty}^F \ge KF^2 > 0$, where $\Ric_{\mu,\infty}^F$ is the weighted Finslerian Ricci curvature. Then for every positive function $f \in L^1(\mu)\cap C^1$ such that $\int|\Phi(f)|d\mu < \infty$,
    $$\int_M\Phi(f)d\mu - \Phi\left(\int_Mfd\mu\right) \le \frac{1}{2K}\int_M\Phi''(f)\cdot F^*(df)^2d\mu.$$
\end{theorem}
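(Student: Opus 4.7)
The plan is to reduce the inequality to its one-dimensional counterpart, Proposition \ref{Phiprop1D}, via the needle decomposition. First I would verify that the Lagrangian $L(v) := \Phi(F(v))$ satisfies assumptions (I)--(III) of Section \ref{assumpsec}: it is Tonelli (smoothness off the zero section is inherited from $F$ and $\Phi$; $C^1$ regularity across the zero section uses $\Phi'(0)=0$ together with the formula $dL = \Phi'(F)\,dF$; strict convexity and superlinearity come from $\Phi$ and the Finslerian fundamental tensor), supercritical since $L>0$, and geodesic convexity together with reversibility yield (II)--(III), since minimizing extremals of $L$ are constant-speed reparametrizations of minimizing $F$-geodesics. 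By the equivalence \eqref{FinslerRiceq}, the hypothesis $\Ric^F_{\mu,\infty}\ge KF^2$ translates to $(\mu,L)$ satisfying $\CD(s^2K,\infty)$ in the Lagrangian sense, where $s$ is the common value of $F$ on $SM$.

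Since $\CD(s^2K,\infty)$ with $s^2 K>0$ forces $\mu$ to be finite (via Gaussian concentration, e.g.\ using Theorem \ref{lsthm}), I may normalize so $\mu(M)=1$, and set $\bar f := \int_M f\,d\mu$. Applying Theorem \ref{needlethm} to the mean-zero function $\tilde f := f-\bar f$ (after a standard truncation argument if the first-moment hypothesis of that theorem needs to be verified) yields a disintegration $\mu = \int_{\sA}\mu_\alpha\,d\nu(\alpha)$ into Dirac atoms and $(\mu,L)$-Jacobi needles $\mu_\alpha=(\gamma_\alpha)_* m_\alpha$, each $m_\alpha$ satisfying 1D $\CD(s^2K,\infty)$ with respect to the Euclidean metric on $I_\alpha$. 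Mass balance \eqref{MBeq} gives $\int (f\circ\gamma_\alpha)\,d\tilde m_\alpha=\bar f$, where $\tilde m_\alpha := m_\alpha/m_\alpha(I_\alpha)$ is the normalized probability measure (whose density is a constant multiple of that of $m_\alpha$, hence satisfies the same log-concavity). Proposition \ref{Phiprop1D} applied to $f\circ\gamma_\alpha$ against $\tilde m_\alpha$ yields
\begin{equation*}
\int \Phi(f\circ\gamma_\alpha)\,d\tilde m_\alpha - \Phi(\bar f) \,\le\, \frac{1}{2s^2K}\int \Phi''(f\circ\gamma_\alpha)\cdot|(f\circ\gamma_\alpha)'|^2\,d\tilde m_\alpha.
\end{equation*}
Reversibility of $F$ enters here to give the two-sided dual norm bound $|df(v)|\le F^*(df)\,F(v)$, so along the extremal $|(f\circ\gamma_\alpha)'(t)|^2 = |df(\dot\gamma_\alpha(t))|^2 \le s^2\,F^*(df)^2(\gamma_\alpha(t))$, the two factors of $s^2$ cancelling. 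Multiplying through by $\mu_\alpha(M)=m_\alpha(I_\alpha)$, observing that Dirac needles $\alpha\in\sD$ contribute trivially since mass balance forces $f(x_\alpha)=\bar f$, and finally integrating against $d\nu(\alpha)$ using the disintegration formula \eqref{disinteq} delivers the inequality.

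The main obstacle is administrative rather than conceptual. Verifying that $L=\Phi\circ F$ truly fits the framework at the zero section and that assumption (II) holds for an arbitrary reversible geodesically convex Finsler manifold needs some care, as does ensuring the first-moment hypothesis of Theorem \ref{needlethm}---under $\CD(s^2K,\infty)$ with $s^2K>0$ this follows from measure concentration, but otherwise calls for approximating $f$ by bounded compactly supported $f_j$ and passing to the limit via dominated convergence. The crux of the argument, namely the inequality $|df(v)|\le F^*(df)\,F(v)$, is precisely where reversibility cannot be dropped: without it one only has $df(v)\le F^*(df)\,F(v)$ and $-df(v)\le F^*(df)\,F(-v)$, and the asymmetry $F(v)\ne F(-v)$ would prevent the clean cancellation of the $s^2$ factor on needles traversed in the ``wrong'' direction.
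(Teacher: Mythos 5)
Your proof is correct and follows essentially the same route as the paper: apply the needle decomposition to the mean-shifted $f$, use the equivalence \eqref{FinslerRiceq} to transfer the Finsler $\CD$ bound to a Lagrangian $\CD(s^2K,\infty)$ bound, apply the one-dimensional Proposition \ref{Phiprop1D} on each needle, cancel the two factors of $s^2$ via reversibility, and reassemble through the disintegration formula. The only cosmetic difference is that you normalize $\mu$ to a probability measure before invoking the needle theorem, whereas the paper renormalizes needle-by-needle and multiplies back through by $\int_M f\,d\mu_\alpha$, which sidesteps the need for that global normalization step.
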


\begin{remark*}\normalfont
    Since minimizing extremals of $L$ are geodesics with speed $s$, Theorem \ref{Phientthm} can be proved using the Finslerian needle decomposition theorem \cite{Oh18} for the Finsler metric $s^{-1}\cdot F$. Still, since we did not find it elsewhere, we include it here.
\end{remark*}

\begin{proof}
    Apply Theorem \ref{needlethm} to the function 
    $$f - \int_Mfd\mu,$$
    and let $\mu = \int\mu_\alpha\,d\nu(\alpha)$ be the resulting disintegration of measure.
    Then for every $\alpha \in \sA$,
    \begin{equation}\label{Phimbeq}\int_Mfd\mu_\alpha = \mu_\alpha(M)\cdot\int_Mfd\mu.\end{equation}
    Each curve $\gamma_\alpha$ is a minimizing geodesic of the Finsler metric $F$, with speed $s = (\Phi')^{-1}(\Phi^*)^{-1}(0)$. Thus
    \begin{equation}\label{fgammaalphaprimeeq}|(f\circ\gamma_\alpha)'| = |df(\dot\gamma_\alpha)| \le F(\dot\gamma_\alpha)\cdot F^*(df) = s \cdot F^*(df).\end{equation}
    This is where we use the fact that $F$ is reversible, because otherwise the middle inequality may not hold. By \eqref{FinslerRiceq} and the assumption $\Ric_{\mu,\infty}^F\ge K$, the pair $(\mu,L)$ satisfies $\CD(s^2K,\infty)$ with respect to the Lagrangian $L$. Hence, for $\nu$-almost every $\alpha \in \sN$, the measure $\mu_\alpha$ is a $\CD(s^2K,\infty)$-needle, i.e. the density $e^{-\psi_\alpha}$ of the measure $m_\alpha$ satisfies $\ddot\psi_\alpha \ge s^2K$. The disintegration formula implies that $\int|\Phi(f)|d\mu_\alpha < \infty$ for $\nu$-almost every $\alpha \in \sA$. Applying Proposition \ref{Phiprop1D} and using \eqref{fgammaalphaprimeeq}, we see that for $\nu$-almost every $\alpha \in \sN$,
    $$\mu_\alpha(M)^{-1}\int_M\Phi(f)d\mu_\alpha - \Phi\left(\mu_\alpha(M)^{-1}\int_Mfd\mu_\alpha\right) \le \frac{\mu_\alpha(M)^{-1}}{2K}\int_M\Phi''(f)\cdot F^*(df)^2d\mu_\alpha.$$
    By \eqref{Phimbeq}, this implies
    $$\frac{\int_Mfd\mu}{\int_Mfd\mu_\alpha}\cdot\int_M\Phi(f)d\mu_\alpha - \Phi\left(\int_Mfd\mu\right) \le \frac{1}{2K}\cdot \frac{\int_Mfd\mu\cdot\int_M\Phi''(f)\cdot F^*(df)^2d\mu_\alpha}{\int_Mfd\mu_\alpha}.$$
    The inequality also holds true trivially for $\alpha \in \sD$, i.e. when $\mu_\alpha$ is a Dirac measure, since the left hand side vanishes by \eqref{Phimbeq}. Multiplying by $\int_Mfd\mu_\alpha$ and integrating over $\alpha$, we obtain the desired inequality.
\end{proof}

As a corollary, we recover (in the reversible case) the Finslerian \emph{Beckner's inequality} \cite[Corollary 16.11]{OhBook}:
\begin{corollary}[Beckner's inequality for reversible Finsler manifolds]
    Let $(M,F)$ be a geodesically convex, reversible Finsler manifold and let $\mu$ be a probability measure on $M$ with a smooth density. Suppose that $\Ric_{\mu,\infty}^F \ge KF^2 > 0$, where $\Ric_{\mu,\infty}^F$ is the weighted Finslerian Ricci curvature. Then for every positive function $f \in L^2(\mu)\cap C^1$ and every $q \in [1,2)$,
    $$\int_Mf^2d\mu - \left(\int_Mf^qd\mu\right)^{2/q} \le \frac{2-q}{K}\int_MF^*(df)^2d\mu.$$
\end{corollary}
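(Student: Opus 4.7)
The strategy is to derive Beckner's inequality as a direct specialization of the $\Phi$-entropy inequality (Theorem \ref{Phientthm}), choosing $\Phi$ to be the power function $\Phi(t) = t^{2/q}$ and applying the theorem to $g := f^q$ rather than to $f$ itself. For $q \in [1,2)$ the exponent $2/q$ lies in $(1,2]$, so $\Phi$ is smooth on $(0,\infty)$, strictly convex (with $1/\Phi''$ concave, which is the standard admissibility condition for $\Phi$-entropy inequalities), and the map $g \mapsto \Phi(g) = g^{2/q}$ satisfies the integrability and regularity hypotheses of Theorem \ref{Phientthm} whenever $f \in L^2(\mu) \cap C^1$ is positive.

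The substitution $g = f^q$ converts the three ingredients of the $\Phi$-entropy functional into exactly the quantities appearing in Beckner's inequality. First, $\Phi(g) = f^2$, so $\int \Phi(g)\,d\mu = \int f^2\,d\mu$. Second, $\int g\,d\mu = \int f^q\,d\mu$, and thus $\Phi(\int g\,d\mu) = (\int f^q\,d\mu)^{2/q}$. Third, for the gradient term, $dg = q f^{q-1}\,df$, so by fiberwise homogeneity of $F^*$ we have $F^*(dg)^2 = q^2 f^{2q-2}\,F^*(df)^2$, and a direct computation yields
\begin{equation*}
\Phi''(g)\,F^*(dg)^2 \;=\; \tfrac{2}{q}\bigl(\tfrac{2}{q}-1\bigr) g^{2/q-2}\cdot q^2 f^{2q-2}\,F^*(df)^2 \;=\; 2(2-q)\,F^*(df)^2.
\end{equation*}
Substituting these three identities into the conclusion of Theorem \ref{Phientthm} and dividing through by $2$ in the coefficient yields precisely
\begin{equation*}
\int_M f^2\,d\mu \;-\; \Bigl(\int_M f^q\,d\mu\Bigr)^{2/q} \;\le\; \frac{2-q}{K}\int_M F^*(df)^2\,d\mu,
\end{equation*}
as desired.

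The main (mild) obstacle is ensuring that $\Phi(t) = t^{2/q}$ is an admissible choice in Theorem \ref{Phientthm}, since that theorem is stated without an explicit list of hypotheses on $\Phi$; one needs to verify (or invoke from the Bakry--\'Emery framework referenced just before the theorem) that the one-dimensional $\Phi$-entropy inequality of Proposition \ref{Phiprop1D} applies to power functions with exponent in $(1,2]$, which it does precisely because $1/\Phi''$ is then affine hence concave. A minor point worth checking is the boundary case $q=1$: here $\Phi(t) = t^2$ and the inequality degenerates to the Poincar\'e inequality with constant $1/K$, consistent with the general formula. For $q = 2$ both sides vanish, so the stated range $q \in [1,2)$ captures the non-trivial content.
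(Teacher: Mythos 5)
Your proposal has the same structure as the paper's proof — specialize Theorem \ref{Phientthm} to a power function and apply it to $f^q$ — and the computation of the three terms is correct. The one genuine issue is the choice $\Phi(t)=t^{2/q}$. The function $\Phi$ in Theorem \ref{Phientthm} is the same $\Phi$ that defines the Lagrangian $L(v)=\Phi(F(v))$ at the start of the subsection, so it must satisfy the conditions listed there: $\Phi:[0,\infty)\to(0,\infty)$, strictly convex, superlinear, $\Phi'(0)=0$. Your $\Phi(t)=t^{2/q}$ has $\Phi(0)=0$, so $L$ vanishes on the zero section and $E(0)=\Phi^*(0)=0$; the indicatrix $SM$ then meets the zero section, supercriticality fails, and the speed of minimizing extremals degenerates to $s=(\Phi')^{-1}(\Phi^*)^{-1}(0)=0$. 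In other words, your choice is not an admissible Lagrangian density, and the obstacle is not (as you suggest) in the one-dimensional Proposition \ref{Phiprop1D} or in concavity of $1/\Phi''$, but in the positivity/supercriticality requirement on $\Phi$ itself.

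The fix is precisely the paper's choice $\Phi(t)=\frac{t^{2/q}+2/q-1}{2/q}$, which is positive ($\Phi(0)=1-q/2>0$), has $\Phi'(t)=t^{2/q-1}$ vanishing at $0$, is strictly convex and superlinear for $q\in[1,2)$, and gives speed $s=1$. Since both sides of the conclusion of Theorem \ref{Phientthm} are invariant under replacing $\Phi$ by $a\Phi+bt+c$ (with $a>0$) — note $\mu$ is a probability measure — your computation with $t^{2/q}$ nevertheless produces the correct inequality; you just never applied the theorem to a $\Phi$ that satisfies its hypotheses. Stating this invariance explicitly, or simply using the paper's normalized $\Phi$, would close the gap.
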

\begin{proof}
    Set 
    $$\Phi(t) = \frac{t^{2/q}+2/q-1}{2/q}$$
    and apply Theorem \ref{Phientthm} to $f^q$. 
    % Note that $w = 1$ since $\Phi^*(t) = \frac{2-q}{2}\cdot(t^{2/(2-q)} - 1)$.
\end{proof}
\begin{remark*}\normalfont
    Taking $q\to 2$ in Beckner's inequality gives the logarithmic Sobolev inequality.
\end{remark*}
% \appendix

\end{document}